\theoremstyle{plain}
\newtheorem{theorem}{Theorem}[section]
\newtheorem{corollary}[theorem]{Corollary}
\newtheorem{lemma}[theorem]{Lemma}
\newtheorem{proposition}[theorem]{Proposition}
\newtheorem{definition}[theorem]{Definition}
\newtheorem{assumption}[theorem]{Assumption}
\newtheorem*{definition*}{Definition}
\newtheorem{introtheorem}{Theorem}
\theoremstyle{remark}
\newtheorem{remark}[theorem]{Remark}
\newtheorem{example}[theorem]{Example}
\newtheorem*{claim*}{Claim}
\newtheorem*{remark*}{Remark}
\newtheorem*{example*}{Example}
\newtheorem*{notation*}{Notation}
\numberwithin{equation}{section}
\def\N{{\mathbb N}}
\def\R{{\mathbb R}}
\def\D{{\mathbb D}}
\def\Q{{\mathbb Q}}
\newcommand{\one}{{{\bf 1}}}
\renewcommand{\a}{\alpha}
\renewcommand{\b}{\beta}
\renewcommand{\d}{\delta}
\newcommand{\eps}{\varepsilon}
\renewcommand{\phi}{\varphi}
\newcommand{\dd}{\mathrm{d}}
\DeclareMathOperator{\supp}{supp}
\newcommand{\ip}[1]{\langle {#1}\rangle}
\newcommand{\norm}[1]{\| {#1}\|}
\newcommand{\abs}[1]{\vert {#1}\vert}
\DeclareMathOperator{\Ric}{Ric}
\DeclareMathOperator{\Lip}{Lip}
\DeclareMathOperator{\Hess}{Hess}
\DeclareMathOperator{\cd}{CD}
\DeclareMathOperator{\rcd}{RCD}
\DeclareMathOperator{\evi}{EVI}
\DeclareMathOperator{\geo}{Geo}
\DeclareMathOperator{\ch}{Ch}
\DeclareMathOperator{\bH}{H}
\DeclareMathOperator{\ent}{Ent}
\DeclareMathOperator{\bl}{BL}
\DeclareMathOperator{\be}{BE}
\newcommand{\ddt}{\frac{\mathrm{d}}{\mathrm{d}t}}
\newcommand{\ddtr}{\frac{\mathrm{d}^+}{\mathrm{d}t}}
\newcommand{\cH}{\mathscr{H}}
\newcommand{\cL}{\mathcal{L}}
\newcommand{\cE}{\mathcal{E}}
\newcommand{\cP}{\mathscr{P}}
\renewcommand{\tilde}{\widetilde}
\newcommand{\e}{\mathrm{e}}
\newcommand{\ecdkn}{{\cd^e(K,N)}}
\newcommand{\cdskn}{{\cd^*(K,N)}}
\newcommand{\cdkn}{{\cd(K,N)}}
\newcommand{\rcdkn}{{\rcd^*(K,N)}}
\newcommand{\evikn}{{\evi_{K,N}}}
\newcommand{\fs}{{\frak{s}}}
\newcommand{\fc}{{\frak{c}}}
\newcommand{\ckn}[1]{\fc_{K/N}\big(#1\big)}
\newcommand{\skn}[1]{\fs_{K/N}\left(#1\right)}
\newcommand{\sigkn}[2]{\sigma^{(#1)}_{K/N}\big(#2\big)}
\newcommand{\sigknp}[2]{\sigma^{(#1)}_{K/N'}\big(#2\big)}
\newcommand{\wug}[1]{\abs{\nabla #1}_w}
\begin{document}

\title[Curvature-Dimension Condition and Bochner's Inequality]{On the
  Equivalence of the Entropic Curvature-Dimension Condition and
  Bochner's Inequality on Metric Measure Spaces}

 \author{Matthias Erbar\quad*\quad
 Kazumasa Kuwada\quad*\quad
Karl-Theodor Sturm}
 \address{
 University of Bonn\\
 Institute for Applied Mathematics\\
 Endenicher Allee 60\\
 53115 Bonn\\
 Germany}
 \email{erbar@iam.uni-bonn.de}
 \email{kuwada.kazumasa@ocha.ac.jp}
 \email{sturm@uni-bonn.de}

\thanks{}




\date\today

\maketitle

\begin{abstract}
We prove the equivalence of the
curvature-dimension bounds of Lott-Sturm-Villani (via entropy and optimal transport) and of
Bakry--\'Emery (via energy and $\Gamma_2$-calculus)  in complete generality for infinitesimally Hilbertian metric measure spaces. In particular, we establish the full Bochner
inequality on such metric measure spaces.
Moreover, we deduce new contraction bounds for the heat flow on Riemannian manifolds and on mms in terms of the $L^2$-Wasserstein distance. 
\end{abstract}

\tableofcontents

\section{Introduction}\label{sec:intro}

Bochner's inequality is one of the most fundamental estimates in
geometric analysis. It states that
\begin{equation}\label{eq:intro-bochner}
\frac12\Delta|\nabla u|^2-\langle\nabla u, \nabla\Delta u\rangle\ge K \cdot |\nabla u|^2+\frac1N \cdot |\Delta u|^2
\end{equation}
for each smooth function $u$ on a Riemannian manifold $(M,g)$ provided
$K\in\R$ is a lower bound for the Ricci curvature on $M$ and $N\in
(0,\infty]$ is an upper bound for the dimension of $M$.  The main
results of this paper is an analogous Bochner inequality on metric
measure spaces $(X,d,m)$ with linear heat flow and satisfying the
(reduced) curvature-dimension condition.  Indeed, we will also prove
the converse: if the heat flow on a mms $(X,d,m)$ is linear then an
appropriate version of \eqref{eq:intro-bochner} (for the canonical
gradient and Laplacian on $X$) will imply the reduced
curvature-dimension condition.  Besides that, we also derive new,
sharp $W_2$-contraction results for the heat flow as well as pointwise
gradient estimates and prove that each of them is equivalent to the
curvature-dimension condition. That way, we obtain a complete
one-to-one correspondence between the Eulerian picture captured in the
Bochner inequality and the Lagrangian interpretation captured in the
curvature-dimension inequality.

\bigskip

The {\bf curvature-dimension condition} $\cdkn$ was introduced by
Sturm in \cite{S06}. It was later adopted and slightly modified by
Lott \& Villani, see also the elaborate presentation in the monograph
\cite{Vil09}. The $\cdkn$-condition for finite $N$ is a sophisticated
tightening up of the much simpler $\cd(K,\infty)$-condition introduced
as a synthetic Ricci bound for metric measure spaces independently by
Sturm \cite{S06} and Lott \& Villani \cite{LV09}. From the very
beginning, a disadvantage of the $\cdkn$-condition for finite $N$ was
the lack of a local-to-global result. To overcome this drawback,
Bacher \& Sturm \cite{BS10} introduced the \emph{reduced
  curvature-dimension condition} $\cdskn$ which has a local-to-global
property and which is equivalent to the local version of $\cdkn$. The
curvature-dimension condition $\cdkn$ has been verified for Riemannian
manifolds \cite{S06}, Finsler spaces \cite{Oht09}, Alexandrov spaces
\cite{Pet10}, \cite{ZZ10}, cones \cite{BS11} and warped products of
Riemannian manifolds \cite{Ket12}.  Actually, in all these cases the
conditions $\cdkn$ and $\cdskn$ turned out to be equivalent.

\bigskip

A completely different approach to generalized curvature-dimension
bounds was set forth in the pioneering work of Bakry and \'Emery
\cite{BE85}. It applies to the general setting of Dirichlet forms and
the associated Markov semigroups and is formulated using the
(iterated) \emph{carr\'e du champ} operators built from the generator
of the semigroup. This \textbf{energetic curvature-dimension
  condition} $\be(K,N)$ has proven a powerful tool in particular in
infinite dimensional situations. It yields hypercontractivity of the
semigroup and has successfully been used to derive functional
inequalities like the logarithmic Sobolev inequalities in a variety of
examples. Among the remarkable analytic consequences of the
Bakry--\'Emery condition $\be(K,\infty)$ we single out the point-wise
gradient estimates for the semigroup $H_t$. It implies that for any
$f$ in a large class of functions
\begin{align*}
\Gamma(H_t f) ~\le~ \e^{-2Kt}\, H_t\Gamma(f)\;,
\end{align*}
where $\Gamma$ is the carr\'e du champ operator.

\bigskip

The relation between the two notions of curvature bounds based on
optimal transport and Dirichlet forms has been studied in large
generality by Ambrosio, Gigli and Savar\'e in a series of recent works
\cite{AGS11b,AGS12}, see also \cite{AGMR12}. The key tool of their
analysis is a powerful calculus on metric measure spaces which allows
them to match the two settings. Starting from a metric measure
structure they introduce the so called Cheeger energy which takes over
the role of the 'standard' Dirichlet energy and is obtained by
relaxing the $L^2$-norm of the slope of Lipschitz functions. A key
result is the identification of the $L^2$-gradient flow of the Cheeger
energy with the Wasserstein gradient flow of the entropy. This is the
mms equivalent of the famous result by Jordan--Kinderlehrer--Otto
\cite{JKO98} and allows one to define unambiguously a heat flow in
metric measure spaces.

We say that a metric measure space is \emph{infinitesimally
  Hilbertian} if the heat flow is linear. This is equivalent to the
Cheeger energy being the associated Dirichlet form. We denote its
domain by $W^{1,2}$. Under the assumption of linearity of the heat
flow, Ambrosio--Gigli--Savar\'e prove that $\cd(K,\infty)$ implies
$\be(K,\infty)$ and the converse also holds under an additional
regularity assumption. Combining linearity of the heat flow with the
$\cd(K,\infty)$ condition leads to the \textbf{Riemannian curvature
  condition} $\rcd(K,\infty)$ introduced in \cite{AGS11b}. This
concept again turns out to be stable under Gromov--Hausdorff
convergence and tensorization.

\bigskip

Recently, also {\bf Bochner's inequality} has been extended to
singular spaces. Ohta \& Sturm \cite{OS11} proved it for Finsler
spaces and Gigli, Kuwada \& Ohta \cite{GKO10} and Zhang \& Zhu
\cite{ZZ12} for Alexandrov spaces. Finally, Ambrosio, Gigli \&
Savar\'e established the Bochner inequality without the dimension term
(i.e. with $N=\infty$) in $\rcd(K,\infty)$ spaces. However, in the
classical setting, the full strength of Bochner's inequality only
comes to play if also the dimension effect is taken into account,
i.e. with finite $N$. This can be seen for example from the famous
results of Li--Yau \cite{LY86} who derive from it a differential
Harnack inequality, eigenvalue estimates for the Laplacian and
Gaussian heat kernel bounds.

\bigskip

We prove the equivalence of
curvature-dimension bounds via optimal transport and via the
Bakry--\'Emery approach in full generality for infinitesimally Hilbertian metric measure spaces. In particular, we establish the full Bochner
inequality on such metric measure spaces.

\medskip

Our approach strongly relies on properties and consequences of a new
curvature-dimension condition, the so-called {\bf entropic curvature
  dimension condition} $\ecdkn$. It simply states that the Boltzmann
entropy $\ent$ is $(K,N)$-convex on the Wasserstein space
$\cP_2(X,d)$.  Here a function $s$ on an interval $I\subset\R$ is
called $(K,N)$-convex if
\begin{equation}
s''~\ge~ K+\frac1N\cdot (s')^2\;.
\end{equation}
holds in distribution sense. A function $S$ on a geodesic space is
called $(K,N)$-convex if it is $(K,N)$-convex along each unit speed
geodesic -- or at least along each curve within a class of unit speed geodesics which connect each pair of points in $X$. This way, $(K,N)$-convexity is a weak formulation of
\begin{equation}
  \Hess S~ \ge~ K+\frac{1}{N}\big(\nabla S \otimes\nabla S \big)\;.
\end{equation}

Our first result is the following

\begin{introtheorem}[Theorem~\ref{thm:equiv_as}]
  For a essentially non-branching mms
  (see Definition~\ref{def:ess-non-branch})
  the entropic curvature-dimension condition $\ecdkn$ is equivalent
  to the reduced curvature-dimension condition $\cdskn$.
\end{introtheorem}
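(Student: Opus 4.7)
The plan is to reduce both conditions to a pointwise density inequality along each Wasserstein geodesic, and then bridge them via an elementary ODE comparison. The essentially non-branching hypothesis is used throughout to justify restrictions and localizations.

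Under this hypothesis, any $L^2$-Wasserstein geodesic $(\mu_t)$ between absolutely continuous measures $\mu_0,\mu_1$ is represented by a unique $\nu \in \cP(\geo(X))$, with $\mu_t = (e_t)_\# \nu = \rho_t m$ absolutely continuous for all $t$, and for every Borel $A \subset \geo(X)$ with $\nu(A) > 0$ the renormalized restriction $\nu^A := \nu|_A/\nu(A)$ again induces a Wasserstein geodesic. Following the approach of Bacher--Sturm, combining this restriction property with a Lebesgue differentiation argument characterizes $\cdskn$ by the pointwise inequality
\begin{equation}\label{eq:pointwise-plan}
  \rho_t(\gamma_t)^{-1/N} \;\geq\; \sigkn{1-t}{d_\gamma}\, \rho_0(\gamma_0)^{-1/N} + \sigkn{t}{d_\gamma}\, \rho_1(\gamma_1)^{-1/N}
\end{equation}
holding for $\nu$-a.e.\ $\gamma$, where $d_\gamma := d(\gamma_0,\gamma_1)$.

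A short ODE computation shows that, setting $v_\gamma(t) := \rho_t(\gamma_t)^{-1/N}$, inequality \eqref{eq:pointwise-plan} is equivalent (by the maximum principle applied to the operator $\partial_t^2 + K d_\gamma^2/N$) to $v_\gamma'' + (K d_\gamma^2 / N)\, v_\gamma \leq 0$ distributionally on $[0,1]$, and this in turn, after differentiating $v_\gamma = \exp(-g_\gamma/N)$ with $g_\gamma(t) := \log \rho_t(\gamma_t)$, is equivalent to the $(K d_\gamma^2, N)$-convexity of $g_\gamma$. The implication $\cdskn \Rightarrow \ecdkn$ then follows by integrating against $\nu$: since $\ent(\mu_t|m) = \int g_\gamma(t)\, \dd\nu$ and $W_2(\mu_0,\mu_1)^2 = \int d_\gamma^2\, \dd\nu$, integrating the differential inequality for each $g_\gamma$ and applying Cauchy--Schwarz in the form $\int (g_\gamma')^2 \dd\nu \geq \big(\int g_\gamma'\, \dd\nu\big)^2$ yields $\ent'' \geq K W_2^2 + (\ent')^2/N$, i.e., the $\ecdkn$ condition along $(\mu_t)$.

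The converse $\ecdkn \Rightarrow \cdskn$ is the main difficulty, since the Cauchy--Schwarz step just used loses information and cannot be reversed directly. To recover the pointwise inequality \eqref{eq:pointwise-plan} from the integrated entropy convexity, the strategy is to apply $\ecdkn$ to every restricted geodesic $(\mu_t^A)$ induced by $\nu^A$ and then let $A$ concentrate on a fixed $\gamma$. The entropic convexity of $(\mu_t^A)$ becomes increasingly local in this limit, and a Lebesgue-type differentiation on $\geo(X)$ should recover \eqref{eq:pointwise-plan} $\nu$-almost surely. Here the essentially non-branching hypothesis is decisive, both to guarantee that $\nu^A$ yields a bona fide Wasserstein geodesic and to identify the density of $\mu_t^A$ with a quantity that collapses to $\rho_t(\gamma_t)$ in the small-$\nu(A)$ limit; carrying out this differentiation argument cleanly is the technical heart of the proof.
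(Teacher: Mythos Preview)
Your overall plan is on the right track---both conditions are indeed bridged through the pointwise density inequality along geodesics---but there are genuine gaps in both directions, and the paper's proof handles them differently.

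For $\cdskn \Rightarrow \ecdkn$, your differential argument (integrate $g_\gamma'' \geq K d_\gamma^2 + (g_\gamma')^2/N$ in $\nu$ and apply Cauchy--Schwarz) presupposes enough regularity of $t \mapsto \log\rho_t(\gamma_t)$ to make sense of $(g_\gamma')^2$ and to interchange $\partial_t$ with $\int\dd\nu$. This regularity is not available in general. The paper works entirely with the integrated form: taking logarithms in the pointwise inequality yields $-\tfrac{1}{N}\log\rho_t(\gamma_t) \geq G_t\big(-\tfrac{1}{N}\log\rho_0(\gamma_0),-\tfrac{1}{N}\log\rho_1(\gamma_1),\tfrac{K}{N}d_\gamma^2\big)$, where $G_t(x,y,\kappa) = \log\big[\sigma_\kappa^{(1-t)}(1)e^x + \sigma_\kappa^{(t)}(1)e^y\big]$ is shown (Lemma~\ref{lem:convexG}) to be jointly convex in $(x,y,\kappa)$; Jensen's inequality then gives $\ecdkn$ directly. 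Your Cauchy--Schwarz step is the infinitesimal shadow of this convexity, but the integrated formulation avoids all regularity issues.

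The reverse direction has a more serious gap. You propose to apply $\ecdkn$ to the restricted geodesic $(\mu_t^A)$ \emph{induced by} $\nu^A$; but $\ecdkn$ only asserts that \emph{some} geodesic between $\mu_0^A$ and $\mu_1^A$ satisfies the entropy convexity, not the particular one obtained by restricting $\nu$. (Your opening claims that the dynamic coupling is unique and that $\mu_t \ll m$ for all $t$ also do not follow from essentially non-branching alone.) The paper's fix is to partition $X\times X$ via a refining $\cap$-stable generator of the Borel $\sigma$-field, choose for each cell $L_i\times L_j$ a \emph{new} dynamic coupling $\pi^{i,j}$ furnished by $\ecdkn$, and glue these into $\pi^{(n)} = \sum \alpha_{i,j}\pi^{i,j}$. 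Essentially non-branching (Lemma~\ref{lem:non-branch}) ensures the intermediate marginals $\mu_t^{i,j}$ are mutually singular, so the entropy is additive across cells and one obtains the localized inequality. Passing $n\to\infty$ (using compactness, lower semicontinuity of the entropy, and the convexity of $G_t$ once more) produces a limit coupling $\tilde\pi$ along which the pointwise density bound holds $\tilde\pi$-a.e. The Lebesgue-differentiation intuition you describe is morally right, but the reconstruction of a single global coupling from the locally chosen ones---and the role of the convex function $G_t$ in passing to the limit---is precisely what your sketch is missing.
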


We say that a metric measure space satisfies the \textbf{Riemannian
  curvature-dimension condition} $\rcdkn$ if it is infinitesimally
Hilbertian and satisfies $\ecdkn$ or $\cdskn$. This notion turns out
to have the natural stability properties. Namely, we prove (see
Theorems~\ref{thm:rcd-stable}, \ref{thm:rcd-tensor},
\ref{thm:rcd-locglob}) that the $\rcdkn$ condition is preserved under
measured Gromov--Hausdorff convergence as well as under tensorization of metric measure
spaces and that it has a local--to--global property.

\bigskip

The geometric intuition coming from the analysis of $(K,N)$-convex
functions and their gradient flows leads to a new form of the {\bf
  Evolution Variation Inequality} $\evikn$ on the Wasserstein space
taking into account also the effect of the dimension bound.
Until now, the notion of $\evikn$ gradient flow was known only without
dimension term (i.e. with $N=\infty$). These Evolution Variational
Inequalities first appeared in the setting of Hilbert spaces where
they characterize uniquely the gradient flows of $K$-convex
functionals.
In a general metric setting and in connection with optimal transport
these inequalities have been extensively studied in \cite{OW06, DS08,
  AGS11b}. In particular, it turned out that $\rcd(K,\infty)$ spaces
can be characterized by the fact that the heat flow is an
$\evi_{K,\infty}$ gradient flow of the entropy.
Here we obtain a reinforcement of this result. Namely, the new
Riemannian curvature-dimension condition $\rcdkn$ is equivalent to the
existence of an $\evikn$ gradient flow of the entropy in the following
sense.

\begin{introtheorem}[Definition~\ref{def:KNflow}, Theorem~\ref{thm:RCDKN-equiv}]
  A mms $(X,d,m)$ satisfies $\rcdkn$ if and only if $(X,d)$ is a
  length space, $m$ satisfies an integrability condition
  \eqref{eq:exp-int} and every $\mu_0\in\cP_2(X,d)$ is the starting
  point of a curve $(\mu_t)_{t\ge0}$ in $\cP_2(X,d)$ such that for any
  other $\nu\in\cP_2(X,d)$ and a.e. $t>0$:
\begin{align}
    \ddt\skn{\frac12 W_2(\mu_t,\nu)}^2 + K\cdot\skn{\frac12 W_2(\mu_t,\nu)}^2~\leq~\frac{N}{2}\left(1-\frac{U_{N}(\nu)}{U_{N}(\mu_t)}\right).
  \end{align}
  Here $U_N(\mu)=\exp\Big(-\frac1N \ent(\mu)\Big)$ and
  $\fs_\kappa(r)=\sqrt{1/\kappa}\sin\big(\sqrt{\kappa}r\big)$ provided
  $\kappa>0$ and $\fs_{\kappa}(r)=\sqrt{1/(-\kappa)}\sinh
  \big(\sqrt{-\kappa}r\big),\ \fs_0(r)=r$ for $\kappa<0$
  resp. $\kappa=0$.
\end{introtheorem}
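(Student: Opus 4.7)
The plan is to establish the two implications separately, using the equivalence $\rcdkn \iff \ecdkn$ plus infinitesimal Hilbertianity set up earlier in the paper, and modeling the argument on the $\evi_{K,\infty}$ theory of Ambrosio--Gigli--Savar\'e with dimensional corrections governed by the modified distance $\skn{W_2/2}$. Recall that $\ecdkn$ is equivalent to the concavity inequality $U_N'' \le -\frac{K}{N} U_N$ along each Wasserstein geodesic (in the distribution sense), which is the form that enters the EVI directly.

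For the forward direction, the candidate curve is the heat flow $\mu_t := H_t \mu_0$, which under $\rcdkn$ coincides with the Wasserstein gradient flow of $\ent$ by the AGS identification theorem (available thanks to infinitesimal Hilbertianity). Fix a competitor $\nu \in \cP_2(X,d)$, pick a Wasserstein geodesic $(\gamma_s)_{s \in [0,1]}$ from $\mu_t$ to $\nu$, and combine the concavity of $U_N \circ \gamma$ coming from $\ecdkn$ with a first-variation estimate for $\ddt W_2^2(\mu_t,\nu)$ coming from the gradient flow structure. The chain rule
\begin{align*}
\ddt \skn{\tfrac12 W_2(\mu_t,\nu)}^2 \;=\; \ckn{\tfrac12 W_2(\mu_t,\nu)} \skn{\tfrac12 W_2(\mu_t,\nu)} \cdot \ddt W_2(\mu_t,\nu)
\end{align*}
then converts these ingredients into the stated EVI, after eliminating $\ent$ in favor of $U_N$.

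For the backward direction, doubling the EVI (comparing two flows $\mu_t,\nu_t$ and symmetrizing at the diagonal) produces a contraction estimate in the modified distance $\skn{W_2(\mu_t,\nu_t)/2}$, which in particular yields uniqueness of the EVI flow and the semigroup property. Standard AGS-type arguments then identify this flow with the Wasserstein gradient flow of $\ent$, and its $W_2$-contraction forces it to be linear on $L^2$; hence the Cheeger energy is quadratic and the space is infinitesimally Hilbertian. To recover $\ecdkn$, fix a geodesic $(\gamma_s)_{s \in [0,1]}$ in $\cP_2(X,d)$ with $D := W_2(\gamma_0,\gamma_1)$, run the EVI flow from $\gamma_s$ for small $t>0$, and test it against $\gamma_0$ and $\gamma_1$ as $t \downarrow 0$; adding the two resulting inequalities and inserting $W_2(\gamma_s,\gamma_0) = sD$, $W_2(\gamma_s,\gamma_1) = (1-s)D$ produces a distributional second-order inequality for $s \mapsto U_N(\gamma_s)$ which is precisely the $(K,N)$-convexity characterization. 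The length space property and integrability \eqref{eq:exp-int} follow from well-posedness of the EVI flow starting from arbitrary $\mu_0 \in \cP_2(X,d)$.

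The delicate part is the forward implication: while $K$-convexity yielding $\evi_{K,\infty}$ is already nontrivial, the dimensional refinement requires simultaneously handling the nonlinear $\skn{\cdot}^2$ distance, the $U_N$-transformation of the entropy, and the conversion of the extra quadratic term $(\ent')^2/N$ appearing in $(K,N)$-convexity into the distortion coefficients $\sigkn{\cdot}{\cdot}$ on the right-hand side. A secondary subtlety is that Wasserstein geodesics may leave the domain of finite entropy, so approximation arguments and careful limit passages are required. In the backward direction, extracting infinitesimal Hilbertianity purely from EVI well-posedness relies on the Hilbert-like rigidity built into the EVI structure ($W_2$-contraction forcing linearity), transferred to the Wasserstein setting.
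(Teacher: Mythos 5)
Your proposal follows essentially the same route as the paper's proof of Theorem~\ref{thm:RCDKN-equiv}: forward direction via the identification of the heat flow with the Wasserstein gradient flow of $\ent$, the first-variation estimate for $\ddt W_2^2(\mu_t,\nu)$, and the tangent-line consequence of $(K,N)$-convexity of $U_N$ (Lemma~\ref{lem:convex-below-tangent}), together with the acknowledged truncation and limit passages; backward direction via the doubling argument (Theorem~\ref{thm:contraction}) and an infinitesimal second-order expansion along geodesics (Theorem~\ref{thm:eviconvex}). One concrete error in the backward step: $W_2$-contraction of the flow does \emph{not} force linearity on $L^2$ — Finsler manifolds of nonnegative Ricci curvature are a counterexample, having $W_2$-contractive but nonlinear heat flows. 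What does yield infinitesimal Hilbertianity is the full $\evi_K$ structure: $\evi_K$ gradient flows are automatically additive (convex combinations of starting points flow to the corresponding convex combinations), and this additivity of $\cH_t$ is what forces the Cheeger energy to be quadratic. The paper correctly invokes this via Lemma~\ref{lem:consistentKN} to downgrade $\evikn$ to $\evi_K$ and then cites \cite[Thm.~5.1]{AGS11b}/\cite[Thm.~6.1]{AGMR12}, which rest on additivity rather than contraction. With that substitution your outline is faithful to the paper's argument.
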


This curve is unique, in fact, it is the heat flow which we denote in
the following by $\mu_t=H_t\mu_0$.
\medskip

The Evolution Variation Inequality $\evikn$ as stated above
immediately implies new, sharp contraction estimates (or, more
precisely, expansion bounds) in Wasserstein metric for the heat flow.

\begin{introtheorem}[Theorem~\ref{thm:contraction}, Theorem~\ref{thm:W2-contraction} and Proposition 2.12]
  Let $(X,d,m)$ be a $\rcdkn$
  space. Then for any $\mu,\nu\in\cP_2(X,d)$ and $s,t>0$:
  \begin{align}\label{eq:intro-contraction}
    \skn{\frac12
      W_2(H_t\mu,H_s\nu)}^2~\leq~&e^{-K(s+t)}\cdot\skn{\frac12
      W_2(\mu,\nu)}^2
   + \frac{N}{K}\Big(1-e^{-K(s+t)}\Big)\frac{\big(\sqrt{t}-\sqrt{s}\big)^2}{2(s+t)}\;.
  \end{align}
The latter implies the slightly weaker bound
\begin{align*}
W_2( H_t\mu,H_s\nu )^2
&~\le~
\e^{-K \tau(s,t)} \cdot W_2(\mu,\nu)^2 + 2N
\frac{ 1 - \e^{-K \tau(s,t)}}{ K \tau(s,t) }
\big( \sqrt{t} - \sqrt{s} \big)^2 \; ,
\end{align*}
where $\tau (s,t) = 2( t + \sqrt{ts} + s )/3$.
In the particular case $t=s$ this reduces to the well-known estimate 
  $W_2( H_t\mu,H_t\nu )  \le
    \e^{-Kt}\cdot W_2(\mu,\nu)$.
\end{introtheorem}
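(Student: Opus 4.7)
The proof rests on the Evolution Variational Inequality $\evikn$ from the previous theorem, applied in both time variables and combined along a one-parameter coupling in the $(t,s)$-plane. This is the natural dimensional refinement of the classical symmetrization argument used in the $\rcd(K,\infty)$ setting.

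Set $\phi(t,s) := \skn{\tfrac12 W_2(H_t\mu, H_s\nu)}^2$. Applying $\evikn$ to the curve $t \mapsto H_t\mu$ with test measure $H_s\nu$ gives, for a.e.\ $t > 0$,
\begin{equation*}
  \partial_t \phi(t,s) + K\phi(t,s) \;\leq\; \tfrac{N}{2}\Big(1 - \tfrac{U_N(H_s\nu)}{U_N(H_t\mu)}\Big),
\end{equation*}
and symmetrically in $s$. Fixing the target pair $(t,s)$, I would restrict $\phi$ to the diagonal curve $r \mapsto (rt, rs)$, $r \in [0,1]$, and set $\psi(r) := \phi(rt, rs)$. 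Multiplying the two $\evikn$-inequalities by $t$ and $s$ respectively and adding them, the chain rule gives
\begin{equation*}
  \psi'(r) + K(s+t)\,\psi(r) \;\leq\; \tfrac{N}{2}\big[(s+t) - t\, u - s/u\big],
\end{equation*}
with $u := U_N(H_{rs}\nu)/U_N(H_{rt}\mu) > 0$. The elementary AM-GM bound $t u + s/u \geq 2\sqrt{ts}$ reduces the bracket to $(\sqrt{t}-\sqrt{s})^2$, so
\begin{equation*}
  \psi'(r) + K(s+t)\,\psi(r) \;\leq\; \tfrac{N}{2}(\sqrt{t}-\sqrt{s})^2.
\end{equation*}
Gr\"onwall's lemma applied from $r=0$ to $r=1$ then yields exactly the first claimed inequality \eqref{eq:intro-contraction}.

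For the second, weaker $W_2^2$-bound, I would convert $\skn{\cdot/2}^2$ into $(\cdot)^2/4$ using the explicit identity $\skn{r/2}^2 = \frac{N}{2K}\big(1 - \fc_{K/N}(r)\big)$ together with sharp two-sided comparisons between $\skn{r/2}^2$ and $r^2/4$; the quantity $\tau(s,t) = 2(t+\sqrt{ts}+s)/3$ emerges from this rescaling as the effective \emph{time} controlling the exponential contraction at the level of $W_2^2$. The special case $s=t$ is then immediate: the correction $(\sqrt{t}-\sqrt{s})^2$ vanishes, $\tau(t,t) = 2t$, and taking the square root of $W_2(H_t\mu, H_t\nu)^2 \leq \e^{-2Kt}W_2(\mu,\nu)^2$ recovers the sharp $W_2$-contraction $W_2(H_t\mu, H_t\nu) \leq \e^{-Kt} W_2(\mu,\nu)$.

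The main technical subtlety is the chain rule for $\psi$ along the diagonal: the $\evikn$-inequality is only valid for a.e.\ $t$ at each fixed $s$ (and symmetrically), while we must differentiate $\psi(r) = \phi(rt,rs)$ in the single parameter $r$. This is resolved by the absolute continuity of $\phi$ in each variable---a consequence of the $\evikn$-bound itself together with the $W_2$-Lipschitz regularity of the heat flow on $\rcdkn$-spaces---which legitimates the identification of $\psi'(r)$ with the expected combination of partial derivatives for a.e.\ $r$. One also needs $U_N(H_{rt}\mu), U_N(H_{rs}\nu) > 0$ for $r > 0$ to apply AM-GM, which follows from the instantaneous entropy regularization of the heat flow on $\rcdkn$-spaces.
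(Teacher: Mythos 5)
Your proof of the main space-time expansion bound \eqref{eq:intro-contraction} is correct and in essence the same as the paper's proof of Theorem~\ref{thm:contraction}: both parametrize along the ray through the origin in the $(t,s)$-plane terminating at $(t,s)$, combine the $\evikn$ inequality symmetrically in the two time variables, eliminate the unknown ratio $U_N(\cdot)/U_N(\cdot)$ by an algebraic convexity inequality (your weighted AM--GM is Young's inequality $2\sqrt{ab}\le\lambda a + \lambda^{-1}b$ with $\lambda=\sqrt{t/s}$, exactly as the paper uses it), and close with Gronwall. The only stylistic difference is that the paper works with the integrated EVI \eqref{eq:evi-int} and one-sided difference quotients, which sidesteps the chain-rule-along-the-ray subtlety you flag at the end by giving a bound on the Dini derivative directly, whereas you differentiate pointwise and then have to argue that the exceptional null sets don't accumulate on the ray; that argument can be completed, but the integrated form is cleaner.

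The treatment of the simplified bound involving $\tau(s,t)=2(t+\sqrt{ts}+s)/3$ (Proposition~\ref{prop:simp-control}) is a genuine gap, not merely a sketch. The claim that $\tau(s,t)$ ``emerges'' from two-sided comparisons between $\skn{r/2}^2$ and $r^2/4$ does not work: the global comparison is one-sided and its direction flips with $\sgn K$ (one has $\skn{r/2}^2\le r^2/4$ for $K>0$ and the reverse for $K<0$), and a naive replacement of $\skn{\cdot}^2$ by $(\cdot)^2/4$ on both sides of \eqref{eq:intro-contraction} does not produce the exponent $\tau(s,t)$ in any case. The paper's actual argument is independent of \eqref{eq:intro-contraction} at the level of global distances: it introduces the reparametrization $\Phi(r)=\big(\sqrt{s'}+(\sqrt{t'}-\sqrt{s'})r\big)^2$, discretizes a geodesic joining the two flows into $k$ segments, uses convexity of $z\mapsto z^2$ to dominate $d(P_{t'}\gamma_1,P_{s'}\gamma_0)^2$ by the telescoping sum $k\sum_j d(\cdot,\cdot)^2$, and only on each \emph{small} segment invokes the asymptotic identity $d^2=4\skn{d/2}^2+O(d^4)$ (which is exact to leading order as $d\to0$) together with \eqref{eq:intro-contraction}. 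Letting $k\to\infty$ produces Riemann integrals, among them $\int_0^1\Phi(r)\,\dd r = (t'+\sqrt{t's'}+s')/3 = \tau(s',t')/2$, after which one derives a fresh differential inequality for $d(x_{\lambda\tau},y_{\lambda^{-1}\tau})^2$ and runs Gronwall again. You would need to reproduce this telescoping construction; a pointwise comparison of $\skn{\cdot}^2$ with the square will not give the stated form.
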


\bigskip

Due to the work of Kuwada \cite{Kuw10}, it is well known that
$W_2$-expansion bounds are intimately related to pointwise gradient
estimates. The next result is a particular case of a more general
equivalence that will be the subject of a forthcoming publication
\cite{Kuw13}.

\begin{introtheorem}[Theorem~\ref{thm:grad-est}]
  Assume that the mms $(X,d,m)$ is infinitesimally Hilbertian and
  satisfies a regularity assumption (Assumption~\ref{ass:Ch-reg}).  If
  the $W_2$-expansion bound \eqref{eq:intro-contraction} holds then
  for any $f$ of finite Cheeger energy:
  \begin{align}\label{eq:intro-grad-est}
    \wug{\bH_t f}^2 + \frac{4Kt^2}{N\big(e^{2Kt}-1\big)}\abs{\Delta \bH_t f}^2~\leq~e^{-2Kt}\bH_t\big(\wug{f}^2\big) \quad \mbox{$m$-a.e.}
  \end{align}
\end{introtheorem}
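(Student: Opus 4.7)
The argument is a Kuwada-type duality that extracts the enhanced gradient estimate from the $W_2$-expansion bound \eqref{eq:intro-contraction} via a two-parameter optimization balancing a spatial perturbation against a temporal one of the heat flow.

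The starting point is the Kantorovich--Hopf--Lax duality for $W_2^2$: for any Lipschitz $\phi$ and any $\mu,\nu\in\cP_2$,
\[
\int Q_1\phi\,d\nu - \int \phi\,d\mu \;\le\; \tfrac12 W_2^2(\mu,\nu),
\]
where $Q_1\phi(y) = \inf_x\{\phi(x) + \tfrac12 d(x,y)^2\}$ is the time-one Hopf--Lax evolute. Under Assumption~\ref{ass:Ch-reg}, the dual heat flow $\bH_t^*$ on probability measures is well defined and satisfies the adjoint identity $\int g\,d(\bH_r^*(\rho\,m)) = \int \bH_r g\cdot\rho\,dm$. The plan is to apply the duality with $\mu = \bH_t^*(\rho_x\,m)$ and $\nu = \bH_{t+\epsilon}^*(\rho_y\,m)$, where $\rho_x,\rho_y$ approximate Dirac masses at two points $x,y$ with $d(x,y)=r$ small, and to bound the right-hand side by the expansion bound \eqref{eq:intro-contraction}. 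The novel feature is to allow $s=t+\epsilon\ne t$: the resulting temporal-mismatch term $\frac{N(1-e^{-K(s+t)})}{K(s+t)}(\sqrt{t}-\sqrt{s})^2$ will, after optimization, produce precisely the $|\Delta \bH_t f|^2$ correction on the left.

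The test function is chosen as $\phi=\alpha f$ for a small parameter $\alpha>0$. The Taylor expansion of the Hopf--Lax semigroup yields $Q_1(\alpha f) = \alpha f - \tfrac{\alpha^2}{2}\wug{f}^2 + o(\alpha^2)$, and the infinitesimal generator gives $\bH_{t+\epsilon}f = \bH_t f + \epsilon\,\Delta \bH_t f + o(\epsilon)$. With the scalings $\alpha = cr$ and $\epsilon = \beta r$, division by $r^2$, and limit $r\to 0$ (choosing the spatial perturbation along a direction maximizing the slope of $\bH_t f$ at $x$), the duality inequality reduces to the sharp quadratic constraint
\begin{align*}
c\,\wug{\bH_t f}(x) + c\beta\,\Delta \bH_t f(x) - \tfrac{c^2}{2}\bH_t(\wug{f}^2)(x) \;\le\; \tfrac{e^{-2Kt}}{2} + \tfrac{N(1-e^{-2Kt})\beta^2}{8Kt^2},
\end{align*}
which must hold for all $c,\beta\in\R$. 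Maximizing the right side of this inequality is the crux: optimizing first over $\beta$ contributes the term $\frac{4Kt^2}{N(1-e^{-2Kt})}c^2(\Delta \bH_t f)^2$ to the left; optimizing then over $c$ and rearranging yields \eqref{eq:intro-grad-est}, the conversion $\frac{e^{-2Kt}}{1-e^{-2Kt}} = \frac{1}{e^{2Kt}-1}$ producing the stated constant $\frac{4Kt^2}{N(e^{2Kt}-1)}$.

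The main technical obstacle lies in justifying the ``pointwise direction'' step in the metric measure setting, where the slope of $\bH_t f$ and the limit $\rho_y\to\delta_y$ along a specific direction have no direct pointwise meaning. This is circumvented by working with the minimal weak upper gradient $\wug{\bH_t f}$ through its dual characterization (made available by Assumption~\ref{ass:Ch-reg}), testing against a density $\rho$ and upgrading the resulting $L^2$ inequality to the $m$-a.e. statement by varying $\rho$. The rigorous Hopf--Lax asymptotics rely on the subsolution framework for the Hamilton--Jacobi equation developed in the Ambrosio--Gigli--Savar\'e calculus, and the extension from bounded Lipschitz $f$ to general $f\in\cD(\ch)$ follows by a standard approximation argument.
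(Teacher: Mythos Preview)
Your approach via Kantorovich--Hopf--Lax duality with the two-parameter $(c,\beta)$ optimization is sound and leads to the correct constant after the indicated maximizations, but the paper's proof proceeds by a genuinely different technical route. Rather than testing with $Q_1(\alpha f)$, the paper uses the direct coupling bound
\[
\tilde{\bH}_s f(x)-\tilde{\bH}_t f(y)\;\le\;\int|f(z)-f(w)|\,d\pi_{s,t}(z,w)
\]
for an optimal coupling $\pi_{s,t}$ of $\cH_s\delta_x$ and $\cH_t\delta_y$, controls the right-hand side by a localized slope $G_r f(z)=\sup_{0<d(z,z')<r}|f(z)-f(z')|/d(z,z')$ times $W_2$ (plus a tail term), and applies the expansion bound with Dirac initial data. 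Only a \emph{single} temporal-to-spatial ratio parameter $\alpha$ (your $\beta$) appears, the final optimization being recast as a supremum over unit vectors $v_\alpha\in\R^2$; your extra parameter $c$ is effectively absorbed by the freedom to rescale $f$. The paper's route bypasses the need to justify the second-order Hopf--Lax expansion in the metric setting, at the cost of the localized-slope bookkeeping and a careful argument that the differentiability set $I\subset(0,\infty)$ in $t$ is all of $(0,\infty)$; conversely, your route stays within the standard optimal-transport toolkit but places more analytic burden on the expansion $Q_1(\alpha f)=\alpha f-\tfrac{\alpha^2}{2}|\nabla f|^2+o(\alpha^2)$ and on the ``direction'' step you flag. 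One small correction: in that expansion the object appearing is the pointwise slope $|\nabla f|$, not $\wug{f}$; the two coincide $m$-a.e.\ for Lipschitz $f$ under $\rcd(K,\infty)$ (already available here via Remark~\ref{rem:regularity}), which is what legitimizes the passage to $\bH_t(\wug{f}^2)$ in the final inequality.
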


Note that Assumption~\ref{ass:Ch-reg} is the same as what is assumed
in \cite{AGS12} and it is always satisfied if $(X,d,m)$ is
$\rcd(K',\infty)$ for any $K'\in \R$. Hence, Theorem 3 and Theorem 4
imply in particular that \eqref{eq:intro-grad-est} holds on a $\rcdkn$
space. Here $\wug{f}$ denotes the weak upper gradient of $f$
introduced in \cite{AGS11a}. This kind of gradient estimate has first
been established by Bakry and Ledoux \cite{BL06} in the setting of
$\Gamma$-calculus. It is new in the framework of metric measure spaces
and allows us to establish the Bochner formula for the canonical
gradients and Laplacians on mms.

\begin{introtheorem}[Theorem~\ref{thm:Bochner}]
  Assume that the mms $(X,d,m)$ is infinitesimally Hilbertian and
  satisfies the gradient estimate \eqref{eq:intro-grad-est}. Then for
  all $f\in D(\Delta)$ with $\Delta f\in W^{1,2}(X,d,m)$ and all $g\in
  D(\Delta)$ bounded and non-negative with $\Delta g\in L^\infty(X,m)$
  we have
  \begin{align}\label{eq:intro-rough-bochner}
    \frac12\int\Delta g |\nabla f|_w^2 \dd m - \int
    g\langle\nabla(\Delta f),\nabla f\rangle \dd m \geq K\int g
    |\nabla f|_w^2\dd m + \frac{1}{N}\int g\big(\Delta f\big)^2\dd
    m\;.
  \end{align}
\end{introtheorem}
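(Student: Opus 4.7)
The plan is to integrate the gradient estimate \eqref{eq:intro-grad-est} against $g$, subtract the trivial equality at $t=0$, divide by $t$, and let $t\downarrow 0$. The dimensional term emerges from the Taylor expansion
\[
\frac{4Kt^2}{N(e^{2Kt}-1)}=\frac{2t}{N}-\frac{2Kt^2}{N}+O(t^3),
\]
so that after dividing by $t$ the prefactor converges to $2/N$, and combined with $(\Delta\bH_t f)^2\to(\Delta f)^2$ this produces the $\tfrac1N(\Delta f)^2$ contribution to Bochner.

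Concretely, I would set
\[
\Phi(t):=\int g\Bigl[e^{-2Kt}\bH_t(\wug{f}^2)-\wug{\bH_t f}^2-\tfrac{4Kt^2}{N(e^{2Kt}-1)}(\Delta\bH_t f)^2\Bigr]\dd m.
\]
Since $g\ge 0$ and \eqref{eq:intro-grad-est} holds $m$-a.e., one has $\Phi(t)\ge 0=\Phi(0)$, so $\liminf_{t\downarrow 0}\Phi(t)/t\ge 0$. I would evaluate this liminf term by term. For the first summand, self-adjointness of $\bH_t$ rewrites $\int g\,\bH_t(\wug{f}^2)\,\dd m=\int \bH_tg\cdot\wug{f}^2\,\dd m$; since $g,\Delta g\in L^\infty$, the quotient $(\bH_tg-g)/t$ converges to $\Delta g$ in $L^2$ with uniform $L^\infty$-bound, and paired with $\wug{f}^2\in L^1$ this yields the limit $\int(\Delta g-2Kg)\wug{f}^2\,\dd m$. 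For the third summand, $\Delta\bH_t f=\bH_t\Delta f\to\Delta f$ in $L^2$ implies $(\Delta\bH_t f)^2\to(\Delta f)^2$ in $L^1$, contributing $\tfrac{2}{N}\int g(\Delta f)^2\,\dd m$.

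The delicate step is the second summand, where I would need to establish
\[
\lim_{t\downarrow 0}\frac{1}{t}\int g\bigl(\wug{\bH_t f}^2-\wug{f}^2\bigr)\,\dd m = 2\int g\,\langle\nabla f,\nabla\Delta f\rangle\,\dd m.
\]
Here infinitesimal Hilbertianity is essential: it provides the bilinear carré du champ $\langle\nabla\cdot,\nabla\cdot\rangle$ and hence the polarization identity $\wug{\bH_t f}^2-\wug{f}^2=\langle\nabla(\bH_t f-f),\nabla(\bH_t f+f)\rangle$. Since $\Delta f\in W^{1,2}$, the curve $t\mapsto\bH_t f$ is strongly $W^{1,2}$-differentiable at $t=0$ with derivative $\Delta f$, so $(\bH_t f-f)/t\to\Delta f$ in $W^{1,2}$ while $\bH_t f+f\to 2f$ in $W^{1,2}$. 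Continuity of the carré du champ as a bilinear map $W^{1,2}\times W^{1,2}\to L^1(m)$ combined with $g\in L^\infty$ then produces the limit.

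Assembling the three contributions and dividing by $2$ yields, after rearrangement, exactly the claimed Bochner inequality. The main obstacle is this identification of the difference quotient for $\wug{\bH_t f}^2$, which genuinely uses the Hilbert structure; everything else reduces to dominated convergence and integration by parts against $g$.
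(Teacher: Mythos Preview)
Your proposal is correct and follows the same overall strategy as the paper: integrate the gradient estimate against $g$, subtract the value at $t=0$, divide by $t$, and pass to the limit. The difference lies in how the two arguments handle the term $\int g\,\wug{\bH_t f}^2\,\dd m$ and in the level of generality at which they work directly.

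The paper first restricts to $f\in D(\Delta)\cap L^\infty$ with $\Delta f\in D(\Delta)\cap L^\infty$ and to $g$ with $\Delta g\in D(\ch)$, and rewrites $\int g\,\wug{f}^2\,\dd m=-\int gf\,\Delta f\,\dd m+\tfrac12\int f^2\,\Delta g\,\dd m$ (this is the identity behind \eqref{eq:Bochner3}, essentially the argument of \cite{GKO10}); boundedness of $f$ is what makes $gf$ and $f^2$ admissible in these integrations by parts. The general case is then recovered by approximating $f$ with $h^\eps(f\wedge R)$ and $g$ with $h^{\eps'}g$, which requires a separate convergence discussion. Your route instead exploits the infinitesimally Hilbertian structure more directly: polarizing $\wug{\bH_t f}^2-\wug{f}^2=\langle\nabla(\bH_tf-f),\nabla(\bH_tf+f)\rangle$ and using that $t\mapsto\bH_t f$ is strongly $W^{1,2}$-differentiable at $0$ with derivative $\Delta f$ (valid precisely because $\Delta f\in W^{1,2}$, via $\bH_tf-f=\int_0^t\bH_s\Delta f\,\dd s$ and strong continuity of $\bH_s$ on $D(\cE)$). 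Combined with the $L^1$-continuity of the bilinear form $\langle\nabla\cdot,\nabla\cdot\rangle$ this gives the limit in one stroke, with no need for boundedness of $f$ or a subsequent approximation. Your argument is cleaner and stays at the natural level of generality throughout; the paper's version is more hands-on and closer to the original Alexandrov-space computation, at the cost of the extra mollification step.
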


\bigskip

\begin{introtheorem}[Proposition~\ref{prop:Bochner2BEW}, Theorem~\ref{thm:BEW2CDE}]
  Assume that the mms $(X,d,m)$ is infinitesimally Hilbertian and
  satisfies Assumption~\ref{ass:Ch-reg}.
  Then the Bochner inequality
  $\be(K,N)$ \eqref{eq:intro-rough-bochner} implies the entropic
  curvature-dimension condition $\ecdkn$.
\end{introtheorem}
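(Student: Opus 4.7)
\noindent\emph{Strategy.} The assertion combines Proposition~\ref{prop:Bochner2BEW} and Theorem~\ref{thm:BEW2CDE}, and accordingly I would split it along the intermediate pointwise dimensional gradient estimate \eqref{eq:intro-grad-est}:
\[
\be(K,N)\;\Longrightarrow\;\text{\eqref{eq:intro-grad-est}}\;\Longrightarrow\;\ecdkn.
\]
Both arrows are dimensional refinements of the $N=\infty$ scheme of Ambrosio--Gigli--Savar\'e, with the $1/N$ correction tracked throughout.

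\medskip

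\noindent\emph{Step 1: $\be(K,N)\Rightarrow$ \eqref{eq:intro-grad-est}.} This is a semigroup interpolation in the weak form allowed by \eqref{eq:intro-rough-bochner}. Fix $t>0$, $f\in W^{1,2}$, and a nonnegative bounded $g\in D(\Delta)$ with $\Delta g\in L^\infty$; set
\[
\Phi(s)\;:=\;e^{-2Ks}\int g\,\bH_s\!\bigl(\wug{\bH_{t-s}f}^2\bigr)\,\dd m,\qquad s\in[0,t].
\]
Moving $\bH_s$ onto $g$ by self-adjointness, differentiating in $s$, and applying \eqref{eq:intro-rough-bochner} to $u=\bH_{t-s}f$ with test function $\bH_s g$ (which has the required regularity by Assumption~\ref{ass:Ch-reg}), the $K$-terms cancel and one obtains
\[
\Phi'(s)\;\geq\;\tfrac{2}{N}e^{-2Ks}\!\int g\,\bH_s\!\bigl((\Delta\bH_{t-s}f)^2\bigr)\,\dd m\;\geq\;\tfrac{2}{N}e^{-2Ks}\!\int g\,(\Delta\bH_t f)^2\,\dd m,
\]
using Jensen's inequality for $\bH_s$ together with $\bH_s\Delta\bH_{t-s}f=\Delta\bH_t f$. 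Integrating over $s\in[0,t]$ and localizing in $g$ yields
\[
\wug{\bH_t f}^2+\tfrac{1-e^{-2Kt}}{NK}(\Delta\bH_t f)^2\;\leq\;e^{-2Kt}\bH_t(\wug{f}^2)\quad m\text{-a.e.},
\]
which implies \eqref{eq:intro-grad-est} via the elementary comparison $\sinh(Kt)\geq Kt$. The technical burden is justifying the chain rule and the pointwise application of \eqref{eq:intro-rough-bochner} along the interpolation, for which Assumption~\ref{ass:Ch-reg} is precisely tailored.

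\medskip

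\noindent\emph{Step 2, and the main obstacle: \eqref{eq:intro-grad-est}$\Rightarrow\ecdkn$.} A dimensional Kuwada-type duality upgrades the pointwise gradient estimate into an $\evikn$-inequality for the heat flow on the Wasserstein space. Concretely, inserting a Hopf--Lax evolved Kantorovich potential into a Benamou--Brenier representation of $\tfrac12 W_2^2(\bH_t\mu,\nu)$ and bounding the integrand termwise via \eqref{eq:intro-grad-est}, the Dirichlet-energy term produces the exponential $K$-contraction, while the new $(\Delta\bH_t f)^2$ term contributes a Fisher-information piece which, via the identity $\tfrac{\dd}{\dd t}U_N(\bH_t\mu)=-\tfrac{1}{N}U_N(\bH_t\mu)\tfrac{\dd}{\dd t}\ent(\bH_t\mu)$ together with a sharp Cauchy--Schwarz, collapses to the $U_N$-correction. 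One obtains, for every $\mu,\nu\in\cP_2(X,d)$,
\[
\ddt\skn{\tfrac12 W_2(\bH_t\mu,\nu)}^2+K\,\skn{\tfrac12 W_2(\bH_t\mu,\nu)}^2\;\leq\;\tfrac{N}{2}\Bigl(1-\tfrac{U_N(\nu)}{U_N(\bH_t\mu)}\Bigr).
\]
By Theorem~\ref{thm:RCDKN-equiv} the existence of this $\evikn$-flow from every starting point is equivalent to $\rcdkn$, which by definition contains $\ecdkn$. The hard part is exactly this dimensional upgrade of Kuwada's duality: matching the non-standard coefficient $4Kt^2/(N(e^{2Kt}-1))$ of \eqref{eq:intro-grad-est} with the $U_N$-difference on the right-hand side requires a tight second-order calibration that has no $N=\infty$ analogue, and it is here that the particular form of \eqref{eq:intro-grad-est} (rather than the slightly sharper coefficient $(1-e^{-2Kt})/(NK)$ produced by Step 1) is needed.
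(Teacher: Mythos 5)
Your Step~1 is essentially the paper's Proposition~\ref{prop:Bochner2BEW}: interpolate with $h(s)=e^{-2Ks}\int H_s g\,\wug{H_{t-s}f}^2\,\dd m$, apply $\be(K,N)$ to the derivative, pass the Laplacian outside with Jensen, integrate. This matches the paper and yields $\bl(K,N)$ with $C(t)=(1-e^{-2Kt})/(2Kt)$, which is the same condition (up to the $O(t)$ normalisation of $C$) as \eqref{eq:intro-grad-est}; the paper's Theorem~\ref{thm:BEW2CDE} works for \emph{any} such $C$, so your concluding remark that ``the particular form of \eqref{eq:intro-grad-est} rather than the sharper coefficient is needed'' is a misreading -- the two forms are interchangeable for this purpose.

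The genuine gap is in Step~2. You propose to go $\bl(K,N)\Rightarrow\evikn$ directly by ``Benamou--Brenier plus Hopf--Lax plus a sharp Cauchy--Schwarz,'' and then invoke Theorem~\ref{thm:RCDKN-equiv}(iii). This is \emph{not} what the paper does, and it is not clear that it can be made to work for general $K$. The mechanism that produces the $U_N$-correction is not a Cauchy--Schwarz inequality applied to a Fisher-information term; it is the \emph{time reparametrisation} $\tau_{s,t}$ defined via $\int_0^{\tau_{s,t}}\exp(\ent(\cH_r\rho_s)/N)\,\dd r = st$, which converts the linear heat flow of $\ent$ into the gradient flow of $-U_N$. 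This time change is the central new idea of Section~\ref{sec:bochner2cde} (Lemma~\ref{lem:welldef-tau}, Lemma~\ref{lem:calculus3}) and is absent from your sketch. It enters the action estimate (Proposition~\ref{prop:action-est}), which is then passed -- through regularisation by regular curves, truncation $E_\eps$ of the entropy, and careful integration by parts via Lemma~\ref{lem:alphabeta-est} -- to Proposition~\ref{prop:green}, yielding a Green-function form of $(K,N)$-convexity of $\ent$ along \emph{geodesics}. Finally Lemma~\ref{lem:knconvexint2} converts the Green-function estimate into strong $\ecdkn$. The $\evikn$ property is then an \emph{output} of the theory (via the equivalence Theorem~\ref{thm:RCDKN-equiv}), not an intermediate step. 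Indeed, the paper explicitly remarks that the direct derivation of $\evikn$ from the action estimate only goes through cleanly in the special case $K=0$; for general $K$ the factor $e^{-2K\tau}$ in the action estimate obstructs the direct limit to an $\evikn$ differential inequality, which is precisely why the paper detours through geodesic convexity. Your sketch identifies the right intuition (``a tight second-order calibration with no $N=\infty$ analogue'') but does not name the calibrating object, which is the $\tau$-time change, and it omits the Green-function characterisation that makes the general-$K$ argument close. As written, Step~2 does not amount to a proof.
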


Thus we have closed the circle. All the previous key properties are
equivalent to each other, at least if we require the heat flow to be
linear.

\begin{introtheorem}[Summary]
  Let $(X,d,m)$ be an infinitesimally Hilbertian metric measure space.
  Then the following properties are equivalent:
\begin{itemize}
\item[(i)] $\cdskn$,
\item[(ii)] $\ecdkn$,
\item[(iii)]$(X,d)$ is a length space, \eqref{eq:exp-int} and the
  existence of the $\evikn$ gradient flow of the entropy starting from
  every $\mu\in\cP_2(X,d)$.
\end{itemize}
If one of them is satisfied, we obtain the following:
\begin{itemize}
\item[(iv)] The $W_2$-expansion bound \eqref{eq:intro-contraction},
\item[(v)] The Bakry--Ledoux pointwise gradient estimate $\bl(K,N)$
  \eqref{eq:intro-grad-est},
\item[(vi)] The Bochner inequality $\be(K,N)$ \eqref{eq:intro-rough-bochner}.
\end{itemize}
Moreover, under Assumption~\ref{ass:Ch-reg},
all of properties {\rm (i)--(vi)} are equivalent.
\end{introtheorem}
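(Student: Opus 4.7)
The plan is to prove the summary by chaining the five preceding introductory theorems into a single circle of implications. Schematically I want to establish
\[
\mathrm{(i)} \Longleftrightarrow \mathrm{(ii)} \Longleftrightarrow \mathrm{(iii)} \Longrightarrow \mathrm{(iv)} \Longrightarrow \mathrm{(v)} \Longrightarrow \mathrm{(vi)} \Longrightarrow \mathrm{(ii)},
\]
with Assumption~\ref{ass:Ch-reg} invoked only on the return arcs (iv)$\Rightarrow$(v) and (vi)$\Rightarrow$(ii), so that the unconditional implications among (i)--(iii) and from (iii) to (iv) account precisely for the statement made without the regularity hypothesis.

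First I would handle the core equivalence (i) $\Leftrightarrow$ (ii). Introtheorem 1 (Theorem~\ref{thm:equiv_as}) delivers this equivalence under the essentially non-branching hypothesis, so the task reduces to showing that an infinitesimally Hilbertian mms satisfying either $\cdskn$ or $\ecdkn$ is automatically essentially non-branching. I would exploit the linearity of the heat flow together with the reduced curvature-dimension bound to derive uniqueness of optimal transport plans between $L^\infty$-absolutely continuous measures along the lines of Rajala--Sturm; this forces non-branching on a Wasserstein-negligible set of geodesics, which is exactly the content of the definition. The equivalence (ii) $\Leftrightarrow$ (iii) is then a direct appeal to Introtheorem 2 (Theorem~\ref{thm:RCDKN-equiv}).

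Next, the forward arc (iii) $\Rightarrow$ (iv) $\Rightarrow$ (v) $\Rightarrow$ (vi) is obtained by sequentially invoking Introtheorems 3, 4 and 5. Introtheorem 3 (Theorem~\ref{thm:W2-contraction}) extracts the $W_2$-expansion bound \eqref{eq:intro-contraction} directly from the $\evikn$ inequality. Introtheorem 4 (Theorem~\ref{thm:grad-est}) upgrades this to the Bakry--Ledoux pointwise gradient estimate \eqref{eq:intro-grad-est}; here Assumption~\ref{ass:Ch-reg} enters through the Kuwada duality between the dual Hopf--Lax semigroup and the continuity equation. Finally Introtheorem 5 (Theorem~\ref{thm:Bochner}) converts the pointwise gradient estimate into the integrated Bochner inequality \eqref{eq:intro-rough-bochner} by differentiating at $t=0$ and testing against the nonnegative $g$. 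No new ideas are needed in this stretch; everything is already packaged in the previous introtheorems.

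The only genuinely difficult closing step, and the main obstacle, is (vi) $\Rightarrow$ (ii), which is Introtheorem 6 (Theorem~\ref{thm:BEW2CDE}). This is the essential Eulerian-to-Lagrangian arrow: from the integrated Bochner formula one must reconstruct $(K,N)$-convexity of $\ent$ along Wasserstein geodesics. My plan is to mimic the Ambrosio--Gigli--Savar\'e strategy for the $N=\infty$ case while threading the dimensional term $\frac{1}{N}(\Delta f)^2$ through every estimate. Concretely, I would apply the Bochner inequality to Hopf--Lax regularizations of Kantorovich potentials tested against heat-regularized densities, derive a second-order differential inequality for $t\mapsto U_N(H_t\mu)$, and then transfer it to $(K,N)$-convexity of $\ent$ along a Wasserstein interpolation by sending the regularization parameter to zero. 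Assumption~\ref{ass:Ch-reg} is exactly what furnishes the regularity of heat flow and of Hopf--Lax subsolutions needed to justify these passages to the limit, which is why the equivalence of (iv)--(vi) with (i)--(iii) is only asserted under that hypothesis. Combining the three blocks closes the circle and delivers the stated summary.
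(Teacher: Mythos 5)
Your routing of the forward circle is essentially the paper's, but the bookkeeping of where Assumption~\ref{ass:Ch-reg} is needed contains a genuine gap. You state that the ``unconditional implications among (i)--(iii) and from (iii) to (iv) account precisely for the statement made without the regularity hypothesis.'' That is not what the Summary asserts: the first part of the statement (before ``Moreover'') claims that \emph{all three} of (iv), (v), (vi) follow from any one of (i)--(iii), with no hypothesis beyond infinitesimal Hilbertianity. Since you flag (iv)$\Rightarrow$(v) (Theorem~\ref{thm:grad-est}) as requiring Assumption~\ref{ass:Ch-reg}, your scheme as written only yields (iv) unconditionally and cannot reach (v) or (vi) without an extra input.

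The missing observation, made explicit in Remark~\ref{rem:regularity} and in the paragraph following Introtheorem~4, is that Assumption~\ref{ass:Ch-reg} is \emph{automatically} satisfied on any $\rcd(K',\infty)$ space, and $\rcdkn$ implies $\rcd(K,\infty)$ by Lemma~\ref{lem:consistent}. Hence, once (i)--(iii) hold, Assumption~\ref{ass:Ch-reg} comes for free, and Theorems~\ref{thm:grad-est} and~\ref{thm:Bochner} apply to give (v) and (vi) unconditionally. Only when one tries to travel the opposite way---from (iv), (v) or (vi) back to (i)--(iii)---must Assumption~\ref{ass:Ch-reg} be imposed as a separate hypothesis, which is precisely the role of the ``Moreover'' clause. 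You should insert this remark before invoking Theorem~\ref{thm:grad-est} in the forward direction. The rest of your plan (Theorem~\ref{thm:equiv_as} together with the essentially non-branching reduction for (i)$\Leftrightarrow$(ii), Theorem~\ref{thm:RCDKN-equiv} for (ii)$\Leftrightarrow$(iii), Theorem~\ref{thm:W2-contraction} for (iii)$\Rightarrow$(iv), and Proposition~\ref{prop:Bochner2BEW} with Theorem~\ref{thm:BEW2CDE} for (vi)$\Rightarrow$(ii)) follows the paper, though your description of the argument behind (vi)$\Rightarrow$(ii) as producing a second-order differential inequality for $t\mapsto U_N(H_t\mu)$ is imprecise: the action estimate of Proposition~\ref{prop:action-est} and Proposition~\ref{prop:green} produce a Green-function inequality for $U_N$ along regularized Wasserstein \emph{geodesics}, and the time-changed heat flow enters only as a technical device.
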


\bigskip

\begin{remark*}
Finally, let us point out -- on a more heuristic level -- two remarkable links between $(K,N)$-convexity and the Bakry-\'Emery condition $\be(K,N)$:
\begin{itemize}
\item[(I)] The $(K,N)$-convexity of a function $V$ on a Riemannian manifold $(M,g)$  can be interpreted as the $BE(K,N)$-condition for the re-scaled drift diffusion
\begin{equation}\label{SDE}
dX_t=\sqrt{2\alpha}\, dB_t-\nabla V(X_t)\,dt
\end{equation}
in the limit of vanishing diffusion.
\item[(II)]
The $\be(K,N)$-condition for the Brownian motion or heat flow on $M$ is equivalent to the $(K,N)$-convexity of the function $S=\ent(.)$ on the
Wasserstein space $\cP_2(M)$.
\end{itemize}
Both links are related to each other since the heat flow is the solution to the ODE (''without diffusion'')
\begin{equation*}\label{ODE}
d\mu_t=-\nabla S(\mu_t)\,dt
\end{equation*}
on $\cP_2(M)$ (regarded as infinite dimensional Riemannian manifold).
The link (II) is the main result of this paper.

To see (I), note that in the case $\alpha>0$, equilibration and regularization effects of the stochastic dynamic \eqref{SDE} can be formulated in terms of the Bakry-\'Emery estimate for the generator
$L=\alpha\Delta-\nabla V\cdot \nabla$
of the associated transition semigroup $\bH_tu(x)={\mathbb E}_x[u(X_t)]$.
The law of $X_t$ evolves according to the dual semigroup $(\bH_t^*)_{t>0}$ with generator $L^*u=\alpha\Delta u+\mathrm{div}(u\cdot\nabla V)$.
Assume that the manifold $M$ has dimension $\le n$ and Ricci curvature $\ge k$.
Then the time-changed operator $\tilde L:=\frac1\alpha L$ satisfies the Bakry-\'Emery condition $\be(\frac1\alpha K,\frac1\alpha N)$ provided
\begin{equation}\label{alpha}
\mathrm{Hess}\, V-\frac1{N-\alpha n}(\nabla V\otimes\nabla V)\ge K-\alpha k
\end{equation}
[Prop. 4.21].
In the Wasserstein picture, the $\be(\frac1\alpha K,\frac1\alpha N)$-condition for $\tilde L$ translates into the $(\frac1\alpha K,\frac1\alpha N)$-convexity of the functional $\tilde S(\mu)=\ent(\mu)+\frac1\alpha \int V\,d\mu$ [Thm. 7]. The latter in turn is equivalent to the $(K,N)$-convexity of $S(\mu)=\alpha\ent(\mu)+ \int V\,d\mu$
on $\cP_2(M)$ [Lemma 2.9].

Note that this also makes perfectly sense for $\alpha=0$  in which case  the associated gradient flow equation on the Wasserstein space $\cP_2(M)$ reads
\begin{equation*}
d\mu_t=-\nabla V\,dt.
\end{equation*}
Obviously, this  precisely describes the evolution on $M$ determined by the semigroup $(\bH_t^*)_{t>0}$ with generator $L^*u=\mathrm{div}(u\cdot\nabla V)$.
Equilibration and regularization for this evolution are characterized by  the parameters $K$ and $N$ in the bound \eqref{alpha}
for $\alpha=0$, i.e.
\begin{equation*}
\mathrm{Hess}\, V-\frac1{N}(\nabla V\otimes\nabla V)\ge K.
\end{equation*}
This is the $(K,N)$-convexity of $V$ on $M$.
\end{remark*}

\textbf{Organization of the article.}  First we illustrate the new
concept of $(K,N)$-convexity in a smooth and finite dimensional
setting. Since many of the arguments which relate geodesic convexity,
the Evolution Variational Inequality and space-time expansion bounds
for the gradient flow are of a purely metric nature we study
$(K,N)$-convexity, $\evi_{K,N}$ and its consequences in the general
setting of metric spaces in Section~\ref{sec:evi}. In
Section~\ref{sec:cds} we turn to the study of $(K,N)$-convexity of the
entropy on the Wasserstein space. The entropic curvature-dimension
condition is introduced in Section~\ref{sec:cdkn} and its basic
properties are established. In particular we prove equivalence with
the reduced curvature-dimension condition for essentially
non-branching spaces. In Section~\ref{sec:riem-cdkn} we prove that the
entropic curvature-dimension condition plus linearity of the heat flow
is equivalent to the existence of an $\evi_{K,N}$ gradient flow of the
entropy which leads to the Riemannian curvature-dimension
condition. Here we also prove the stability results for $\rcdkn$.
Finally, in Section~\ref{sec:cde-bochner} we prove the equivalence of
the entropic curvature-dimension condition, space-time Wasserstein
expansion bounds, pointwise gradient estimates and the Bochner
inequality for infinitesimally Hilbertian metric measure spaces. As
applications, new functional inequalities deduced from $\ecdkn$ are
studied in Section~\ref{sec:FI} and the sharp Lichnerowicz bound for
$\rcdkn$ spaces is established in Section~\ref{sec:Lichnerowicz}.

\section{$(K,N)$-convex functions and their EVI gradient flows}\label{sec:evi}
\subsection{Gradient flows and $(K,N)$-convexity in a smooth setting}
\label{sec:smooth}

In order to illustrate the concept of $(K,N)$-convexity of the entropy
and the consequences for its gradient flow, we consider in this
section a smooth and finite-dimensional setting.  \medskip

Let $M$ be a smooth connected and geodesically complete Riemannian
manifold with metric tensor $\ip{\cdot,\cdot}$ and Riemannian distance
$d$. Let $S:M\to\R$ be a smooth function. Given two real numbers
$K\in\R$ and $N>0$, we say that $S$ is $(K,N)$-convex, if
  \begin{align}\label{eq:knconvex1}
    \Hess S - \frac{1}{N}\big(\nabla S \otimes\nabla S \big)~\geq~K\;,
  \end{align}
in the sense that for all $x\in M$ and $v\in T_xM$ we have
\begin{align*}
    \Hess S(x)[v] - \frac{1}{N}\ip{\nabla S(x),v}_x^2~\geq~K\abs{v}_x^2\;.
\end{align*}
Obviously, this condition becomes weaker as $N$ increases and in the
limit $N\to\infty$ we recover the notion of $K$-convexity, i.e. $\Hess
S\geq K$. It turns out to be useful to introduce the function
$U_N:M\to\R_+$ given by
\begin{align*}
  U_N(x)~=~\exp\left(-\frac{1}{N}S(x)\right)\;.
\end{align*}
A direct calculation shows that \eqref{eq:knconvex1} can equivalently
be written as:
\begin{align}\label{eq:knconvex2}
  \Hess U_N ~\leq~ -\frac{K}{N}\cdot U_N\;.
\end{align}
This condition can be thought of as a ``concavity'' property of
$U_N$. As with concavity, it can be expressed in an integrated form. To
this end we introduce the following functions.
\begin{definition}\label{def:distortioncoefficients}
  For $\kappa\in\R$ and $\theta\geq0$ we define the functions
  \begin{align*}
    \fs_\kappa(\theta)~&=~
    \begin{cases}
      \frac{1}{\sqrt{\kappa}}\sin\left(\sqrt\kappa\theta\right)\;, & \kappa>0\;,\\
      \theta\;, & \kappa=0\;,\\
      \frac{1}{\sqrt{-\kappa}}\sinh\left(\sqrt{-\kappa}\theta\right)\;, & \kappa<0\;,
    \end{cases}\\
    \fc_\kappa(\theta)~&=~
    \begin{cases}
      \cos\left(\sqrt\kappa\theta\right)\;, & \kappa\geq0\;,\\
      \cosh\left(\sqrt{-\kappa}\theta\right)\;, & \kappa<0\;.
    \end{cases}
  \end{align*}
Moreover, for $t\in[0,1]$ we set
 \begin{align*}
   \sigma_\kappa^{(t)}(\theta)~=~
   \begin{cases}
     \frac{\fs_\kappa(t\theta)}{\fs_\kappa(\theta)}\;, & \kappa\theta^2\neq0 \text{ and } \kappa\theta^2<\pi^2\;,\\
     t\;, & \kappa\theta^2=0\;,\\
     +\infty\;, & \kappa\theta^2\geq \pi^2\;.
   \end{cases}
 \end{align*}
\end{definition}
\begin{lemma}\label{lem:knconvexint}
  The following statements are equivalent:
  \begin{itemize}
  \item[(i)] The function $S$ is $(K,N)$-convex.
  \item[(ii)] For each constant speed geodesic
    $(\gamma_t)_{t\in[0,1]}$ in $M$  and all $t\in[0,1]$ we have with $d:=d(\gamma_0,\gamma_1)$:
    \begin{align}\label{eq:knconvex3}
      U_N(\gamma_t)~\geq~\sigkn{1-t}{d}\cdot U_N(\gamma_0) + \sigkn{t}{d}\cdot U_N(\gamma_1)\;.
    \end{align}
  \item[(iii)] For each constant speed geodesic
    $(\gamma_t)_{t\in[0,1]}$ in $M$ we have that
    \begin{align}\label{eq:knconvex4}
       U_N(\gamma_1)~\leq~\ckn{d}\cdot U_N(\gamma_0) + \frac{\skn{d}}{d}\cdot\left.\ddt\right\vert_{t=0}U_N(\gamma_t)\;.
    \end{align}
  \end{itemize}
\end{lemma}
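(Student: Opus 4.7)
The plan is to reduce the whole statement to a one-dimensional ODE comparison argument along constant speed geodesics. First I would record the algebraic identity that rewrites (i) as a concavity-type inequality on $U_N$: using $\nabla U_N = -\frac{1}{N} U_N \nabla S$ and $\Hess U_N = -\frac{1}{N} U_N \Hess S + \frac{1}{N^2} U_N (\nabla S \otimes \nabla S)$, the condition (i) is pointwise equivalent to $\Hess U_N \le -\frac{K}{N} U_N$, as is already indicated right before the lemma.

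Next I would fix a constant speed geodesic $(\gamma_t)_{t \in [0,1]}$ with $d = d(\gamma_0,\gamma_1) = |\dot\gamma_t|$ and set $u(t) = U_N(\gamma_t)$. Since $\nabla_{\dot\gamma}\dot\gamma \equiv 0$, the chain rule gives $u''(t) = \Hess U_N(\gamma_t)[\dot\gamma_t,\dot\gamma_t]$, so the reformulated (i) along $\gamma$ becomes the scalar inequality $u''(t) + \kappa\, u(t) \le 0$ on $[0,1]$ with $\kappa := Kd^2/N$. This is equivalent to (i) on all of $M$, because every tangent vector is realized as the initial velocity of some constant speed geodesic.

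What remains is the standard equivalence, for a smooth positive function $u$ on $[0,1]$, of the differential inequality $u''+\kappa u \le 0$ with (ii) and with (iii). For (ii), when $\kappa < \pi^2$ I would introduce the unique solution $v$ of $v''+\kappa v = 0$ with $v(0)=u(0)$, $v(1)=u(1)$; using the rescaling identity $\fs_\kappa(s) = d^{-1}\fs_{K/N}(sd)$ one verifies $v(t) = \sigma_{K/N}^{(1-t)}(d)\, u(0) + \sigma_{K/N}^{(t)}(d)\, u(1)$. Then $w := u-v$ satisfies $w''+\kappa w \le 0$ with $w(0)=w(1)=0$, and the one-dimensional maximum principle for $-\partial_t^2 - \kappa$ on $[0,1]$ (whose first Dirichlet eigenvalue is $\pi^2$) forces $w \ge 0$, which is (ii). In the degenerate case $\kappa \ge \pi^2$ one has $\sigma^{(t)}_{K/N}(d) = +\infty$ by convention, so (ii) is automatic. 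The reverse implication follows by applying (ii) to short sub-geodesics $\gamma|_{[s-h, s+h]}$, expanding in $h$ and letting $h\to 0$. Finally, (ii) and (iii) are equivalent: differentiating (ii) at $t = 0$ and using $\sigma^{(0)}_{K/N}(d)=0$, $\sigma^{(1)}_{K/N}(d)=1$ together with $\frac{d}{dt}\big|_{t=0}\sigma^{(t)}_{K/N}(d) = \fs_{K/N}'(0)/\fs_{K/N}(d)\cdot d = d/\fs_{K/N}(d)$ recovers (iii) after dividing by the positive factor $\fs_{K/N}(d)/d$; conversely, applying (iii) to each sub-geodesic $\gamma|_{[0,s]}$ yields a first-order differential inequality which integrates against the fundamental solutions of $y''+\kappa y = 0$ to reproduce (ii).

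The main obstacle I expect is the bookkeeping in the comparison step: verifying that the explicit $v$ written above really does solve the boundary value problem, that the $\sigma$-coefficients appear with the correct arguments once the rescaling $\kappa = Kd^2/N$ is accounted for, and that the maximum principle is applied only in the non-degenerate regime $\kappa < \pi^2$. All the genuine analytic content sits in this one-dimensional comparison; everything else is the chain rule and routine calculus.
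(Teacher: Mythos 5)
Your proposal is correct and follows essentially the same strategy as the paper: reduce (i) via the chain rule to the scalar ODE inequality $u''(t)\le -(Kd^2/N)\,u(t)$ for $u(t)=U_N(\gamma_t)$ along constant speed geodesics, obtain (ii) by comparison with the explicit solution of $v''=-(Kd^2/N)v$ sharing the same boundary values, and obtain (iii) by differentiating (ii) at $t=0$. Your write-up of the comparison step is a little more explicit than the paper's one-line appeal to "a comparison principle", since you identify it as a maximum principle for $-\partial_t^2-\kappa$ and note it is valid because $\kappa<\pi^2$, i.e.\ below the first Dirichlet eigenvalue on $[0,1]$. The one place your route differs is in closing the equivalence: the paper proves (iii)$\Rightarrow$(i) directly by applying (iii) to the two rescaled geodesics $t\mapsto\gamma_{\pm\eps t}$, adding, dividing by $\eps^2$, and reading off $\Hess U_N(x)[v]\le -(K/N)|v|^2$ from the second difference quotient; you instead sketch (iii)$\Rightarrow$(ii), which you leave somewhat vague ("integrates against the fundamental solutions"). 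That step can be made precise by applying (iii) to the reversed sub-geodesic $\gamma|_{[s,0]}$ and to $\gamma|_{[s,1]}$, giving
\begin{equation*}
u(0)\le \fc_{K/N}(sd)\,u(s)-\frac{\fs_{K/N}(sd)}{d}\,u'(s),\qquad
u(1)\le \fc_{K/N}((1-s)d)\,u(s)+\frac{\fs_{K/N}((1-s)d)}{d}\,u'(s),
\end{equation*}
then multiplying these by $\fs_{K/N}((1-s)d)\ge 0$ and $\fs_{K/N}(sd)\ge 0$ respectively and adding; the $u'(s)$ terms cancel and the coefficient of $u(s)$ collapses to $\fs_{K/N}(d)$ by the angle-sum identity, which after dividing yields (ii). Both ways of closing the loop are valid; the paper's cyclic chain (i)$\Rightarrow$(ii)$\Rightarrow$(iii)$\Rightarrow$(i) uses three implications, whereas your hub-and-spoke organization needs four, but each piece is correct.
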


\begin{proof}
  (i)$\Rightarrow$(ii): Let $(\gamma_t)_{t\in[0,1]}$ be a constant
  speed geodesic. Then in particular
  $\abs{\dot{\gamma_t}}_{\gamma_t}=d$ and \eqref{eq:knconvex2}
  immediately yields that the function $u:t\mapsto U_N(\gamma_t)$
  satisfies
  \begin{align}\label{eq:geoconvex}
    u''(t)~\leq~-\frac{K}{N}d^2\cdot u(t)\;.
  \end{align}
  The function $v:[0,1]\to\R$ given by the right-hand side of
  \eqref{eq:knconvex3} has the same boundary values as
  $u$ and satisfies $v''=-(K/N) d^2\cdot v$. A comparison principle thus
  yields $u\geq v$.

  (ii)$\Rightarrow$(iii): This follows immediately by subtracting
  $U_N(\gamma_0)$ on both sides of \eqref{eq:knconvex3}, dividing by
  $t$ and letting $t\searrow0$.

  (iii)$\Rightarrow$(i): Let $\gamma:[-1,1]\to M$ be a constant speed
  geodesic with $\gamma_0=x$ and $\dot\gamma_0=v$,
  i.e. $d=d(\gamma_0,\gamma_1)=\abs{v}$. Using \eqref{eq:knconvex4}
  for the rescaled geodesics $\gamma':[0,1]\to M,\; t\mapsto\gamma_{\eps t}$ and
  $\gamma'':[0,1]\to M,\;t\mapsto\gamma_{-\eps t}$ and adding up we obtain
  \begin{align*}
    U_N(\gamma_{\eps}) + U_N(\gamma_{-\eps}) - 2\ckn{\eps d}\cdot U_N(\gamma_{0})~\leq~ 0\;.
  \end{align*}
  Dividing by $\eps^2$ and using the fact that $\ckn{\eps
    d}=1-\frac{K}{N}\eps^2d^2+o(\eps^2)$ finally yields
  \begin{align*}
    \Hess U_N(x)[v]~\leq~-\frac{K}{N}\abs{v}^2\;.
  \end{align*}
\end{proof}

\begin{remark}\label{rem:finite-diam}
  We note that the existence of a $(K,N)$-convex function $S:M\to\R$
  with $K>0$ poses strong constraints on the manifold $M$. In
  particular, it implies that the diameter of $M$ is bounded by
  $\sqrt{\frac{N}{K}}\pi$. This is immediate from the characterization
  \eqref{eq:knconvex3} and the singularity of the coefficient
  $\sigma_\kappa^{(t)}(\cdot)$ at $\pi/\sqrt{\kappa}$.
\end{remark}

\begin{lemma}\label{lem:gradflowevi}
  Assume that $S$ is $(K,N)$-convex and differentiable. A smooth curve
  $x:[0,\infty)\to M$ is a solution to the gradient flow equation
  \begin{align}\label{eq:smoothgradflow}
   \dot x_t~=~-\nabla S(x_t) \quad \forall t>0\;,
  \end{align}
  if and only if the following Evolution Variation Inequality
  ($EVI_{K,N}$) holds: for all $z\in M$ and all $t>0$:
 \begin{align}\label{eq:smoothevikn}
      \ddt\skn{\frac12 d(x_t,z)}^2 + K\cdot\skn{\frac12 d(x_t,z)}^2 ~\leq~\frac{N}{2}\left(1-\frac{U_N(z)}{U_N(x_t)}\right)\;.
  \end{align}
\end{lemma}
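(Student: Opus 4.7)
The plan is to use characterisation~(iii) of $(K,N)$-convexity from Lemma~\ref{lem:knconvexint} together with the first variation formula for the distance. For the forward direction, fix $z\in M$ and $t>0$ and let $\gamma\colon[0,1]\to M$ be a minimising constant-speed geodesic from $x_t$ to $z$, so that $|\dot\gamma_0|=d(x_t,z)=:d$. The first variation formula gives $\ddt\tfrac12 d(x_t,z)^2=-\ip{\dot x_t,\dot\gamma_0}$, which under the gradient flow equation \eqref{eq:smoothgradflow} equals $\ip{\nabla S(x_t),\dot\gamma_0}$. Since $\nabla U_N=-\tfrac{1}{N}U_N\nabla S$, the initial derivative of $U_N$ along $\gamma$ is $-\tfrac{U_N(x_t)}{N}\ip{\nabla S(x_t),\dot\gamma_0}$, so that \eqref{eq:knconvex4} rewrites as
\begin{align*}
\frac{U_N(z)}{U_N(x_t)}~\le~\ckn{d}-\frac{\skn{d}}{Nd}\cdot\ddt\frac{d(x_t,z)^2}{2}.
\end{align*}

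To convert this into the EVI form~\eqref{eq:smoothevikn} I will then invoke two elementary identities for the generalised trigonometric functions, valid for every $\kappa\in\R$: the half-angle formula $\fc_\kappa(d)=1-2\kappa\,\fs_\kappa(d/2)^2$, and the chain rule $\ddt\fs_\kappa\!\left(d(x_t,z)/2\right)^2=\tfrac{\fs_\kappa(d)}{2d}\cdot\ddt\tfrac{1}{2}d(x_t,z)^2$, which follows from $\tfrac{\dd}{\dd r}\fs_\kappa(r)^2=\fs_\kappa(2r)$. Multiplying the displayed inequality by $N/2$ and applying these with $\kappa=K/N$ yields precisely~\eqref{eq:smoothevikn}, because $\tfrac{N}{2}(1-\ckn{d})=K\,\skn{d/2}^2$.

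For the reverse direction, I would fix $t_0>0$ and an arbitrary $v\in T_{x_{t_0}}M$, and test~\eqref{eq:smoothevikn} with $z_\eps:=\exp_{x_{t_0}}(\eps v)$ for small $\eps>0$. A Taylor expansion in $\eps$ gives $\skn{d(x_{t_0},z_\eps)/2}^2=O(\eps^2)$, $\tfrac{N}{2}(1-U_N(z_\eps)/U_N(x_{t_0}))=\tfrac{\eps}{2}\ip{\nabla S(x_{t_0}),v}+O(\eps^2)$, and by the first variation $\ddt\skn{d(x_t,z_\eps)/2}^2\big|_{t=t_0}=-\tfrac{\eps}{2}\ip{\dot x_{t_0},v}+O(\eps^2)$. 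Dividing by $\eps$ and letting $\eps\to 0$ gives $-\ip{\dot x_{t_0},v}\le\ip{\nabla S(x_{t_0}),v}$; applying this to both $v$ and $-v$ forces $\dot x_{t_0}=-\nabla S(x_{t_0})$. The main technical point is keeping the modified $\sin$/$\cos$ identities straight and dealing with the non-smoothness of $d(\cdot,z)^2$ on the cut locus (handled by standard semi-concavity); the conceptual substance is that characterisation~(iii) of Lemma~\ref{lem:knconvexint} is precisely the right intermediate form to turn the dimension-weighted convexity into the EVI.
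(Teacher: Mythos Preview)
Your proof is correct and follows essentially the same route as the paper: both directions rely on Lemma~\ref{lem:knconvexint}(iii), the first variation formula, and the half-angle identity $\ckn{d}=1-\tfrac{2K}{N}\skn{d/2}^2$. The only cosmetic difference is that the paper first passes to the equivalent intermediate form $U_N(z)/U_N(x_t)\le\ckn{d}-\tfrac{\skn{d}}{Nd}\ddt\tfrac12 d(x_t,z)^2$ and does both the conversion to \eqref{eq:smoothevikn} and the reverse-direction Taylor expansion from there, whereas you carry out the expansion directly in the EVI variables; the content is identical.
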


\begin{proof}
  To prove the only if part, fix $t\geq0$, $z\in M$ and a constant
  speed geodesic $\gamma:[0,1]\to M$ connecting $x_t$ to $z$. Observe
  that by \eqref{eq:smoothgradflow} and the first variation formula we
  have
  \begin{align*}
    \left.\frac{\dd}{\dd s}\right\vert_{s=0}U_N(\gamma_s)~&=~-\frac{1}{N}U_N(x_t)\ip{\nabla S(x_t),\dot\gamma_0}~=~\frac{1}{N}U_N(x_t)\ip{\dot x_t,\dot\gamma_0}\\
              &=~-\frac{1}{N}U_N(x_t)\ddt \frac12 d(x_t,z)^2\;.
  \end{align*}
  Combining this with the $(K,N)$-convexity condition in the form
  \eqref{eq:knconvex4} we obtain with $d=d(x_t,z)$:
  \begin{align}
    \label{eq:smoothevialt}
    U_N(z)~\leq~\ckn{d} U_N(x_t) - \frac{\skn{d}}{N d}U_N(x_t)\ddt\frac12 d(x_t,z)^2\;.
  \end{align}
  Using the identity
  \begin{align}\label{eq:sc-trick}
    \frac{2}{N}\skn{\frac12\theta}^2~=~\frac{1}{K}\big(1-\ckn{\theta})\;,
  \end{align}
  it is immediate to see that the last inequality is equivalent to
  \eqref{eq:smoothevikn}.

  For the if part, fix $t\geq0$ and a constant speed geodesic
  $\gamma:[0,1]\to M$ with $\gamma_0=x_t$. Using the Evolution
  Variational inequality in the form \eqref{eq:smoothevialt} with
  $z=\gamma_\eps$ for some $\eps>0$ we obtain
  \begin{align*}
    U_N(\gamma_\eps)-\ckn{\eps\abs{v}}U_N(\gamma_0)~\leq~\frac{\skn{\eps\abs{v}}}{N\eps\abs{v}}U_N(\gamma_0)\ip{\dot x_t,\eps v}\;,
  \end{align*}
  where $v=\dot\gamma_0$. Dividing by $\eps$ and letting
  $\eps\searrow0$, taking into account that $\ckn{\eps d}=1+o(\eps)$
  and $\skn{\eps d}=\eps d +o(\eps^2)$, we obtain
  \begin{align*}
    \ip{-\nabla S(x_t),v}~\leq~\ip{\dot x_t,v}\;.
  \end{align*}
  Since the direction of $v\in T_{x_t}M$ was arbitrary we obtain
  \eqref{eq:smoothgradflow}.
\end{proof}

We conclude this section by exhibiting some 1-dimensional models of
$(K,N)$-convex functions.

\begin{example}\label{ex:1D-model}
  Each of the following are $(K,N)$-convex functions. Note that the
  domain of definition is maximal in each case.
  \begin{itemize}
  \item[(i)] For $N>0$ and $K>0$ let $S:(-\frac{\pi}{2}\sqrt{\frac{N}{K}},\frac{\pi}{2}\sqrt{\frac{N}{K}})\to\R$ defined by
    \begin{align*}
      S(x)~=~-N \log\cos\left(x\sqrt{K/N}\right)\;.
    \end{align*}
  \item[(ii)] For $N > 0$ and $K = 0$ let $S: (0,\infty) \to \R$ defined by
    \begin{align*}
        S(x) = - N \log x \;.
    \end{align*}
  \item[(iii)] For $N>0$ and $K < 0$ let $S:(0,\infty)\to\R$ defined by
    \begin{align*}
      S(x)~=~-N \log\sinh\left(x\sqrt{-K/N}\right)\;.
    \end{align*}
  \item[(iv)] For $N>0$ and $K<0$ let $S:(-\infty,\infty)\to\R$ defined by
    \begin{align*}
      S(x)~=~-N \log\cosh\left(x\sqrt{-K/N}\right)\,.
    \end{align*}
  \end{itemize}
\end{example}
The cases (i) and (iv) of the previous example canonically extend to multidimensional spaces.
 \begin{example}
 Let $(M,g)$ be a $n$-dimensional Riemannian manifold, $z\in M$ be any point and  $N>0$ be any real number. 
 \begin{itemize}
  \item[(i)] Then for each $K>0$ the function 
   \begin{align*}
      S(x)~=~-N \log\cos\left(d(x,z)\sqrt{K/N}\right)
    \end{align*}
  defined on the open ball $\big\{x\in M: \ d(x,z)<\frac{\pi}{2}\sqrt{N/K}\big\}$ is $(K,N)$-convex provided the sectional curvature of the underlying space  is $\le K/N$. (This in particular applies to the Euclidean space $\R^n$.)
 \item[(ii)]
  For each $K<0$ the function
   \begin{align*}
      S(x)~=~-N \log\cosh\left(d(x,z)\sqrt{-K/N}\right)
    \end{align*}
  defined on all of $M$ is $(K,N)$-convex provided the sectional curvature of the underlying space  is $\ge K/N$.
(This in particular applies to the Euclidean space $\R^n$.)
\end{itemize}
Indeed, analogous statements hold true on geodesic spaces with generalized bounds for the sectional curvature in the sense of Alexandrov \cite{BBI01}.
\end{example}

\subsection{$(K,N)$-convexity in metric spaces}

We proceed our study of $(K,N)$-convexity in a purely metric setting.
Let $(X,d)$ be a complete and separable metric space and let $S : X
\to [ - \infty , \infty ]$ be a functional on $X$.
We denote by $D(S):=\{x\in X~:~S(x) < \infty \}$ the proper domain of
$S$. Given a number $N\in(0,\infty)$ we define the functional
$U_N:X\to[0,\infty)$ by setting
\begin{align}\label{eq:def-U}
  U_N(x)~:=~\exp\left(-\frac{1}{N}S(x)\right)\;.
\end{align}

\begin{definition}\label{def:knconvex}
  Let $K\in\R$, $N\in(0,\infty)$. We say that the functional $S$ is
  $(K,N)$-convex if and only if for each pair $x_0,x_1\in D(S)$ there
  exists a constant speed geodesic $\gamma:[0,1]\to X$ connecting
  $x_0$ to $x_1$ such that for all $t\in[0,1]$:
  \begin{align}\label{eq:defknconvex}
     U_N(\gamma_t)~\geq~\sigkn{1-t}{d(\gamma_0,\gamma_1)}\cdot U_N(\gamma_0) + \sigkn{t}{d(\gamma_0,\gamma_1)}\cdot U_N(\gamma_1)\;.
  \end{align}
  If \eqref{eq:defknconvex} holds for every geodesic $\gamma:[0,1]\to
  D(S)$ we say that $S$ is \emph{strongly} $(K,N)$-convex.
\end{definition}

For investigating $(K,N)$-convexity (especially for the strong form),
the following equivalent conditions will be helpful in the sequel.

\begin{lemma}\label{lem:knconvexint2}
  Let $u : X \to [ 0, \infty )$ be a upper semi-continuous function
  and $\kappa \in \R$.
  Then the following statements are equivalent:
  \begin{itemize}
  \item[(i)]
    For each constant speed geodesic $\gamma : [ 0 , 1 ] \to X$ and $t \in [0 , 1]$,
    $\displaystyle
      u'' (\gamma_t)
      ~\leq~
      - \kappa d ( \gamma_0 , \gamma_1 )^2 u (\gamma_t)
    $
    in the distributional sense, i.e.
    \begin{align*}
      \int_0^1 \varphi'' (t) u (\gamma_t) \,\dd t
      ~\leq~
      - \kappa d ( \gamma_0 , \gamma_1 )^2 \int_0^1 \varphi (t) u (\gamma_t) \,\dd t
    \end{align*}
    for any $\varphi \in C_0^\infty ( ( 0 , 1 ) )$ with $\varphi \ge 0$.
  \item[(ii)] For each constant speed geodesic $\gamma$ on $X$ and $t \in [0,1]$,
    \begin{align}\label{eq:knconvex2-3}
       u (\gamma_t)~\geq~
       \sigma_{\kappa}^{(1-t)} ( d ( \gamma_0 , \gamma_1 ) ) \cdot u(\gamma_0)
         +
       \sigma_{\kappa}^{(t)} ( d ( \gamma_0 , \gamma_1 ) ) \cdot u(\gamma_1).
    \end{align}
  \item[(iii)] For each constant speed geodesic $\gamma$ on $X$,
    there is $\delta = \delta_\gamma > 0$ such that
    for all $0 \le s \le t \le 1$ with $t-s \le \delta$ and $\a \in [ 0 , 1 ]$,
    \begin{align}\label{eq:knconvex2-3'}
       u (\gamma_{(1-\a)s + \a t})~\geq~
       \sigma_{\kappa}^{(1-\a)} ( d ( \gamma_s , \gamma_t ) ) \cdot u(\gamma_s)
         +
       \sigma_{\kappa}^{(\a)} ( d ( \gamma_s , \gamma_t ) ) \cdot u(\gamma_t).
    \end{align}
  \item[(iv)] For each constant speed geodesic $\gamma$ on $X$ and $t \in [ 0, 1 ]$,
    \begin{equation*}
        u(\gamma_t)~\geq~(1-t)\cdot u(\gamma_0)+ t \cdot u(\gamma_1)
        + \kappa d ( \gamma_0 , \gamma_1 )^2 \int_0^1 g (t,r) u(\gamma_r) \dd r
    \end{equation*}
with $g (t,r)= \min\{(1-t)r,(1-r)t \}$ being the Green function on the interval $[0,1]$.
  \end{itemize}
In particular, when $ - \infty \notin S (X)$ and $S$ is lower semi-continuous,
$S$ is strongly $(K,N)$-convex if and only if
$u = U_N$ and $\kappa = K/N$ satisfies one of these conditions.
\end{lemma}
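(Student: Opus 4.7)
The strategy is to fix an arbitrary constant speed geodesic $\gamma: [0,1] \to X$ and reduce each of the four conditions to a statement about $f(t) := u(\gamma_t)$, a usc and bounded function on $[0,1]$, with $L := d(\gamma_0, \gamma_1)$. Crucially, the ``for each geodesic'' quantifier automatically includes every sub-geodesic $\gamma\vert_{[s_0, t_0]}$ (reparametrized affinely to $[0,1]$ and having length $L(t_0-s_0)$). I plan to prove the circle (i) $\Leftrightarrow$ (iv) $\Rightarrow$ (ii) $\Rightarrow$ (iii) $\Rightarrow$ (i).

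For the pivotal equivalence (i) $\Leftrightarrow$ (iv), I would introduce
$H(t) := f(t) - (1-t) f(0) - t f(1) - \kappa L^2 \int_0^1 g(t,r) f(r)\, \dd r$,
a usc function on $[0,1]$ with $H(0) = H(1) = 0$ (using that $f$ is bounded and $g$ vanishes at $t \in \{0,1\}$). Using the distributional identity $-\partial_t^2 g(\cdot, r) = \delta_r$ one checks $H'' = f'' + \kappa L^2 f$ in the sense of distributions on $(0,1)$. Thus (i) becomes $H'' \leq 0$, equivalently $H$ concave. On the other hand, (iv) says $H \geq 0$; applied to every sub-geodesic it becomes, via the identity (for $\tau \in [s_0, t_0]$)
$g(\tau, r) - \tfrac{t_0-\tau}{t_0-s_0} g(s_0, r) - \tfrac{\tau - s_0}{t_0-s_0} g(t_0, r) = G_{s_0, t_0}(\tau, r) \one_{[s_0, t_0]}(r)$,
where $G_{s_0, t_0}$ is the Dirichlet Green function on $[s_0, t_0]$ (this holds because for $r \notin [s_0, t_0]$ the function $\tau \mapsto g(\tau, r)$ is linear on $[s_0, t_0]$, while for $r \in [s_0, t_0]$ both sides satisfy $-\partial_\tau^2 = \delta_r$ with zero boundary values), precisely the concavity inequality $H(\tau) \geq \tfrac{t_0-\tau}{t_0-s_0} H(s_0) + \tfrac{\tau-s_0}{t_0-s_0} H(t_0)$. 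Hence (iv) along all sub-geodesics is equivalent to $H$ being concave, closing the equivalence with (i).

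Next I would obtain (iv) $\Rightarrow$ (ii) via a comparison principle. The reference function $h_\kappa(t) := \sigma_\kappa^{(1-t)}(L) f(0) + \sigma_\kappa^{(t)}(L) f(1)$ solves $h_\kappa'' + \kappa L^2 h_\kappa = 0$ with the boundary values of $f$. Rewriting (iv) as $(I - \kappa L^2 G) f \geq \ell$ (with $\ell$ the linear interpolant of the boundary values and $(G\phi)(t) := \int_0^1 g(t,r) \phi(r)\, \dd r$) and the analogous identity $(I - \kappa L^2 G) h_\kappa = \ell$ with equality, one deduces $f \geq h_\kappa$ provided $\kappa L^2 < \pi^2$, since in this range $(I - \kappa L^2 G)^{-1}$ is the Green operator of $-\partial_t^2 - \kappa L^2$ on $[0,1]$ with Dirichlet BC and has a strictly positive integral kernel. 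In the super-critical case $\kappa L^2 \geq \pi^2$, (ii) reads $u(\gamma_t) \geq +\infty \cdot (u(\gamma_0) + u(\gamma_1))$, which forces $f(0) = f(1) = 0$; I would derive this by applying the sub-critical (ii) to sub-geodesics of $\gamma$ whose length approaches $\pi/\sqrt\kappa$ from below, where the coefficients $\sigma_\kappa^{(t)}$ diverge. The implication (ii) $\Rightarrow$ (iii) is trivial (take $\delta_\gamma = 1$).

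Finally, for (iii) $\Rightarrow$ (i), I would instantiate (iii) with $\alpha = 1/2$ and $s = t_0 - h$, $t = t_0 + h$ for small $h$, and use the expansion $\sigma_\kappa^{(1/2)}(\theta) = 1/2 + \kappa \theta^2/16 + O(\theta^4)$ to obtain the midpoint inequality
$f(t_0-h) + f(t_0+h) - 2 f(t_0) \leq -(\kappa L^2 h^2/2 + O(h^4))\,[f(t_0-h) + f(t_0+h)]$.
Integrating against $\varphi \in C_c^\infty((0,1))$ with $\varphi \geq 0$, dividing by $h^2$, and passing to the limit $h \to 0$ by dominated convergence (using $f$ bounded on $[0,1]$ and $\varphi$ smooth with compact support) yields $\int_0^1 f \varphi'' \leq -\kappa L^2 \int_0^1 f \varphi$, which is precisely (i) along $\gamma$. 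The last assertion about strong $(K,N)$-convexity is then immediate from Definition~\ref{def:knconvex} applied with $u := U_N$ and $\kappa := K/N$. The main technical obstacle is the sharp threshold $\kappa L^2 = \pi^2$ in the comparison step: the operator $I - \kappa L^2 G$ ceases to be invertible and the $\sigma_\kappa^{(t)}$ coefficients blow up, so the super-critical case must be handled by the sub-geodesic limiting procedure described above.
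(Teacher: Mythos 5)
Your route genuinely differs from the paper's in two of the implications, and is mostly sound, but there are a couple of points that need repair.

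\textbf{What you do differently.} The paper's circle is (i)$\Rightarrow$(iv)$\Rightarrow$(i), (i)$\Rightarrow$(ii)$\Rightarrow$(iii)$\Rightarrow$(i), with (i)$\Rightarrow$(ii) done by mollifying $u\circ\gamma$ in the $t$-variable and invoking the smooth comparison of Lemma~\ref{lem:knconvexint}. Your circle (i)$\Leftrightarrow$(iv)$\Rightarrow$(ii)$\Rightarrow$(iii)$\Rightarrow$(i) replaces that mollification by an operator-theoretic comparison principle applied directly to (iv), and packages (i)$\Leftrightarrow$(iv) through the single auxiliary function $H(t)=f(t)-\ell(t)-\kappa L^2(Gf)(t)$. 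Both ideas are clean; the Green-function identity showing that (iv) along all sub-geodesics is literally concavity of $H$ is a nice reformulation, and your (iii)$\Rightarrow$(i) coincides with the paper's.

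\textbf{A concrete error in (iv)$\Rightarrow$(ii).} You assert that for $\kappa L^2<\pi^2$ the resolvent $(I-\kappa L^2 G)^{-1}$ ``is the Green operator of $-\partial_t^2-\kappa L^2$'' with positive kernel. That is not what it is: writing $\mu=\kappa L^2$ and $G_\mu$ for the Dirichlet Green operator of $-\partial_t^2-\mu$ on $[0,1]$, the correct identity is
\[
(I-\mu G)^{-1}~=~I+\mu\,G_\mu\;,
\]
so $(I-\mu G)^{-1}$ is the identity plus an integral operator with kernel $\mu\,G_\mu(s,t)$. For $\kappa\ge 0$ this kernel is nonnegative and your positivity argument goes through verbatim (it even gives $f-h_\kappa\ge H\ge 0$). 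For $\kappa<0$, however, the kernel $\mu\,G_\mu$ is \emph{negative}, and positivity of $(I-\mu G)^{-1}$ applied to $H\ge 0$ fails. The way to save the step is to use the full strength of what (iv)-along-sub-geodesics gives, namely that $H$ is \emph{concave} (not merely nonnegative): then $v:=f-h_\kappa$ satisfies $(-\partial_t^2-\mu)v=-H''\ge 0$ as a measure with Dirichlet boundary values, and $v=G_\mu(-H'')\ge 0$ follows from positivity of $G_\mu$ alone, uniformly in the sign of $\kappa$. With that correction, your (iv)$\Rightarrow$(ii) (including the supercritical degeneration as you describe, forcing $f(0)=f(1)=0$ by blowing up the coefficients on sub-geodesics whose length tends to $\pi/\sqrt\kappa$) is fine.

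\textbf{A shared subtlety in (i)$\Rightarrow$(iv).} Both your argument and the paper's pass through ``$H''\le 0$ distributionally and $H$ usc $\Rightarrow H$ concave''. In fact a usc function can coincide a.e.\ with a concave function and fail to be concave: take $X=[0,1]$, $\kappa=0$, $u=\one_{\{1/2\}}$. This $u$ is usc, (i) holds for every geodesic (the integral never sees the spike), yet (ii)--(iv) fail, e.g.\ along the geodesic from $0$ to $1/2$. So (i)$\Rightarrow$(iv) really needs \emph{continuity} of $u\circ\gamma$, which is not implied by (i) alone, but which \emph{is} implied by any of (ii), (iii), (iv) together with the upper semicontinuity of $u$ (the paper makes exactly this observation inside its (iv)$\Rightarrow$(i) step). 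Since in your circle the step (i)$\Rightarrow$(iv) is only ever reached from (iii)$\Rightarrow$(i), you should first record that (iii) forces $u\circ\gamma$ continuous (the argument the paper gives for (iv) adapts: letting $\alpha\to 0,1$ in \eqref{eq:knconvex2-3'} with fixed endpoint and shrinking interval gives the missing lower semicontinuity), and then run (i)$\Rightarrow$(iv) under that regularity. With continuity of $H$ the step ``$H''\le 0$ distributionally $\Rightarrow H$ concave'' is unproblematic.

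Everything else (the Green-function identity
$g(\tau,r)-\tfrac{t_0-\tau}{t_0-s_0}g(s_0,r)-\tfrac{\tau-s_0}{t_0-s_0}g(t_0,r)=G_{s_0,t_0}(\tau,r)\one_{[s_0,t_0]}(r)$,
the expansion $\sigma_\kappa^{(1/2)}(\theta)=\tfrac12+\tfrac{\kappa\theta^2}{16}+O(\theta^4)$, and the passage to the limit in (iii)$\Rightarrow$(i) using only boundedness of $u\circ\gamma$ and a change of variables against $\varphi$) checks out.
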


\begin{proof}
For simplicity of presentation,
we denote $\theta = \theta_\gamma = d ( \gamma_0 , \gamma_1 )$
in this proof whenever a fixed geodesic is under consideration.
we also denote the restriction of $\gamma$ on $[s,t]$
for $0 \le s < t \le 1$ by $\gamma^{[s,t]} : [ 0 ,1 ] \to X$,
that is, $\gamma^{[s,t]}_r := \gamma_{(1-r)s + rt}$.

(i) $\Rightarrow$ (iv):
Let us denote $u_* (s) := \int_0^1 g (s,r) u(\gamma_r) \,\dd r$.
Since we have
\begin{equation*}
  \int_0^1 \varphi'' (r) u_* (r) \,\dd r = - \int_0^1 \varphi (r) u(\gamma_r) \,\dd r
\end{equation*}
for any $\varphi \in C^\infty_0 ((s,t))$ with $\varphi \ge 0$,
(i) implies $( u ( \gamma_\cdot ) - \kappa \theta^2 u_* )'' \le 0$ on $[ 0 , 1 ]$
in the distributional sense.
Thus the distributional characterization of convex functions
(see \cite[Theorem~1.29]{S11}, for instance) yields that
$u ( \gamma_\cdot ) - \kappa \theta^2 u_*$ coincides
with a concave function a.e.~and hence concave
because $u$ is upper semi-continuous.
It immediately implies (iv) since $u_* (0) = u_* (1) = 1$.

(iv) $\Rightarrow$ (i):
Note first that $u ( \gamma_t )$ is continuous.
Indeed, the condition (iv) together with the upper semi-continuity of $u$
implies that
$u ( \gamma_t )$ is continuous at $t = 0 , 1$.
Thus the continuity follows
by applying the same for $\gamma^{[0,s]}$ and $\gamma^{[s,1]}$.
For $s \in ( 0 , 1 )$ and $h > 0$ with $s+h , s-h \in [ 0 , 1 ]$,
we apply (iv) to $\gamma^{[s-h,s+h]}$ and $t = 1/2$ to obtain
\begin{equation*}
\frac{u(\gamma_{s+h}) + u (\gamma_{s-h}) - 2 u ( \gamma_s )}{2}
\le
4 \kappa h^2 \theta^2
\int_0^1
  g \left( \frac12 , r \right)
  u ( \gamma_{ s + (2 r - 1 ) h } )
\,\dd r .
\end{equation*}
Then (i) follows by multiplying $\varphi \in C_0^\infty ( ( 0 , 1 ) )$,
integrating w.r.t.~$t$ (for sufficiently small $h$),
dividing by $h^2$ and $h \to 0$ with a change of variable.

(i) $\Rightarrow$ (ii):
Take $\eps > 0$ and $\varphi \in C_0^\infty ( ( 0 , 1 ) )$
with $\int_0^1 \varphi (x) \, \dd x = 1$,
and let
\begin{equation*}
\tilde{u}_\eps (t)
: =
\int_0^1
\eps^{-1} \phi ( \eps^{-1} ( t - r ) ) u ( \gamma_r )
\,\dd r\,.
\end{equation*}
Then (i) implies $\tilde{u}_\eps'' (t) \le -\kappa \theta^2 \tilde{u}_\eps (t)$
for each $t \in [ a_\eps , 1 ]$ for some $a_\eps > 0$.
Note that $a_\eps$ can be chosen so that $\lim_{\eps \to 0} a_\eps = 0$.
Thus, in the same way as in Lemma~2.2,
we obtain
\begin{equation*}
\tilde{u}_\eps ( (1-t) a_\eps + t )
\ge
\sigma_{\kappa}^{(1-t)} ( \theta ) \tilde{u_\eps} ( a_\eps )
+
\sigma_{\kappa}^{(t)} ( \theta ) \tilde{u_\eps} (1).
\end{equation*}
By virtue of the equivalence (i) $\Leftrightarrow$ (iv),
$u \circ \gamma$ is continuous and
hence $\tilde{u}_\eps \to u \circ \gamma$ as $\eps \to 0$
uniformly on $[ 0 , 1 ]$.
Thus the conclusion follows by letting $\eps \to 0$.

(ii) $\Rightarrow$ (iii):
It follows by considering (ii) for $\gamma^{[s,t]}$.

(iii) $\Rightarrow$ (i):
We imitate the proof of the implication (iv) $\Rightarrow$ (i)
by using the following:
\begin{equation*}
\lim_{h \to 0} \frac{1}{h^2}
\left( \frac12 - \sigma_\kappa^{(1/2)} (2h\theta) \right)
=
- \frac14 \kappa \theta^2 .
\end{equation*}
\end{proof}

We conclude this section with some remarks about $(K,N)$-convexity.
The first property is immediate from the definition.

\begin{lemma}\label{lem:scale-convex}
  If $S$
  is $(K,N)$-convex, then for $\lambda>0$
  the functional $\lambda\cdot S$ is $(\lambda K,\lambda N)$-convex.
\end{lemma}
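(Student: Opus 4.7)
The plan is to observe that the definition of $(K,N)$-convexity depends on $K$ and $N$ only through the ratio $K/N$ (which appears in the distortion coefficients $\sigma^{(t)}_{K/N}$) and through the exponential rescaling $U_N = \exp(-S/N)$. Both of these quantities are invariant under the simultaneous scaling $S \mapsto \lambda S$, $K \mapsto \lambda K$, $N \mapsto \lambda N$, so the inequality \eqref{eq:defknconvex} transforms into itself.

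More concretely, first I would compute the exponential for the scaled functional. Writing $\tilde U$ for the function associated to $\lambda S$ with parameter $\lambda N$, we have
\begin{equation*}
\tilde U(x) \;=\; \exp\!\left(-\frac{1}{\lambda N}\,\lambda S(x)\right) \;=\; \exp\!\left(-\frac{1}{N} S(x)\right) \;=\; U_N(x) .
\end{equation*}
Next I would observe that the distortion coefficients depend only on the ratio of the two parameters:
\begin{equation*}
\sigma^{(t)}_{(\lambda K)/(\lambda N)}(\theta) \;=\; \sigma^{(t)}_{K/N}(\theta) ,
\end{equation*}
by the very definition in Definition~2.3 (since $\lambda K/\lambda N = K/N$).

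Finally, given $x_0,x_1 \in D(\lambda S) = D(S)$, the $(K,N)$-convexity of $S$ supplies a constant speed geodesic $\gamma$ connecting them along which \eqref{eq:defknconvex} holds. Substituting the two identities above shows that exactly the same geodesic witnesses the inequality required for $(\lambda K, \lambda N)$-convexity of $\lambda S$. There is no real obstacle here; the only thing to watch is that the proper domain and the class of admissible geodesics are unchanged by multiplication by $\lambda > 0$, so the quantifier structure in Definition~2.6 is preserved verbatim.
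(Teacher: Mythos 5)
Your proof is correct and is precisely the observation the paper has in mind when it states (without proof) that this is ``immediate from the definition'': the function $U$ and the coefficients $\sigma^{(t)}_{K/N}$ are both invariant under the simultaneous rescaling, and $D(\lambda S)=D(S)$, so the defining inequality for $S$ is literally the defining inequality for $\lambda S$ along the same geodesic.
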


\begin{lemma}\label{lem:convex-sum}
  Let $S^1:X\to(-\infty,\infty]$ be a $(K_1,N_1)$-convex functional
  and $S^2:X\to(-\infty,\infty]$ a strongly $(K_2,N_2)$-convex
  functional.
  Then the functional $S:=S^1 + S^2$ is $(K_1+K_2,N_1+N_2)$-convex.
  In particular, $S$ is strongly $(K_1+K_2,N_1+N_2)$-convex if $S^1$
  is strongly $(K_1 , N_1 )$-convex.
\end{lemma}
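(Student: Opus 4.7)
The plan is to reduce the claim to a pointwise algebraic manipulation of the two convexity inequalities along a common geodesic. The key observation is that with $N := N_1 + N_2$, $K := K_1 + K_2$, $\alpha := N_1/N$, and $\beta := N_2/N$, one has $\alpha + \beta = 1$ together with the multiplicative identity
\[
U_N(x) \;=\; \exp\!\left(-\frac{S^1(x) + S^2(x)}{N_1+N_2}\right) \;=\; U_{N_1}(x)^\alpha \cdot U_{N_2}(x)^\beta
\]
and the convex-combination identity $\alpha(K_1/N_1) + \beta(K_2/N_2) = K/N$. These suggest that a weighted H\"older-type combination of the convexity inequalities for $U_{N_1}$ and $U_{N_2}$ should yield the convexity inequality for $U_N$.

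Concretely, I would fix $x_0, x_1 \in D(S) = D(S^1) \cap D(S^2)$, set $\theta := d(x_0,x_1)$, and use the $(K_1,N_1)$-convexity of $S^1$ to select a constant-speed geodesic $\gamma : [0,1] \to X$ from $x_0$ to $x_1$ satisfying the defining inequality for $U_{N_1}$. Since the right-hand side of that inequality is strictly positive on $(0,1)$, the selection forces $\gamma_t \in D(S^1)$ for every $t$; ensuring that $\gamma$ also lies in $D(S^2)$ (so that the strong $(K_2,N_2)$-convexity of $S^2$ applies along $\gamma$ and delivers the analogous inequality for $U_{N_2}$) is one of the delicate points that has to be argued. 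Raising the two inequalities to the powers $\alpha$ and $\beta$ respectively and multiplying then produces
\[
U_N(\gamma_t) \;\geq\; \bigl[\sigma_{K_1/N_1}^{(1-t)}(\theta) U_{N_1}(x_0) + \sigma_{K_1/N_1}^{(t)}(\theta) U_{N_1}(x_1)\bigr]^\alpha \bigl[\sigma_{K_2/N_2}^{(1-t)}(\theta) U_{N_2}(x_0) + \sigma_{K_2/N_2}^{(t)}(\theta) U_{N_2}(x_1)\bigr]^\beta.
\]

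The proof is then completed using two elementary ingredients: first, the H\"older-type inequality $(x+y)^\alpha (x'+y')^\beta \geq x^\alpha x'^\beta + y^\alpha y'^\beta$ for $x, y, x', y' \geq 0$, which follows by applying Young's inequality $a^\alpha b^\beta \leq \alpha a + \beta b$ to the two terms of $[x^\alpha x'^\beta + y^\alpha y'^\beta]/[(x+y)^\alpha(x'+y')^\beta]$ and summing to $\alpha + \beta = 1$; and second, the log-convexity of the distortion coefficient in its curvature parameter,
\[
\sigma_{K/N}^{(s)}(\theta) \;\leq\; \bigl[\sigma_{K_1/N_1}^{(s)}(\theta)\bigr]^\alpha \bigl[\sigma_{K_2/N_2}^{(s)}(\theta)\bigr]^\beta \qquad \text{for } s \in \{t, 1-t\},
\]
which crucially uses the identity $\alpha(K_1/N_1) + \beta(K_2/N_2) = K/N$. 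Combining these with the previous display yields precisely the defining $(K,N)$-convexity inequality for $U_N$ along $\gamma$. The strong version then follows by running the same argument along an arbitrary geodesic in $D(S)$, invoking the strong $(K_1,N_1)$-convexity of $S^1$ as the additional hypothesis. I expect the main obstacle to be the verification of the log-convexity of $\sigma_\kappa^{(s)}(\theta)$ in $\kappa$ across the piecewise definition (involving $\sin$, $\sinh$, and the blow-up regime $\kappa \theta^2 \geq \pi^2$), together with the technical point of ensuring that the chosen geodesic remains in $D(S^2)$.
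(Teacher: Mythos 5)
Your proposal is correct and takes essentially the same route as the paper, presented multiplicatively instead of logarithmically: the paper's Lemma~\ref{lem:convexG}, the joint convexity of $G_t(x,y,\kappa)=\log[\sigma^{(1-t)}_\kappa(1)e^x+\sigma^{(t)}_\kappa(1)e^y]$, packages precisely your two ingredients --- the reverse H\"older inequality (which is the convexity of $F(u,v)=\log(e^u+e^v)$) and the log-convexity of $\kappa\mapsto\sigma^{(s)}_\kappa$ --- and the paper then applies it with exactly your weights $\alpha=N_1/N$, $\beta=N_2/N$ to the logarithms of the two convexity inequalities. Both points you flag as delicate are also passed over in the paper: the convexity of $g^{(t)}(\kappa)=\log\sigma^{(t)}_\kappa(1)$ is asserted in the proof of Lemma~\ref{lem:convexG} without verification, and the paper silently invokes the strong $(K_2,N_2)$-convexity of $S^2$ along the geodesic chosen for $S^1$ without first confirming that this geodesic remains in $D(S^2)$.
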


\begin{proof}
  Let us set $K=K_1+K_2$ and $N=N_1+N_2$ and given $x_0 , x_1 \in
  D(S)=D(S^1)\cap D(S^2)$ take a constant speed geodesic
  $\gamma:[0,1]\to X$ from $x_0$ to $x_1$ according to the convexity
  assumption of $S^1$. By the convexity assumption on $S^1$ and $S^2$
  we have
  \begin{align*}
    \log U_N(\gamma_t)~&=~\frac{N_1}{N}\frac{(-1)}{N_1}S^1(\gamma_t) + \frac{N_2}{N}\frac{(-1)}{N_2}S^2(\gamma_t)\\
                      &\geq~\frac{N_1}{N}G_t \left(\frac{(-1)}{N_1}S^1(\gamma_0),\frac{(-1)}{N_1}S^1(\gamma_1),\frac{K_1}{N_1} d ( \gamma_0 , \gamma_1 )^2 \right)\\
                      &\quad +\frac{N_2}{N}G_t \left(\frac{(-1)}{N_2}S^2(\gamma_0),\frac{(-1)}{N_2}S^2(\gamma_1),\frac{K_2}{N_2} d ( \gamma_0 , \gamma_1 )^2 \right)\;,
  \end{align*}
  where the function $G_t$ is given by \eqref{eq:convexG}.
  By Lemma~\ref{lem:convexG} below,
  $G_t$ is convex. Hence we obtain
  \begin{align*}
    \log U_N(\gamma_t)~\geq~G_t\left(\frac{(-1)}{N}S(\gamma_0),\frac{(-1)}{N}S(\gamma_1),\frac{K}{N} d( \gamma_0 , \gamma_1 )^2\right)\;.
  \end{align*}
  Taking the exponential on both sides yields the claim. The last
  assertion is obvious from the proof.
\end{proof}

\begin{lemma}\label{lem:convexG}
  For any fixed $t\in[0,1]$ the function
  $G_t :\R\times\R\times(-\infty,\pi^2 )\to\R$ given by
  \begin{align}\label{eq:convexG}
    G_t (x,y,\kappa)~=~~\log\left[\sigma^{(1-t)}_\kappa(1)e^x + \sigma^{(t)}_\kappa(1)e^y\right]
  \end{align}
  is convex.
\end{lemma}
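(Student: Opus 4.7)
The plan is to reduce joint convexity of $G_t$ in $(x,y,\kappa)$ to the log-convexity in $\kappa$ of the two coefficient functions $a(\kappa):=\sigma_\kappa^{(1-t)}(1)$ and $b(\kappa):=\sigma_\kappa^{(t)}(1)$. Once this reduction is made, the core technical task becomes a one-dimensional log-convexity statement about $\sigma_\kappa^{(s)}(1)=\fs_\kappa(s)/\fs_\kappa(1)$.

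For the reduction, fix $(x_i,y_i,\kappa_i)$, $i=0,1$, and $\lambda\in[0,1]$, and write $\bar x,\bar y,\bar\kappa$ for the corresponding convex combinations. Log-convexity of $a$ gives $a(\bar\kappa)e^{\bar x}\le[a(\kappa_0)e^{x_0}]^\lambda[a(\kappa_1)e^{x_1}]^{1-\lambda}$, and similarly for $b$. Summing these two estimates and applying the H\"older-type inequality $A^\lambda B^{1-\lambda}+C^\lambda D^{1-\lambda}\le(A+C)^\lambda(B+D)^{1-\lambda}$ (valid for positive $A,B,C,D$) to the right-hand side yields
\[
a(\bar\kappa)e^{\bar x}+b(\bar\kappa)e^{\bar y}\ \le\ \bigl[a(\kappa_0)e^{x_0}+b(\kappa_0)e^{y_0}\bigr]^\lambda\bigl[a(\kappa_1)e^{x_1}+b(\kappa_1)e^{y_1}\bigr]^{1-\lambda},
\]
and taking logarithms gives convexity of $G_t$ along the segment chosen. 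Since the pair $(\kappa_0,\kappa_1)$ and $\lambda$ were arbitrary, $G_t$ is jointly convex.

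It therefore remains to show that $\kappa\mapsto\log\sigma_\kappa^{(s)}(1)=\log\fs_\kappa(s)-\log\fs_\kappa(1)$ is convex on $(-\infty,\pi^2)$ for every $s\in[0,1]$. Using the explicit formulas for $\fs_\kappa$ in the three regimes $\kappa>0$, $\kappa=0$, $\kappa<0$, this is a concrete computation of a second $\kappa$-derivative; in the positive-curvature case one must establish non-negativity of $\partial_\kappa^2[\log\sin(s\sqrt\kappa)-\log\sin(\sqrt\kappa)]$ for $\sqrt\kappa\in(0,\pi)$, the hyperbolic analogue handles $\kappa<0$, and continuity glues the regimes at $\kappa=0$.

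This last step will be the main obstacle. It is subtle because $\kappa\mapsto\log\fs_\kappa(s)$ is itself \emph{concave} in $\kappa$: the Taylor expansion $\log\fs_\kappa(s)=\log s-\kappa s^2/6-\kappa^2 s^4/180+O(\kappa^3)$ has negative second derivative $-s^4/90$ at the origin. The required log-convexity of the ratio $\fs_\kappa(s)/\fs_\kappa(1)$ therefore comes entirely from the subtraction, and hinges on the fact that $s\mapsto\partial_\kappa^2\log\fs_\kappa(s)$ is non-increasing on $(0,1]$ (consistent with $-s^4/90$ being decreasing there). This monotonicity---the genuine content of the lemma---can be derived from the ODE $\fs_\kappa''+\kappa\fs_\kappa=0$ with $\fs_\kappa(0)=0$, $\fs_\kappa'(0)=1$ by an elementary but careful case-by-case calculation.
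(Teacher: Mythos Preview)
Your approach is essentially the same as the paper's. The paper writes $G_t(x,y,\kappa)=F\bigl(x+g^{(1-t)}(\kappa),\,y+g^{(t)}(\kappa)\bigr)$ with $F(u,v)=\log(e^u+e^v)$ and $g^{(s)}(\kappa)=\log\sigma_\kappa^{(s)}(1)$, and then invokes the standard fact that a convex, coordinatewise non-decreasing outer function composed with convex inner functions is convex; this is exactly your H\"older reduction in composition form. Both routes reduce the problem to the convexity of $\kappa\mapsto\log\sigma_\kappa^{(s)}(1)$, which the paper simply asserts without further comment, whereas you go a bit further in identifying the subtlety (that $\kappa\mapsto\log\fs_\kappa(s)$ is itself concave and the convexity comes only from the subtraction) and outlining how the verification should go.
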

Note that we have $\sigma^{(s)}_\kappa (\theta) = \sigma^{(s)}_{\kappa \theta^2 } (1)$
for $s \in [ 0 , 1 ]$, $\theta \geq 0$ and $\kappa \in ( - \infty , \pi^2 / \theta^2 )$.
It is useful to apply this lemma.
\begin{proof}
  We define the function
  $g^{(t)}: \kappa\mapsto\log \sigma^{(t)}_\kappa(1)$
  on $(-\infty, \pi^2 )$
  and write
  \begin{align*}
    G_t (x,y,\kappa)~=~F\Big(x+g^{(1-t)}(\kappa),y+g^{(t)}(\kappa)\Big)\;,
  \end{align*}
  where $F(u,v)=\log\big(e^u+ e^v\big)$. The claim then follows by
  noting that the function $F$ is convex, $a\mapsto F(u+a,v+a)$ is
  increasing and that the functions $g^{(t)}$ are convex.
\end{proof}

Finally we remark that the notion of $(K,N)$-convexity is consistent
in the parameters $K$ and $N$.

\begin{lemma}\label{lem:convexconsistent}
  If $S$ is $(K,N)$-convex then it is also $(K',N')$-convex for all
  $K'\leq K$ and $N'\geq N$. Moreover, it is $K$-convex in the sense
  that for each pair $x_0,x_1\in D(S)$ there exist a constant speed
  geodesic $\gamma:[0,1]\to X$ connecting $x_0$ to $x_1$ such that for
  all $t\in[0,1]$:
  \begin{align}\label{eq:kconvex}
    S(\gamma_t)~\leq~(1-t)S(\gamma_0) + t S(\gamma_1) -\frac{K}{2}t(1-t)d(\gamma_0,\gamma_1)^2\;.
  \end{align}
\end{lemma}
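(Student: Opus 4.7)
The plan is to split the statement into two assertions and tackle them separately. For the monotonicity part, I would produce a single geodesic $\gamma$ connecting $x_0, x_1 \in D(S)$ along which the $(K', N')$-inequality holds, distinguishing whether $N' > N$ or $N' = N$. In the strict case $N' > N$, I would invoke Lemma~\ref{lem:convex-sum} on the trivial decomposition $S = S + 0$, with $K_2 := K' - K \le 0$ and $N_2 := N' - N > 0$. Verifying that the zero functional is \emph{strongly} $(K_2, N_2)$-convex reduces to the elementary inequality
\[
1 \ge \sigma^{(1-t)}_{K_2/N_2}(\theta) + \sigma^{(t)}_{K_2/N_2}(\theta), \quad t \in [0,1], \; \theta \ge 0,
\]
which follows, for $\kappa := K_2/N_2 < 0$, from the $\sinh$ sum-to-product formula together with $|1-2t| \le 1$, and is trivial for $\kappa = 0$. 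Lemma~\ref{lem:convex-sum} then delivers the $(K', N')$-condition along the geodesic furnished by the $(K,N)$-convexity of $S$. For the boundary case $N' = N$ with $K' \le K$, I would appeal instead to the distributional characterization of Lemma~\ref{lem:knconvexint2}(i): on $u := U_N \circ \gamma$ the bound $u'' \le -(K/N) d^2 u$ upgrades to $u'' \le -(K'/N) d^2 u$ simply because $u \ge 0$ and $-K/N \le -K'/N$, and Lemma~\ref{lem:knconvexint2}(ii) then restores the integrated $(K', N)$-inequality on the same $\gamma$.

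For the $K$-convexity assertion, I would fix the geodesic $\gamma$ supplied by $(K, N)$-convexity and use the monotonicity just proved to extend it to $(K, N')$-convexity along the \emph{same} $\gamma$ for every $N' \ge N$, namely
\[
U_{N'}(\gamma_t) \ge \sigma^{(1-t)}_{K/N'}(d) \, U_{N'}(\gamma_0) + \sigma^{(t)}_{K/N'}(d) \, U_{N'}(\gamma_1),
\]
with $d := d(\gamma_0, \gamma_1)$. Taking $-N' \log$ of both sides and expanding in $1/N'$---using $\sigma^{(t)}_{K/N'}(d) = t + \tfrac{t(1-t^2)}{6} \cdot \tfrac{K d^2}{N'} + O(1/N'^2)$ (from the Taylor expansions of $\sin$ and $\sinh$) and $e^{-S(\gamma_i)/N'} = 1 - S(\gamma_i)/N' + O(1/N'^2)$ (legitimate since $x_0, x_1 \in D(S)$)---the argument of the logarithm collapses to
\[
1 + \frac{1}{N'}\left[\frac{K d^2 t(1-t)}{2} - (1-t) S(\gamma_0) - t S(\gamma_1)\right] + O(1/N'^2),
\]
the coefficient $t(1-t)$ on the curvature term arising from the identity $(1-t) t (2-t) + t(1-t)(1+t) = 3 t(1-t)$. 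Passing to the limit $N' \to \infty$ under $-N' \log$ then produces exactly the claimed $K$-convexity inequality.

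The main obstacle I anticipate is the accounting in the asymptotic expansion for the $K$-convexity step: one has to carry both the distortion coefficients and the exponentials to order $1/N'$ in a mutually consistent way to recover the precise quadratic correction $\tfrac{K}{2} t(1-t) d^2$, and to confirm uniformity of the $O(1/N'^2)$ remainders so that the limit inside $-N'\log(\cdot)$ is legitimate. The monotonicity, by contrast, is essentially bookkeeping once Lemmas~\ref{lem:convex-sum} and \ref{lem:knconvexint2} are in hand, modulo the elementary but necessary verification that the zero functional is strongly $(K_2, N_2)$-convex for $K_2 \le 0$.
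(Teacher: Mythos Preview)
Your approach is close to the paper's, but there is a genuine gap in the boundary case $N'=N$, $K'\le K$. You propose to pass through the distributional characterization of Lemma~\ref{lem:knconvexint2}: from the integrated inequality on $\gamma$ infer $u''\le -(K/N)d^2 u$, weaken to $K'$, and go back. The problem is that Lemma~\ref{lem:knconvexint2} is a statement about \emph{all} geodesics (equivalently, about the strong version of $(K,N)$-convexity); the implication (ii)$\Rightarrow$(i) in its proof proceeds via (iii), which requires the inequality on every restriction $\gamma^{[s,t]}$. The hypothesis here only furnishes the integrated inequality on one full geodesic between $x_0$ and $x_1$, not on its sub-arcs (the definition would hand you \emph{some} geodesic between $\gamma_s$ and $\gamma_t$, not necessarily $\gamma|_{[s,t]}$). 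And the single-geodesic implication is simply false: already for $K=0$, a function $v:[0,1]\to[0,\infty)$ lying above its secant on $[0,1]$ need not be concave---take $v(t)=1+t(1-t)(t-\tfrac12)^2$.

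The fix is immediate and is exactly what the paper does: since $\kappa\mapsto\sigma^{(t)}_\kappa(\theta)$ is non-decreasing and $U_N\ge 0$, the integrated inequality with $K$ directly yields the one with $K'\le K$ on the same $\gamma$---no detour through distributions needed. With this repair your argument matches the paper's: the paper handles $K$-consistency by this monotonicity and $N$-consistency via Lemma~\ref{lem:convex-sum} with the zero functional being $(0,N'-N)$-convex, whereas you bundle both into a single application of Lemma~\ref{lem:convex-sum} (zero functional strongly $(K'-K,N'-N)$-convex) when $N'>N$; both are fine. For the $K$-convexity limit, the paper subtracts $1$ and multiplies by $N$ while you take $-N'\log$; these are equivalent first-order linearizations, and your expansion is correct.
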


\begin{proof}
  Consistency in $K$ is immediate from the fact that for any fixed $t$
  and $\theta$ the coefficient $\sigkn{t}{\theta}$ is increasing in
  $K$.
  Consistency in $N$ is
  a consequence e.g. of Lemma~\ref{lem:convex-sum}
  and the trivial observation that for any $N'>N$
  the constant functional $S^0 \equiv 0$ is $(0,N'-N)$-convex.

  Using the consistency in $N$ we can derive \eqref{eq:kconvex} by
  subtracting $1$ on both sides of \eqref{eq:defknconvex}, multiplying
  with $N$ and passing to the limit $N\nearrow\infty$. Here we use the
  fact that $\sigkn{t}{\theta}=t+-K(t^3-t)\theta^2/(6N) + o(1/N)$ and
  $U_N(x)=1-S(x)/N+o(1/N)$.
\end{proof}

\subsection{Evolution Variational Inequalities in metric spaces}
\label{sec:metric}

In this section we study the Evolution Variational Inequality with
parameters $K$ and $N$ and the associated notion of gradient flow in a
purely metric setting. In particular, we investigate the relation with
geodesic convexity. Our approach extends the results obtained in
\cite{DS08,AGS11b} where the case $N=\infty$ has been considered.

\medskip

Let $(X, d)$ be a complete separable geodesic metric space and $S : X
\to ( - \infty , \infty ]$ a lower semi-continuous functional.  Note
that our framework is slightly more restrictive than that in the last
section. We define the \emph{descending slope} of $S$ at $x\in D(S)$
as
\begin{align*}
  \abs{\nabla^-S}(x)~:=~\limsup\limits_{y\to x}\frac{\big(S(x)-S(y)\big)_+}{d(x,y)}\;.
\end{align*}
For $x\notin D(S)$ we set $\abs{\nabla^-S}=+\infty$. A curve
$\gamma:I\to X$ defined on an interval $I\subset\R$ is called
\emph{absolutely continuous} if
\begin{align}\label{eq:abscont}
  d(\gamma_s,\gamma_t)~\leq~\int_s^tg(r)\dd r\quad\forall s,t\in I\;,s\leq t\;,
\end{align}
for some $g\in L^1(I)$. For an absolutely continuous curve $\gamma$ the
\emph{metric speed}, defined by
\begin{align*}
  \abs{\dot\gamma}(t)~:=~\lim\limits_{h\to0}\frac{d(\gamma_{t+h},\gamma_t)}{\abs{h}}\;,
\end{align*}
exists for a.e. $t\in I$ and is the minimal $g$ in \eqref{eq:abscont}
(see e.g. \cite[Thm.~1.1.2]{AGS08}). The following is a classical
notion of gradient flow in a metric space, see e.g. \cite{AGS08}.

\begin{definition}[Gradient flow]\label{def:metric-gf}
  We say that a locally absolutely continuous curve $x:[0,\infty)\to
  X$ with $x_0\in D(S)$ is a (downward) gradient flow of $S$ starting in $x_0$ if
  the \emph{Energy Dissipation Equality} holds:
  \begin{align}\label{eq:EDE}
    S(x_{s})~=~S(x_{t})
    + \frac12\int_s^t\abs{\dot x_r}^2+\abs{\nabla^-S}(x_r)\dd r\quad\forall 0\leq s\leq t\;.
  \end{align}
\end{definition}
We introduce here a more restrictive notion of gradient flow based on
the Evolution Variational Inequality.
\begin{definition}[$\evikn$ gradient flow]\label{def:KNflow}
  Let $K\in \R$, $
N\in(0,\infty)$ and let $x:(0,\infty)\to D(S)$ be a
  locally absolutely continuous curve. We say that $(x_t)$ is an
  $\evikn$ \emph{gradient flow} of $S$ starting in $x_0$ if
  $\lim_{t\to0}x_t=x_0$ and if for all $z\in D(S)$ the \emph{Evolution
  Variational Inequality}
  \begin{align}\label{eq:knflow}
    \ddt\skn{\frac12 d(x_t,z)}^2 + K\cdot\skn{\frac12 d(x_t,z)}^2~\leq~\frac{N}{2}\left(1-\frac{U_{N}(z)}{U_{N}(x_t)}\right)
  \end{align}
holds for a.e. $t>0$.
\end{definition}


\begin{lemma}\label{lem:consistentKN}
  If $(x_t)_t$ is an $\evikn$ flow for $S$, then it is also an $\evi_{K',N'}$ flow
  for $S$ for any $K'\leq K$ and $N'\geq N$. Moreover, $(x_t)$ is an
  $\evi_K$ flow for $S$, i.e. for all $z\in D(S)$ and a.e. $t>0$:
  \begin{align}\label{eq:kflow}
    \frac12\ddt d(x_t,z)^2 + \frac{K}{2}d(x_t,z)^2~\leq~S(z) - S(x_t)\;.
  \end{align}
\end{lemma}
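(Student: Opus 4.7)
My plan is to establish the first assertion by algebraic comparison of the two sides of $\evikn$ as the parameters vary, and to derive the second assertion by passing to the limit $N'\to\infty$ in the first.

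Using the identity $\frac{2}{N}\skn{\frac12\theta}^2=\frac{1}{K}(1-\ckn{\theta})$ from \eqref{eq:sc-trick} (the case $K=0$ is handled directly via $\fs_0(\theta/2)^2 = \theta^2/4$), I first rewrite $\evikn$ equivalently as the pointwise inequality
\begin{equation*}
U_N(z) \;\le\; \ckn{d(x_t,z)}\,U_N(x_t) \;-\; \frac{\skn{d(x_t,z)}}{N\,d(x_t,z)}\,U_N(x_t)\,\ddt\tfrac{1}{2}d(x_t,z)^2
\end{equation*}
for a.e.\ $t>0$ (where the time derivative exists by local absolute continuity of $t\mapsto d(x_t,z)^2$). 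This is exactly the metric analogue of the subdifferential inequality \eqref{eq:smoothevialt} derived in the proof of Lemma~\ref{lem:gradflowevi}, and the equivalence is purely algebraic. In this form, consistency in $K$ (with $N$ fixed) follows by comparing the RHS for $(K,N)$ and $(K',N)$ using the monotonicity of the distortion coefficients: $\fc_{K'/N}(r)\ge \ckn{r}$ and $\fs_{K'/N}(r)\ge \skn{r}$ when $K'\le K$. Consistency in $N$ (with $K$ fixed) is established by an argument parallel to the $N$-consistency portion of Lemma~\ref{lem:convexconsistent}: the key monotonicity is that $N\mapsto N(1-\e^{-a/N})$ is non-decreasing with limit $a$ (a consequence of the fact that $x\mapsto(1-\e^{-x})/x$ is non-increasing, by elementary convexity), and this is combined with the corresponding controlled behaviour of the LHS using the same sum-rule/trivial-convexity trick (adding a $(0,N'-N)$-convex zero functional, cf.\ Lemma~\ref{lem:convex-sum}) employed in the proof of Lemma~\ref{lem:convexconsistent}.

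For the second assertion, having established consistency in $N$, pass to the limit $N'\to\infty$ in $\evi_{K,N'}$. The asymptotic expansions $\fs_{K/N'}(d_t/2)^2 = d(x_t,z)^2/4 + O(1/N')$ and $\frac{N'}{2}(1-\e^{-a/N'}) = a/2 + O(1/N')$ (with $a=S(z)-S(x_t)$) transform \eqref{eq:knflow} in the limit into
\begin{equation*}
\tfrac14 \ddt d(x_t,z)^2 + \tfrac{K}{4}\,d(x_t,z)^2 \;\le\; \tfrac12\bigl(S(z)-S(x_t)\bigr),
\end{equation*}
which is \eqref{eq:kflow} after multiplication by $2$. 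The main obstacle is the consistency in $N$, since $\evikn$ depends on $N$ both through the distortion functions $\fs_{K/N},\fc_{K/N}$ and through the exponent in $U_N=\e^{-S/N}$, and neither side of $\evikn$ is straightforwardly monotone in $N$ on its own. The reformulation into the subdifferential form above is the technical device that makes the $(K,N)$-convexity-style monotonicity/sum-rule argument of Lemma~\ref{lem:convexconsistent} transportable to the dynamic EVI setting.
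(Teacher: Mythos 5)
Your plan correctly identifies the general strategy (rewrite the EVI and read off monotonicity of the coefficients), but your chosen rearrangement creates a genuine gap in the $K$-consistency step. In your form
\begin{equation*}
U_N(z)\;\le\;\ckn{d}\,U_N(x_t)\;-\;\frac{\skn{d}}{N\,d}\,U_N(x_t)\,\ddt\tfrac12 d(x_t,z)^2\,,\qquad d=d(x_t,z),
\end{equation*}
the sign-indefinite quantity $\ddt\frac12 d^2$ is multiplied by the $K$-dependent factor $\skn{d}/(Nd)$. The monotonicities $\fc_{K'/N}(d)\ge\ckn{d}$ and $\fs_{K'/N}(d)\ge\skn{d}$ for $K'\le K$ are both correct, but they do not yield the inequality for $(K',N)$: when $\ddt\frac12 d^2>0$, replacing $\skn{d}$ by the larger $\fs_{K'/N}(d)$ makes the right-hand side \emph{smaller}, and nothing guarantees the gain $\fc_{K'/N}(d)-\ckn{d}$ compensates. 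The paper's equivalent form \eqref{eq:evialt1} divides through so that $\frac12\ddt d^2$ stands alone on the left with no $K$-dependent prefactor; one then only needs that $\ckn{d}/\skn{d}$ and $-1/\skn{d}$, which sit on the right away from the derivative, increase as $K$ decreases. Your form can be salvaged by first performing that division, but as written the step is incomplete.

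The $N$-consistency step is also under-justified. The ``sum-rule trick'' of Lemma~\ref{lem:convexconsistent} (add the trivially $(0,N'-N)$-convex zero functional via Lemma~\ref{lem:convex-sum}) is a statement about static geodesic $(K,N)$-convexity; it does not transfer to the dynamic $\evikn$ inequality, since being an $\evikn$ flow for $S$ is not a property that decomposes along $S=S+0$. Your observation that $N\mapsto N(1-\e^{-a/N})$ is non-decreasing is one needed ingredient, but it must be combined with the simultaneous $N$-dependence of $\skn{\cdot}$ through $K/N$, which you do not address. The paper does this directly from \eqref{eq:evialt2} (again with $\frac12\ddt d^2$ isolated on the left), showing that $N[1-\e^{-v/N}]/\skn{\theta}$ and $-K\skn{\theta/2}^2/\skn{\theta}$ are each increasing in $N$. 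Your $N'\to\infty$ passage is fine in spirit, though taking the limit in \eqref{eq:evialt2} avoids the interchange of limit and derivative that your form would require.
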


 \begin{proof}
   Using the \eqref{eq:sc-trick}
   one checks that \eqref{eq:knflow} is equivalent to either of the
   following inequalities:
   \begin{align}\label{eq:evialt1}
     \frac12 \ddt d(x_t,z)^2 ~&\leq~\frac{Nd}{\skn{d}}\left[\ckn{d} -
       \frac{U_{N}(z)}{U_{N}(x_t)}\right]\\\label{eq:evialt2}
     \frac12 \ddt d(x_t,z)^2~&\leq~\frac{d}{\skn{d}}N\left[1-\frac{U_N(z)}{U_N(x_t)}\right] -
     2Kd\frac{\skn{\frac12 d}^2}{\skn{d}}\;,
   \end{align}
   where we set $d=d(x_t,z)$. Consistency in $K$ can be seen from
   \eqref{eq:evialt1} by noting that for any $\theta\geq0$ we have
   that $\skn{\theta}$ and $\ckn{\theta}/\skn{\theta}$ is decreasing
   in $K$. Consistency in $N$ follows from \eqref{eq:evialt2} and the
   fact that for any $v\in\R$ and $\theta\geq0$ both
   \begin{align*}
     N\left[1-\exp\Big(-\frac{1}{N}v\Big)\right]\frac{1}{\skn{\theta}}\quad\text{ and }\quad -K\cdot\frac{\skn{\frac12\theta}^2}{\skn{\theta}}
   \end{align*}
   are increasing in $N$. \eqref{eq:kflow} follows immediately from
   \eqref{eq:evialt2} by passing to the limit as $N\to\infty$. For
   this we note that
   \begin{align*}
     \lim\limits_{N\to\infty}\skn{d}=d\;,\quad\lim\limits_{N\to\infty}N\left[1-\frac{U_N(z)}{U_N(x_t)}\right]=S(z)-S(x_t)\;.
   \end{align*}
\end{proof}

\begin{remark}
  This shows consistency with the theory of $\evi_K$ gradient flows of
  geodesically $K$-convex functions. It can be thought of as the
  limiting case $N=\infty$. By taking the limit $N\to\infty$ in the
  estimates obtained in this section we recover the corresponding
  results for $\evi_K$ flows established in \cite{DS08,AGS11b}.
\end{remark}

We summarize here some properties of $\evikn$ gradient flows. To this
end we set for $\kappa\in\R$ and $t\geq0$:
\begin{align*}
  e_\kappa(t)~=~\int_0^t\e^{\kappa s}\dd s\;.
\end{align*}

\begin{proposition}\label{prop:evi-properties}
  Let $(x_t)$ be an $\evikn$ gradient flow of $S$ starting in
  $x_0$. Then the following statements hold:
  \begin{itemize}
  \item[(i)] If $x_0\in D(S)$ then $(x_t)$ is also a metric gradient
    flow in the sense of Definition~\ref{def:metric-gf}. In
    particular, the map $t\mapsto S(x_t)$ is non-increasing.
  \item[(ii)] We have the uniform regularization bound
    \begin{align}\label{eq:regularization}
      \frac{U_N(z)}{U_N(x_t)}~\leq~1 + \frac{2}{N e_K(t)}\skn{\frac12
        d(x_0,z)}^2
    \end{align}
  \item[(iii)] If $S$ is bounded below we have the uniform continuity
    estimate
  \begin{align}\label{eq:uniform-continuity}
    \skn{\frac12d(x_{t_1},x_{t_0})}^2~\leq~\frac{N}{2e_{-K}(t_1-t_0)}\left[1-\frac{U_N(x_{t_0})}{U_{N}^{\max}}\right]\;.
  \end{align}
  \end{itemize}
\end{proposition}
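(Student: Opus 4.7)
The three assertions of the proposition would be proved in turn, each reducing to a suitable manipulation of the $\evikn$ inequality \eqref{eq:knflow}.

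For (i), I would first invoke Lemma~\ref{lem:consistentKN} to pass from $\evikn$ to the weaker $\evi_K$ inequality \eqref{eq:kflow}. The fact that any $\evi_K$ gradient flow is a metric gradient flow in the sense of Definition~\ref{def:metric-gf} is classical: taking $z=x_s$ in \eqref{eq:kflow} with $s$ close to $t$, one first shows that $t\mapsto S(x_t)$ is absolutely continuous and derives the slope-type bound $|\dot{x}_t|^2+|\nabla^-S|(x_t)^2 \leq -2\ddt S(x_t)$ for a.e.\ $t$; the reverse inequality comes from the chain rule for $S\circ x$ together with Young's inequality applied to the general metric estimate $|\ddt S(x_t)|\le |\dot{x}_t|\cdot|\nabla^-S|(x_t)$. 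Combining the two yields \eqref{eq:EDE}, and in particular the monotonicity of $t\mapsto S(x_t)$. I would cite this step rather than re-derive it.

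For (ii), the starting point is to rewrite \eqref{eq:knflow} using the integrating factor $e^{Kt}$:
\begin{equation*}
\ddt \Bigl( e^{Kt}\,\skn{\tfrac12 d(x_t,z)}^{2} \Bigr) \;\leq\; \frac{N e^{Kt}}{2}\Bigl(1-\frac{U_N(z)}{U_N(x_t)}\Bigr).
\end{equation*}
By (i), $U_N\circ x_t$ is non-decreasing, so $s\mapsto 1-U_N(z)/U_N(x_s)$ is non-decreasing as well. Hence the right-hand side integrand is bounded pointwise by its value at the endpoint $s=t$, and after integrating from $0$ to $t$ and discarding the non-negative term $e^{Kt}\skn{\tfrac12 d(x_t,z)}^{2}$ on the left, one arrives at exactly \eqref{eq:regularization}.

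For (iii), I would apply the same strategy to the time-shifted flow $y_\tau:=x_{t_0+\tau}$, which is again an $\evikn$ flow starting at $x_{t_0}$. Choosing $z=y_0=x_{t_0}$, the initial distance vanishes, and the assumption that $S$ is bounded below gives the uniform bound $1-U_N(x_{t_0})/U_N(y_\tau)\leq 1-U_N(x_{t_0})/U_N^{\max}$. Integration of \eqref{eq:knflow} on $[0,t_1-t_0]$ with the appropriate exponential weight then produces \eqref{eq:uniform-continuity}. The main obstacle is part (i): the AGS-style chain of implications $\evi_K\Rightarrow$ EDE relies on a careful interplay between the metric derivative, the descending slope, and the absolute continuity of $t\mapsto S(x_t)$, while (ii) and (iii) are essentially one-line consequences of the ODE perspective, provided the monotonicity of $U_N\circ x_t$ from (i) is in hand.
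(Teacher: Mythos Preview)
Your approach matches the paper's exactly. For (i) the paper also reduces to $\evi_K$ via Lemma~\ref{lem:consistentKN} and cites the known implication $\evi_K\Rightarrow$ EDE; for (ii) and (iii) the paper invokes the integrated form \eqref{eq:evi-int} from Proposition~\ref{prop:evi-equiv} (whose proof is precisely your integrating-factor argument using the monotonicity of $U_N(x_t)$) with $t_0=0$ and $z=x_{t_0}$ respectively, so the only difference is that the paper packages the integration step as a separate statement while you carry it out inline.
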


\begin{proof}
  By Lemma~\ref{lem:consistentKN} $(x_t)$ is an $\evi_K$ flow of $S$
  and hence a metric gradient flow by \cite[Prop.~3.9]{AG12}.
  \eqref{eq:regularization} follows immediately from
  \eqref{eq:evi-int} in Proposition~\ref{prop:evi-equiv} below by
  taking $t_0=0$. The uniform continuity estimate
  \eqref{eq:uniform-continuity} is obtained similarly by taking
  $z=x_{t_0}$.
\end{proof}

The following result collects several equivalent reformulations of the
definition of $\evikn$ gradient flows which will be useful in the
sequel. For this we say that a subset $D\subset D(S)$ is \emph{dense
  in energy}, if for any $z\in D(S)$ there exists a sequence
$(z_n)\subset D$ such that $d(z_n,z)\to0$ and $S(z_n)\to S(z)$ as
$n\to\infty$. For a function $f:I\to\R$ on some interval $I$ we use
the notation
\begin{align*}
  \ddtr f(t)~=~\limsup\limits_{h\searrow0}\frac{f(t+h)-f(t)}{h}
\end{align*}
to denote the right derivative.

\begin{proposition}\label{prop:evi-equiv}
  Let $D\subset D(S)$ be dense in energy and let $x:(0,\infty)\to
  D(S)$ be a locally absolutely continuous curve with
  $\lim_{t\to0}x_t=x_0$. Then $(x_t)$ is an $\evikn$ gradient flow of
  $S$ if and only if one of the following statements holds:
  \begin{itemize}
  \item[(i)] The differential inequality \eqref{eq:knflow} holds for
    all $z\in D$ and a.e. $t>0$.
  \item[(ii)] For all $z\in D$ and all $0\leq t_0\leq t_1$:
    \begin{align}\label{eq:evi-int}
      e_K(t_1-t_0)\frac{N}{2}\left(1-\frac{U_N(z)}{U_N(x_{t_1})}\right)~\geq~\e^{K(t_1-t_0)}\skn{\frac12 d(x_{t_1},z)}-\skn{\frac12 d(x_{t_0},z)}^2\;.
    \end{align}
  \item[(iii)] For all $z\in D$ and \emph{all} $t>0$:
  \begin{align}\label{eq:knflowallt}
    \ddtr\skn{\frac12 d(x_t,z)}^2 + K\cdot \skn{\frac12
      d(x_t,z)}^2~\leq~\frac{N}{2}\left(1-\frac{U_{N}(z)}{U_{N}(x_t)}\right)
  \end{align}
 \end{itemize}
\end{proposition}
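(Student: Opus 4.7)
My plan is to prove the cycle $\text{Def}\Leftrightarrow(\text{i})$ together with $(\text{i})\Rightarrow(\text{ii})\Rightarrow(\text{iii})\Rightarrow(\text{i})$. The implication $\text{Def}\Rightarrow(\text{i})$ is trivial since $D\subset D(S)$, and $(\text{iii})\Rightarrow(\text{i})$ is also immediate: since $f_z(t):=\skn{\frac12 d(x_t,z)}^2$ is locally absolutely continuous, the classical derivative $f_z'(t)$ exists and coincides with $\ddtr f_z(t)$ at a.e.~$t$, so the all-$t$ right-derivative inequality of (iii) yields the a.e.~classical inequality of (i). For $(\text{i})\Rightarrow\text{Def}$, I would extend the inequality from $z\in D$ to $z\in D(S)$ by density in energy: given $z\in D(S)$, take $z_n\in D$ with $d(z_n,z)\to 0$ and $S(z_n)\to S(z)$, integrate the (i)-inequality for each $z_n$ to obtain an integrated form valid for all $t_0\le t_1$, pass to the limit via dominated convergence, and differentiate the resulting integrated form in $t_1$ to recover the a.e.~EVI at $z$.

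For the key implication $(\text{i})\Rightarrow(\text{ii})$, fix $z\in D$ and set $f(s):=\skn{\frac12 d(x_s,z)}^2$, $g(s):=\frac{N}{2}\bigl(1-U_N(z)/U_N(x_s)\bigr)$. Since $s\mapsto x_s$ is locally absolutely continuous and $\theta\mapsto\skn{\theta/2}^2$ is locally Lipschitz, $f$ is locally absolutely continuous, and (i) reads $f'(s)+Kf(s)\le g(s)$ for a.e.~$s$. Multiplying by $e^{K(s-t_0)}$ and integrating from $t_0$ to $t_1$ yields
\begin{equation*}
e^{K(t_1-t_0)}f(t_1)-f(t_0)\;\le\;\int_{t_0}^{t_1}e^{K(s-t_0)}\,g(s)\,\dd s. \tag{$\star$}
\end{equation*}
To conclude (ii) I must replace the integral by the simpler bound $g(t_1)\cdot e_K(t_1-t_0)$, which holds iff $g$ is non-decreasing on $[t_0,t_1]$, equivalently iff $s\mapsto S(x_s)$ is non-increasing.

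The monotonicity of $s\mapsto S(x_s)$ is the main obstacle, since Proposition~\ref{prop:evi-properties} (which would supply it via the metric gradient flow property) depends on the present proposition. My plan is to establish it as follows: by Lemma~\ref{lem:consistentKN}, whose proof is pointwise in $(t,z)$, hypothesis (i) implies that for every $z\in D$ and a.e.~$t$ the EVI$_K$ inequality $\frac12\ddt d(x_t,z)^2+\frac{K}{2}d(x_t,z)^2\le S(z)-S(x_t)$ holds. Integrating this and extending to arbitrary $z\in D(S)$ by density in energy (as in the $\text{Def}$-argument above), one obtains an EVI$_K$ flow on $D(S)$; the classical result \cite[Prop.~3.9]{AG12} then yields that $(x_t)$ is a metric gradient flow in the sense of Definition~\ref{def:metric-gf}. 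In particular $t\mapsto S(x_t)$ is non-increasing, which combined with $(\star)$ completes the proof of (ii).

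Finally, for $(\text{ii})\Rightarrow(\text{iii})$, specialize (ii) to $t_0=t$, $t_1=t+h$, divide by $h$, and take $\limsup_{h\searrow 0}$. Using $e_K(h)/h\to 1$ and $e^{Kh}=1+Kh+o(h)$, the left-hand side has $\limsup$ equal to $\ddtr f(t)+Kf(t)$, while the upper semi-continuity of $s\mapsto U_N(x_s)$ (which follows from the lower semi-continuity of $S$ and the continuity of $x$) gives $\limsup_{h\searrow 0}g(t+h)\le g(t)$; this yields (iii) at every $t>0$ and closes the cycle.
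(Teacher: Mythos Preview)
Your argument is correct and follows essentially the same route as the paper. The paper organizes the equivalences as a hub-and-spoke (Def $\Leftrightarrow$ (ii) in detail, then ``similar arguments'' for Def $\Leftrightarrow$ (i) and Def $\Leftrightarrow$ (iii)), while you do Def $\Leftrightarrow$ (i) together with the cycle (i)$\Rightarrow$(ii)$\Rightarrow$(iii)$\Rightarrow$(i); the analytic content is the same.

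One minor remark: your worry that Proposition~\ref{prop:evi-properties} depends on the present proposition is slightly overstated. Only parts (ii) and (iii) of that proposition invoke \eqref{eq:evi-int}; part (i) --- the monotonicity of $t\mapsto S(x_t)$ --- relies solely on Lemma~\ref{lem:consistentKN} and the external reference \cite[Prop.~3.9]{AG12}. Hence, once you have established (i)$\Rightarrow$Def (which you do via the integrated form $(\star)$ and density), Proposition~\ref{prop:evi-properties}(i) applies directly and yields the monotonicity needed for (i)$\Rightarrow$(ii). Your alternative derivation --- passing from (i) to $\evi_K$ pointwise via Lemma~\ref{lem:consistentKN}, extending by density in energy, and then invoking \cite[Prop.~3.9]{AG12} --- is correct and simply reproduces the content of Proposition~\ref{prop:evi-properties}(i) from the weaker hypothesis (i) rather than from Def. Either organization is fine.
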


\begin{proof}
  We prove the equivalence of Definition~\ref{def:KNflow} and
  (ii). Assume that $(x_t)$ is an $\evikn$ flow and note that the
  right hand side of \eqref{eq:knflow} can be rewritten as
  \begin{align*}
    \e^{-Kt}\ddt\left[\e^{Kt}\skn{\frac12 d(x_t,z)}^2\right]\;.
  \end{align*}
  Integrating from $t_0$ to $t_1$ and using that the map $t\mapsto
  U_N(x_t)$ is non-decreasing by (i) of
  Proposition~\ref{prop:evi-properties} then yields \eqref{eq:evi-int}
  for all $z\in D(S)$. Conversely, differentiating \eqref{eq:evi-int}
  yields \eqref{eq:knflow}. The fact that \eqref{eq:evi-int} holds for
  all $z\in D(S)$ if and only if it holds for all $z\in D$ is
  obvious. Similar arguments show the equivalence of
  Definition~\ref{def:KNflow} with (i) and (iii).
\end{proof}

An important property of $\evikn$ flows is the following expansion
bound.

\begin{theorem}\label{thm:contraction}
  Let $(x_t),(y_t)$ be two $\evikn$ gradient flows of $S$ starting
  from $x_0$ resp. $y_0$. Then for all $s,t\geq0$:
  \begin{align}\label{eq:contraction}
    \skn{\frac12 d(x_t,y_s)}^2~\leq~\e^{-K(s+t)}\skn{\frac12 d(x_0,y_0)}^2
                                    + \frac{N}{K}\Big(1-\e^{-K(s+t)}\Big)\frac{\big(\sqrt{t}-\sqrt{s}\big)^2}{2(s+t)}\;.
  \end{align}
\end{theorem}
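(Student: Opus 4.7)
The plan is to run the two $\evikn$-flows in parallel along the ray $(rt,rs)_{r\in[0,1]}$ in time--time space, couple the two $\evikn$-inequalities by a single AM--GM step, and close with a one-variable Gronwall argument. Concretely, I would fix $t,s\geq 0$ (not both zero) and set $\phi(r):=\skn{\tfrac12 d(x_{rt},y_{rs})}^2$, $u(r):=U_N(x_{rt})$, $v(r):=U_N(y_{rs})$. Applying \eqref{eq:knflow} to $(x_a)$ at $a=rt$ with target $y_{rs}$ held fixed and multiplying by the chain-rule factor $t$, and symmetrically applying \eqref{eq:knflow} to $(y_b)$ at $b=rs$ with target $x_{rt}$ and multiplying by $s$, and summing, should yield
\[
\phi'(r)+K(t+s)\phi(r)~\leq~\frac{N}{2}\Big[\,t\Big(1-\tfrac{v(r)}{u(r)}\Big)+s\Big(1-\tfrac{u(r)}{v(r)}\Big)\Big]\;.
\]

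The decisive observation is that, by AM--GM, $t\,v/u+s\,u/v\geq 2\sqrt{ts}$, so the bracket is bounded above by $(t+s)-2\sqrt{ts}=(\sqrt{t}-\sqrt{s})^2$, giving the clean Gronwall-type inequality
\[
\phi'(r)+K(t+s)\phi(r)~\leq~\frac{N}{2}\big(\sqrt{t}-\sqrt{s}\big)^2\;.
\]
Multiplying by the integrating factor $\e^{K(t+s)r}$ and integrating over $[0,1]$ then produces \eqref{eq:contraction} verbatim; the degenerate case $K=0$ is handled by a direct integration or by the limit $K\to 0$.

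The hard part will be justifying the combined differential inequality, since \eqref{eq:knflow} only holds for almost every time and couples time-derivatives along two \emph{different} flows at two \emph{different} base points, so a naive chain-rule step is not immediately legitimate. The clean route is to argue via the integrated form \eqref{eq:evi-int} of Proposition~\ref{prop:evi-equiv}: on each sub-interval of a partition $0=r_0<\cdots<r_n=1$, apply \eqref{eq:evi-int} for $(x_\bullet)$ on $[r_it,r_{i+1}t]$ with fixed target $y_{r_is}$ and for $(y_\bullet)$ on $[r_is,r_{i+1}s]$ with fixed target $x_{r_{i+1}t}$; telescope, control the cross-terms by the uniform continuity estimate \eqref{eq:uniform-continuity} of Proposition~\ref{prop:evi-properties}, apply AM--GM uniformly on each sub-interval, and pass to the mesh-zero limit. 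Alternatively, since Proposition~\ref{prop:evi-equiv}(iii) provides the $\evikn$ inequality with the upper right derivative $\ddtr$ at \emph{every} time, and since $\phi$ is locally absolutely continuous on $(0,1]$ (both flows being AC), one can bound the upper right derivative of $\phi$ by the sum of the two scaled one-sided EVIs and close with a Gronwall lemma for Dini derivatives.
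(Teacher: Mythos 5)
Your proposal is correct and is essentially the proof in the paper, written in a slightly different algebraic arrangement: the paper parametrizes by $\tau\mapsto(\lambda\tau,\lambda^{-1}\tau)$ with $\lambda=\sqrt{t/s}$, which is your ray $(rt,rs)$ after the affine change $\tau=r\sqrt{ts}$, and arrives at exactly your differential inequality $g'+K(\lambda+\lambda^{-1})g\leq\lambda+\lambda^{-1}-2$ (your $\phi'+K(t+s)\phi\leq\tfrac N2(\sqrt t-\sqrt s)^2$ after rescaling), followed by Gronwall. The only cosmetic difference is where the weighted Young/AM--GM inequality $2\sqrt{ab}\leq\lambda a+\lambda^{-1}b$ is deployed: you add the two scaled differential EVIs and apply it directly to the ratios $tv/u+su/v\geq2\sqrt{ts}$, whereas the paper multiplies the two \emph{integrated} EVIs from \eqref{eq:evi-int} over short intervals $[\lambda r,\lambda(r+\eps)]$ and $[\lambda^{-1}r,\lambda^{-1}(r+\eps)]$ — so $U_N(x_{\lambda(r+\eps)})$ cancels telescopically and the Young step lands on the metric terms instead — which simultaneously handles your "hard part" (the a.e.-time/moving-target issue) in a single shot before passing $\eps\to0$ and invoking Gronwall for upper right derivatives. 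Your two suggested fixes for the hard part are both sound; the paper's implementation is the $\eps\to0$ version of your first one.
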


\begin{proof}
  Let us fix $s,t>0$. Choose $\lambda,r>0$ such that $\lambda r=t$ and
  $\lambda^{-1}r=s$, i.e. $\lambda=\sqrt{\frac{t}{s}}$ and
  $r=\sqrt{ts}$. From \eqref{eq:evi-int} applied to $(x_t)$ with
  $z=y_{\lambda^{-1}r}$ and $t_0=\lambda r,t_1=\lambda(r+\eps)$ for
  some $\eps>0$ we obtain
  \begin{align}\label{eq:contraction1}
   \frac{N}{2}\frac{U_N(y_{\lambda^{-1}r})}{U_N(x_{\lambda(r+\eps)})}~\leq~\frac{N}{2}&-\frac{1}{e_{-K}(\lambda\eps)}\skn{\frac12 d(x_{\lambda(r+\eps)},y_{\lambda^{-1}r})}^2\\\nonumber
                                                                         &+\frac{1}{e_{K}(\lambda\eps)}\skn{\frac12 d(x_{\lambda r},y_{\lambda^{-1}r})}^2\;.
  \end{align}
  Similarly, choosing $z=x_{\lambda(r+\eps)}$ and
  $t_0=\lambda^{-1}r,t_1=\lambda^{-1}(r+\eps)$ and applying
  \eqref{eq:evi-int} to $(y_s)$ we obtain
  \begin{align}\label{eq:contraction2}
   \frac{N}{2}\frac{U_N(x_{\lambda(r+\eps)})}{U_N(y_{\lambda^{-1}(r+\eps)})}~\leq~\frac{N}{2}&-\frac{1}{e_{-K}(\lambda^{-1}\eps)}\skn{\frac12 d(y_{\lambda^{-1}(r+\eps)},x_{\lambda(r+\eps)})}^2\\\nonumber
                                                                         &+\frac{1}{e_{K}(\lambda^{-1}\eps)}\skn{\frac12 d(y_{\lambda^{-1}r},x_{\lambda(r+\eps)})}^2\;.
  \end{align}
  Multiplying \eqref{eq:contraction1} and \eqref{eq:contraction2}
  after taking square roots and using Young's inequality,
  $2\sqrt{ab}\leq\lambda a + \lambda^{-1} b$, we deduce the estimate
  \begin{align}\label{eq:contraction3}
    &N\sqrt{\frac{U_N(y_{\lambda^{-1}r})}{U_N(y_{\lambda^{-1}(r+\eps)})}}~\leq~\frac{N}{2}(\lambda^{-1}+\lambda)\\\nonumber
    &+ \skn{\frac12 d(y_{\lambda^{-1}r},x_{\lambda(r+\eps)})}^2\left[\frac{\lambda^{-1}}{e_{K}(\lambda^{-1}\eps)}-\frac{\lambda}{e_{-K}(\lambda\eps)}\right]\\\nonumber
    & +\skn{\frac12 d(x_{\lambda r},y_{\lambda^{-1}r})}^2\left[\frac{\lambda}{e_{K}(\lambda\eps)}-\frac{\lambda^{-1}}{e_{-K}(\lambda^{-1}\eps)}\right]\\\nonumber
&-\frac{\lambda^{-1}\eps}{e_{-K}(\lambda^{-1}\eps)}\frac{1}{\eps}\left[\skn{\frac12 d(y_{\lambda^{-1}(r+\eps)},x_{\lambda(r+\eps)})}^2-\skn{\frac12 d(x_{\lambda r},y_{\lambda^{-1}r})}^2\right]\;.
  \end{align}
  Note that as $\eps\to0$ we have
  \begin{align*}
    \frac{e_{-K}(\lambda^{-1}\eps)}{\lambda^{-1}\eps}\to1\quad \text{and}\quad \left[\frac{\lambda^{-1}}{e_{K}(\lambda^{-1}\eps)}-\frac{\lambda}{e_{-K}(\lambda\eps)}\right]\to-\frac{K}{2}(\lambda+\lambda^{-1})\;.
  \end{align*}
  Hence, if we consider the function $g:\R_+\to\R$ given by
  \begin{align*}
    g(\tau)~=~\frac{2}{N}\skn{\frac12 d(x_{\lambda \tau},y_{\lambda^{-1}\tau})}^2
  \end{align*}
  and take the limit as $\eps\searrow0$ in \eqref{eq:contraction3} we
  obtain
  \begin{align*}
   \left.\frac{\mathrm{d}^+}{\mathrm{d}\tau}\right\vert_{\tau=r} g(\tau)~\leq~-K(\lambda+\lambda^{-1})g(r) + (\lambda+\lambda^{-1}-2)\;.
  \end{align*}
  By an application of Gronwall's lemma we deduce that
  \begin{align*}
   g(r)~\leq~e^{-K(\lambda+\lambda^{-1})r}\Big[g(0) + \frac{\lambda+\lambda^{-1}-2}{(\lambda+\lambda^{-1})}e_K\big((\lambda+\lambda^{-1})r\big)\Big]\;.
  \end{align*}
  Rewriting $r,\lambda$ in terms of $s,t$ finally yields
  \eqref{eq:contraction}.
\end{proof}

\begin{remark}\label{rem:contraction-infinitesimal}
  In the limit $d(x_0,y_0)\to0$ and $s\to t$ the contraction estimate
  \eqref{eq:contraction} reads asymptotically as follows:
  \begin{align}\label{eq:contraction-infinitesimal}
    d(x_t,y_s)^2~\leq~e^{-2Kt}d(x_0,y_0)^2 + \frac{N}{K}\frac{1-e^{-2Kt}}{4t^2}\cdot\abs{s-t}^2
     + o\big(d(x_0,y_0)^2+\abs{t-s}^2\big)\;.
  \end{align}
\end{remark}

\begin{corollary}\label{cor:contraction}
  For each $x_0\in\overline{D(S)}$ there exist at most one $\evikn$
  gradient flow of $S$ starting from $x_0$. The maps $P_t:x_0\mapsto x_t$,
  where $(x_t)$ is the unique gradient flow starting from $x_0$
  constitute a continuous semigroup defined on a closed (possibly
  empty) subset of $\overline{D(S)}$.
\end{corollary}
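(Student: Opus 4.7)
The plan is to derive all three parts of the statement---uniqueness, the semigroup law with continuity, and closedness of the domain---essentially from the expansion bound of Theorem~\ref{thm:contraction}. The main obstacle will arise in the closedness step, where one has to pass to the limit in the EVI for approximating curves; this requires using the integrated form \eqref{eq:evi-int} together with a monotonicity argument driven by lower semicontinuity of $S$.

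\emph{Uniqueness and semigroup.} If $(x_t)$ and $(y_t)$ are two $\evikn$ flows starting from the same $x_0\in\overline{D(S)}$, then Theorem~\ref{thm:contraction} applied at $s=t$ with $d(x_0,y_0)=0$ forces $\skn{\tfrac12 d(x_t,y_t)}^2\leq 0$, hence $x_t=y_t$; this makes $P_t x_0:=x_t$ well-defined on the set $\mathcal{D}\subset\overline{D(S)}$ of starting points admitting an $\evikn$ flow. For $x_0\in\mathcal{D}$ and $\tau>0$ the shifted curve $s\mapsto x_{\tau+s}$ is again a locally absolutely continuous curve satisfying the (time-translation invariant) inequality \eqref{eq:knflow} and tending to $x_\tau$ as $s\downarrow 0$; hence it is the unique $\evikn$ flow starting from $x_\tau$, and uniqueness forces $P_{\tau+s}=P_s\circ P_\tau$. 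Continuity of $P_t$ as a map on $\mathcal{D}$ is a direct reading of Theorem~\ref{thm:contraction} at $s=t$ applied to distinct initial data, while continuity of $t\mapsto P_t x_0$ is just the local absolute continuity of the flow.

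\emph{Closedness of $\mathcal{D}$.} Let $x_0^n\in\mathcal{D}$ with $x_0^n\to x_0\in\overline{D(S)}$ and set $x_t^n:=P_t x_0^n$. The contraction at $s=t$ shows that $(x_t^n)_n$ is Cauchy for each $t\geq 0$, locally uniformly in $t$; denote its limit by $x_t$. Applying Theorem~\ref{thm:contraction} within the single flow $x^n$ (using $d(x_0^n,x_0^n)=0$) gives
\begin{equation*}
\skn{\tfrac12 d(x_t^n,x_s^n)}^2\leq \frac{N}{K}\bigl(1-\e^{-K(s+t)}\bigr)\frac{(\sqrt{t}-\sqrt{s})^2}{2(s+t)},
\end{equation*}
which on every $[s_0,T]\subset(0,\infty)$ is uniformly Lipschitz in $(s,t)$; this bound passes to $x$ and yields local absolute continuity on $(0,\infty)$. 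For fixed $t>0$, the regularization estimate \eqref{eq:regularization} applied to each $x^n$ with any fixed reference point $z\in D(S)$ bounds $S(x_t^n)$ from above uniformly in $n$; by lower semicontinuity of $S$ this forces $x_t\in D(S)$. The initial condition $x_t\to x_0$ as $t\downarrow 0$ follows from a three-$\varepsilon$ argument combining the uniform modulus of continuity of the $x^n$ near $0$ with the convergence $x_0^n\to x_0$ and the locally uniform convergence $x^n\to x$.

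\emph{EVI for the limit.} This is the decisive step. Work with the integrated form \eqref{eq:evi-int} of Proposition~\ref{prop:evi-equiv} for each $x^n$. Its right-hand side is continuous in the distances $d(x_{t_i}^n,z)$ and so converges to the corresponding expression for $x$. The left-hand side equals $\tfrac{N}{2}e_K(t_1-t_0)\bigl(1-U_N(z)/U_N(x_{t_1}^n)\bigr)$, which is a \emph{monotone increasing} function of $U_N(x_{t_1}^n)>0$; upper semicontinuity of $U_N$ (equivalently lower semicontinuity of $S$) gives $\limsup_n U_N(x_{t_1}^n)\leq U_N(x_{t_1})$, and monotonicity therefore forces $\limsup_n \mathrm{LHS}_n\leq \mathrm{LHS}$. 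Taking $\limsup$ in $\mathrm{LHS}_n\geq \mathrm{RHS}_n$ yields \eqref{eq:evi-int} for $(x_t)$, and Proposition~\ref{prop:evi-equiv} promotes this back to the differential EVI \eqref{eq:knflow}. Hence $x_0\in\mathcal{D}$, and the semigroup $(P_t)$ is continuous on the closed subset $\mathcal{D}$ of $\overline{D(S)}$.
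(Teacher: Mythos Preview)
Your proof is correct and follows precisely the route the paper intends: the corollary is stated without proof in the paper, as an immediate consequence of the expansion bound Theorem~\ref{thm:contraction}, and you have supplied exactly the standard argument---uniqueness and continuity from the $s=t$ case, the semigroup law from time-translation invariance of \eqref{eq:knflow}, and closedness by passing to the limit in the integrated form \eqref{eq:evi-int} via lower semicontinuity of $S$ (equivalently upper semicontinuity of $U_N$). The monotonicity observation you use to handle the left-hand side is the right mechanism, and your verification that the limit curve stays in $D(S)$ via \eqref{eq:regularization} and is locally absolutely continuous via the two-time bound is clean.
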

The previous expansion estimate in Theorem~\ref{eq:contraction}
implies a slightly weaker estimate directly for the distance $d$ not
involving the functions $\mathfrak{s}_{K/N}$. More precisely, we have
the following:
\begin{proposition}\label{prop:simp-control}
  The expansion bound \eqref{eq:contraction} implies the following
  bound: For each $x_0 , x_1 \in X$ and $s , t \ge 0$, $x_t := P_t
  x_0$ and $y_s : = P_s y_0$ satisfies
\begin{align*}
d ( x_t , y_s )^2
&~\le~
\e^{-K \tau(s,t)} d ( x_0 , y_0 )^2 + 2N
\frac{ 1 - \e^{-K \tau(s,t)}}{ K \tau(s,t) }
\big( \sqrt{t} - \sqrt{s} \big)^2 \; ,
\end{align*}
where $\tau (s,t) = 2( t + \sqrt{ts} + s )/3$.
In particular, setting
$t=s$ yields the following estimate:
  \begin{align}\label{eq:WC0} d ( x_t , y_t ) & \le
    \e^{-Kt} d ( x_0 , y_0 ).
  \end{align}
\end{proposition}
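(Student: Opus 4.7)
The strategy is to derive the direct $d$-distance bound from the sharper $\mathfrak{s}$-version established in Theorem~\ref{thm:contraction}. The passage costs some sharpness, which is exactly what gives rise to the modified time parameter $\tau(s,t)$ in place of $s+t$. The argument has three ingredients.

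\emph{Step 1 (analytic sandwich).} I would establish the two-sided comparison between $\mathfrak{s}_{K/N}(\theta/2)^2$ and $\theta^2$: for $K>0$ and $\theta\le \pi\sqrt{N/K}$,
\[
\frac{N}{2K}\bigl(1-e^{-K\theta^2/(2N)}\bigr) \;\le\; \mathfrak{s}_{K/N}(\theta/2)^2 \;\le\; \frac{\theta^2}{4},
\]
with the analogous statement (inequalities reversed) for $K<0$. The upper bound is the elementary $\sin(x)\le x$. The lower bound is equivalent, via the identity $2\mathfrak{s}_{K/N}(\theta/2)^2 = (1-\mathfrak{c}_{K/N}(\theta))/(K/N)$, to $\cos(x)\le e^{-x^2/2}$ on $[0,\pi]$. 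This last inequality follows from the power-series expansion $e^{-x^2/2}-\cos(x) = x^4/12 + O(x^6)$, which is positive near $0$, combined with a monotonicity check at the endpoint $x=\pi$ (where $\cos\pi = -1 < e^{-\pi^2/2}$). The case $K<0$ is handled by the corresponding bound $\cosh(x)\le e^{x^2/2}$.

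\emph{Step 2 (substitution).} Inserting these bounds into \eqref{eq:contraction} converts the $\mathfrak{s}$-contraction into an inequality between $(1-\exp)$ expressions in $d_1^2$ and ordinary quantities in $d_0^2$:
\[
\frac{2N}{K}\bigl(1-e^{-Kd_1^2/(2N)}\bigr) \;\le\; e^{-K(s+t)}\,d_0^2 \;+\; \frac{2N\bigl(1-e^{-K(s+t)}\bigr)(\sqrt t-\sqrt s)^2}{K(s+t)},
\]
with $d_0 = d(x_0,y_0)$ and $d_1 = d(x_t,y_s)$.

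\emph{Step 3 (introduction of $\tau$).} To pass from the intermediate bound to the claimed form I use the key algebraic identity
\[
\tau(s,t) \;=\; \frac{2(s+\sqrt{st}+t)}{3} \;=\; 2\int_0^1 \bigl((1-u)\sqrt s + u\sqrt t\bigr)^2\,du,
\]
which represents $\tau$ as twice the time-average of the squared linear interpolation between $\sqrt s$ and $\sqrt t$. The plan is to apply \eqref{eq:contraction} in its infinitesimal form along the path $\bigl(s(u),t(u)\bigr)$ with $\sqrt{s(u)} = u\sqrt s$ and $\sqrt{t(u)} = u\sqrt t$ (using the semigroup property and $\evikn$ flow to treat $P_{t(u)}x_0, P_{s(u)}y_0$ as new initial data), derive a differential inequality for $H(u) = d\bigl(P_{t(u)}x_0,P_{s(u)}y_0\bigr)^2$ using the analytic sandwich from Step~1 applied locally, and integrate with the integrating factor associated to the weight $((1-u)\sqrt s+u\sqrt t)^2$. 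This integration naturally yields the coefficient $e^{-K\tau}$ and the inhomogeneous factor $(1-e^{-K\tau})/(K\tau)$.

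\emph{Special case $s=t$.} Here the inhomogeneous term vanishes since $\sqrt t-\sqrt s = 0$ and $\tau=2t$, so the claim reduces to $d(x_t,y_t)\le e^{-Kt}d(x_0,y_0)$, which follows directly from the $\evi_K$ reformulation in Lemma~\ref{lem:consistentKN} (by taking $N\to\infty$ in \eqref{eq:contraction}).

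\emph{Main obstacle.} The core difficulty lies in Step~3: matching the coefficients so that precisely $\tau = 2(s+\sqrt{st}+t)/3$ appears, rather than $s+t$ (which would correspond to the ray-path integration that reproduces \eqref{eq:contraction} itself). This boils down to choosing the right parametrization in which the conversion $\mathfrak{s}_{K/N}(\theta/2)^2 \mapsto \theta^2/4$ is loss-less at the infinitesimal level, and identifying the correct integrating factor arising from the $\sqrt{\cdot}$-scaling inherent to heat-flow-type regularization estimates.
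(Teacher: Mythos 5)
Your Step 1 (the global sandwich $\frac{N}{2K}(1-\e^{-K\theta^2/(2N)}) \le \skn{\theta/2}^2 \le \theta^2/4$ for $K>0$) and the plug-in of Step 2 are both sound, but the conclusion of Step 2 is a bound on $\frac{2N}{K}\bigl(1-\e^{-K d_1^2/(2N)}\bigr)$ with $d_1=d(x_t,y_s)$, and for $K>0$ this quantity is strictly \emph{smaller} than $d_1^2$, so that inequality does not yield a bound on $d_1^2$ itself. The real gap is in Step 3. Reparametrising in time only, $\sqrt{s(u)}=u\sqrt{s}$, $\sqrt{t(u)}=u\sqrt{t}$, one can indeed differentiate \eqref{eq:contraction} along the path; but the quantity for which this ODE closes is $G(u)=\skn{\frac12 d(x_{t(u)},y_{s(u)})}^2$, and integrating that differential inequality reproduces \eqref{eq:contraction} verbatim — it gives no simplification. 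To obtain an ODE for $H(u)=d(x_{t(u)},y_{s(u)})^2$ one must replace $4\skn{\theta/2}^2$ by $\theta^2$, a substitution that is lossless only as $\theta\to 0$; along your path the distance $d(x_{t(u)},y_{s(u)})$ stays of order one, so the error does not vanish and the modified rate $\tau(s,t)$ never emerges. Your closing paragraph (``Main obstacle'') correctly senses the difficulty, but ``choosing the right parametrisation'' of time alone cannot resolve it.

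The paper's proof supplies the missing device: a simultaneous subdivision in \emph{space and time}. Given a geodesic $\gamma$ from $\gamma_0$ to $\gamma_1$, one applies \eqref{eq:contraction} to each consecutive pair $\bigl(P_{\Phi((j-1)/k)}\gamma_{(j-1)/k},\,P_{\Phi(j/k)}\gamma_{j/k}\bigr)$, with $\Phi(r)=\bigl((1-r)\sqrt{s'}+r\sqrt{t'}\bigr)^2$, and lets $k\to\infty$. Along each fragment the spatial step $d(\gamma_{(j-1)/k},\gamma_{j/k})=d(\gamma_0,\gamma_1)/k$ is infinitesimal, so the replacement $4\skn{\theta/2}^2\leftrightarrow\theta^2$ incurs only $O(k^{-2})$ total error, giving an intermediate bound directly for $d^2$:
\begin{align*}
d(P_{t'}\gamma_1,P_{s'}\gamma_0)^2 ~\le~ \Bigl(\int_0^1\e^{-2K\Phi(r)}\,\dd r\Bigr)\,d(\gamma_0,\gamma_1)^2 + 2N\Bigl(\int_0^1\frac{1-\e^{-K\Phi(r)}}{K\Phi(r)}\,\dd r\Bigr)\bigl(\sqrt{t'}-\sqrt{s'}\bigr)^2.
\end{align*}
Applying this with $\gamma_0=P_{\lambda^{-1}\rho}y_0$, $\gamma_1=P_{\lambda\rho}x_0$, $s'=\lambda^{-1}h$, $t'=\lambda h$ and $h\to 0$ produces a differential inequality for $H(\rho)=d(x_{\lambda\rho},y_{\lambda^{-1}\rho})^2$ with linear coefficient $-\tfrac{2K}{3}(\lambda+\lambda^{-1}+1)$. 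The essential $+1$ arises from the cross term in $\int_0^1\Phi(r)\,\dd r=\tfrac{h}{3}(\lambda+\lambda^{-1}+1)$, i.e.\ from averaging over the \emph{spatial} parameter $r$, and it is exactly this extra $+1$ which, after Gronwall, converts $s+t$ into $\tau(s,t)=\tfrac{2}{3}(s+\sqrt{st}+t)$. Your identity $\tau=2\int_0^1((1-r)\sqrt{s}+r\sqrt{t})^2\,\dd r$ is precisely $2\int_0^1\Phi(r)\,\dd r$, so you have the right algebraic object in hand; what is missing is that $r$ must index a geodesic subdivision, not merely a time rescaling.
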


\begin{proof}
  For $0 < s' < t'$, let $\Phi : [ 0, 1 ] \to [ s' , t' ]$ be given by
  $ \Phi (r) := ( \sqrt{s'} + ( \sqrt{t'} - \sqrt{s'} ) r )^2 $. Let
  $( \gamma_u )_{u \in [0,1]}$ be a constant speed geodesic.  By
  \eqref{eq:contraction}, there exists $C_1 > 0$ such that
\begin{equation} \label{eq:small}
d ( P_{r} \gamma_{u} , P_{r'} \gamma_{u'} )
\le
C_1 \left( | u - u' | + | \sqrt{r} - \sqrt{r'} | \right)
\end{equation}
when $| u - u' |$ and
$| \sqrt{r} - \sqrt{r'} |$ is sufficiently small.
By the convexity of $z \mapsto z^2$ on $\R$,
for $k \in \N$,
\begin{align*}
d ( P_{t'}\gamma_1 , P_{s'} \gamma_0 )^2
\le
\sum_{j=1}^{k}
d \big(
  P_{\Phi ((j-1)/k)} \gamma_{(j-1)/k} ,
  P_{\Phi (j/k)} \gamma_{j/k}
\big)^2 k.
\end{align*}
By virtue of \eqref{eq:small}, we have
\begin{align*}
\lim_{k \to \infty} &
\sum_{j=1}^{k}
d \big(
  P_{\Phi ((j-1)/k)} \gamma_{(j-1)/k} ,
  P_{\Phi (j/k)} \gamma_{j/k}
\big)^2 k
\\
& \le
4 \lim_{k \to \infty}
\sum_{j=1}^{k}
\skn{ \frac12
d \big(
  P_{\Phi ((j-1)/k)} \gamma_{(j-1)/k} ,
  P_{\Phi (j/k)} \gamma_{j/k}
\big)
}^2 k
\\
& \le
4 \lim_{k \to \infty} \Bigg[
\sum_{j=1}^k \e^{-K ( \Phi (j/k) + \Phi ((j-1)/k) )}
\skn{
  \frac12
  d ( \gamma_{(j-1)/k} , \gamma_{j/k} )
}^2 k
\\
& \hspace{6em} +
\frac{N}{2} \sum_{j=1}^k
\frac{ 1 - \e^{-K( \Phi (j/k) + \Phi ( (j-1)/k ) )} }
{
  K ( \Phi (j/k) + \Phi ( ( j-1 )/k ) )
}
\frac{ \big( \sqrt{ t' } - \sqrt{ s' } \big)^2 }{k}
\Bigg]
\\
& =
\int_0^1 \e^{-2 K \Phi (r)} dr d ( \gamma_0 , \gamma_1 )^2
+ 2N
\int_0^1 \frac{ 1 - \e^{-K \Phi (r)}}{ K \Phi (r)} dr
( \sqrt{t'} - \sqrt{s'} )^2 .
\end{align*}
Let $\lambda \ge 1$, $\tau , h > 0$,
$s' = \lambda^{-1} ( \tau + h )$,
$t' = \lambda ( \tau + h )$,
$\gamma_0 := P_{\lambda^{-1} r} y_0$ and $\gamma_1 : = P_{\lambda r} x_0$.
Then the last inequality implies
\begin{align*}
\frac{\mathrm{d}^+}{\mathrm{d}\tau}
d ( x_{\lambda \tau} , y_{\lambda^{-1} \tau} )^2
\le
- \frac{2K}{3} ( \lambda + \lambda^{-1} + 1 )
d ( x_{\lambda \tau} , y_{\lambda^{-1} \tau} )^2
+
2N
( \sqrt{\lambda} - \sqrt{ \lambda^{-1} } )^2 .
\end{align*}
Thus the conclusion follows from this estimate as in the proof of
Theorem~\ref{thm:contraction}.
\end{proof}

We now investigate the relation between the Evolution Variational
Inequality and geodesic convexity of the functional $S$.

\begin{theorem}\label{thm:eviconvex}
  Assume that for every starting point $x_0\in \overline{D(S)}$ the
  $\evikn$ flow for $S$ exists. Then $S$ is strongly $(K,N)$-convex.
\end{theorem}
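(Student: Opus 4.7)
The strategy is to verify the convexity characterization of Lemma~\ref{lem:knconvexint2} applied to $u = U_N$ and $\kappa = K/N$ along every geodesic $\gamma: [0,1] \to D(S)$. Since condition (iii) of that lemma only requires the inequality on sufficiently short sub-geodesics, I would localize the problem and may assume $d := d(\gamma_0, \gamma_1)$ is as small as needed; a rescaling argument provides flexibility there.

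The main step is to exploit the EVI at both endpoints of the geodesic. Using Proposition~\ref{prop:evi-equiv}(iii), for each interior parameter $s \in (0,1)$ I would apply the EVI to the flow $(P_t \gamma_0)_{t \geq 0}$ with test point $z = \gamma_s$, and symmetrically to $(P_t\gamma_1)_{t \geq 0}$, taking the upper right-derivative at $t = 0^+$. This yields the pair of one-sided inequalities
\begin{align*}
    \left.\ddtr\right|_{t=0} \skn{\tfrac12 d(P_t \gamma_0, \gamma_s)}^2 + K \skn{\tfrac12 sd}^2 &\leq \tfrac{N}{2}\!\left(1 - \tfrac{U_N(\gamma_s)}{U_N(\gamma_0)}\right), \\
    \left.\ddtr\right|_{t=0} \skn{\tfrac12 d(P_t \gamma_1, \gamma_s)}^2 + K \skn{\tfrac12 (1-s)d}^2 &\leq \tfrac{N}{2}\!\left(1 - \tfrac{U_N(\gamma_s)}{U_N(\gamma_1)}\right),
\end{align*}
which encode the "downhill" direction of the flow at the endpoints. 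I would then combine them, weighted by $\sigkn{1-s}{d}$ and $\sigkn{s}{d}$, using the same-time contraction $d(P_t\gamma_0, P_t\gamma_1) \leq \e^{-Kt} d$ from Proposition~\ref{prop:simp-control} together with the triangle inequality $d(P_t \gamma_0, \gamma_s) + d(\gamma_s, P_t \gamma_1) \geq d(P_t\gamma_0, P_t\gamma_1)$ to control the joint infinitesimal motion and force a lower bound on $U_N(\gamma_s)$.

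The last step is a trigonometric/hyperbolic rearrangement using identities such as $\skn{\alpha\theta}\ckn{(1-\alpha)\theta} + \ckn{\alpha\theta}\skn{(1-\alpha)\theta} = \skn{\theta}$ to convert the combined inequality into the form of condition (ii) of Lemma~\ref{lem:knconvexint2}:
\begin{align*}
    U_N(\gamma_s) \geq \sigkn{1-s}{d} U_N(\gamma_0) + \sigkn{s}{d} U_N(\gamma_1).
\end{align*}
The hard part is the combination step. In the case $N = \infty$ treated in \cite{DS08, AGS11b}, the convexity inequality is linear in $S$ and the endpoint EVIs can essentially be added directly; here the nonlinearity of $U_N$ and the $\skn{\cdot}$-distorted distance couple the endpoint data and require tracking genuinely second-order information. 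I expect that, in analogy with the smooth case of Lemma~\ref{lem:gradflowevi}, the correct combination emerges by first applying the integrated form of EVI (Proposition~\ref{prop:evi-equiv}(ii)) over shrinking time intervals and then invoking the uniform continuity estimate of Proposition~\ref{prop:evi-properties}(iii) to pass to the limit uniformly in $s$, rather than by purely pointwise manipulation of the right-derivatives above.
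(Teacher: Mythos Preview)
Your setup reverses the roles of ``flow starting point'' and ``test point'' relative to what is needed, and this makes the inequalities point the wrong way. In the $\evikn$ inequality
\[
\ddtr\skn{\tfrac12 d(x_t,z)}^2 + K\skn{\tfrac12 d(x_t,z)}^2~\leq~\frac{N}{2}\Big(1-\frac{U_N(z)}{U_N(x_t)}\Big),
\]
the quantity $U_N(z)$ sits in the numerator on the right. If you take $x_0=\gamma_0$ (or $\gamma_1$) and $z=\gamma_s$, then after rearranging you obtain an \emph{upper} bound on $U_N(\gamma_s)$, whereas strong $(K,N)$-convexity requires a \emph{lower} bound. No weighted combination of two upper bounds for $U_N(\gamma_s)$ will produce the lower bound $U_N(\gamma_s)\geq\sigkn{1-s}{d}U_N(\gamma_0)+\sigkn{s}{d}U_N(\gamma_1)$, and the contraction/triangle argument you sketch only controls the sum of the derivative terms from below, which is again the wrong sign to convert the combined inequality. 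This is not a technicality you can fix with Lemma~\ref{lem:knconvexint2}(iii) or the integrated EVI; the obstruction is structural.

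The paper's proof instead starts the flow from the intermediate point: set $\gamma_s^t:=P_t\gamma_s$ and apply the integrated EVI (in the form \eqref{eq:evi-int-alt}) with $x=\gamma_s$ and successively $z=\gamma_0$, $z=\gamma_1$. Now $U_N(\gamma_0)$ and $U_N(\gamma_1)$ appear in the numerators and $U_N(\gamma_s^t)$ in the denominator, so the weighted sum immediately gives
\[
\sigkn{1-s}{d}U_N(\gamma_0)+\sigkn{s}{d}U_N(\gamma_1)~\leq~\frac{U_N(\gamma_s^t)}{K\,e_K(t)}\cdot A,
\]
and the task reduces to showing $A\leq e^{Kt}-1$ (resp.\ $\geq$ for $K<0$). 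This is where the triangle inequality enters, in the form $d(\gamma_s^t,\gamma_0)+d(\gamma_s^t,\gamma_1)\geq d$, together with the addition formula for $\fs_{K/N}$ and a monotonicity fact for $\sin\alpha\cos\beta'+\cos\beta\sin\alpha'$ (and its hyperbolic analogue). Letting $t\searrow 0$ then finishes. Your trigonometric-rearrangement instinct and the use of the geodesic identity $d(\gamma_s,\gamma_{s'})=|s-s'|d$ are exactly right; what needs to change is which point you flow from.
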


\begin{proof}
  Let $P$ denote the $\evikn$ gradient flow semigroup of $S$. We treat
  the case $K\neq0$ first. So let $(\gamma_s)_{s\in[0,1]}$ be a
  constant speed geodesic. Let us fix $s\in[0,1], t>0$ and set
  $\gamma_s^t:=P_t\gamma_s$. We can assume that
  $d:=d(\gamma_0,\gamma_1)\neq0$. Using the identity \eqref{eq:sc-trick}
  we see that \eqref{eq:evi-int} can be rewritten as
  \begin{align}\label{eq:evi-int-alt}
    \frac{U_N(z)}{U_N(P_{t_1}x)}e_K(t_1-t_0)~\leq~\frac{1}{K}\Big[\e^{K(t_1-t_0)}\ckn{d(P_{t_1}x,z)}
                                                                 -\ckn{d(P_{t_0}x,z)}\Big]\;.
  \end{align}
  Using \eqref{eq:evi-int-alt} with $t_0=0,t_1=t,x=\gamma_s$ and
  $z=\gamma_0$ respectively $z=\gamma_1$ we immediately obtain
  \begin{align*}
    \sigkn{1-s}{d}\cdot U_N&(\gamma_0) + \sigkn{s}{d}\cdot U_N(\gamma_1)\\
      ~\leq~\frac{U_N(\gamma^t_s)}{K\cdot e_K(t)}\Big[&\sigkn{1-s}{d}\cdot\Big(\e^{Kt}\ckn{d(\gamma_s^t,\gamma_0)}-\ckn{d(\gamma_s,\gamma_0)}\Big)\\
      +&\sigkn{s}{d}\cdot\Big(\e^{Kt}\ckn{d(\gamma_s^t,\gamma_1)}-\ckn{d(\gamma_s,\gamma_1)}\Big)\Big]\;.
  \end{align*}
  Let $A$ denote the term in square brackets in the last
  inequality. The claim follows if we show that for $t$ small enough
  we have $A\leq K\cdot e_K(t)=\e^{Kt}-1$ if $K>0$ and $A\geq \e^{Kt}-1$
  if $K<0$. Using the fact that $d(\gamma_s,\gamma_{s'})=\abs{s-s'}d$,
  we first find
  \begin{align*}
    A~=&~\frac{\e^{Kt}}{\skn{d}}\Big[\skn{(1-s)d}\cdot\ckn{d(\gamma_s^t,\gamma_0)}+\skn{sd}\cdot\ckn{d(\gamma_s^t,\gamma_1)}\Big]\\
       &-\frac{1}{\skn{d}}\Big[\skn{(1-s)d}\cdot\ckn{sd}+\skn{sd}\cdot\ckn{(1-s)d}\Big]\\
      :=&~ A_1+A_2    \;.
\end{align*}
By the angle sum identity for $\sin$ (resp. $\sinh$) we have
$A_2=-1$. To see that $A_1\leq \e^{Kt}$ (resp. $A_1\geq \e^{Kt}$), we
observe the following fact, which is easily verified using the
angle sum identities for trigonometric or hyperbolic functions: If
$\alpha,\alpha'\geq 0$ and
$\eps,\eps'\in[-\frac{\pi}{2},\frac{\pi}{2}]$ such that
$\eps+\eps'\geq0$, then, putting $\beta=\alpha+\eps,
\beta'=\alpha'+\eps'$, we have that
\begin{align*}
  \sin(\a)\cos(\beta') + \cos(\beta)\sin(\a')~&\leq~\sin(\a+\a')\;,\\
  \sinh(\a)\cosh(\beta') +
  \cosh(\beta)\sinh(\a')~&\geq~\sinh(\a+\a')\;.
\end{align*}
To conclude, we apply this with $\alpha=(1-s)d$, $\alpha'=sd$ and
$\eps=d(\gamma_s^t,\gamma_1)-(1-s)d$,
$\eps'=d(\gamma_s^t,\gamma_0)-sd$ and note that $\eps+\eps'\geq0$ by
the triangle inequality.

Finally, we treat the case $K=0$. By Lemma~\ref{lem:consistentKN} $P$
is a $\evi_{K',N}$ flow for every $K'<0$. Thus by the previous argument
\eqref{eq:defknconvex} holds with $K'$ instead of $K$ and we can pass
to the limit as $K'\nearrow0$.
\end{proof}

\section{Entropic and Riemannian curvature-dimension conditions}\label{sec:cds}
\subsection{The entropic curvature-dimension condition}
\label{sec:cdkn}

In this section we introduce a new curvature-dimension condition for
metric measure spaces based on $(K,N)$-convexity of the entropy on the
Wasserstein space.

Let $(X,d,m)$ be a metric measure space, i.e. $(X,d)$ is a complete
and separable metric space and $m$ is a locally finite,
$\sigma$-finite Borel measure on $X$.  We denote by $\cP_2(M,d)$ the
$L^2$-Wasserstein space over $(X,d)$, i.e. the set of all Borel
probability measures $\mu$ satisfying
\begin{align*}
\int d(x_0,x)^2\mu(\dd x)~<\infty
\end{align*}
for some, hence any, $x_0\in X$. The subspace of all measures
absolutely continuous w.r.t. $m$ is denoted by $\cP_2(X,d,m)$. The
$L^2$-Wasserstein distance between $\mu_0,\mu_1\in\cP_2(X,d)$ is
defined by
\begin{align*}
  W_2(\mu_0,\mu_1)^2~=~\inf \int d(x,y)^2\dd q(x,y)\;,
\end{align*}
where the infimum is taken over all Borel probability measures $q$ on
$X\times X$ with marginals $\mu_0$ and $\mu_1$.  Let us denote by
$\geo(X)=\{\gamma:[0,1]\to X\ |\ \gamma \text{ const. speed
  geodesic}\}$ the space of constant speed geodesics in $X$ equipped
with the topology of uniform convergence. For any $t\in[0,1]$ we
denote by $e_t:\geo(X)\to X$ the evaluation map $\gamma\mapsto
\gamma_t$. Recall that a \emph{dynamic optimal coupling} between
$\mu_0,\mu_1\in\cP_2(X,d)$ is a probability measure
$\pi\in\cP(\geo(X))$ such that $(e_0,\e_1)_\#\pi$ is an optimal
coupling of $\mu_0,\mu_1$. The curve $(\mu_t)_{t\in[0,1]}$ with
$\mu_t=(e_t)_\#\pi$ is then a geodesic in $\cP_2(X,d)$ connecting
$\mu_0$ to $\mu_1$. Moreover, by \cite[Lem.~I.2.11]{S06}, for each
geodesic $\Gamma:[0,1] \to \cP_2(X,d)$, there exists a probability
measure $\pi$ on $\geo(X)$ such that $\Gamma_t = (e_t)_\#\pi$ for all
$t\in[0,1]$.

Given a measure $\mu\in\cP_2(X,d)$ we define its relative entropy by
\begin{align*}
  \ent(\mu)~:=~\int \rho\log\rho\dd m\;,
\end{align*}
if $\mu=\rho m$ is absolutely continuous w.r.t. $m$ and
$(\rho\log\rho)_+$ is integrable. Otherwise we set
$\ent(\mu)=+\infty$. The subset of probability measures with finite
entropy will be denoted by $\cP_2^*(X,d,m)$. Moreover, for a number
$N\in(0,\infty)$ we introduce the functional
$U_N:\cP_2(X,d)\to[0,\infty]$ by
\begin{align*}
  U_N(\mu)~:=~\exp\left(-\frac{1}{N}\ent(\mu)\right)\;.
\end{align*}

\begin{definition}\label{def:ecdkn}
  Given two numbers $K\in\R$, $N\in(0,\infty)$ we say that a metric
  measure space $(X,d,m)$ satisfies the \emph{entropic
    curvature-dimension condition} $\ecdkn$ if and only if for each
  pair $\mu_0,\mu_1\in\cP^*_2(X,d,m)$ there exists a constant speed
  geodesic $(\mu_t)_{t\in[0,1]}$ in $\cP_2^*(X,d,m)$ connecting
  $\mu_0$ to $\mu_1$ such that for all $t\in[0,1]$:
  \begin{align}\label{eq:ecdkn}
    U_{N}(\mu_t)~\geq~\sigkn{1-t}{W_2(\mu_0,\mu_1)} U_{N}(\mu_0) +
    \sigkn{t}{W_2(\mu_0,\mu_1)} U_{N}(\mu_1)\;.
  \end{align}
  If \eqref{eq:ecdkn} holds for any constant speed geodesic
  $(\mu_t)_{t\in[0,1]}$ in $\cP^*_2(X,d,m)$ we say that $(X,d,m)$ is a
  \emph{strong} $\ecdkn$ space.
\end{definition}

In other words, the $\ecdkn$-condition means that the entropy is
$(K,N)$-convex along Wasserstein geodesic. As an immediate consequence
of Lemma~\ref{lem:convexconsistent} we obtain the following
consistency result.

\begin{lemma}\label{lem:consistent}
  If $(X,d,m)$ satisfies the $\ecdkn$ condition, then it also
  satisfies $\cd^e(K',N')$ for any $K'\leq K$ and $N'\geq
  N$. Moreover, it satisfies the $\cd(K,\infty)$ condition.
\end{lemma}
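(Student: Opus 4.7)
The plan is to recognize that the $\ecdkn$ condition, as stated in Definition~\ref{def:ecdkn}, is precisely the $(K,N)$-convexity (in the sense of Definition~\ref{def:knconvex}) of the Boltzmann entropy $\ent$ viewed as a functional on the metric space $(\cP_2(X,d), W_2)$, with proper domain $\cP_2^*(X,d,m)$. Indeed, the quantity $U_N(\mu)=\exp(-\ent(\mu)/N)$ matches the general definition \eqref{eq:def-U}, and the inequality \eqref{eq:ecdkn} is exactly the convexity condition \eqref{eq:defknconvex} spelled out for this functional along a Wasserstein geodesic. With this identification, both assertions become instances of Lemma~\ref{lem:convexconsistent} applied to $S=\ent$.

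First I would apply the first part of Lemma~\ref{lem:convexconsistent}: from the $(K,N)$-convexity of $\ent$ on $(\cP_2(X,d),W_2)$, one immediately gets $(K',N')$-convexity for every $K'\leq K$ and $N'\geq N$, since the coefficients $\sigkn{t}{\theta}$ are monotone in $K$ and the consistency in $N$ follows by adding the trivial $(0,N'-N)$-convex functional $S^0\equiv 0$ via Lemma~\ref{lem:convex-sum}. By definition this is the $\cd^e(K',N')$ condition.

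Second I would invoke the $K$-convexity statement of Lemma~\ref{lem:convexconsistent}: along the same geodesic $(\mu_t)$ provided by $\ecdkn$ one has
\begin{equation*}
\ent(\mu_t)\;\leq\;(1-t)\ent(\mu_0)+t\,\ent(\mu_1)-\frac{K}{2}t(1-t)W_2(\mu_0,\mu_1)^2,
\end{equation*}
which is exactly the defining inequality of the $\cd(K,\infty)$ condition of Lott--Sturm--Villani. The derivation inside Lemma~\ref{lem:convexconsistent} proceeds by subtracting $1$ from both sides of \eqref{eq:defknconvex}, multiplying by $N$ and sending $N\to\infty$, using the expansions $\sigkn{t}{\theta}=t-K(t^3-t)\theta^2/(6N)+o(1/N)$ and $U_N=1-\ent/N+o(1/N)$; since these expansions are pointwise at fixed points $\mu_0,\mu_t,\mu_1$ of a single, $N$-independent geodesic, no further technicality arises.

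There is essentially no obstacle here, which is why the result is stated as an \emph{immediate consequence}: the only thing to check is the natural identification of $\ecdkn$ with the abstract $(K,N)$-convexity notion from Section~\ref{sec:evi}, after which both conclusions are simply rereadings of Lemma~\ref{lem:convexconsistent} on the Wasserstein space.
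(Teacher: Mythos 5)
Your proposal is correct and matches the paper's own (one-line) proof: the paper says exactly "As an immediate consequence of Lemma~\ref{lem:convexconsistent} we obtain the following consistency result," and your argument is the spelled-out version of that, identifying $\ecdkn$ as $(K,N)$-convexity of $\ent$ on $(\cP_2(X,d),W_2)$ and then reading off both consistency in $(K,N)$ and the $\cd(K,\infty)$ inequality from Lemma~\ref{lem:convexconsistent}.
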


Using similar arguments as in the case of the $\cd(K,\infty)$
condition introduced in \cite{S06} it is immediate to check that
$\ecdkn$ is invariant under isomorphisms of metric measure
spaces. Moreover, adapting \cite[Thm.~I.4.20]{S06}, one can check that
it is stable under convergence of metric measure spaces in the
transportation distance $\D$, also introduced in \cite{S06}.



As an application of the additivity of $(K,N)$-convexity we note
the following

\begin{proposition}[Weighted spaces]\label{prop:weighted-cde}
  Let $(X,d,m)$ be a metric measure space satisfying $\ecdkn$ and let
  $V:X\to\R$ be a measurable function bounded from below that is
  strongly $(K',N')$-convex in the sense of
  Definition~\ref{def:knconvex}. Then $(X,d,\e^{-V}m)$ satisfies
  $\cd^e(K+K',N+N')$. In particular, if $(X,d,m)$ satisfies strong
  $\ecdkn$, then $(X,d,\e^{-V}m)$ also satisfies strong
  $\cd^e(K+K',N+N')$.
\end{proposition}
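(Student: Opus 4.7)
The strategy is to express the entropy with respect to the weighted reference measure as a sum and then invoke the additivity of $(K,N)$-convexity from Lemma~\ref{lem:convex-sum}. A direct change of reference measure computation shows that for $\mu\in\cP_2^*(X,d,\e^{-V}m)$,
\begin{equation*}
  \ent_{\e^{-V}m}(\mu) ~=~ \ent_m(\mu) + \int V\,\dd\mu,
\end{equation*}
with both sides well defined in $(-\infty,\infty]$ since $V$ is bounded below. Write $\mathcal{V}(\mu):=\int V\,\dd\mu$. If $\mathcal{V}$ is shown to be \emph{strongly} $(K',N')$-convex on $\cP_2(X,d)$, then, given a Wasserstein geodesic $(\mu_t)$ along which $\ent_m$ is $(K,N)$-convex (provided by $\ecdkn$), Lemma~\ref{lem:convex-sum} yields $(K+K',N+N')$-convexity of $\ent_{\e^{-V}m}$ along the same geodesic, i.e.\ $\cd^e(K+K',N+N')$ (using also that $V$ bounded below ensures $\mu_t\in\cP_2^*(X,d,\e^{-V}m)$ throughout).

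To establish strong $(K',N')$-convexity of $\mathcal{V}$, I would take an arbitrary constant speed geodesic $\mu_t=(e_t)_\#\pi$ in $\cP_2(X,d)$ with dynamical plan $\pi\in\cP(\geo(X))$. For $\pi$-a.e.\ $\gamma$, strong $(K',N')$-convexity of $V$ along $\gamma$, together with the identity $\sigma^{(s)}_{\kappa}(\theta)=\sigma^{(s)}_{\kappa\theta^{2}}(1)$, gives after taking $-N'\log$
\begin{equation*}
  \frac{V(\gamma_t)}{N'} ~\leq~ -G_t\Big(-\tfrac{V(\gamma_0)}{N'},\;-\tfrac{V(\gamma_1)}{N'},\;\tfrac{K'}{N'}d(\gamma_0,\gamma_1)^2\Big),
\end{equation*}
with $G_t$ as in \eqref{eq:convexG}. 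Integrating against $\pi$, applying Jensen's inequality (legitimate because $G_t$ is convex by Lemma~\ref{lem:convexG}), and using the identity $\int d(\gamma_0,\gamma_1)^2\,\dd\pi=W_2(\mu_0,\mu_1)^2$ valid for any Wasserstein geodesic, one obtains
\begin{equation*}
  \frac{\mathcal{V}(\mu_t)}{N'} ~\leq~ -G_t\Big(-\tfrac{\mathcal{V}(\mu_0)}{N'},\;-\tfrac{\mathcal{V}(\mu_1)}{N'},\;\tfrac{K'}{N'}W_2(\mu_0,\mu_1)^2\Big),
\end{equation*}
and exponentiating recovers the desired strong $(K',N')$-convexity inequality for $\mathcal{V}$. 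For the second assertion, if $(X,d,m)$ satisfies strong $\ecdkn$, then $\ent_m$ is strongly $(K,N)$-convex and Lemma~\ref{lem:convex-sum}, applied along every Wasserstein geodesic, delivers strong $\cd^e(K+K',N+N')$ for $(X,d,\e^{-V}m)$.

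The main obstacle is precisely the lifting from pointwise $(K',N')$-convexity of $V$ on $X$ to strong $(K',N')$-convexity of the linear functional $\mathcal{V}$ on $\cP_2(X,d)$: since the convexity condition is phrased in terms of the nonlinear quantity $U_{N'}=\exp(-V/N')$ rather than $V$ itself, a naive averaging of the pointwise inequality over $\pi$ does not collapse to the right form. The key mechanism is the convex auxiliary function $G_t$ of Lemma~\ref{lem:convexG}, which lets Jensen's inequality reduce the $\pi$-integral of the $G_t$-term to an evaluation at the Wasserstein endpoints and the single scalar $W_2(\mu_0,\mu_1)^2$; this is exactly the same mechanism as in the proof of Lemma~\ref{lem:convex-sum}.
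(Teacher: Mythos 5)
Your proof is correct and follows essentially the same route as the paper's own argument: establish strong $(K',N')$-convexity of the potential functional $\mu\mapsto\int V\,\dd\mu$ by applying Jensen's inequality through the convex auxiliary function $G_t$ of Lemma~\ref{lem:convexG}, then decompose $\ent(\cdot\,|\,\e^{-V}m)=\ent(\cdot\,|\,m)+\int V\,\dd\mu$ and invoke Lemma~\ref{lem:convex-sum}. No meaningful deviation from the paper's proof.
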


\begin{proof}
  We will first show that the functional $\overline
  V:\cP_2(X,d)\to(-\infty,\infty]$ defined by $\overline V (\mu)=\int
  V\dd \mu$ is strongly $(K',N')$-convex on $\cP_2(X,d)$. Let $\pi \in
  \cP ( \geo (X) )$ be an dynamic optimal coupling.  and set $\mu_t
  = (e_t)_\# \pi$.  From the $(K',N')$-convexity of $V$ we have for
  any $\gamma\in\geo(X)$ and $t\in[0,1]$:
  \begin{align}\label{eq:weighted-cde}
    \e^{-V(\gamma_t)/N'}~\geq~ \sigma^{(1-t)}_{K'/N'}\big(d(\gamma_0,\gamma_1)\big)\cdot\e^{-V(\gamma_t)/N'} + \sigma^{(t)}_{K'/N'}\big(d(\gamma_0,\gamma_1)\big)\cdot\e^{-V(\gamma_t)/N'}\;.
  \end{align}
  Take the logarithm on both sides of \eqref{eq:weighted-cde}. By
  virtue of Lemma \ref{lem:convexG}, we can use Jensen's inequality
  when integrating it w.r.t. $\pi$ to obtain
  \begin{align*}
    -\frac{1}{N'}\overline V(\Gamma_t)~=~-\frac{1}{N'}\int V(\gamma_t)\dd\pi(\gamma)~&\geq~\int G_t \Big(-\frac{1}{N'} V(\gamma_0),-\frac{1}{N'} V(\gamma_1), \frac{K'}{N'}d(\gamma_0,\gamma_1)^2\Big)\dd \pi(\gamma)\\
                                  &\geq G_t \Big(-\frac{1}{N'} \overline V(\mu_0),-\frac{1}{N'} \overline V(\mu_1),\frac{K'}{N'} W_2(\mu_0,\mu_1)^2\Big)\;.
  \end{align*}
  Taking the exponential again then yields the claim. By the lower
  boundedness of $V$ we have
  $\cP_2(X,d,\e^{-V}m)\subset\cP_2(X,d,m)$. Now the assertion of the
  proposition is a consequence of the observation
  \begin{align*}
    \ent(\mu\vert\e^{-V}m)~=~\ent(\mu\vert m) + \overline V(
\mu)
  \end{align*}
  and Lemma~\ref{lem:convex-sum}. The latter assertion is obvious from
  the proof.
\end{proof}

We will now derive some first geometric consequences of the entropic
curvature-dimension condition.

\begin{proposition}[Generalized Brunn--Minkowski inequality]\label{prop:BMI}
  Assume that $(X,d,m)$ satisfies the condition $\ecdkn$ with
  $N\geq1$. Then for all measurable sets $A_0,A_1\subset X$ with
  $m(A_0),m(A_1)>0$ and all $t\in[0,1]$ we have
  \begin{align}\label{eq:BMI}
    \bar{m}(A_t)^{1/N}~\geq~\sigkn{1-t}{\Theta}\cdot m(A_0)^{1/N} + \sigkn{t}{\Theta}\cdot m(A_1)^{1/N}\;,
  \end{align}
  where $\bar{m}$ is the completion of $m$, $A_t$ denotes the set of
  $t$-midpoints and $\Theta$ the minimal/maximal distance between
  points in $A_0$ and $A_1$, i.e.
  \begin{align*}
    A_t~&=~\{\gamma_t : \gamma:[0,1]\to X \text{ geodesic s.t. }\gamma_0\in A_0, \gamma_1\in A_1\}\;,\\
   \Theta~&=~
    \begin{cases}
      \inf_{x_0\in A_0,x_1\in A_1}d(x_0,x_1)\;, & K\geq 0\;,\\
      \sup_{x_0\in A_0,x_1\in A_1}d(x_0,x_1)\;, & K< 0\;.
    \end{cases}
  \end{align*}
\end{proposition}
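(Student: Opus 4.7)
The plan is to apply the entropic curvature-dimension condition to the uniform measures on $A_0$ and $A_1$, identify the values of $U_N$, and then use Jensen's inequality together with the fact that Wasserstein geodesics between absolutely continuous measures can be represented as pushforwards of measures on the space of geodesics.

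First I would reduce to the case where $m(A_0)$ and $m(A_1)$ are both finite by a monotone exhaustion argument; once the inequality is known on sets of finite measure, we may pass to the limit along any sequences of subsets $A_i^n \subset A_i$ of finite measure increasing to $A_i$. Then, given $A_0, A_1$ of finite positive measure, I would set
\[
  \mu_i ~:=~ \frac{1}{m(A_i)} \one_{A_i} \, m \quad (i=0,1),
\]
which belong to $\cP_2^*(X,d,m)$ with relative entropy $\ent(\mu_i) = -\log m(A_i)$, hence $U_N(\mu_i) = m(A_i)^{1/N}$. The $\ecdkn$ condition supplies a constant speed geodesic $(\mu_t)_{t \in [0,1]}$ satisfying
\[
  U_N(\mu_t) ~\geq~ \sigkn{1-t}{W_2(\mu_0,\mu_1)} \, m(A_0)^{1/N} + \sigkn{t}{W_2(\mu_0,\mu_1)} \, m(A_1)^{1/N}.
\]

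Next I would invoke the representation cited just before Definition~\ref{def:ecdkn}: there exists $\pi \in \cP(\geo(X))$ with $\mu_t = (e_t)_\# \pi$ for all $t$. Since $\mu_0, \mu_1$ are concentrated on $A_0, A_1$ respectively, the measure $\pi$ is concentrated on geodesics $\gamma$ with $\gamma_0 \in A_0$ and $\gamma_1 \in A_1$. Consequently $\mu_t$ is concentrated on $A_t$ (which is universally measurable as a continuous image of a Borel set, motivating the use of the completion $\bar m$). Writing $\mu_t = \rho_t \, m$ and applying Jensen's inequality to the convex function $x \mapsto x \log x$ against the probability measure $\bar m\vert_{A_t}/\bar m(A_t)$ yields $\ent(\mu_t) \geq -\log \bar m(A_t)$, i.e.\ $U_N(\mu_t) \leq \bar m(A_t)^{1/N}$.

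To finish I would compare $W_2(\mu_0,\mu_1)$ with $\Theta$ by exploiting the appropriate monotonicity of $\theta \mapsto \sigma^{(t)}_{K/N}(\theta)$: a direct computation with $\fs_{K/N}$ shows this function is nondecreasing in $\theta$ when $K \geq 0$ and nonincreasing when $K < 0$. For any coupling $q$ of $\mu_0,\mu_1$ one has $\int d^2 \, dq \geq (\inf_{A_0 \times A_1} d)^2$ and $\leq (\sup_{A_0 \times A_1} d)^2$; taking infimum over couplings gives $W_2(\mu_0,\mu_1) \geq \Theta$ in the case $K \geq 0$ and $W_2(\mu_0,\mu_1) \leq \Theta$ in the case $K < 0$. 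In both cases $\sigkn{t}{W_2(\mu_0,\mu_1)} \geq \sigkn{t}{\Theta}$, and combining this with the two previous displays yields \eqref{eq:BMI}.

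The only real subtlety I anticipate is the measurability of $A_t$; this is handled cleanly by passing to the completion $\bar m$ and using that $\mu_t$, as a Borel probability measure, sees $A_t$ through its $\bar m$-measurable hull. The rest is essentially a direct transcription of the $\ecdkn$ inequality into the sets-of-positive-measure setting.
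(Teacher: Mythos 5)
Your proof is correct and follows the same overall strategy as the paper: reduce to finite-measure sets, apply $\ecdkn$ to the normalized indicator measures $\mu_i = m(A_i)^{-1}\one_{A_i}m$ (with $U_N(\mu_i) = m(A_i)^{1/N}$), use the lifting of the Wasserstein geodesic to a measure on $\geo(X)$ to conclude that $\mu_t$ is concentrated on the analytic set $A_t$, estimate $U_N(\mu_t)$ from above by $\bar m(A_t)^{1/N}$, and compare $W_2(\mu_0,\mu_1)$ with $\Theta$ via the monotonicity of $\theta\mapsto\sigkn{t}{\theta}$. The one place you diverge is in how the middle estimate is proved. The paper uses a ``double Jensen'': first $\exp\bigl(-\frac1N\int\log\rho_t\,\dd\mu_t\bigr) \leq \int\rho_t^{-1/N}\,\dd\mu_t$ by convexity of $\exp$, then $\int_{A_t}\rho_t^{1-1/N}\,\dd\bar m \leq \bar m(A_t)^{1/N}$ by Jensen (or H\"older) using concavity of $x\mapsto x^{1-1/N}$, which is where the hypothesis $N\geq 1$ enters. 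You instead apply Jensen once with the convex function $x\mapsto x\log x$ against $\bar m|_{A_t}/\bar m(A_t)$ to get the entropy lower bound $\ent(\mu_t) \geq -\log\bar m(A_t)$ directly (the standard fact that the normalized restriction of $m$ minimizes entropy among probability measures concentrated on $A_t$), and then exponentiate. This yields the same inequality, is a bit cleaner, and incidentally works for all $N > 0$ rather than only $N\geq 1$, so the $N\geq1$ assumption in the proposition is not actually needed for your argument.
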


\begin{proof}
  We first prove the assertion under the assumption that
  $m(A_0),m(A_1)<\infty$, the general case then follows by
  approximating the sets $A_0,A_1$ by sets of finite volume. Applying
  the condition $\cd^e(K,N)$ to $\mu_i=m(A_i)^{-1}\one_{A_i}m$ for $i=0,1$
  yields
  \begin{align}\label{eq:BMI1}
    U_{N}(\Gamma_t)~\geq~\sigkn{1-t}{W_2(\mu_0,\mu_1)}\cdot m(A_0)^{1/N} + \sigkn{t}{W_2(\mu_0,\mu_1)}\cdot m(A_1)^{1/N}\;,
  \end{align}
  where $\mu_t=\rho_tm$ is the $t$-midpoint of a geodesic connecting
  $\mu_0$ and $\mu_1$. Since $\mu_t$ is concentrated on $A_t$, which
  is a Souslin set, a double application of Jensen's inequality gives
  that
  \begin{align*}
    U_{N}(\mu_t)~&=~\exp\Big(-\frac{1}{N}\int\log\rho_t\dd\mu_t\Big)~\leq~\int\rho_t^{-1/N}\dd\mu_t\\
                ~&=~\int\limits_{A_t}\rho_t^{1-1/N}\dd \bar{m}~\leq~\bar{m}(A_t)^{1/N}\;.
  \end{align*}
  Hence \eqref{eq:BMI} follows by noting that
  $\theta\mapsto\sigkn{t}{\theta}$ is increasing if $K\geq0$ and
  decreasing if $K<0$ and that $W_2(\mu_0,\mu_1)\geq\Theta$
  (resp. $\leq\Theta$).
\end{proof}

The Brunn--Minkowski inequality entails further geometric consequences
like a Bishop--Gromov type volume growth estimate and a generalized
Bonnet--Myers theorem. The following results can be deduced from
Proposition~\ref{prop:BMI} using similar arguments as in \cite{S06}
and replacing the coefficients $\tau^{(t)}_{K/N}(\cdot)$ by
$\sigkn{t}{\cdot}$.

\begin{remark}\label{rem:nonsharp}
  The estimates presented below are not sharp, yet they provide
  necessary local compactness results for example. We will see below
  that under the assumption that $(X,d,m)$ is non-branching the
  $\ecdkn$ condition is equivalent to the $\cdskn$ condition. It has
  been proven by Cavaletti \& Sturm \cite{CS12} that under the same
  assumption $\cdskn$ implies the measure contraction property
  $\text{MCP}(K,N)$ from which a sharp Bishop--Gromov and Lichnerowicz
  inequality can be derived, see \cite{S06}.
\end{remark}

To state the volume growth estimate we introduce the following
notation. Given a metric measure space $(X,d,m)$ and a point
$x_0\in\supp[m]$ we denote by
\begin{align*}
  v(r)~:=~m(\overline{B_r(x_0)})
\end{align*}
the volume of the closed ball of radius $r$ around $x_0$. Moreover, we
set
\begin{align*}
  s(r)~:=~\limsup\limits_{\delta\to0}\frac{1}{\delta}m(\overline{B_{r+\delta}(x_0)}\setminus B_{r}(x_0))
\end{align*}
for the volume of the corresponding sphere.

\begin{proposition}[Generalized Bishop--Gromov inequality]\label{prop:BGI}
  Assume that $(X,d,m)$ satisfies the condition $\ecdkn$ with
  $N\geq1$. Then each bounded closed set $M\subset\supp[m]$ is compact
  and has finite volume. More precisely, for each $x_0\in\supp[m]$ and
  $0<r<R\leq\pi\sqrt{N/(K\vee0)}$,
  \begin{align}\label{eq:BGI}
    \frac{s(r)}{s(R)}~\geq~\left(\frac{\skn{r}}{\skn{R}}\right)^{N}
 \quad \text{and} \quad
   \frac{v(r)}{v(R)}~\geq~\frac{\int_0^r\skn{t}^{N}\dd t}{\int_0^R\skn{t}^{N}\dd t}\;.
  \end{align}
\end{proposition}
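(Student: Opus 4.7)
Both estimates will be derived from the generalized Brunn--Minkowski inequality (Proposition~\ref{prop:BMI}) by a limiting procedure, adapting the scheme of \cite[\S2]{S06} with $\tau^{(t)}_{K/N}$ replaced everywhere by $\sigma^{(t)}_{K/N}$; the compactness assertion is then a standard consequence of the resulting volume control.

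For the sphere estimate I would fix $0<r<R\leq\pi\sqrt{N/(K\vee 0)}$, set $t:=r/R$, and for small parameters $0<\varepsilon\ll h$ apply Proposition~\ref{prop:BMI} to
\begin{align*}
A_0~:=~\overline{B_\varepsilon(x_0)},\qquad A_1~:=~\overline{B_{R+h}(x_0)}\setminus B_R(x_0).
\end{align*}
A triangle-inequality computation shows that $A_t$ sits inside a spherical shell around $x_0$ whose inner radius tends to $r=tR$ and whose radial thickness tends to $h$, while $\Theta\to R$ in both the $K\geq 0$ (infimum) and $K<0$ (supremum) cases because $A_0$ collapses to $\{x_0\}$ and $A_1$ to $\partial B_R(x_0)$. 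Provided $m$ has no atom at $x_0$, one has $m(B_\varepsilon(x_0))\to 0$ as $\varepsilon\to 0$ so that the first summand in \eqref{eq:BMI} vanishes. Extracting a subsequence $h_n\to 0$ realising the $\limsup$ defining $s(R)$, dividing by $h_n^{1/N}$ and passing to the limit then yields $s(r)\geq(\skn{r}/\skn{R})^{N}s(R)$.

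For the ball estimate in the case $K<0$ one can apply Proposition~\ref{prop:BMI} directly to $A_0=B_\varepsilon(x_0)$ and $A_1=\overline{B_R(x_0)}$: here $\Theta\leq R+\varepsilon$, and letting $\varepsilon\to 0$ produces the stronger bound $v(r)/v(R)\geq(\skn{r}/\skn{R})^{N}$, whence the integral ratio follows by elementary monotonicity of $\skn{\cdot}$. In the case $K\geq 0$ the direct approach degenerates ($\Theta=0$), and I would instead use the sphere estimate: applying it for every $R'\in(r,R]$ shows $\rho\mapsto s(\rho)/\skn{\rho}^N$ is non-increasing on $(0,R]$, and a Chebyshev-type rearrangement
\begin{align*}
\int_0^r\!\!\int_r^R\bigl(f(\rho)-f(\sigma)\bigr)g(\rho)g(\sigma)\,d\rho\,d\sigma~\geq~0
\end{align*}
with $f=s/\skn{\cdot}^N$ non-increasing and $g=\skn{\cdot}^N>0$ translates, after identifying $v$ with $\int s$, into the desired ball inequality. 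Finally, local finiteness of $m$ supplies $v(r_0)<\infty$ for some $r_0>0$, the ball bound propagates this to every admissible $R$, and the resulting doubling-type control $v(2\varepsilon)\leq Cv(\varepsilon)$ on bounded regions produces $\varepsilon$-nets of bounded cardinality in every closed bounded set, delivering compactness by completeness of $(X,d)$.

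The main technical hurdle is the passage from annular Brunn--Minkowski inequalities to a pointwise comparison of $s(r)$ and $s(R)$: since $s$ is a $\limsup$ rather than a genuine derivative, one must extract subsequences realising this $\limsup$ simultaneously on both sides and check that the thickness factors cancel in the limit. A secondary subtlety is the identification of $v$ with $\int_0^{\cdot} s$ in the $K\geq 0$ argument, which can be handled by reducing to almost-every $R$ (where $v$ is differentiable and $v'=s$ a.e.) and extending to all admissible $R$ by monotonicity of both sides of the stated inequality.
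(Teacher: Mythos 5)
The paper gives no self-contained proof of this proposition---it only notes that the result follows from Proposition~\ref{prop:BMI} by the arguments of \cite{S06} with $\tau^{(t)}_{K/N}$ replaced by $\sigkn{t}{\cdot}$---so your proposal correctly identifies the intended route. That said, there are a few points to flag. In the sphere estimate, the $t$-midpoint set of $\overline{B_\varepsilon(x_0)}$ and $\overline{B_{R+h}(x_0)}\setminus B_R(x_0)$ sits (as $\varepsilon\to 0$) in an annulus of thickness $th$, not $h$: from $d(\gamma_0,\gamma_1)\in[R-\varepsilon,R+h+\varepsilon]$ and the triangle inequality one gets $d(\gamma_t,x_0)\in[tR-(1+t)\varepsilon,\,tR+th+(1+t)\varepsilon]$. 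This is a slip but a harmless one, since carrying it through actually yields the stronger bound $t\,s(r)\geq(\skn{r}/\skn{R})^N s(R)$, from which the stated inequality follows on dropping $1/t\geq 1$. Also, the hypothesis ``$m$ has no atom at $x_0$'' is not needed: the $A_0$-term $\sigkn{1-t}{\Theta}m(A_0)^{1/N}$ is non-negative and can simply be discarded, so the argument goes through without any non-atomicity assumption.

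The more substantial gap is in the passage from the sphere inequality to the ball inequality for $K\geq 0$, which you correctly identify as necessary (the direct ball comparison with $\Theta=0$ gives only $v(r)/v(R)\geq(r/R)^N$, and one can check this can be \emph{weaker} than the stated integral ratio when $K>0$ and $r$ is close to $\pi\sqrt{N/K}$). Your Chebyshev rearrangement is the right idea, but it hinges on the identity $v(R)-v(r)=\int_r^R s(t)\,\dd t$, and your proposed fix (``reduce to a.e.~$R$'') does not by itself address whether $v$ has a singular part. One actually needs to establish (i) $m(\{d(x_0,\cdot)=\rho\})=0$ for all $\rho$, which can be obtained by applying Proposition~\ref{prop:BMI} with $A_1$ a single sphere and $A_0$ a small ball: a single atomic sphere would force uncountably many disjoint spheres of uniformly positive mass inside a ball of finite mass; (ii) continuity and local Lipschitz regularity of $v$, whence $v$ is absolutely continuous and $v'=s$ a.e.\ at differentiability points. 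These steps are standard but must be spelled out; as written, the argument has a hole at exactly the place you label a ``secondary subtlety.'' Finally, your observation that $(\skn{r}/\skn{R})^N\geq\int_0^r\skn{t}^N\dd t\big/\int_0^R\skn{t}^N\dd t$ is correct but does not follow from ``elementary monotonicity of $\skn{\cdot}$'' alone; it amounts to showing $u\mapsto\skn{u}^{-N}\int_0^u\skn{t}^N\dd t$ is non-decreasing, a Wronskian-type computation that deserves a line of justification.
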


\begin{corollary}[Generalized Bonnet--Myers theorem]\label{cor:BMT}
  If $(X,d,m)$ satisfies the condition $\ecdkn$ with $K>0$ and $N\geq
  1$, then the support of $m$ is compact and its diameter $L$ can be
  bounded as $L\leq\pi\sqrt{N/K}$.
\end{corollary}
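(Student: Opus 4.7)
The plan is to deduce the diameter bound from the generalized Brunn--Minkowski inequality by exploiting the singularity of the distortion coefficient $\sigma_{K/N}^{(t)}(\theta)$ at $\theta = \pi\sqrt{N/K}$, and then to obtain compactness from the Bishop--Gromov estimate.

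First, I would argue by contradiction for the diameter bound. Suppose $L = \operatorname{diam}(\operatorname{supp}[m]) > \pi\sqrt{N/K}$. Then there exist $p,q \in \operatorname{supp}[m]$ with $d(p,q) > \pi\sqrt{N/K}$, so one may choose $\eps > 0$ small enough that $d(p,q) - 2\eps > \pi\sqrt{N/K}$. Set $A_0 = B_\eps(p)$ and $A_1 = B_\eps(q)$; both have strictly positive $m$-measure by definition of the support. The infimum distance $\Theta$ between $A_0$ and $A_1$ satisfies $\Theta \geq d(p,q) - 2\eps > \pi\sqrt{N/K}$, and hence $(K/N)\Theta^2 > \pi^2$. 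By Definition~\ref{def:distortioncoefficients}, this forces $\sigkn{1-t}{\Theta} = \sigkn{t}{\Theta} = +\infty$ for every $t \in (0,1)$. Plugging this into the Brunn--Minkowski inequality (Proposition~\ref{prop:BMI}) gives $\bar m(A_t)^{1/N} = +\infty$, so the set of $t$-midpoints $A_t$ must have infinite outer measure.

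The second step is to derive a contradiction from this. By the triangle inequality, every $t$-midpoint of a geodesic joining a point of $A_0$ to a point of $A_1$ lies inside the closed ball $\overline{B_R(p)}$ with $R := d(p,q) + 2\eps$, so $A_t \subset \overline{B_R(p)}$. Intersecting with $\operatorname{supp}[m]$ yields a bounded closed subset of the support, which by Proposition~\ref{prop:BGI} has finite volume; since the measure outside $\operatorname{supp}[m]$ vanishes, this forces $\bar m(A_t) < \infty$, contradicting the conclusion of the previous paragraph. Hence $L \leq \pi\sqrt{N/K}$.

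Finally, compactness follows at once: the support of $m$ is closed by definition and now bounded by the diameter estimate, so Proposition~\ref{prop:BGI}, applied with $M = \operatorname{supp}[m]$, yields that $\operatorname{supp}[m]$ itself is compact. The only subtle point to check is that the midpoint set $A_t$ is nonempty and satisfies the claimed $\bar m$-bound; nonemptiness is guaranteed by $\ecdkn$, which supplies a Wasserstein geodesic between $\mu_i = m(A_i)^{-1}\mathbf{1}_{A_i}m$ whose intermediate time slice is concentrated on $A_t$, and the measurability/completion issue is handled exactly as in the proof of Proposition~\ref{prop:BMI}. I expect no further obstacle; the entire argument rests on reading the singularity of $\sigma_{K/N}^{(t)}$ against the Brunn--Minkowski inequality, which is the only nontrivial input.
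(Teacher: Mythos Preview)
Your argument is correct and matches the approach the paper indicates (it does not spell out a proof but refers to Proposition~\ref{prop:BMI} and the analogous argument in \cite{S06}, relying on the singularity of $\sigma_{K/N}^{(t)}$ at $\pi\sqrt{N/K}$ together with the Bishop--Gromov compactness statement). One small simplification: you need not pass through $\bar m(A_t)=+\infty$ and then contradict finite volume of a large ball---already the $\ecdkn$ inequality applied to $\mu_i=m(A_i)^{-1}\mathbf{1}_{A_i}m$ forces $U_N(\mu_t)\geq +\infty$ while $\mu_t\in\cP_2^*(X,d,m)$ has finite entropy, giving the contradiction directly; this also avoids invoking finite volume for balls of radius exceeding $\pi\sqrt{N/K}$ before the diameter bound is established.
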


\begin{remark}\label{rem:exp-int}
  $\ecdkn$ or $\cd (K, \infty)$ yields that $\cP (X,d)$ is a length
  space and hence so is $( \supp m , d )$ \cite[Rem.~I.4.6(iii),
  Prop.~2.11(iii)]{S06}.  Thus, by the local compactness ensured in
  Proposition~\ref{prop:BGI}, if $(X,d,m)$ is a $\ecdkn$ space then
  $(\supp m , d)$ and hence $\cP_2 ( \supp m, d )$ is a geodesic space
  (see e.g.~\cite[Thm.~2.5.23]{BBI01}). In addition, the volume
  growth estimate \eqref{eq:BGI} implies in particular that for any
  $x_0\in X$ and $c>0$:
  \begin{align}\label{eq:exp-int}
    \int_X\e^{-cd(x_0,x)^2}\dd m(x)~<~\infty\;.
  \end{align}
  It is well known that the latter implies that $\ent$ does not take
  the value $-\infty$ on $\cP_2(X,d)$ and is lower semi-continuous
  w.r.t. $W_2$ (see e.g.~\cite[Sec.~7]{AGS11a}).  Thus, when $\supp m
  = X$, Definition~\ref{def:ecdkn} fits well into the setting of
  Section~\ref{sec:metric}, where we assumed these additional
  regularity properties.
\end{remark}

It turns out that under mild assumptions the modified
curvature-dimension condition $\ecdkn$ is equivalent to the reduced
curvature-dimension condition $\cdskn$ introduced in \cite{BS10}. We
recall here the definition. Denote by $\cP_\infty(X,d,m)$ the set of
measures in $\cP_2(X,d,m)$ with bounded support.

\begin{definition}\label{def:reduced-cdkn}
  We say that a metric measure space $(X,d,m)$ satisfies the
  \emph{reduced curvature-dimension condition} $\cdskn$ if and only if
  for each pair $\mu_0=\rho_0 m,\mu_1=\rho_1 m\in\cP_\infty(X,d,m)$
  there exist an optimal coupling $q$ of them and a geodesic
  $(\mu_t)_{t\in[0,1]}$ in $\cP_\infty(X,d,m)$ connecting them such
  that for all $t\in[0,1]$ and $N'\geq N$:
  \begin{align}\label{eq:cdskn}
    \int\rho_t^{-\frac{1}{N'}}\dd\mu_t~\geq~\int\limits_{X\times X}\Big[&\sigknp{1-t}{d(x_0,x_1)}\rho_0(x_0)^{-\frac{1}{N'}}\\\nonumber
    &+ \sigknp{t}{d(x_0,x_1)}\rho_1(x_1)^{-\frac{1}{N'}}\Big]\dd q(x_0,x_1) \;.
  \end{align}
  If \eqref{eq:cdskn} holds for any geodesic $(\mu_t)_{t\in[0,1]}$ in
  $\cP_\infty(X,d,m)$ we say that $(X,d,m)$ is a \emph{strong}
  $\cdskn$ space.
\end{definition}
The assumption we need to prove equivalence of the different
curvature-dimension conditions is the following weak form of
non-branching.
\begin{definition}\label{def:ess-non-branch}
  We say that a metric measure space $(X,d,m)$ is \emph{essentially
    non-branching} if any dynamic optimal coupling
  $\pi\in\cP(\geo(X))$ between two absolutely continuous measures is
  supported in a set of non-branching geodesics, i.e. there exists
  $A\subset\geo(X)$ such that $\pi(A)=1$ and for all $\gamma,\tilde
  \gamma\in A$:
  \begin{align*}
    \gamma_t=\tilde\gamma_t\quad \forall t\in[0,\eps] \text{ for some } \eps>0\ \Rightarrow\ \gamma=\tilde\gamma\;.
  \end{align*}
\end{definition}

This condition has been introduced in \cite{RS12} and it has been
shown that strong $\cd(K,\infty)$ spaces are essentially
non-branching. It has also been noted there that the essential
non-branching condition is equivalent to the following apparently
stronger condition: Every dynamic optimal coupling $\pi$ between
absolutely continuous measures is concentrated on a set of geodesics
that do not meet at intermediate times, i.e. there is $A'\subset
\geo(X)$ such that $\pi(A')=1$ and for all $\gamma,\tilde\gamma\in
A'$:
\begin{align*}
  \gamma_t=\tilde\gamma_t\quad \text{ for some } t\in(0,1)\
  \Rightarrow\ \gamma=\tilde\gamma\;.
\end{align*}
Indeed, assuming the existence of a dynamic optimal coupling where
such crossings happen with positive probability, one can reshuffle the
geodesics before and after the crossing to produce a dynamic optimal
coupling of the same marginals where branching happens with positive
probability, contradicting the essentially non-branching assumption.

An immediate consequence of this observation is the following adaption
of \cite[Lem.~2.8]{BS10}.

\begin{lemma}\label{lem:non-branch}
  Let $(X,d,m)$ be an essentially non-branching metric measure space
  and let $\pi$ be a dynamic optimal coupling. Assume that
  $\pi=\sum_{k=1}^n\a_k\pi^k$ for suitable $\a_k>0$ and dynamic
  optimal couplings $\pi^k$. For given $t\in(0,1)$ and $i\in\{0,t\}$
  we set $\mu_i^k=(e_i)_\#\pi^k$. If the family $\{\mu^k_0\}_k$ is
  mutually singular, then also the family $\{\mu_t^k\}_k$ is mutually
  singular.
\end{lemma}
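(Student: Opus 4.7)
The plan is to apply the equivalent reformulation of the essentially non-branching property recorded just after Definition~\ref{def:ess-non-branch}, namely that a dynamic optimal coupling is concentrated on a set of geodesics which do not meet at any intermediate time $t \in (0,1)$.

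By mutual singularity of $\{\mu_0^k\}_k$, I choose pairwise disjoint Borel sets $B_k \subset X$ with $\mu_0^k(B_k) = 1$. Setting $A_k := e_0^{-1}(B_k) \subset \geo(X)$, these are pairwise disjoint Borel subsets of $\geo(X)$ with $\pi^k(A_k) = 1$ for each $k$. Applying the essentially non-branching hypothesis to the dynamic optimal coupling $\pi$ (whose marginals are the absolutely continuous measures $\sum_k \a_k \mu_0^k$ and $\sum_k \a_k \mu_1^k$) yields a Borel set $A \subset \geo(X)$ with $\pi(A) = 1$ such that whenever $\gamma, \gamma' \in A$ satisfy $\gamma_s = \gamma'_s$ for some $s \in (0,1)$, one has $\gamma = \gamma'$. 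Since $\a_k > 0$, each $\pi^k$ gives full mass to $A$, hence $\pi^k(A \cap A_k) = 1$.

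Now define $C_k := e_t(A \cap A_k)$. I claim the sets $\{C_k\}$ are pairwise disjoint: if $x \in C_k \cap C_j$ with $k \neq j$, I can pick $\gamma \in A \cap A_k$ and $\gamma' \in A \cap A_j$ with $\gamma_t = \gamma'_t = x$; the non-branching property on $A$ forces $\gamma = \gamma'$, so $\gamma_0 = \gamma'_0 \in B_k \cap B_j = \emptyset$, a contradiction. Since $\mu_t^k = (e_t)_\#\pi^k$ is concentrated on $C_k$, this yields mutual singularity of $\{\mu_t^k\}_k$.

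The only technical wrinkle, and the main (though minor) obstacle, is that $C_k$ is a priori only Suslin rather than Borel, being the continuous image of a Borel set under $e_t$. This is harmless for mutual singularity: $C_k$ is universally measurable, and by inner regularity of the finite Borel measures $\mu_t^k$ one can extract compact (hence Borel) subsets of $C_k$ carrying full $\mu_t^k$-measure, producing the required disjoint Borel supports. The conceptual step is reducing to the non-crossing property of dynamic optimal couplings; the measure-theoretic bookkeeping afterwards is routine.
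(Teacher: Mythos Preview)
Your proof is correct and follows exactly the approach the paper intends: the paper states the lemma as ``an immediate consequence'' of the reformulation of essentially non-branching as a non-crossing property at intermediate times, and you have written out precisely that argument. One minor point: you assert that the marginals of $\pi$ are absolutely continuous in order to invoke the essentially non-branching hypothesis, but this is not explicitly part of the lemma's hypotheses; however, this assumption is implicit in the paper's formulation (and holds in every application of the lemma, e.g.\ in Theorem~\ref{thm:equiv_as}), so your reading is the intended one.
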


\begin{theorem}\label{thm:equiv_as}
  Let $(X,d,m)$ be an essentially non-branching metric measure
  space. Then the following assertions are equivalent:
  \begin{itemize}
  \item[(i)] $(X,d,m)$ satisfies $\cdskn$,
  \item[(ii)] For each pair $\mu_0,\mu_1\in\cP_\infty(X,d,m)$
    there is a
    dynamic optimal coupling $\pi$ of them such that we have
    $( e_t )_\# \pi \ll m$ and
    \begin{align}\label{eq:cdkn-as}
      \rho_t(\gamma_t)^{-\frac{1}{N}}~\geq~&\sigkn{1-t}{d(\gamma_0,\gamma_1)}\rho_0(\gamma_0)^{-\frac{1}{N}}
      + \sigkn{t}{d(\gamma_0,\gamma_1)}\rho_1(\gamma_1)^{-\frac{1}{N}}\;,
    \end{align}
    for $\pi$-a.e. $\gamma\in\geo(X)$, where $\rho_t$ denotes the density of
    $(e_t)_\# \pi$ w.r.t.~$m$.
  \item[(iii)] $(X,d,m)$ satisfies $\ecdkn$.
  \end{itemize}
\end{theorem}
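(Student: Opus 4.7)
The plan is to prove the triangle by establishing the two ``soft'' implications $(ii)\Rightarrow(i)$ and $(ii)\Rightarrow(iii)$ by direct integration of the pointwise bound \eqref{eq:cdkn-as}, and the two hard implications $(i)\Rightarrow(ii)$ and $(iii)\Rightarrow(ii)$ via a localization argument that exploits essential non-branching through Lemma~\ref{lem:non-branch}.

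For $(ii)\Rightarrow(i)$: along each fixed geodesic $\gamma$, \eqref{eq:cdkn-as} expresses precisely the $(K,N)$-convexity of $t\mapsto -\log\rho_t(\gamma_t)$, so the consistency Lemma~\ref{lem:convexconsistent} upgrades the bound to hold with $N$ replaced by any $N'\geq N$. Integrating against $\pi$ and using that $q:=(e_0,e_1)_\#\pi$ is optimal directly delivers \eqref{eq:cdskn}. For $(ii)\Rightarrow(iii)$: take logarithms in \eqref{eq:cdkn-as} so that the right-hand side becomes $G_t\bigl(-\log\rho_0(\gamma_0)/N,\,-\log\rho_1(\gamma_1)/N,\,(K/N)d(\gamma_0,\gamma_1)^2\bigr)$ (using $\sigma^{(s)}_\kappa(\theta)=\sigma^{(s)}_{\kappa\theta^2}(1)$); integrate against $\pi$ and apply Jensen's inequality for the convex function $G_t$ of Lemma~\ref{lem:convexG}. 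Since $\int d(\gamma_0,\gamma_1)^2\,d\pi=W_2(\mu_0,\mu_1)^2$ and $\int -\log\rho_i(\gamma_i)\,d\pi=-\ent(\mu_i)$ for $i\in\{0,1,t\}$, exponentiating yields \eqref{eq:ecdkn} for $\mu_0,\mu_1\in\cP_\infty(X,d,m)$; a truncation argument using \eqref{eq:exp-int} and the lower semicontinuity of $\ent$ then extends it to $\cP_2^*(X,d,m)$.

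For the hard implications I follow the localization scheme of Bacher--Sturm~\cite{BS10} and Rajala--Sturm~\cite{RS12}. Fixing $\mu_0,\mu_1\in\cP_\infty(X,d,m)$ and an optimal dynamic coupling $\pi$, take a measurable partition $\{A_k\}_{k=1}^n$ of $\geo(X)$ pulled back through $(e_0,e_1)$ from a fine Borel partition of $X\times X$, and set $\pi^k:=\pi|_{A_k}/\pi(A_k)$, $\mu_i^k:=(e_i)_\#\pi^k$ and $\alpha_k:=\pi(A_k)$. Applying the hypothesis (either $\cdskn$ or $\ecdkn$) to each pair $(\mu_0^k,\mu_1^k)$ and invoking Lemma~\ref{lem:non-branch} to force the $\mu_t^k$ to be mutually singular yields a decomposition $\rho_t=\sum_k\alpha_k\rho_t^k$ on pairwise disjoint supports, so the restricted inequalities assemble into an averaged bound on $\rho_t$ over the cells $A_k$. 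Refining to vanishing mesh and applying a Lebesgue-type differentiation argument on $\geo(X)$ extracts the pointwise inequality \eqref{eq:cdkn-as} at $\pi$-a.e.\ $\gamma$. In the $\ecdkn$ case one additionally passes from the logarithmic $U_N$-bound back to the additive density form by using that the Jensen gap for $G_t$ vanishes as the cells shrink.

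The principal obstacle is this localization-and-differentiation step: simultaneously one must (a) ensure that each restricted midpoint $\mu_t^k$ is absolutely continuous (for the $(iii)\Rightarrow(ii)$ implication this uses the generalized Brunn--Minkowski bound from Proposition~\ref{prop:BMI}), (b) invoke essential non-branching to obtain mutual singularity of the family $\{\mu_t^k\}_k$ so that the pointwise decomposition of $\rho_t$ really holds, and (c) control the limit as the partition refines so that the averaged estimates collapse to a genuinely pointwise inequality. Once \eqref{eq:cdkn-as} is established at fixed $t$, continuity of $\sigma^{(t)}_{K/N}$ in $t$ together with continuity of $t\mapsto\rho_t(\gamma_t)$ along $\pi$-a.e.\ $\gamma$ upgrades the bound to hold throughout $[0,1]$.
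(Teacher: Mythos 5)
Your overall plan mirrors the paper's proof: the two ``soft'' implications $(ii)\Rightarrow(i)$ (via consistency in $N$) and $(ii)\Rightarrow(iii)$ (via logarithms and Jensen with Lemma~\ref{lem:convexG}) are exactly the paper's arguments, and the ``hard'' implications are attacked by the same Bacher--Sturm localization scheme, using Lemma~\ref{lem:non-branch} to secure mutual singularity of the midpoint marginals.

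One step is more delicate than your sketch suggests. When you apply the hypothesis (i) or (iii) to the partitioned marginals $(\mu_0^{i,j},\mu_1^{i,j})$, you obtain \emph{new} dynamic couplings $\pi^{i,j}$; the assembled coupling $\pi^{(n)}=\sum_{i,j}\alpha_{i,j}\pi^{i,j}$ therefore \emph{changes with each level $n$ of the refinement}. Your ``Lebesgue-type differentiation argument on $\geo(X)$'' presupposes a single fixed dynamic coupling whose conditional averages over shrinking cells converge a.e., which is not yet available. The paper resolves this by first showing, via Proposition~\ref{prop:BGI}, that all $\pi^{(n)}$ live in a common compact set, so a subsequence converges weakly to some $\tilde\pi$; one checks $(e_0,e_1)_\#\tilde\pi=(e_0,e_1)_\#\pi$ (using $m(\partial M_i)=0$), so $\tilde\pi$ is again a dynamic optimal coupling. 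Then, by weak lower semicontinuity of the entropy and one more application of Jensen's inequality with $G_t$, the averaged estimate \eqref{eq:equiv-as4} is shown to hold for $\tilde\pi$ over \emph{every} finite union of cells from \emph{every} level of the partition; since these generate the Borel $\sigma$-field on $\geo(X)$ restricted to $(e_0,e_1)$, the pointwise inequality \eqref{eq:cdkn-as} follows $\tilde\pi$-a.s. So your item~(c) is indeed the crux; the missing ingredient is the extraction of a single limiting coupling $\tilde\pi$ before you differentiate, and the observation that the averaged inequalities hold for $\tilde\pi$ uniformly in the refinement level.
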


\begin{proof}
  The equivalence of (i) and (ii) has already been proven in
  \cite[Prop.~2.8]{BS10} under the assumption that $X$ is
  non-branching. Note that the statement (ii) is slightly different
  there but equivalent, since under the non-branching assumption
  $m^2$-a.e. pair of points is connected by a unique geodesic. Under
  the weaker essential non-branching condition the equivalence of (i)
  and (ii) follows by repeating almost verbatim the proof of
  \cite[Prop.~2.8]{BS10} substituting \cite[Lem.~2.6]{BS10}
  with Lemma \ref{lem:non-branch}. For details on the necessary
  modifications see also the implication (iii)$\Rightarrow$(ii) below
  which follows a similar argument.

  (ii)$\Rightarrow$(iii): First note that by an approximation argument
  as in \cite[Lem.~2.11]{BS10} one can show that \eqref{eq:cdkn-as}
  also holds for $\mu_0,\mu_1\in\cP_2(X,d,m)$ not necessarily with
  bounded support. Now fix $\mu_0,\mu_1\in\cP_2(X,d,m)$ and a dynamic optimal
  coupling $\pi$ of them satisfying \eqref{eq:cdkn-as}. Taking logarithms on both sides of \eqref{eq:cdkn-as} we obtain
  \begin{align}\label{eq:equiv-as1}
     -\frac{1}{N}\log\rho_t(\gamma_t)~\geq~G_t\Big(-\frac{1}{N}\log\rho_0(\gamma_0),-\frac{1}{N}\log\rho_1(\gamma_1),\frac{K}{N} d(\gamma_0,\gamma_1)^2\Big)\;,
  \end{align}
  where the function $G_t$ is given by
  \eqref{eq:convexG}.
  Integrating \eqref{eq:equiv-as1} w.r.t. $\pi$ and using
  Jensen's inequality with the aid of Lemma~\ref{lem:convexG} we obtain
  \begin{align*}
    -\frac{1}{N}\ent\big(\mu_t\big)~\geq~G_t\Big(-\frac{1}{N}\ent\big(\mu_0\big),-\frac{1}{N}\ent\big(\mu_1\big), \frac{K}{N} W_2(\mu_0,\mu_1)^2\Big)\;.
  \end{align*}
  Hence \eqref{eq:ecdkn} follows by taking the exponential on both sides.

  (iii)$\Rightarrow$(ii): Here we follow closely the arguments in the
  proof of \cite[Prop.~II.4.2]{S06}. Fix $\mu_0,\mu_1\in\cP_\infty(X,d,m)$ and
  a dynamic optimal coupling $\pi$ of them. Let $\{M_n\}_{n\in\N}$ be a
  $\cap$-stable generator of the Borel $\sigma$-field of $X$ with
  $m(\partial M_n)=0$ for all $n$. For each $n$ consider the disjoint
  covering of $X$ given by the $2^n$ sets $L_1=M_1\cap\dots\cap M_n$,
  $L_2=M_1\cap\dots\cap M_n^c$, $\dots$, $L_{2^n}=M_1^c\cap\dots\cap
  M_n^c$. For fixed $n$ and $i,j=1,\dots,2^n$ we define sets
  $A_{i,j}=\{\gamma\in\geo(X)\ :\ (\gamma_0,\gamma_1)\in L_i\times L_j\}$ and probability measures
  $\mu_0^{i,j},\mu_1^{i,j}$ by
  \begin{align*}
    \mu_0^{i,j}(B)=\a_{i,j}^{-1}\pi\big(\{\gamma_0\in B\cap L_i, \gamma_1\in L_j\}\big)\;,\quad \mu_1^{i,j}(B)=\a_{i,j}^{-1}\pi\big(\{\gamma_0\in L_i,\gamma_1\in B\cap L_j\}\big)\;,
  \end{align*}
  provided that $\a_{i,j}=\pi(A_{i,j})>0$. By (iii) we can choose
  dynamic optimal couplings $\pi^{i,j}$ of them such that
  \begin{align}\label{eq:equiv-as2}
   U_N(\mu^{i,j}_t)~\geq~&\sigkn{1-t}{W_2(\mu^{i,j}_0,\mu^{i,j}_1)}\cdot U_N(\mu^{i,j}_0)
   + \sigkn{t}{W_2(\mu^{i,j}_0,\mu^{i,j}_1)}\cdot U_N(\mu^{i,j}_1)\;,
  \end{align}
  where $\mu_t^{i,j}=(e_t)_\# \pi^{i,j}$. Define
  \begin{align*}
    \pi^{(n)}:=\sum\limits_{i,j=1}^{2^n}\a_{i,j}\pi^{i,j}\;,\qquad \mu_t^{(n)}=(e_t)_\# \pi^{(n)}\;.
  \end{align*}
  Then $\pi^{(n)}$ is a dynamic optimal coupling of the measures
  $\mu_0,\mu_1$ and $(\mu^{(n)}_t)_{t\in[0,1]}$ is a geodesic between
  them. Since the measures $\mu_0^{i,j}\otimes\mu_1^{i,j}$ are
  mutually singular and $X$ is essentially non-branching, also the
  measures $\mu_t^{i,j}$ are mutually singular for each fixed $t$ by
  Lemma~\ref{lem:non-branch}. We conclude that
  $\rho_t^{(n)}(\gamma_t)=\a_{i,j}\rho_t^{i,j}(\gamma_t)$ on the
  set $A_{i,j}$. Plugging this into \eqref{eq:equiv-as2} and taking
  logarithms on both sides we find
  \begin{align}\label{eq:equiv-as3}
        &-\frac{\a_{i,j}^{-1}}{N}\int\limits_{A_{i,j}}\log\rho^{(n)}_t(\gamma_t)\dd \pi^{(n)}\\\nonumber
&\geq~G_t \Big(-\frac{\a_{i,j}^{-1}}{N}\int\limits_{A_{i,j}}\log\rho_0(\gamma_0)\dd \pi^{(n)},-\frac{\a_{i,j}^{-1}}{N}\int\limits_{A_{i,j}}\log\rho_1(\gamma_1)\dd \pi^{(n)},\a_{i,j}^{-1}\frac{K}{N} \int\limits_{A_{i,j}} d^2(\gamma_0,\gamma_1)\dd \pi^{(n)}\Big)\;.
  \end{align}
  Since $\mu_0,\mu_1$ have bounded support, all geodesic in the
  support of the measures $\pi^{(n)}$ stay within a single closed
  bounded set $B$. By Proposition~\ref{prop:BGI} $B$ is compact and
  has finite mass. Hence also the measures $\pi^{(n)}$ are supported
  in a single compact set and thus converge weakly, up to extraction
  of a subsequence, to a dynamic optimal coupling $\tilde \pi$ of
  $\mu_0$ and $\mu_1$. Since $m(\partial M_i)=0$ for all $i$ we deduce
  that
  \begin{equation*}
      \pi\big(\{\gamma_{0}\in M_{i}, \gamma_{1}\in M_{j}\}\big)~=~\lim\limits_{n\to\infty}\pi^{(n)}\big(\{\gamma_{0}\in M_{i}, \gamma_{1}\in M_{j}\}\big)~=~\tilde \pi\big(\{\gamma_{0}\in M_{i}, \gamma_{1}\in M_{j}\}\big)
  \end{equation*}
  for each $i,j$ and hence $(e_0 , e_1 )_\# \pi = ( e_0 , e_1 )_\#
  \tilde{\pi}$.  In particular $\tilde{\pi}$ is a dynamic optimal
  coupling of $\mu_0$ and $\mu_1$. By weak lower semi-continuity of
  the entropy we can pass to the limit as $n\to\infty$ in the left
  hand side of \eqref{eq:equiv-as3}. Invoking furthermore the
  convexity of $G_t$ given by Lemma~\ref{lem:convexG} and Jensen's
  inequality we see that
\begin{align}\label{eq:equiv-as4}
        &-\frac{\a^{-1}}{N}\int\limits_{A}\log\rho_t(\gamma_t)\dd \tilde{\pi}\\\nonumber
&\geq~G_t\Big(-\frac{\a^{-1}}{N}\int\limits_{A}\log\rho_0(\gamma_0)\dd \tilde{\pi},-\frac{\a^{-1}}{N}\int\limits_{A}\log\rho_1(\gamma_1)\dd \tilde{\pi},\a^{-1}\frac{K}{N}\int\limits_{A} d^2(\gamma_0,\gamma_1)\dd \tilde{\pi}\Big)\;,
  \end{align}
  for any set $A$ which is a union of a finite number of the sets
  $A_{i,j}$ and $\a=\tilde{\pi}(A)$. This implies the
  $\tilde{\pi}$-a.s. inequality \eqref{eq:cdkn-as}.
\end{proof}

\begin{corollary}\label{cor:strong-cde-cds}
  For a metric measure space $(X,d,m)$ the following assertions are equivalent:
  \begin{itemize}
  \item[(i)] $(X,d,m)$ is a strong $\cdskn$ space,
  \item[(ii)] For each pair $\mu_0,\mu_1\in\cP_\infty(X,d,m)$, and
    each dynamic optimal coupling $\pi$ of it \eqref{eq:cdkn-as}
    holds,
  \item[(iii)] $(X,d,m)$ is a strong $\ecdkn$ space.
  \end{itemize}
\end{corollary}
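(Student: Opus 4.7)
The strategy is to transfer the proof of Theorem~\ref{thm:equiv_as} to the strong setting, observing that the \emph{strong} versions of both conditions quantify over every Wasserstein geodesic (respectively, every dynamic optimal coupling). This quantifier structure is precisely what allows one to dispense with the essential non-branching assumption: one never has to construct a geodesic or a coupling with special properties, since the hypothesis already guarantees that every one of them works.

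For (i)$\Leftrightarrow$(ii) I would invoke the strong formulation of \cite[Prop.~2.8]{BS10}; its proof by optimal-transport disintegration carries over to the strong setting without non-branching, because quantification over all dynamic optimal couplings replaces the need to pick a distinguished one. For (ii)$\Rightarrow$(iii) I would copy the argument from Theorem~\ref{thm:equiv_as}: given any dynamic optimal coupling $\pi$ of $\mu_0,\mu_1\in\cP_\infty(X,d,m)$, take logarithms of \eqref{eq:cdkn-as}, invoke the convexity of the function $G_t$ from Lemma~\ref{lem:convexG}, and integrate against $\pi$ via Jensen's inequality. This yields the required entropy inequality along $\mu_t=(e_t)_\#\pi$; since $\pi$ is arbitrary, strong $\ecdkn$ follows on $\cP_\infty$, and extends to $\cP_2^*$ by the truncation argument of \cite[Lem.~2.11]{BS10}.

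The implication (iii)$\Rightarrow$(ii) is where essential non-branching enters in Theorem~\ref{thm:equiv_as} and must now be bypassed. I would run the partition scheme of that proof but keep the given coupling $\pi$ fixed throughout, rather than assembling an auxiliary $\tilde\pi$ out of sub-couplings selected by $\ecdkn$. Concretely, for a nested sequence of finite partitions $\{L_i^n\}$ of $X$ inducing partitions $\{A_{i,j}\}$ of $\geo(X)$, set $\pi^{i,j}:=\pi|_{A_{i,j}}/\a_{i,j}$. Each $\pi^{i,j}$ is itself a dynamic optimal coupling, since its endpoint marginal inherits $d^2$-cyclical monotonicity from that of $\pi$. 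Strong $\ecdkn$ applied to the geodesic $(\mu_t^{i,j})_{t\in[0,1]}$ with $\mu_t^{i,j}=(e_t)_\#\pi^{i,j}$ then gives the entropy inequality for each sub-coupling. Taking logs and invoking the convexity of $G_t$ produces a family of inequalities indexed by the cells, and a Lebesgue-differentiation / martingale convergence argument with respect to the conditional densities $p_{i,j}(x):=\pi(A_{i,j}\mid\gamma_t=x)$ should extract \eqref{eq:cdkn-as} pointwise $\pi$-almost everywhere as the partition is refined.

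The main obstacle I anticipate is the density bookkeeping in this last implication. Without essential non-branching the sub-marginals $\mu_t^{i,j}$ need not be mutually singular, so the clean identity $\rho_t^{(n)}(\gamma_t)=\a_{i,j}\rho_t^{i,j}(\gamma_t)$ on $A_{i,j}$ used in Theorem~\ref{thm:equiv_as} is no longer available. One must work instead with the exact disintegration $\rho_t^{i,j}(x)=p_{i,j}(x)\rho_t(x)/\a_{i,j}$ and verify that, as the partition refines, the conditional probabilities $p_{i,j}(\gamma_t)$ concentrate on a single cell $\pi$-a.e., so that the logarithmic correction terms vanish in the limit and the pointwise form of \eqref{eq:cdkn-as} survives the passage.
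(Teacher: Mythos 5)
Your framing is off on the key point, and this leads to a genuine gap. You treat essential non-branching as an obstacle to be bypassed in (iii)$\Rightarrow$(ii) and in (i)$\Leftrightarrow$(ii), and your proposed bypass (a Lebesgue-differentiation/martingale argument showing that the conditional probabilities $p_{i,j}(\gamma_t)$ concentrate on a single cell as the partitions refine) is precisely the kind of statement that is \emph{false} in general branching spaces --- there is no reason for the conditional mass to localize if genuinely distinct geodesics pass through the same point at intermediate times. So the gap you honestly flag in your last paragraph is real and your sketch does not close it. The observation you are missing is that you never need to bypass essential non-branching at all: both strong $\cdskn$ and strong $\ecdkn$ imply strong $\cd(K,\infty)$, and by \cite[Thm.~1.1]{RS12} every strong $\cd(K,\infty)$ space \emph{is} essentially non-branching. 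So one is free to invoke Theorem~\ref{thm:equiv_as} (and its supporting Lemma~\ref{lem:non-branch}) without further ado.

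The paper's proof then uses a second ingredient from \cite{RS12} which your sketch does not reach for and which makes the argument essentially trivial: \cite[Cor.~1.4]{RS12} gives that on strong $\cd(K,\infty)$ spaces the dynamic optimal coupling between two measures in $\cP_2(X,d,m)$ is \emph{unique}. With uniqueness in hand there is nothing to match up --- the coupling $\tilde\pi$ produced in the proof of Theorem~\ref{thm:equiv_as}(iii)$\Rightarrow$(ii) necessarily coincides with the given $\pi$, so the ``weak'' conclusion \eqref{eq:cdkn-as} is automatically the ``strong'' one. Your alternative plan of re-running the partition scheme with the given $\pi$ held fixed (replacing the chosen $\pi^{i,j}$ by restrictions $\pi|_{A_{i,j}}/\alpha_{i,j}$ and appealing to strong $\ecdkn$ for those geodesics) would in fact work once essential non-branching is granted, and is a perfectly reasonable route that avoids the uniqueness statement; but as written your proposal neither grants essential non-branching nor supplies a substitute for it. Your (ii)$\Rightarrow$(iii) step is fine and matches the paper, which explicitly notes that essential non-branching is not used there.
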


\begin{proof}
  Note that both (i) and (iii) imply that $(X,d,m)$ satisfies the
  strong $\cd(K,\infty)$ condition. \cite[Thm.~1.1]{RS12} gives that
  every strong $\cd(K,\infty)$ space is essentially non-branching. In
  addition, \cite[Cor.~1.4]{RS12} also states that on strong $\cd
  (K,\infty)$ spaces the dynamic optimal coupling of $\mu_0$ and
  $\mu_1$ is unique for each $\mu_0 , \mu_1 \in \cP_2 (X,d,m)$. Hence
  the assertion follows from the same arguments as
  Theorem~\ref{thm:equiv_as}. Indeed, the dynamic optimal coupling
  $\tilde{\pi}$ obtained in the proof of Theorem~\ref{thm:equiv_as}
  (iii)$\Rightarrow$(ii) coincides with $\pi$. Note that the
  essentially non-branching assumption is not used in the implications
  (ii)$\Rightarrow$(i),(iii).
\end{proof}

We conclude this section with a globalization property of the strong
entropic curvature-dimension condition. We say that a metric measure
space $(X,d,m)$ satisfies the \emph{local} entropic
curvature-dimension condition $\cd^e_{\text{loc}}(K,N)$ if and only if
every point $x\in\supp m$ has a neighborhood $M$ such that for each
pair $\mu_0,\mu_1\in\cP^*_2(X,d,m)$ supported in $M$ there exists a
geodesic $(\mu_t)_{t\in[0,1]}$ in $\cP^*_2(X,d,m)$ satisfying
\eqref{eq:ecdkn}. Similarly, we say that $(X,d,m)$ is a \emph{strong}
$\cd^e_{\text{loc}}(K,N)$ space if in addition \eqref{eq:ecdkn} holds
along \emph{every} constant speed geodesic $(\mu_t)_{t\in[0,1]}$ in
$\cP^*_2(X,d,m)$ with $\mu_0,\mu_1$ supported in $M$. Note that
$(X,d,m)$ is essentially non-branching if it is $\cd^e_{\text{loc}}
(K,N)$ space. Indeed, we first localize the problem in the argument
in \cite{RS12} and hence the local condition is sufficient.

\begin{theorem}[Local-global]\label{thm:cde-locglob}
  Let $(X,d,m)$ be a geodesic metric measure space. Then it satisfies
  the strong $\ecdkn$ condition if and only if it satisfies the strong
  $\cd^e_{\text{loc}}(K,N)$ condition.
\end{theorem}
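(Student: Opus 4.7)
The ``only if'' direction is immediate: if strong $\ecdkn$ holds globally, then in particular \eqref{eq:ecdkn} holds along every constant speed geodesic in $\cP^*_2(X,d,m)$ whose endpoints happen to be supported in any prescribed neighborhood $M$, so strong $\cd^e_{\text{loc}}(K,N)$ is a restriction of this.

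For the converse, the plan is to reduce to the local-to-global property already known for the reduced curvature-dimension condition. First I would note that strong $\cd^e_{\text{loc}}(K,N)$ forces $(X,d,m)$ to be essentially non-branching: this is stated in the paragraph preceding the theorem, and follows by localizing the argument of \cite{RS12} (essentially, if a dynamic optimal coupling had branching on a set of positive mass, one could restrict to a small neighborhood where the coupling still branches and violate the strong local inequality along a rearranged geodesic). Given essential non-branching, I would establish a local analogue of Corollary~\ref{cor:strong-cde-cds}: for measures $\mu_0,\mu_1\in\cP_\infty(X,d,m)$ supported in a sufficiently small neighborhood $M'\Subset M$, the strong $\cd^e_{\text{loc}}(K,N)$ inequality \eqref{eq:ecdkn} along every geodesic is equivalent to the pointwise inequality \eqref{eq:cdkn-as} along $\pi$-a.e.\ geodesic for every dynamic optimal coupling. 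This is exactly the same argument as in the proof of Theorem~\ref{thm:equiv_as} (ii)$\Leftrightarrow$(iii) and Corollary~\ref{cor:strong-cde-cds}; the only point requiring care is that the geodesics connecting points of $M'$ stay inside $M$, which one ensures by taking $M'$ small enough relative to the injectivity-type radius furnished by the local hypothesis. Integrating \eqref{eq:cdkn-as} against $\pi$ with the help of Jensen's inequality and Lemma~\ref{lem:convexG} (going one way) and extracting $\pi$-a.s.\ information via the mutual-singularity trick (going the other way, using Lemma~\ref{lem:non-branch}) yields the local equivalence; hence $(X,d,m)$ satisfies a strong version of the local reduced condition $\cd^{*,\text{loc}}(K,N)$.

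Next I would invoke the local-to-global theorem for the strong reduced curvature-dimension condition due to Bacher and Sturm \cite{BS10} (which applies to essentially non-branching spaces by the same reshuffling argument): strong $\cd^{*,\text{loc}}(K,N)$ implies strong $\cdskn$ globally. Finally, applying Corollary~\ref{cor:strong-cde-cds} once more in the global setting converts strong $\cdskn$ back into strong $\ecdkn$, completing the proof.

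The main obstacle, I expect, is the localization step: one must justify that the compactness and limit arguments used in the proof of Theorem~\ref{thm:equiv_as} (iii)$\Rightarrow$(ii) (in particular, the appeal to Proposition~\ref{prop:BGI} to guarantee that all dynamic couplings $\pi^{(n)}$ sit in a compact set of finite mass) go through when restricted to a neighborhood. Since $\cd^e_{\text{loc}}(K,N)$ gives only local mass and volume bounds, one has to choose the neighborhood $M'$ small enough that geodesics between points of $M'$ remain in the region where Bishop--Gromov is available; this is possible because $(X,d)$ is geodesic and the Bishop--Gromov estimate \eqref{eq:BGI} can be derived locally from the local Brunn--Minkowski inequality. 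Once this localization is in place, the rest of the argument is a transcription of the global proofs.
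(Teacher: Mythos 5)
Your approach is a genuinely different route from the paper's, and in fact it is exactly the alternative route that the authors themselves mention and deliberately avoid in the Remark following Theorem~\ref{thm:rcd-locglob}. The paper proves local-to-global directly for the entropic formulation: after establishing local compactness, for a compactly supported geodesic $(\mu_t)_t$ it fixes a disjoint partition $(Y_i)_i$ of the support with $\eps$-neighborhoods $U_i$ on which the local hypothesis applies; it then conditions the dynamic coupling $\pi$ on $\{\gamma_t\in Y_i\}$, applies the local $\ecdkn$ inequality on each piece, and recombines via the (super)additivity of entropy for mutually singular summands and the convexity of $G_t$ from Lemma~\ref{lem:convexG}. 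The crucial step that lets one go from the resulting short-time inequality~\eqref{eq:ecdkn-loc-time1} to the full $[0,1]$-geodesic inequality is the purely one-dimensional Lemma~\ref{lem:knconvexint2}, equivalence (ii)$\Leftrightarrow$(iii). The non-compact case is handled by conditioning on precompact pieces and Lemma~\ref{lem:non-branch}. Your proposal bypasses all of this by detouring through the local reduced condition $\cd^{*,\mathrm{loc}}(K,N)$ and appealing to Bacher--Sturm's globalization result.

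The genuine gap is precisely the one the authors flag as their reason for not taking your route: the globalization theorem \cite[Thm.~5.1]{BS10} is proved under the assumption that the \emph{whole space} $(X,d,m)$ is non-branching, while here one only has the essentially non-branching property forced by $\cd^e_{\mathrm{loc}}(K,N)$. You write that it ``applies to essentially non-branching spaces by the same reshuffling argument,'' but the reshuffling argument only shows that the two formulations of \emph{essential non-branching} (no branching vs.\ no intermediate crossings) coincide; it does not by itself extend the quite involved Bacher--Sturm proof (which uses non-branching repeatedly in its covering and gluing constructions) to the essentially non-branching setting. The paper explicitly states that this verification is nontrivial and chooses the direct entropic argument to sidestep it. A complete proof along your lines would have to carry out that verification in detail, which is a substantial missing piece, not a transcription. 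The localization of Corollary~\ref{cor:strong-cde-cds} and the compactness bookkeeping, which you correctly identify as delicate, can indeed be handled by shrinking the neighborhoods as you propose, so those parts of your plan are fine; the real obstruction is the dependence of \cite[Thm.~5.1]{BS10} on the stronger non-branching hypothesis.
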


\begin{proof}
  The only if part is obvious. For the if part, assume that $(X,d,m)$
  is a strong $\cd^e_{\text{loc}}(K,N)$ space. First note that this
  implies that $X$ is locally compact. Indeed, this can be seen by
  estimating the volume growth of balls in a small neighborhood around
  any point similarly as in Proposition~\ref{prop:BGI}. $(X,d)$ being
  a length space, local compactness implies that bounded closed sets in $X$
  are compact, see \cite[Prop.~2.5.22]{BBI01}.

  Now we first verify the $\cd^e(K,N)$ inequality \eqref{eq:ecdkn} for
  a geodesic $(\mu_t)_{t\in[0,1]}$ in $\cP_2^*(X,d,m)$ where the
  measures $\mu_t$ are jointly supported in a compact set $K$. By
  compactness and the strong $\cd^e_{\text{loc}}(K,N)$ condition we
  can find $\epsilon>0$ and a disjoint partition $(Y_i)_i$ of $K$ such
  that the $\eps$-neighborhoods $U_i$ of $Y_i$ have the following
  property: any geodesic $(\mu_t)_{t\in[0,1]}$ in $\cP_2^*(X,d,m)$
  with $\mu_0,\mu_1$ supported in $U_i$ satisfies
  \eqref{eq:ecdkn}. Write $\mu_t=(e_t)_\#\pi$, where
  $\pi\in\cP(\geo(X))$ is the associated dynamic optimal
  coupling. Then there exists $L>0$ such $d(\gamma_0,\gamma_1)\leq L$
  for all $\gamma$ in the support of $\pi$. We claim that for any
  $0\leq r\leq t\leq s\leq 1$ with $\abs{s-r}<\eps/L$:
  \begin{align}\label{eq:ecdkn-loc-time1}
    U_{N}(\mu_t)~\geq~\sigkn{\frac{s-t}{s-r}}{W_2(\mu_r,\mu_s)} U_{N}(\mu_r) +
    \sigkn{\frac{t-r}{s-r}}{W_2(\mu_r,\mu_s)} U_{N}(\mu_s)\;,
  \end{align}
  which suffices to show \eqref{eq:ecdkn} by virtue of
  Lemma~\ref{lem:knconvexint2}. Indeed, let us define the sets
  $A_{i}=\{\gamma\in\geo(X)\ :\ \gamma_{t}\in Y_i\}$ and define the
  measures
  \begin{align*}
    \pi_{i}~:=~\alpha_{i}^{-1}\pi|_{A_{i}}\;,
  \end{align*}
  provided that $\alpha_{i}:=\pi(A_{i})>0$. Then for
  $\pi_i$-a.e. geodesic $\gamma$ and $\tau\in[r,s]$ one has $\gamma_\tau\in
  U_i$. Setting $\mu^i_\tau=(e_\tau)_\#\pi_i$ we infer that the geodesic
  $(\mu^i_\tau)_{\tau\in[r,s]}$ is supported in $U_i$. From the construction
  of $U_i$ we obtain for $\tau\in[r,s]$:
  \begin{align}\label{eq:ecdkn-loc-time2}
    U_{N}(\mu^i_\tau)~\geq~\sigkn{\frac{s-\tau}{s-r}}{W_2(\mu^i_r,\mu^i_s)} U_{N}(\mu^i_r) +
    \sigkn{\frac{\tau-r}{s-r}}{W_2(\mu^i_r,\mu^i_s)} U_{N}(\mu^i_s)\;.
  \end{align}
  Note that $\mu_\tau=\sum_i\a_i\mu^i_\tau$.
  Hence we have that (see e.g. \cite[Rem.~I.4.2]{S06})
  \begin{align}\label{eq:ent-subadd}
    \ent(\mu_\tau)\geq\sum_i\a_i\ent(\mu^i_\tau) +
    \sum_i\a_i\log\a_i\;.
  \end{align}
  For $\tau=t$ we have equality in \eqref{eq:ent-subadd} since the
  family $(\mu_t^i)_i$ is mutually singular by construction. Taking
  logarithms in \eqref{eq:ecdkn-loc-time2} and summing over $i$ we
  obtain
 \begin{eqnarray*}
\lefteqn{ -\frac1N \ent(\mu_t)\ 
= \ -\frac1N \sum_i\a_i\big[\ent(\mu_t^i)+\log\a_i\Big]}\\
                        &\geq&\sum_i\a_i G_{\frac{t-r}{s-r}}\left(-\frac1N \Big[\ent(\mu_r^i)+\log\a_i\Big],-\frac1N \Big[\ent(\mu_s^i)+\log\a_i\Big],\frac{K}{N}W_2^2(\mu^i_r,\mu^i_s)\right)\\
                         &\geq&G_{\frac{t-r}{s-r}}\left(-\frac1N \sum_i\a_i\Big[\ent(\mu_r^i)+\log\a_i\Big],-\frac1N \sum_i\a_i\Big[\ent(\mu_s^i)+\log\a_i\Big],\frac{K}{N}\sum_i\a_iW_2^2(\mu^i_r,\mu^i_s)\right)\\
                         &\geq&G_{\frac{t-r}{s-r}}\left(-\frac1N \ent(\mu_r),-\frac1N \ent(\mu_s),\frac{K}{N}W_2^2(\mu_r,\mu_s)\right)\;,
  \end{eqnarray*}
  where we have used \eqref{eq:ent-subadd} as well as the convexity of
  $G_{\frac{t-r}{s-r}} (x,y, \kappa)$ given by Lemma~\ref{lem:convexG}
  and its monotonicity in $x,y$. Taking the exponential yields
  \eqref{eq:ecdkn-loc-time1}.

  Finally, we establish the $\ecdkn$ inequality \eqref{eq:ecdkn} for
  an arbitrary, not necessarily compactly supported geodesic
  $(\mu_t)_{t\in[0,1]}$ in $\cP_2^*(X,d,m)$. Partition $X$ in a
  disjoint collection of precompact sets $K_i$ and let $\pi_{i,j}$ be
  dynamic optimal couplings obtained by conditioning the coupling
  $\pi$ associated to $(\mu_t)_t$ to have starting point in $K_i$ and
  endpoint in $K_j$. By the previous argument any compactly supported
  geodesic satisfies \eqref{eq:ecdkn}. Since $\cd^e_{\text{loc}}(K,N)$
  implies that $(X,d,m)$ is essentially non-branching, the measures
  $(e_t)_\#\pi_{i,j}$ are mutually singular using
  Lemma~\ref{lem:non-branch}. Thus arguing as before the inequality
  \eqref{eq:ecdkn} for $(\mu_t)_t$ can be obtained by summing the
  corresponding inequalities valid along the geodesics
  $(\mu_t^{i,j})_t$ associated to $\pi_{i,j}$.
\end{proof}

\subsection{Calculus  and heat flow on metric measure spaces}
\label{sec:recap}

Here we recapitulate briefly some of the results obtained by Ambrosio,
Gigli and Savar\'e in a series of recent works, see
\cite{AGS11a,AGS11b,AGS12,G12}. In particular, we introduce notation and
concepts that we use in the sequel about the powerful machinery of
calculus on metric measure spaces developed by these authors. We refer
to \cite{AGS11a,AGS11b} for more details on the definitions and
results.

Let $(X,d,m)$ be a metric measure space. The basic object of study,
introduced in \cite{AGS11a} is the Cheeger energy. For a measurable
function $f:X\to\R$ it can be defined by
\begin{align*}
  \ch(f)=\frac12\int\wug{f}^2\dd m\;,
\end{align*}
where $\wug{f}:X\to[0,\infty]$ denotes the so called minimal weak
upper gradient of $f$. An important approximation result
\cite[Thm.~6.2]{AGS11a} states that for $f\in L^2(X,m)$ the Cheeger
energy can also be obtained by a relaxation procedure:
\begin{align*}
  \ch(f)=\inf\left\{\liminf\limits_{n\to\infty}\frac12\int\abs{\nabla f_n}^2\dd m\right\}\;,
\end{align*}
where the infimum is taken over all sequences of Lipschitz functions
$(f_n)$ converging to $f$ in $L^2(X,m)$ and where $\abs{\nabla f_n}$
denotes the local Lipschitz constant. In particular, Lipschitz
functions are dense in in the domain of $\ch$ in $L^2(X,m)$ denoted by
$D(\ch)=W^{1,2}(X,d,m)$ in the following sense: For each $f \in D
(\ch)$ there exist a sequence $( f_n )_{n \in \N}$ of Lipschitz
functions such that $f_n \to f$ in $L^2$ and $| \nabla f_n | \to
\wug{f}$ in $L^2$ \cite[Lem.~4.3(c)]{AGS11a}. For a Lipschitz function
$f$ the slope, or local Lipschitz constant, is an upper gradient. Thus
\begin{align}\label{eq:lip-wug}
  \wug{f}~\leq~\abs{\nabla f}\qquad \text{a.e.}
\end{align}

It turns out that $\ch$ is a convex and lower
semi-continuous functional on $L^2(X,m)$. It allows to define the
Laplacian $-\Delta f\in L^2(X,m)$ of a function $f\in W^{1,2}(X,d,m)$
as the element of minimal $L^2$-norm in the subdifferential
$\partial^-\ch(f)$ provided the latter is non-empty. In this
generality, $\ch$ is not necessarily a quadratic form and
consequently $\Delta$ need not be a linear operator.

The classical theory of gradient flows of convex functionals in
Hilbert-spaces allows to study the gradient flow of $\ch$ in
$L^2(X,m)$: For any $f\in L^2(X,m)$ there exists a unique continuous
curve $(f_t)_{t\in[0,\infty)}$ in $L^2(X,m)$, locally absolutely
continuous in $(0,\infty)$ with $f_0=f$ such that $\ddt
f_t~\in~\partial^-\ch(f_t)$ for a.e. $t>0$. In fact, we have $f_t\in
D(\Delta)$ and
\begin{align*}
  \ddtr f_t~=~\Delta f_t
\end{align*}
for all $t>0$. This gives rise to a semigroup $(\bH_t)_{t\geq0}$ on
$L^2(X,m)$ defined by $\bH_tf=f_t$, where $f_t$ is the unique
$L^2$-gradient flow of $\ch$.

On the other hand, one can study the metric gradient flow of the
relative entropy $\ent$ in $\cP_2(X,d)$. Under the assumption that
$(X,d,m)$ satisfies $\cd(K,\infty)$ it has been proven in \cite{Gi10}
and more generally in \cite[Thm.~9.3(ii)]{AGS11a} that for any $\mu\in
D(\ent)$ there exist a unique gradient flow of $\ent$ starting from
$\mu$ in the sense of Definition~\ref{def:metric-gf}. This gives rise
to a semigroup $(\cH_t)_{t\geq0}$ on $\cP_2(X,d)$ defined by
$\cH_t\mu=\mu_t$ where $\mu_t$ is the unique gradient flow of $\ent$
starting from $\mu$.

One of the main result of \cite{AGS11a} is the identification of the
two gradient flows, which allows to consistently define the heat flow
on $\cd(K,\infty)$ spaces.

\begin{theorem}[{\cite[Thm.~9.3]{AGS11a}}]\label{thm:gf-identification}
  Let $(X,d,m)$ be a $\cd(K,\infty)$ space and let $f\in L^2(X,d,m)$
  such that $\mu=f m\in \cP_2(X,d)$. 
  Then we have
  \begin{align*}
    \cH_t\mu=(\bH_tf) m \quad\forall t\geq0\;.
  \end{align*}
\end{theorem}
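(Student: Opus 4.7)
The plan is to identify the two semigroups by showing that the curve $\mu_t := (\bH_t f)\,m$ satisfies the Energy Dissipation Equality (EDE) for the entropy in $\cP_2(X,d)$, and then invoke uniqueness of the metric gradient flow of $\ent$ on $\cd(K,\infty)$ spaces (which guarantees that an EDE solution starting from $\mu$ must equal $\cH_t\mu$).

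First, I would verify that $\mu_t = (\bH_t f)\,m$ is a well-defined curve in $\cP_2(X,d)$. Mass preservation follows by testing the gradient flow equation against the constant function; positivity of $\bH_t f$ (assuming $f\geq0$, and reducing the general case by linearity and convexity) follows from maximum principle arguments built into the $L^2$ theory of $\ch$. The more delicate point is uniform boundedness of the second moment, for which I would use the integrability property \eqref{eq:exp-int} guaranteed by $\cd(K,\infty)$ together with the $L^2$-contractivity of $\bH_t$.

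The technical heart is Kuwada's duality argument, which yields the metric-speed estimate
\begin{equation*}
  |\dot\mu_t|^2 ~\le~ \int_{\{\bH_t f>0\}} \frac{\wug{\bH_t f}^2}{\bH_t f}\,\dd m
  ~=~ 4\int \wug{\sqrt{\bH_t f}}^2\,\dd m \; .
\end{equation*}
The proof of this proceeds by dualizing via Kantorovich--Rubinstein: for any $\phi\in\Lip_b(X)$ with bounded slope one differentiates $t\mapsto\int Q_\tau\phi\,\dd\mu_t$ using the Hopf--Lax semigroup and integrates by parts against $\Delta \bH_t f$, estimating the slope $|\nabla Q_\tau\phi|$ against $\wug{\bH_t f}$ via Cauchy--Schwarz. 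In parallel, the chain rule for $\ch$ combined with $\ddtr \bH_t f=\Delta \bH_t f$ gives
\begin{equation*}
  \ddt \ent(\mu_t) ~=~ -\int \frac{\wug{\bH_t f}^2}{\bH_t f}\,\dd m \; ,
\end{equation*}
which I would justify by approximating $\log r$ with bounded-derivative truncations and passing to the limit using the $L^2$-regularization of $\bH_t$.

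Next I would identify the Fisher information $\int \wug{\bH_t f}^2/\bH_t f\,\dd m$ with the squared descending slope $|\nabla^-\ent|^2(\mu_t)$. Here the $\cd(K,\infty)$ assumption enters essentially, via the $K$-convexity of $\ent$ along Wasserstein geodesics, which forces $|\nabla^-\ent|$ to be the strong upper gradient and lets one compare it with the Fisher information along the flow. Combining the three ingredients in integrated form yields the Energy Dissipation Equality
\begin{equation*}
  \ent(\mu_0) ~=~ \ent(\mu_t) + \tfrac12\int_0^t|\dot\mu_s|^2\,\dd s + \tfrac12\int_0^t|\nabla^-\ent|^2(\mu_s)\,\dd s \; ,
\end{equation*}
since the two inequalities built from Young's inequality become equalities simultaneously. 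The uniqueness of the EDE gradient flow of $\ent$ under $\cd(K,\infty)$ then forces $(\bH_t f)\,m = \cH_t\mu$. The main obstacle I expect is the Fisher information identification together with the rigorous justification of the chain rule above; it is precisely at this point that one needs the full strength of the Sobolev calculus from \cite{AGS11a} and the $K$-geodesic convexity of the entropy to upgrade the slope of $\ent$ to a strong upper gradient so that Young's inequality can be saturated.
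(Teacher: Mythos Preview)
The paper does not contain its own proof of this statement: Theorem~\ref{thm:gf-identification} is quoted from \cite[Thm.~9.3]{AGS11a} and used as a black box, so there is no in-paper argument to compare your proposal against.

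That said, your sketch is a faithful outline of the original Ambrosio--Gigli--Savar\'e proof. The three ingredients you identify --- Kuwada's lemma to bound $|\dot\mu_t|^2$ by the Fisher information, the entropy dissipation computation $\ddt\ent(\mu_t)=-I(\mu_t)$ via the chain rule for $\ch$, and the identification of $I$ with $|\nabla^-\ent|^2$ through $K$-convexity --- are exactly the components assembled in \cite{AGS11a}. Your remark that the hardest step is upgrading $|\nabla^-\ent|$ to a strong upper gradient is also accurate: this is where the $\cd(K,\infty)$ hypothesis is genuinely needed, and it is handled in \cite{AGS11a} by showing that the slope is lower semicontinuous and coincides with the relaxed slope, so that the Energy Dissipation Inequality becomes an equality and uniqueness follows.
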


A byproduct of this result is a representation of the slope of the
entropy.
\begin{align}\label{eq:slope-FI}
  \abs{\nabla^-\ent}(\rho m)~=~4\int\wug{\sqrt{\rho}}^2\dd m
\end{align}
for all probability densities $\rho$ with $\sqrt{\rho}\in
D(\ch)$. Note that the minimal weak upper gradient satisfies a chain
rule, \cite[Prop.~5.16]{AGS11a}: for $\phi:I\to\R$ non-decreasing and
locally Lipschitz we have
\begin{align}\label{eq:chainrule-wug}
  \wug{\phi(f)}~=~\phi'(f)\wug{f}\;.
\end{align}
A basic property of the heat flow is the maximum principle, see
\cite[Thm.~4.16]{AGS11a}: If $f\in L^2(X,m)$ satisfies $f\leq C$
$m$-a.e. then also $\bH_tf\leq C$ $m$-a.e. for all $t\geq0$.

If $\ch$ is assumed to be a quadratic form, and without any curvature
assumption, the notion of weak upper gradient gives rise to a powerful
calculus, in which not only the norm of the gradient, but also scalar
products between gradients are defined. For details we refer to
\cite[Sec. 4.3]{AGS11b} and \cite[Sec. 4.3]{G12}, where this calculus
has been developed in larger generality. We note briefly that given
$f,g\in D(\ch)$, the limit
\begin{align}\label{eq:scp}
  \langle\nabla f,\nabla g\rangle~:=~\lim\limits_{\eps\searrow0}\frac{1}{2\eps}\left(\wug{(f+\eps g)}^2-\wug{f}^2\right)
\end{align}
can be shown to exists in $L^1(X,m)$. Moreover, the map
$D(\ch)^2\ni(f,g)\mapsto\langle\nabla f,\nabla g\rangle \in L^1(X,m)$ is
bilinear, symmetric and satisfies
\begin{align*}
  \abs{\langle\nabla f,\nabla g\rangle}~\leq~\wug{f}\wug{g}\;.
\end{align*}
For all $f,g,h\in D(\ch)\cap L^\infty(X,m)$ we have the Leibniz rule:
\begin{align}\label{eq:Leibniz}
  \int \langle\nabla f,\nabla(g h)\rangle\dd m~=~\int h \langle\nabla f,\nabla g\rangle\dd
  m +\int g \langle\nabla f,\nabla h\rangle\dd m\;.
\end{align}

A quadratic Cheeger energy gives rise to a strongly local Dirichlet
form $(\cE,D(\cE))$ on $L^2(X,m)$ by setting $\cE(f,f)=\ch(f)$ and
$D(\cE)=W^{1,2}(X,d,m)$. In particular, $W^{1,2}(X,d,m)$ is a Hilbert
space and $L^2$-Lipschitz functions are dense in the usual sense
\cite[Prop.~4.10]{AGS11b}. In this case $\bH_t$ is a semigroup of
self-adjoined linear operators on $L^2(X,m)$ with the Laplacian
$\Delta$ as its generator. The previous result implies that for
$f,g\in W^{1,2}(X,d,m)$
\begin{align*}
  \cE(f,g)~=~\int \langle\nabla f,\nabla g\rangle\dd m\;,
\end{align*}
i.e. the energy measure of $\cE$ has a density given by
\eqref{eq:scp}. Moreover, for $f\in W^{1,2}$ and $g\in D(\Delta)$ we
have the integration by parts formula
\begin{align}\label{eq:int-by-parts}
  \int \langle\nabla f,\nabla g\rangle\dd m~=~-\int f\Delta g\dd m\;.
\end{align}

\subsection{The Riemannian curvature-dimension condition}
\label{sec:riem-cdkn}

In this section we introduce the notion of Riemannian
curvature-dimension bounds. This notion can be seen as a
generalization of the Riemannian Ricci curvature bounds for metric
measure spaces introduced in \cite{AGS11b} for mms with finite
reference measure and later generalized in \cite{AGMR12} to
$\sigma$-finite reference measures. We will rely on the powerful
machinery of calculus on metric measure spaces already developed by
Ambrosio, Gigli, Savar\'e and co-authors in a series of recent
works. Following their nomenclature, we make the following

\begin{definition}\label{def:riemcdkn}
  We say that a metric measure space $(X,d,m)$ is
  \emph{infinitesimally Hilbertian} if the associated Cheeger energy
  is quadratic. Moreover, we say that it satisfies the
  \emph{Riemannian curvature-dimension condition} $\rcdkn$ if it
  satisfies any of the equivalent properties of
  Theorem~\ref{thm:RCDKN-equiv} below.
\end{definition}

\begin{theorem}\label{thm:RCDKN-equiv}
  Let $(X,d,m)$ be a metric measure space with $\supp m = X$. The
  following properties are equivalent:
  \begin{itemize}
  \item[(i)] $(X,d,m)$ is infinitesimally Hilbertian and satisfies the $\cdskn$ condition.
  \item[(ii)] $(X,d,m)$ is infinitesimally Hilbertian and satisfies the $\ecdkn$ condition.
  \item[(iii)] $(X,d,m)$ is a length space satisfying the exponential
    integrability condition \eqref{eq:exp-int} and any
    $\mu\in\cP_2(X,d)$ is the starting point of an $\evikn$ gradient
    flow of $\ent$.
  \end{itemize}
\end{theorem}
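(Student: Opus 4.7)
The plan is to establish the three implications (i)$\Leftrightarrow$(ii), (iii)$\Rightarrow$(ii) and (ii)$\Rightarrow$(iii) separately. For (i)$\Leftrightarrow$(ii), each of the two curvature-dimension conditions entails $\cd(K,\infty)$ (by Lemma~\ref{lem:consistent} for $\ecdkn$, and by direct relaxation for $\cdskn$). Combined with infinitesimal Hilbertianity this places $(X,d,m)$ in the $\rcd(K,\infty)$ framework, which is essentially non-branching by Rajala--Sturm \cite{RS12}. Theorem~\ref{thm:equiv_as} then provides the desired equivalence.

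For (iii)$\Rightarrow$(ii), Theorem~\ref{thm:eviconvex} applied with $\ent$ on $\cP_2(X,d)$ upgrades the existence of $\evikn$ gradient flows into strong $(K,N)$-convexity of the entropy, hence into (strong) $\ecdkn$. At the same time, Lemma~\ref{lem:consistentKN} reduces every $\evikn$ flow to an $\evi_K$ flow, and the Ambrosio--Gigli--Savar\'e characterization of $\rcd(K,\infty)$ as the spaces on which $\ent$ admits an $\evi_K$ gradient flow from every starting point forces the Cheeger energy to be quadratic.

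For the implication (ii)$\Rightarrow$(iii), the length-space property and the integrability bound \eqref{eq:exp-int} are immediate from Remark~\ref{rem:exp-int}. Since (ii) in particular yields $\rcd(K,\infty)$, the $L^2$-heat semigroup $\bH_t$ is well-defined, and by Theorem~\ref{thm:gf-identification} coincides with the Wasserstein gradient flow $\cH_t$ of $\ent$ on absolutely continuous measures; the $K$-contractivity $W_2(\cH_t\mu,\cH_t\nu)\le \e^{-Kt}W_2(\mu,\nu)$ extends $\cH_t$ continuously to all of $\cP_2(X,d)$. The crux is then to show that the candidate curve $\mu_t:=\cH_t\mu_0$ satisfies the dimensional refinement $\evikn$, not merely $\evi_K$. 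For this I would, given $\nu\in D(\ent)$ and $t>0$, select via $\ecdkn$ a constant speed Wasserstein geodesic $(\gamma^{t,\nu}_s)_{s\in[0,1]}$ from $\mu_t$ to $\nu$ along which $\ent$ is $(K,N)$-convex, and apply the initial-slope form of $(K,N)$-convexity from Lemma~\ref{lem:knconvexint2} (cf.\ the smooth template \eqref{eq:smoothevialt}) to bound $U_N(\nu)$ from above by a combination of $U_N(\mu_t)$ and the right-derivative of $s\mapsto U_N(\gamma^{t,\nu}_s)$ at $s=0$. Identifying this initial slope as $-\tfrac{1}{N}U_N(\mu_t)\cdot\bigl(-\ddt\tfrac12 W_2^2(\mu_t,\nu)\bigr)$ via the Wasserstein chain rule for the gradient flow of $\ent$ (which in turn rests on the $L^2$ chain rule for $\ch$ and on the slope identity \eqref{eq:slope-FI}) and rewriting with the trigonometric identity \eqref{eq:sc-trick} delivers exactly \eqref{eq:knflow}.

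The main obstacle is this last step: bridging the purely Wasserstein-geometric datum supplied by $\ecdkn$ with the $L^2$-analytic datum supplied by the Cheeger energy, at the level of the nonlinear functional $U_N=\exp(-\ent/N)$ rather than $\ent$ itself. Imitating the smooth model of Lemma~\ref{lem:gradflowevi} is straightforward once the first-variation identification is secured; the care lies in handling non-smoothness (the geodesic $\gamma^{t,\nu}$ is a priori only chosen for $\nu\in D(\ent)$, one must extend by density and pass to limits without losing sharpness, typically through a midpoint/discretization approximation followed by integration against the integrated $\evikn$ form \eqref{eq:evi-int}).
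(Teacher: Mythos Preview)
Your proposal is correct and follows essentially the same route as the paper. The only place where the paper adds precision is in (ii)$\Rightarrow$(iii): the ``Wasserstein chain rule'' identification you invoke is realized via \cite[Thm.~6.3, Prop.~6.6]{AGMR12} applied not to an arbitrary $\ecdkn$ geodesic but to the truncated, midpoint-entropy-minimizing geodesic (which simultaneously inherits the $(K,N)$-convexity and carries the density bounds those results require)---exactly the ``midpoint/discretization approximation'' you allude to at the end.
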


\begin{remark}\label{rem:strongcde}
  Note that according to Theorem~\ref{thm:eviconvex}, (iii) even
  implies that $(X,d,m)$ is a strong $\ecdkn$ space and a geodesic
  space.
\end{remark}

\begin{remark}\label{rem:additive-linear}
  Since both $\cdskn$ and $\ecdkn$ imply the $\cd(K,\infty)$
  condition, \cite[Thm.~5.1]{AGS11b}, resp. \cite[Thm.~6.1]{AGMR12}
  show that the requirement that the Cheeger energy $\ch$ is quadratic
  can equivalently be replaced in (i) and (ii) by additivity of the
  semigroup $\cH_t$, in the sense that
  $\cH_t\big(\lambda\mu+(1-\lambda)\nu\big)=\lambda\cH_t\mu
  +(1-\lambda)\cH_t\nu$ for any $\mu,\nu\in\cP_2(X,d)$ and
  $\lambda\in[0,1]$.
\end{remark}

\begin{proof}
  (i)$\Leftrightarrow$(ii): Both $\cdskn$ and $\ecdkn$ imply the
  $\cd(K,\infty)$ condition. Thus \cite[Thm.~6.1]{AGMR12} yields that
  under either (i) or (ii) the $\evi_K$ gradient flow of $\ent$ exists
  for every starting point. This implies that $(X,d,m)$ is a strong
  $\cd(K,\infty)$ space and hence essentially non-branching by
  \cite[Thm.~1.1]{RS12}. In this setting, Theorem~\ref{thm:equiv_as}
  yields equivalence of $\cdskn$ and $\ecdkn$.

  (ii)$\Rightarrow$(iii): By Remark~\ref{rem:exp-int}, $(X,d)$ is a
  geodesic space and satisfies \eqref{eq:exp-int}. Taking
  Theorem~\ref{thm:contraction} into account it is sufficient to show
  that $\cH_t(\mu)$ is an $\evikn$-gradient flow of $\ent$ for every
  $\mu\in\cP_2(X,d,m)$ of the form $\mu=f m$ with $f$ bounded and
  $\ch(\sqrt{f})<\infty$. Set $\mu_t:=\cH_t(\mu)=f_t m$ and note that
  $f_t$ is still bounded with $\ch(\sqrt{f_t})<\infty$ for all
  $t>0$. By Proposition~\ref{prop:evi-equiv} it is sufficient to take
  reference measures in \eqref{eq:knflow} of the form $\sigma=g m$
  where $g$ is bounded and has bounded support. Taking into account
  \eqref{eq:evialt1} we have to show that for a.e. $t>0$:
  \begin{align}\label{eq:equiv1}
    \frac{U_N(\sigma)}{U_N(\mu_t)}~\leq~\ckn{W_2(\mu_t,\sigma)} - \frac{\skn{W_2(\mu_t,\sigma)}}{N\cdot W_2(\mu_t,\sigma)}\ddt\frac12
    W_2(\mu_t,\sigma)^2 \;.
  \end{align}
  This will follow from essentially the same arguments as in the proof
  of \cite[Thm.~6.1]{AGMR12}. Let us briefly sketch these arguments,
  indicating the modifications that are necessary.

  First, \cite[Thm.~6.3]{AGMR12} yields that for a.e. $t>0$:
  \begin{align}\label{eq:equiv2}
    \ddt\frac12 W_2(\mu_t,\sigma)^2~=~-\cE_{\mu_t}(\phi_t,\log f_t)\;,
  \end{align}
  where $\phi_t$ is a suitable Kantorovich potential for the optimal
  transport from $\mu_t$ to $\sigma$ and $\cE_{\mu_t}(\cdot,\cdot)$ is
  the bilinear form associated to the weighted Cheeger energy
  $\ch_{\mu_t}(f)=\frac12\int\abs{\nabla f}_{w,\mu_t}\dd\mu_t$ (see
  \cite[Sec. 3]{AGMR12}). We claim that also
  \begin{align}\label{eq:equiv3}
    \cE_{\mu_t}(\phi_t,\log f_t)~\geq~\frac{N\cdot W_2(\mu_t,\sigma)}{\skn{W_2(\mu_t,\sigma)}}\Big[-\ckn{W_2(\mu_t,\sigma)} + \frac{U_N(\sigma)}{U_N(\mu_t)}\Big]\;.
  \end{align}
  Combining then \eqref{eq:equiv2} and \eqref{eq:equiv3} yields the
  desired inequality \eqref{eq:equiv1}.

  To prove \eqref{eq:equiv3} one argues similar as in
  \cite[Thm.~6.5]{AGMR12}. First $f_t$ is approximated by suitable
  truncated probability densities $f_t^\delta$. Then, by successively
  minimizing the entropy of midpoints, a particularly nice geodesic
  $(\Gamma_s^{\delta,t})_{s\in[0,1]}$ connecting
  $\mu_t^\delta=f_t^\delta m$ to $\sigma$ is constructed which
  satisfies the $\cd(K,\infty)$ condition and has density bounds. From
  the construction it is immediate that in our setting this geodesic
  also satisfies the $\ecdkn$ condition. Thus on one hand, we have by
  Lemma~\ref{lem:convex-below-tangent} below the inequality
  \begin{align}\label{eq:equiv4}
    \liminf\limits_{s\searrow0}\frac{U_N(\Gamma^{\delta,t}_s)-U_N(\mu^\delta_t)}{s}~\geq~\frac{W_2(\mu^\delta_t,\sigma)}{\skn{W_2(\mu^\delta_t,\sigma)}}\Big[-U_N(\mu^\delta_t)\cdot\ckn{W_2(\mu^\delta_t,\sigma)}
    + U_N(\sigma)\Big]\;.
  \end{align}
  On the other hand, \cite[Prop.~6.6]{AGMR12} yields that
  \begin{align}\label{eq:equiv5}
    -\cE_{\mu^\delta_t}(\phi^\delta_t,\log f^\delta_t)~\leq~\liminf\limits_{s\searrow0}\frac{\ent(\Gamma^{\delta,t}_s)-\ent(\mu^\delta_t)}{s}\;,
  \end{align}
  where $\phi^\delta_t$ is a Kantorovich potential relative to
  $\mu^\delta_t$ and $\sigma$. By $K$-convexity of $\ent$ along the
  geodesic $\Gamma^{\delta,t}$ we have
  \begin{align*}
    \limsup\limits_{s\searrow0}\frac{\ent(\Gamma^{\delta,t}_s)-\ent(\mu^\delta_t)}{s}~\leq~\ent(\sigma)
    - \ent(\mu^\delta_t) - \frac{K}{2} W_2(\mu^\delta_t,\sigma)^2
  \end{align*}
  and thus
  $\big(\ent(\Gamma^{\delta,t}_s)-\ent(\mu^\delta_t)\big)^2=o(s)$ as
  $s\to0$. Now \eqref{eq:equiv4} and \eqref{eq:equiv5} together with a
  Taylor expansion of $x\mapsto e^{-x/N}$ yield
  \begin{align}\label{eq:equiv6}
    \cE_{\mu^\delta_t}(\phi^\delta_t,\log f^\delta_t)~\geq~\frac{N\cdot W_2(\mu^\delta_t,\sigma)}{\skn{W_2(\mu^\delta_t,\sigma)}}\Big[-\ckn{W_2(\mu^\delta_t,\sigma)} + \frac{U_N(\sigma)}{U_N(\mu^\delta_t)}\Big]\;.
  \end{align}
  Finally \eqref{eq:equiv3} is obtained by lifting the truncation and
  passing to the limit $\delta\to0$ in \eqref{eq:equiv6}. Passage to
  the limit in the RHS is obvious, for the LHS a delicate argument is
  needed which is given in the proof of \cite[Thm.~6.5]{AGMR12}.


  (iii)$\Rightarrow$(ii). Since by Lemma~\ref{lem:consistentKN} an
  $\evikn$ flow is in particular an $\evi_K$ flow,
  \cite[Thm.~5.1]{AGS11b} or \cite[Thm.~6.1]{AGMR12} already gives
  that $(X,d,m)$ is infinitesimally Hilbertian. Let us now show that
  $(X,d,m)$ is a strong $\ecdkn$ space. The same argument as in the
  proof of \cite[Lem.~5.2]{AGS11b} yields for any pair $\mu_0,\mu_1\in
  D(\ent)\subset\cP_2(X,d,m)$ the existence of a geodesic
  $\Gamma:[0,1]\to D(\ent)$ connecting $\mu_0$ to $\mu_1$. Hence
  $D(\ent)$ is a geodesic space and Theorem~\ref{thm:eviconvex} shows
  that \eqref{eq:ecdkn} holds along any geodesic in $D(\ent)$.
\end{proof}

\begin{lemma}\label{lem:convex-below-tangent}
  Let $(X,d,m)$ satisfy the $\ecdkn$ condition and let
  $\mu_0,\mu_1\in\cP_2(X,d,m)$. Then there exists a geodesic
  $(\mu_t)_{t\in[0,1]}$ in $\cP_2(X,d,m)$ connecting $\mu_0$ and $\mu_1$ such
  that, with $\theta=W_2(\mu_0,\mu_1)$,
  \begin{align}\label{eq:convex-below-tangent}
    U_N(\mu_1)~\leq~\ckn{\theta}\cdot U_N(\mu_0) + \frac{\skn{\theta}}{\theta}\cdot\liminf\limits_{t\searrow0}\frac{U_N(\mu_t)-U_N(\mu_0)}{t} \;.
  \end{align}
\end{lemma}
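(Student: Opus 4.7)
The plan is to imitate the smooth finite-dimensional argument, namely the implication (ii)$\Rightarrow$(iii) in Lemma~\ref{lem:knconvexint}, lifted to the Wasserstein space by treating the $\ecdkn$-geodesic as the analogue of the constant speed geodesic $\gamma$.

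If $\theta := W_2(\mu_0,\mu_1) = 0$ then $\mu_0 = \mu_1$ and the claim is trivial. Assuming $\theta > 0$ and $\mu_0,\mu_1 \in \cP^*_2(X,d,m)$ (when one of the entropies is infinite the inequality either becomes vacuous since the corresponding $U_N$ vanishes, or is recovered by approximation by truncated densities), the $\ecdkn$ condition delivers a constant speed geodesic $(\mu_t)_{t \in [0,1]}$ in $\cP^*_2(X,d,m)$ connecting $\mu_0$ to $\mu_1$ such that
\[
U_N(\mu_t) \;\ge\; \sigkn{1-t}{\theta}\, U_N(\mu_0) + \sigkn{t}{\theta}\, U_N(\mu_1)
\]
holds for every $t \in [0,1]$. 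This is precisely the Wasserstein analogue of \eqref{eq:knconvex3}, so the rest of the argument can proceed in parallel with the smooth case.

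Next, rearranging to isolate $U_N(\mu_1)$ and subtracting $U_N(\mu_0)$ gives
\[
\sigkn{t}{\theta}\, U_N(\mu_1) \;\le\; \bigl(U_N(\mu_t) - U_N(\mu_0)\bigr) + \bigl(1 - \sigkn{1-t}{\theta}\bigr) U_N(\mu_0).
\]
Dividing by $t > 0$ and passing to $\liminf$ as $t \searrow 0$, I use the elementary expansions obtained from $\skn{r} = r + O(r^3)$ and $\fs_\kappa' = \fc_\kappa$:
\[
\frac{\sigkn{t}{\theta}}{t} \longrightarrow \frac{\theta}{\skn{\theta}}, \qquad \frac{1-\sigkn{1-t}{\theta}}{t} \longrightarrow \frac{\theta\,\ckn{\theta}}{\skn{\theta}}.
\]
Multiplying both sides of the resulting inequality by $\skn{\theta}/\theta > 0$ then yields exactly the desired bound, in perfect parallel with the passage from \eqref{eq:knconvex3} to \eqref{eq:knconvex4} in the smooth case.

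The only subtle point is that the $\liminf$ on the right-hand side of the conclusion must be evaluated along the \emph{same} geodesic chosen in the first step by the $\ecdkn$ condition; once that choice is fixed, the rest is a purely algebraic manipulation with the distortion coefficients, and no serious obstacle arises. It is worth noting that this is also where the flexibility of ordinary (as opposed to \emph{strong}) $(K,N)$-convexity of the entropy enters: we do not require the bound along every geodesic, only along the distinguished one provided by $\ecdkn$, and this suffices to supply one geodesic realizing \eqref{eq:convex-below-tangent}.
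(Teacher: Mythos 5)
Your proof is correct and follows essentially the same route as the paper's own argument: take the $\ecdkn$-geodesic, rearrange the $(K,N)$-convexity inequality to isolate $U_N(\mu_t)-U_N(\mu_0)$ (or $U_N(\mu_1)$, as you do), divide by $t$, and send $t\searrow 0$ using $\ddt\sigkn{t}{\theta}\big\vert_{t=0}=\theta/\skn{\theta}$ and $\ddt\sigkn{1-t}{\theta}\big\vert_{t=0}=-\theta\ckn{\theta}/\skn{\theta}$. The two rearrangements are algebraically equivalent, and your closing remark correctly identifies the only subtlety (the $\liminf$ must be taken along the distinguished $\ecdkn$-geodesic), which the paper leaves implicit.
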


\begin{proof}
  Let $(\mu_t)_{t\in[0,1]}$ be the geodesic connecting $\mu_0$ and $\mu_1$ given by
  the $\ecdkn$ condition. We immediately obtain that for
  every $t\in[0,1]$:
  \begin{align*}
    U_N(\mu_t)-U_N(\mu_0)~\geq~\left[\sigkn{1-t}{\theta}-1\right]\cdot U_N(\mu_0) + \sigkn{t}{\theta}\cdot U_N(\mu_1)\;.
  \end{align*}
  Dividing by $t$ on both sides and passing to the limit $t\searrow0$
  the assertion follows from the fact that
  \begin{align*}
    \ddt\sigkn{t}{\theta} = +\frac{\theta\cdot\ckn{t\theta}}{\skn{\theta}}\;,\quad \sigkn{0}{\theta}=0\;,\quad \sigkn{1}{\theta}=1\;.
  \end{align*}
\end{proof}

\begin{proposition}[Weighted spaces]\label{prop:weighted-rcdkn}
  Let $(X,d,m)$ be a $\rcdkn$ space and let $V:X\to\R$ be continuous,
  bounded below and strongly $(K',N')$-convex function in the sense of
  Definition~\ref{def:knconvex} with $\int\exp(-V)\dd m< \infty$. Then
  $(X,d,\e^{-V}m)$ is a $\rcd^*(K+K',N+N')$ space.
\end{proposition}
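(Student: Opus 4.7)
The plan is to verify the two conditions defining $\rcd^*(K+K',N+N')$: the reduced curvature-dimension bound and infinitesimal Hilbertianity of $(X,d,e^{-V}m)$.

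For the curvature-dimension part I would proceed as follows. Apply Proposition~\ref{prop:weighted-cde} to $(X,d,m)$, which satisfies $\ecdkn$ as part of being $\rcdkn$: this immediately yields that $(X,d,e^{-V}m)$ satisfies $\cd^e(K+K',N+N')$. To upgrade this to $\cd^*(K+K',N+N')$ via Theorem~\ref{thm:equiv_as} I need essential non-branching of the weighted space. The original $\rcdkn$ space is strong $\cd(K,\infty)$---by Theorem~\ref{thm:RCDKN-equiv} combined with Lemma~\ref{lem:consistentKN} it admits an $\evi_K$-flow of the entropy for every initial measure---and hence essentially non-branching by the Rajala--Sturm result cited just before Lemma~\ref{lem:non-branch}. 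The essentially non-branching property depends only on $(X,d)$ and on the class of measures absolutely continuous with respect to the reference; strict positivity and continuity of $e^{-V}$ keep this class unchanged. Thus $(X,d,e^{-V}m)$ is essentially non-branching and Theorem~\ref{thm:equiv_as} yields the reduced condition $\cd^*(K+K',N+N')$.

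For infinitesimal Hilbertianity I would appeal to the equivalent characterization via additivity of the Wasserstein heat semigroup (Remark~\ref{rem:additive-linear}). The candidate heat flow on $(X,d,e^{-V}m)$ is the $W_2$-gradient flow of the weighted relative entropy, which decomposes as $\ent_{e^{-V}m}(\mu) = \ent_m(\mu) + \int V\,d\mu$. I would then adapt the argument of Theorem~\ref{thm:RCDKN-equiv}(ii)$\Rightarrow$(iii) to the weighted setting: using the $\cd^e(K+K',N+N')$ bound just established, the essentially non-branching property, and the same variational minimization of midpoint entropies, I would construct an $\evi_{K+K',N+N'}$-gradient flow of the weighted entropy for every starting point in $\cP_2(X,d,e^{-V}m)$. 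Uniqueness of such flows (Corollary~\ref{cor:contraction}) yields a contractive semigroup, and the additivity of the drift $\bar V(\mu) = \int V\,d\mu$ combined with the linearity of the heat flow on the unweighted space forces this semigroup to be additive. By Remark~\ref{rem:additive-linear} this is equivalent to the Cheeger energy of $(X,d,e^{-V}m)$ being quadratic.

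The main obstacle is the construction of the $\evi_{K+K',N+N'}$-flow in the weighted setting: the proof of Theorem~\ref{thm:RCDKN-equiv}(ii)$\Rightarrow$(iii) hinges on the identification of the $L^2$-flow of the Cheeger energy with the $W_2$-flow of the entropy (Theorem~\ref{thm:gf-identification}), on properties of Kantorovich potentials, and on weighted Cheeger energies, all of which must be transferred to $(X,d,e^{-V}m)$. The continuity and lower-boundedness of $V$, together with the integrability $\int e^{-V}\,dm < \infty$ and the exponential integrability condition~\eqref{eq:exp-int} for $e^{-V}m$ inherited from Remark~\ref{rem:exp-int} applied to the $\cd^e(K+K',N+N')$-bound, ensure that the analytic framework of Section~\ref{sec:recap} transfers to the weighted space, so the same variational argument goes through with $(K,N)$ replaced throughout by $(K+K',N+N')$. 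Once the $\evi$-flow is in place, linearity of the heat semigroup and therefore infinitesimal Hilbertianity follow, completing the proof.
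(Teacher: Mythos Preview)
Your curvature-dimension argument via Proposition~\ref{prop:weighted-cde} matches the paper, though the detour through $\cdskn$ via essential non-branching is unnecessary: Theorem~\ref{thm:RCDKN-equiv}(ii) characterizes $\rcd^*$ directly as ``infinitesimally Hilbertian plus $\ecdkn$'', so once you have $\cd^e(K+K',N+N')$ and quadraticity of the weighted Cheeger energy you are done.

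The real gap is in your treatment of infinitesimal Hilbertianity. The argument is circular: the construction of the $\evi$ flow in the proof of Theorem~\ref{thm:RCDKN-equiv}(ii)$\Rightarrow$(iii) already \emph{uses} that the Cheeger energy is quadratic---it relies on the linear heat semigroup and on the bilinear form $\cE_{\mu_t}(\cdot,\cdot)$---so you cannot invoke it to \emph{prove} quadraticity of the weighted Cheeger energy. Moreover, the claim that ``additivity of the drift $\bar V$ combined with linearity of the unweighted heat flow forces the weighted semigroup to be additive'' is not justified: the gradient flow of $\ent_m+\bar V$ is a drift-diffusion, not a simple composition of the linear heat flow with an additive correction, and gradient flows of sums do not decompose this way.

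The paper's argument is far more direct and avoids all of this. The minimal weak upper gradient $\wug{f}$ is invariant under multiplying the reference measure by a positive continuous density (the null sets are unchanged; this is \cite[Lem.~4.11]{AGS11a}). Hence the Cheeger energy for $e^{-V}m$ is simply
\[
  \ch_{e^{-V}m}(f)=\tfrac12\int \wug{f}^2\,e^{-V}\,\dd m,
\]
which is quadratic in $f$ because $(f,g)\mapsto\langle\nabla f,\nabla g\rangle$ is already bilinear by infinitesimal Hilbertianity of $(X,d,m)$ (cf.\ the Leibniz rule~\eqref{eq:Leibniz} and \cite[Prop.~6.19]{AGS11b}). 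This one line replaces your entire second step.
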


\begin{proof}
  By Proposition~\ref{prop:weighted-cde}, $(X,d,\e^{-V}m)$ is a
  $\cd^e(K+K',N+N')$ space. Invariance of the weak upper gradient
  under multiplicative changes of the reference measure by
  \cite[Lem.~4.11]{AGS11a} together with the Leibniz rule
  \eqref{eq:Leibniz} give that the Cheeger energy associated to
  $\e^{-V}m$ is again quadratic. See also
  \cite[Prop.~6.19]{AGS11b}. Thus the assertion follows from
  Theorem~\ref{thm:RCDKN-equiv} (ii).
\end{proof}

The Riemannian curvature-dimension condition has a number of natural
properties that we collect here. The first one is the stability under
convergence of metric measure spaces in the transportation distance
$\D$. We refer to \cite[Sec.~I.3]{S06} for the definition and properties
of the transportation distance.

\begin{theorem}[Stability]\label{thm:rcd-stable}
  Let $( (X_n,d_n,m_n) )_{n \in \N}$ be a sequence of $\rcdkn$ spaces
  with $m_n \in \cP_2 (X_n,d_n)$. If
  $\D\big((X_n,d_n,m_n),(X,d,m)\big)\to 0$ for some metric measure
  space $(X,d,m)$ then $(X,d,m)$ is also a $\rcdkn$ space.
\end{theorem}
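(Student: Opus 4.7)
The plan is to verify characterization (ii) of Theorem~\ref{thm:RCDKN-equiv}: the limit $(X,d,m)$ is both infinitesimally Hilbertian and satisfies $\ecdkn$. These two properties will be established independently and then combined.

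For stability of $\ecdkn$, I would follow the blueprint of Sturm's stability proof for $\cd(K,\infty)$ in \cite[Thm.~I.4.20]{S06}. Fix $\mu_0,\mu_1\in\cP_2^*(X,d,m)$. Using couplings $\hat q_n$ between $m_n$ and $m$ that approximately realize $\D\big((X_n,d_n,m_n),(X,d,m)\big)$, one builds approximating measures $\mu_0^n,\mu_1^n\in\cP_2^*(X_n,d_n,m_n)$ whose entropies converge to $\ent(\mu_0),\ent(\mu_1)$ and which converge to $\mu_0,\mu_1$ in the transportation sense (so in particular $W_2^n(\mu_i^n,\cdot)$ is comparable to $W_2(\mu_i,\cdot)$ modulo the coupling). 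Applying $\ecdkn$ on $X_n$ yields constant speed Wasserstein geodesics $(\mu_t^n)_{t\in[0,1]}$ satisfying \eqref{eq:ecdkn}. Gluing the spaces $X_n$ and $X$ along $\hat q_n$ into a common ambient metric space and invoking the uniform volume bounds of Proposition~\ref{prop:BGI} (and the uniform diameter bound of Corollary~\ref{cor:BMT} when $K>0$), one extracts by Prokhorov and Arzel\`a--Ascoli a subsequential limit $(\mu_t)\subset\cP_2(X,d)$ which is itself a Wasserstein geodesic. The inequality \eqref{eq:ecdkn} then passes to the limit because $\theta\mapsto\sigkn{t}{\theta}$ is continuous, $W_2(\mu_0^n,\mu_1^n)\to W_2(\mu_0,\mu_1)$, and the entropy is jointly lower semi-continuous under $\D$-convergence (a key property established in \cite{S06}).

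For infinitesimal Hilbertianness, I would invoke stability of the weaker $\rcd(K,\infty)$ condition. By Lemma~\ref{lem:consistent}, every $\rcdkn$ space is $\rcd(K,\infty)$; the $\rcd(K,\infty)$ class is already known to be stable under measured Gromov--Hausdorff (hence $\D$-) convergence thanks to \cite{AGS11b,AGMR12}. Consequently $(X,d,m)$ is itself $\rcd(K,\infty)$, and in particular its Cheeger energy is quadratic. Equivalently, as noted in Remark~\ref{rem:additive-linear}, one may argue that the additivity of the heat semigroup $\cH_t$ passes to the $\D$-limit via the stability of heat flows under convergence of mms. Combining the two ingredients via Theorem~\ref{thm:RCDKN-equiv}(ii) concludes that $(X,d,m)$ is an $\rcdkn$ space.

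The main obstacle will be the construction of the approximating measures $\mu_0^n,\mu_1^n$ on the varying spaces $X_n$ with simultaneously controlled Wasserstein distance \emph{and} relative entropy; the standard remedy is to push $\mu_i$ through the kernel of $\hat q_n$ and possibly mollify against a smooth cutoff of $m_n$, using the uniform local volume growth from Proposition~\ref{prop:BGI} to keep the densities bounded and the entropies convergent. Once this approximation is in place, the rest of the argument reduces to a continuity/compactness passage to the limit essentially identical to the $\cd(K,\infty)$ case, together with direct appeal to the already-proven stability of the Riemannian part of the structure.
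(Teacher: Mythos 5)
Your proposal is correct, but it takes a genuinely different route from the paper. The paper proves stability of characterization (iii) of Theorem~\ref{thm:RCDKN-equiv}: it constructs pushforward operators $Q_n, Q_n'$ from the near-optimal couplings realizing $\D$-convergence, transports the $\evikn$ gradient flow (heat flow) from $X_n$ back to $X$, extracts a limit curve using the maximum principle (uniform density bounds) together with $W_2$-precompactness, and then passes to the limit in the integrated $\evikn$ inequality \eqref{eq:evi-int} using lower semicontinuity of the entropy, continuity of $W_2$ under the couplings, and the entropy-decreasing property of $Q_n,Q_n'$. This is a direct adaptation of \cite[Thm.~6.10]{AGS11b}, with the single change of replacing $\evi_K$ by $\evikn$. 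You instead split the task via characterization (ii): stability of $\ecdkn$ by itself (adapting Sturm's geodesic-based compactness argument for $\cd$-conditions under $\D$-convergence) plus stability of infinitesimal Hilbertianness, which you import for free from the known stability of $\rcd(K,\infty)$ together with $\rcdkn\Rightarrow\rcd(K,\infty)$. Both routes work; the paper indeed remarks (just after Lemma~\ref{lem:consistent}) that $\ecdkn$ stability can be obtained by adapting \cite[Thm.~I.4.20]{S06}, so your first ingredient is available. The trade-off is that the paper's approach is a single, self-contained compactness argument on curves in the Wasserstein space and never has to touch Wasserstein geodesics in the limit, while your approach is modular and more elementary in spirit but requires carrying out the full Sturm-type geodesic-extraction argument for $\ecdkn$ (approximating measures with jointly controlled entropy and $W_2$-distance, Prokhorov compactness of the dynamic couplings, lower semicontinuity of entropy along the limit geodesic, preservation of the constant-speed property), which is a nontrivial if routine adaptation. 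One small caveat: the direction of your parenthetical ``measured Gromov--Hausdorff (hence $\D$-) convergence'' is reversed — it is $\D$-convergence stability from \cite{AGS11b,AGMR12} that is the more general statement you need, while mGH-stability is the consequence (for compact spaces); but since the references you cite do prove $\D$-stability, the argument stands.
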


Note that this in particular implies stability of the $\rcdkn$-condition under {\it measured Gromov-Hausdorff convergence} (mGH-convergence for short). Indeed, for compact mms -- and only for such spaces the concept of mGH-convergence is well-established -- mGH-convergence implies $\D$-convergence
\cite[Lemma 3.18]{S06}.

\begin{proof}
  We follow essentially the arguments of Ambrosio, Gigli and Savar\'e
  in \cite[Thm.~6.10]{AGS11b} where stability of the $\rcd(K,\infty)$
  condition has been established.

  We show stability of characterization (iii) in Theorem~\ref{thm:RCDKN-equiv}.
  By Proposition~\ref{prop:evi-equiv} and
  Corollary~\ref{cor:contraction} it is sufficient to show that for
  any $\mu=f m\in\cP_2(X,d,m)$ with $f\in L^\infty(X,m)$ there
  exists a continuous curve $(\mu_t)_{t\in[0,\infty)}$ in $\cP_2(X,d)$,
  locally absolutely continuous in $(0,\infty)$ and starting in $\mu$
  such that for any $\nu=\sigma m\in\cP_2(X,d)$ with $\sigma\in
  L^\infty(X,d,m)$ and any $s\leq t$:
  \begin{align}\label{eq:stable1}
    e_K(t-s)\frac{N}{2}\left(1-\frac{U_N(\nu)}{U_N(\mu_{t})}\right)~\geq~\e^{K(t-s)}&\skn{\frac12
        W_2(\mu_{t},\nu)}^2
 -&\skn{\frac12 W_2(\mu_{s},\nu)}^2\;.
  \end{align}
  Choose optimal couplings $(\hat d_n,q_n)$ of $(X_n,d_n,m_n)$ and
  $(X,d,m)$. Given $\mu=f m\in\cP_2(X,d,m)$ we set
  \begin{align*}
   Q_n\mu(\dd x)~=~\int f(y)q_n(\dd x,\dd y)~\in~\cP_2(X_n,d_n,m_n)\;.
  \end{align*}
  Similarly we obtain an operator
  $Q_n':\cP_2(X_n,d_n,m_n)\to\cP_2(X,d,m)$, see \cite[Lem.~I.4.19]{S06} and also \cite[Prop.~2.2,2.3]{AGS11b}.

  Now set $\mu^n=Q_n\mu$. By assumption there exists a curve
  $(\mu^n_t)_{t\in[0,\infty)}$ in $\cP_2(X_n,d_n)$ starting from
  $\mu^n$ such that for all $s\leq t$:
  \begin{align}\label{eq:stable2}
    e_K(t-s)\frac{N}{2}\left(1-\frac{U^n_N(\nu^n)}{U^n_N(\mu^n_{t})}\right)~\geq~\e^{K(t-s)}&\skn{\frac12
        W_2(\mu^n_{t},\nu^n)}^2
 -&\skn{\frac12 W_2(\mu^n_{s},\nu^n)}^2\;,
  \end{align}
  where $\nu^n=Q_n\nu$ and $U^n_N$ corresponds to the relative entropy
  functional in $(X_n,d_n,m_n)$. By the maximum principle we have
  $\mu^n_t\leq Cm_n$ with $C = \| \rho \|_{L^\infty ( X, m )}$.
  For each $t\geq0$ set
  $\tilde\mu^n_t:=Q'_n\mu^n_t\in\cP_2(X,d)$. We claim that, after
  extraction of a subsequence, we have that $\tilde\mu^n_t\to\mu_t$ in
  $\cP_2(X,d)$ as $n\to\infty$ for a curve $(\mu_t)$ in $\cP_2(X,d)$.

  Indeed, note that $\tilde\mu^n_t\leq C m$ for all $n$ and $t$. From
  the Energy Dissipation Equality \eqref{eq:EDE} we conclude that
  \begin{align*}
    \int_s^t\abs{\dot\mu^n_r}^2\dd r~\leq~\ent(\mu^n|m^n)~\leq~C\log C
  \end{align*}
  and hence the curves $(\mu^n_t)$ are equi-absolutely continuous.
  Since $m\in\cP_2(X,d)$, the set of measures
  $\{\mu\in\cP_2(X,d,m)):\mu\leq Cm\}$ is relatively compact w.r.t
  $W_2$-convergence. Hence, by a diagonal argument, we conclude that
  up to extraction of a subsequence $\tilde\mu^n_t\to\mu_t$ for all
  $t\in\Q_+$ and some $\mu_t\in\cP_2(X,d)$. Using the equi-absolute
  continuity of the curves $(\mu^n_t)$ and the equi-continuity of the
  map $Q_n'$ we obtain convergence for all times
  $t\in[0,\infty )$ for
  the same subsequence and a curve $(\mu_t)$ in $\cP_2(X,d)$ which is
  again absolutely continuous.

  Finally, we observe that since the operators $Q_n,Q'_n$ do not
  increase the entropy we have $U_N^n(\nu^n)\geq U_N(\nu)$ and by
  lower semi-continuity of the entropy also
  $\ent(\mu_t)\leq\liminf_n\ent(\tilde\mu^n_t)\leq\liminf_n\ent(\mu^n_t|m^n)$. Moreover,
  we have $W_2(\mu^n_t,\nu^n)\to W_2(\mu_t,\nu)$. This allows to pass
  to the limit in \eqref{eq:stable2} to obtain \eqref{eq:stable1}.
\end{proof}

\begin{theorem}[Tensorization]\label{thm:rcd-tensor}
  For $i=1,2$ let $(X_i,d_i,m_i)$ be $\rcd^*(K,N_i)$ spaces. Then the
  product space $(X_1\times X_2,d,m_1\otimes m_2)$, defined by
  \begin{align*}
    d\big((x,y),(x',y')\big)^2=d_1(x,x')^2 + d_2(y,y')^2\;,
  \end{align*}
  also satisfies $\rcd^*(K,N_1+N_2)$.
\end{theorem}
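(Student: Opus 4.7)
The plan is to invoke Theorem~\ref{thm:RCDKN-equiv}(i) and reduce the claim to two separate statements about the product $(Y, d, m) := (X_1 \times X_2, d, m_1 \otimes m_2)$: that it is infinitesimally Hilbertian, and that it satisfies $\cd^*(K, N_1+N_2)$. Infinitesimal Hilbertianity should follow from the additive decomposition of the Cheeger energy on a Cartesian product into horizontal and vertical contributions, each of which is quadratic by hypothesis, along the lines established in~\cite{AGS11b,AGMR12}. For the curvature-dimension claim, I would work equivalently with $\cd^e(K, N_1+N_2)$ via Theorem~\ref{thm:equiv_as}, using that $\rcd^*(K,N_i)$ factors are essentially non-branching (and hence so is the product with the natural dynamic optimal couplings).

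The core of the construction is the disintegration identity
$$\ent(\mu \mid m_1 \otimes m_2) = \ent(\mu^{(1)} \mid m_1) + \int_{X_1} \ent(\mu^{x_1} \mid m_2)\,d\mu^{(1)}(x_1),$$
where $\mu^{(1)}$ is the first marginal and $\mu^{x_1}$ the conditional over the fiber. Given $\mu_0, \mu_1 \in \cP_\infty(Y)$, I would build a candidate Wasserstein geodesic in two stages: first apply $\cd^e(K, N_1)$ on $X_1$ to the marginals, obtaining a dynamic optimal coupling $\pi^{(1)}$ and the associated $(K,N_1)$-convexity bound; then, for $\pi^{(1)}$-a.e.\ endpoint pair $(\gamma^{(1)}_0, \gamma^{(1)}_1)$, apply $\cd^e(K, N_2)$ on $X_2$ to the conditional measures $\mu_0^{\gamma^{(1)}_0}, \mu_1^{\gamma^{(1)}_1}$, producing a fiberwise geodesic with its $(K,N_2)$-convexity bound. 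Gluing these via the product structure yields a curve $(\mu_t)$ on $Y$. The $(K, N_1+N_2)$-convexity of the total entropy along $(\mu_t)$ should then follow by integrating the fiberwise bound against $\pi^{(1)}$ with Jensen's inequality applied to the convex function $G_t$ from Lemma~\ref{lem:convexG}, and adding the marginal bound; the dimensions add because the marginal speed $v_1$ and the averaged fiber speed $v_2$ combine orthogonally in the Pythagorean sense $v^2 = v_1^2 + v_2^2$, which reflects the elementary inequality $(s_1' \sqrt{N_2/N_1} - s_2' \sqrt{N_1/N_2})^2 \ge 0$ governing the combination of $(K,N_i)$-convexities.

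The main obstacle will be verifying that the glued curve $(\mu_t)$ is genuinely an optimal Wasserstein geodesic in $\cP_2(Y)$ rather than merely an admissible interpolation. This relies on the product structure $d^2 = d_1^2 + d_2^2$ of the cost and on essential non-branching, which together should ensure that the coordinatewise optimal couplings assemble into a global optimal coupling for $\mu_0$ and $\mu_1$ on $Y$. A secondary but crucial point is that the $\sigma$-coefficients of the reduced condition behave multiplicatively in the manner demanded by Lemma~\ref{lem:convexG}, whereas the $\tau$-coefficients of the non-reduced $\cdkn$ do not; this is exactly why only the reduced version tensorizes. Finally, the bounded-support restriction on $\mu_0, \mu_1$ can be lifted through the approximation argument of~\cite[Lem.~2.11]{BS10}.
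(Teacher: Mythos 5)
Your high-level plan (reduce to $\cd^e(K,N_1+N_2)$ plus Hilbertianity via Theorem~\ref{thm:RCDKN-equiv}(i), using essential non-branching and the convexity of the $\sigma$-coefficients through Lemma~\ref{lem:convexG}) matches the direct alternative that the paper only sketches in a remark, not its actual proof: the paper instead passes indirectly through the Bakry--Ledoux estimate, showing each factor satisfies $\bl(K,N_i)$ by Theorem~\ref{thm:grad-est}, invoking~\cite[Thm.~5.2]{AGS12} for tensorization of $\bl$, and pulling back via Theorems~\ref{thm:BEW2CDE} and~\ref{thm:RCDKN-equiv}. Your observations about Hilbertianity, essential non-branching of the product, and why $\sigma$ works where $\tau$ fails are all correct.

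The geodesic construction at the heart of your argument, however, does not work. You build a curve by taking an optimal dynamic coupling $\pi^{(1)}$ of the $X_1$-marginals and then fiberwise optimal geodesics between conditionals. The resulting glued curve is supported on geodesics of the product, so it is an admissible interpolation, but it is \emph{not} in general a Wasserstein geodesic on $Y$. The reason is that while the cost splits additively, $\int d_Y^2\,d\pi=\int d_1^2\,d\pi_1+\int d_2^2\,d\pi_2$ (with $\pi_i$ the $X_i$-marginal coupling of $\pi$), the optimization does not: there is generically \emph{no} coupling $\pi$ of $\mu_0,\mu_1$ on $Y\times Y$ whose two marginal couplings $\pi_1,\pi_2$ are simultaneously the optimal ones. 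Concretely, take $\mu_0=\tfrac12(\delta_{(0,0)}+\delta_{(1,1)})$ and $\mu_1=\tfrac12(\delta_{(0,10)}+\delta_{(1,0)})$ on $\R\times\R$ (or mollifications thereof). The optimal $X_1$-coupling of marginals is the identity on $\{0,1\}$; any product coupling respecting it must send $(0,0)\mapsto(0,10)$ and $(1,1)\mapsto(1,0)$, for total cost $\tfrac12(100+1)$. But the actual optimal product plan sends $(0,0)\mapsto(1,0)$ and $(1,1)\mapsto(0,10)$, cost $\tfrac12(1+82)$, and its $X_1$-marginal is the swap, not the identity. Essential non-branching governs uniqueness of geodesics; it says nothing about this factorization problem and cannot close the gap you flag.

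The direct argument that does work (and which the paper's remark cites from~\cite[Thm.~4.1]{BS10}) runs in the opposite direction: start from an arbitrary dynamic optimal coupling $\pi$ on $\geo(Y)$, project it to the coordinates (obtaining dynamic plans on $X_1,X_2$ that are not themselves required to be optimal a priori), and use non-branching to disintegrate $\pi$ into slices on which the density estimate of $\cd^*(K,N_i)$ on each factor can be applied; the fiberwise estimates are then assembled via the multiplicativity of the $\sigma$-coefficients. One cannot instead start from optimal marginal plans and hope to glue them into the product geodesic.
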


\begin{proof}
  The result will follow indirectly: According to
  Theorem~\ref{thm:grad-est} below, the $\rcd^*(K,N_i)$-conditions
  will imply the Bakry--Ledoux conditions $\bl(K,N_i)$ on the first
  and second factor. According to \cite[Thm.~5.2]{AGS12}, this implies
  that the product space satisfies $\bl(K,N_1+N_2)$.  Now
  Theorems~\ref{thm:BEW2CDE} and \ref{thm:RCDKN-equiv} imply that the
  $\rcd^*(K,N_1+N_2)$ condition holds on the product space.
\end{proof}

\begin{remark}
  Let us also briefly sketch an alternative more direct argument using
  characterization (i) of Theorem~\ref{thm:RCDKN-equiv}:
  First, \cite[Thm.~6.17]{AGS11b} yields that the Cheeger energy on
  the product space is again quadratic. Since $(X_i,d_i,m_i)$ are in
  particular strong $\cd(K,\infty)$ spaces, they are essentially
  non-branching according to Definition~\ref{def:ess-non-branch} by
  \cite[Thm.~1.1]{RS12}. This implies that also the product space is
  essentially non-branching. The latter can be seen using the fact
  that if $\gamma=(\gamma_1,\gamma_2)$ is a geodesic in $X_1\times
  X_2$, then $\gamma_i$ are geodesics in $X_i$. Finally, the reduced
  curvature-dimension condition tensorizes under the essentially
  non-branching assumption. This follows from the same arguments as in
  \cite[Thm.~4.1]{BS10}, where tensorization has been proven under the
  slightly stronger assumption that the full space is non-branching.
 \end{remark}

We conclude with a globalization property of the $\rcdkn$ condition.

\begin{theorem}[Local-to-global]\label{thm:rcd-locglob}
  Let $(X,d,m)$ be a strong $\cd^e_{\text{loc}}(K,N)$ space with
  $m\in\cP_2(X,d)$ and assume that it is locally infinitesimally
  Hilbertian in the following sense: there exists a countable covering
  $\{Y_i\}_{i\in I}$ by closed sets with $m(Y_i)>0$ such that the
  spaces $(Y_i,d,m_i)$ are infinitesimally Hilbertian, where
  $m_i=m(Y_i)^{-1}m\vert_{Y_i}$. Then $(X,d,m)$ satisfies the $\rcdkn$
  condition.
\end{theorem}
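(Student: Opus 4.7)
My plan is to treat the two ingredients of $\rcdkn$ separately: first promote the local curvature--dimension condition to a global one, and then upgrade the local infinitesimal Hilbertianity to the global infinitesimal Hilbertianity needed for $\rcdkn$. The conclusion will then follow from characterization (ii) of Theorem~\ref{thm:RCDKN-equiv}.

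For the first ingredient I would directly invoke Theorem~\ref{thm:cde-locglob}: by assumption $(X,d,m)$ is strongly $\cd^e_{\text{loc}}(K,N)$, and $\cd^e_{\text{loc}}(K,N)$ implies that bounded closed subsets of $\supp m$ are compact, whence $(X,d)$ is in fact a geodesic space. Hence Theorem~\ref{thm:cde-locglob} applies and yields strong $\ecdkn$ globally, and in particular $\ecdkn$ as well as $\cd(K,\infty)$.

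For the second ingredient it suffices, by Remark~\ref{rem:additive-linear}, to show that the Wasserstein heat semigroup $\cH_t$ on $(X,d,m)$ is additive, or equivalently that the Cheeger energy $\ch$ on $(X,d,m)$ is a quadratic form. The strategy is to exploit the locality of the minimal weak upper gradient together with the assumed local infinitesimal Hilbertianity on each $(Y_i,d,m_i)$. More precisely, I would first show that for any $f\in W^{1,2}(X,d,m)$ and any index $i$, the restriction $f\vert_{Y_i}$ lies in $W^{1,2}(Y_i,d,m_i)$ and its minimal weak upper gradient $\wug{f\vert_{Y_i}}_i$ computed intrinsically in $(Y_i,d,m_i)$ agrees $m_i$-a.e.\ with $\wug{f}\vert_{Y_i}$. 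This requires both inequalities: one direction is obtained by localizing test plans in $X$ to those that are concentrated on curves staying inside $Y_i$ (using that $m(Y_i)>0$ and the cover is countable, so $m$-a.e.\ point is a density point of some $Y_i$); the other follows because absolutely continuous curves inside $Y_i$ are also admissible in the definition of the weak upper gradient in $X$, together with the approximation of $f$ by Lipschitz functions given by \cite[Lem.~4.3(c)]{AGS11a}. Once this identification is in place, each $(Y_i,d,m_i)$ furnishes a symmetric bilinear scalar product $\ip{\nabla\cdot,\nabla\cdot}_i$ via \eqref{eq:scp}, and the same identity \eqref{eq:scp} read in $(X,d,m)$ must therefore produce $m$-a.e.\ the bilinear limit $\ip{\nabla f,\nabla g}$; summing over the countable covering $\{Y_i\}$ gives the parallelogram identity for $\ch$ on $W^{1,2}(X,d,m)$.

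The main obstacle is the comparison of minimal weak upper gradients outlined above: a priori, the intrinsic gradient in a closed subset can be smaller than the restriction of the ambient one (for instance if $Y_i$ is disconnected or has small ``metric thickness''). To overcome this, I would use that $(X,d,m)$, being $\cd(K,\infty)$, is locally doubling on bounded sets (via the Bishop--Gromov estimate of Proposition~\ref{prop:BGI} applied after verifying the missing dimension ingredient from the strong $\ecdkn$ condition already obtained), so that Lebesgue differentiation is available and $m$-a.e.\ point is a density point of some $Y_i$; at such points the two weak upper gradients must coincide by the locality principle together with the density of Lipschitz functions. Having established the quadratic character of $\ch$ on $(X,d,m)$, one concludes by invoking Theorem~\ref{thm:RCDKN-equiv}(ii): $(X,d,m)$ is infinitesimally Hilbertian and satisfies $\ecdkn$, hence it is $\rcdkn$.
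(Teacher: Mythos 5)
Your first ingredient (using Theorem~\ref{thm:cde-locglob} to promote strong $\cd^e_{\text{loc}}(K,N)$ to strong $\ecdkn$) matches the paper exactly, including the observation that the local condition already gives compactness of bounded closed sets and hence a geodesic space. For the second ingredient the paper takes a much shorter route: it simply cites the local-to-global property of infinitesimal Hilbertianity from \cite[Thm.~6.20]{AGS11b}, whereas you attempt to reconstruct that result from scratch. Your sketch is in the right spirit but has real gaps at the very point you yourself flag as ``the main obstacle.'' The identification $\wug{f\vert_{Y_i}}_i = \wug{f}\vert_{Y_i}$ for closed subsets is genuinely delicate: the intrinsic weak upper gradient in $(Y_i,d,m_i)$ only sees test plans supported on curves lying in $Y_i$, so a priori it can be strictly smaller than the restriction of the ambient one, and invoking ``locality plus density of Lipschitz functions'' at density points is a statement of the desired conclusion, not a proof of it. Your final step (``summing over the countable covering gives the parallelogram identity'') also skips over the overlaps between the $Y_i$ and the consistency of the bilinear forms on those overlaps. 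None of this is an unfixable flaw --- it is essentially what \cite[Thm.~6.20]{AGS11b} proves --- but as written your argument amounts to re-deriving a nontrivial known result while omitting the hard part, so you should either supply the missing details or simply cite the theorem as the paper does.
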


\begin{proof}
  Using characterization (ii) in Theorem~\ref{thm:RCDKN-equiv}, the
  assertion is a direct consequence of the fact that both
  infinitesimal Hilbertianity and the \emph{strong} $\ecdkn$ condition
  by themselves have the local-to-global property. Indeed, by
  \cite[Thm.~6.20]{AGS11b} the mms $(X,d,m)$ is again infinitesimally
  Hilbertian, i.e. the associated Cheeger energy is quadratic. By
  Theorem~\ref{thm:cde-locglob} it also satisfies the strong $\ecdkn$
  condition.
\end{proof}

\begin{remark}
  It is also possible to establish local--to--global property by
  passing through the corresponding result for $\cdskn$ with the aid
  of Theorem~\ref{thm:RCDKN-equiv}. This requires to check that the
  (quite complicated) proof of globalization for $\cdskn$ in
  \cite[Thm.~5.1]{BS10} also works under the slightly weaker
  ess. non-branching assumption. Thus, we prefer to give an
  independent and, to our knowledge, novel argument in the preceding
  proof.
\end{remark}

\subsection{Dimension dependent functional inequalities}
\label{sec:FI}

Here we present dimensional versions of classical transport
inequalities. Namely, we show that the new entropic
curvature-dimension condition entails improvements of the HWI
inequality, the logarithmic Sobolev inequality and the Talagrand
inequality taking into account the dimension bound. These results can
be seen as finite dimensional analogues of the famous results by
Bakry--\'Emery \cite{BE85} and Otto--Villani \cite{OV00}.

Given a probability measure $\mu\in\cP_2(X,d)$ we define the
\emph{Fisher information} by
\begin{align*}
  I(\mu)~=~4\int\wug{\sqrt{f}}^2\dd m\;,
\end{align*}
provided that $\mu=f m$ is absolutely continuous with a density
$f$ such that $\sqrt{f}\in D(\ch)$. Otherwise we set
$I(\mu)=+\infty$. With this notation,
the equality \eqref{eq:slope-FI}, which is valid on $\rcd (K,\infty)$ spaces,
means $\abs{ \nabla^-\ent}(f m)~=~I(f m)$.

\begin{theorem}[$N$-HWI inequality]\label{thm:N-HWI}
  Assume that the mms $(X,d,m)$ satisfies the $\ecdkn$ condition. Then
  for all $\mu_0,\mu_1\in\cP_2(X,d,m)$,
  \begin{align}\label{eq:UWI}
    \frac{U_N(\mu_1)}{U_N(\mu_0)}~\leq~\ckn{W_2(\mu_0,\mu_1)} + \frac1N\skn{W_2(\mu_0,\mu_1)}\sqrt{I(\mu_0)}\;.
  \end{align}
\end{theorem}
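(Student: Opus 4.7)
The plan is to deduce the inequality from two ingredients already assembled in the paper: the "below tangent" reformulation of $(K,N)$-convexity given by Lemma~\ref{lem:convex-below-tangent} and the representation of the descending slope of the entropy in terms of the Fisher information. First I would dispose of the trivial cases: if $\mu_0$ is not absolutely continuous, or $\sqrt{\rho_0} \notin D(\ch)$, then $I(\mu_0) = +\infty$ and the right--hand side is $+\infty$; if $\mu_1$ has infinite entropy then $U_N(\mu_1)=0$ and the inequality holds trivially. So I assume $\mu_0,\mu_1 \in \cP_2^*(X,d,m)$ with $I(\mu_0)<\infty$, write $\theta := W_2(\mu_0,\mu_1)$, and note that $\theta \le \pi\sqrt{N/K}$ by Corollary~\ref{cor:BMT} in the case $K>0$, so that $\skn{\theta}$ is non-negative.

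Applying Lemma~\ref{lem:convex-below-tangent}, which encapsulates the $\ecdkn$ condition via the tangent slope, I obtain a geodesic $(\mu_t)_{t\in[0,1]}$ in $\cP_2(X,d,m)$ connecting $\mu_0$ to $\mu_1$ such that
\begin{equation*}
U_N(\mu_1) \;\le\; \ckn{\theta}\cdot U_N(\mu_0) \,+\, \frac{\skn{\theta}}{\theta}\cdot \liminf_{t\searrow 0}\frac{U_N(\mu_t)-U_N(\mu_0)}{t}.
\end{equation*}
The goal is therefore to show that
\begin{equation*}
\liminf_{t\searrow 0}\frac{U_N(\mu_t)-U_N(\mu_0)}{t} \;\le\; \frac{\theta}{N}\, U_N(\mu_0)\,\sqrt{I(\mu_0)}.
\end{equation*}

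To do this I pass to the exponential representation $U_N(\mu_t)=\exp(-\ent(\mu_t)/N)$. Continuity of $t\mapsto \ent(\mu_t)$ at $t=0$ follows from the $K$-convexity estimate \eqref{eq:kconvex} (a consequence of $\ecdkn$ by Lemma~\ref{lem:consistent}) combined with the lower semi-continuity of the entropy along the Wasserstein geodesic. Hence, for any sequence $t_n\searrow 0$ along which the $\liminf$ on the left is attained, extracting a subsequence I may assume that $(\ent(\mu_{t_n})-\ent(\mu_0))/t_n$ converges to some $\ell\in[-\infty,\infty]$, and by a first-order Taylor expansion
\begin{equation*}
\lim_{n\to\infty}\frac{U_N(\mu_{t_n})-U_N(\mu_0)}{t_n} \;=\; -\frac{U_N(\mu_0)}{N}\,\ell.
\end{equation*}
Since the metric speed of the geodesic is constant and equal to $\theta$, so $W_2(\mu_0,\mu_{t_n})=t_n\theta$, the definition of the descending slope yields
\begin{equation*}
\ell \;\ge\; \liminf_{n\to\infty}\frac{\ent(\mu_{t_n})-\ent(\mu_0)}{t_n}
\;\ge\; -\theta\cdot\abs{\nabla^- \ent}(\mu_0),
\end{equation*}
using $\limsup_{\nu\to\mu_0}(\ent(\mu_0)-\ent(\nu))_+/W_2(\mu_0,\nu)\le\abs{\nabla^-\ent}(\mu_0)$. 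Invoking the identity $\abs{\nabla^-\ent}(\mu_0)^2 = I(\mu_0)$ from \eqref{eq:slope-FI}, valid because $\ecdkn$ implies $\cd(K,\infty)$, gives the required bound on the $\liminf$, and substituting back produces \eqref{eq:UWI} after dividing by $U_N(\mu_0)>0$.

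The main subtle point is the step converting the lower bound on the difference quotient of $\ent(\mu_t)$ into an upper bound on the $\liminf$ of the quotient of $U_N(\mu_t)$; this requires both the short-time continuity $\ent(\mu_t)\to\ent(\mu_0)$ (to justify the Taylor expansion) and a careful extraction along the same subsequence realizing the $\liminf$. The sign-and-monotonicity check (making sure $\skn{\theta}\ge 0$ for the coefficient to preserve the inequality, which is where Bonnet--Myers enters when $K>0$) is a minor but necessary point; everything else is a direct assembly of cited results.
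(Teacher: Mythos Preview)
Your proof is correct and follows essentially the same approach as the paper's: both invoke Lemma~\ref{lem:convex-below-tangent}, convert the $\liminf$ of the $U_N$-difference quotient into a bound on the entropy-difference quotient via Taylor expansion, and then control the latter by the descending slope together with \eqref{eq:slope-FI}. The only cosmetic difference is that the paper first records both the upper bound (from $K$-convexity) and the lower bound (from the slope) on $(\ent(\mu_t)-\ent(\mu_0))/t$ to conclude $(\ent(\mu_t)-\ent(\mu_0))^2=o(t)$ before expanding, while you pass through a subsequence; your route is fine once you observe that continuity of $\ent(\mu_t)$ at $t=0$ makes the first-order expansion exact in the limit even when $\ell=\pm\infty$.
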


\begin{proof}
  We can assume that $I(\mu_0)=\abs{\nabla^-\ent}(\mu_0)$ is finite,
  as otherwise there is nothing to prove. Let $(\mu_t)_{t\in[0,1]}$ be
  the constant speed geodesic connecting $\mu_0$ to $\mu_1$ given by
  the $\ecdkn$ condition. Since $(K,N)$-convexity of $\ent$ along the
  geodesic $(\mu_t)$ implies usual $K$-convexity along the same
  geodesic we have
  \begin{align*}
    \limsup\limits_{t\searrow0}\frac{\ent(\mu_t)-\ent(\mu_0)}{t}~\leq~\ent(\mu_1)
    - \ent(\mu_0) - \frac{K}{2} W_2(\mu_0,\mu_1)^2\;.
  \end{align*}
  On the other hand, we have
  \begin{align}\nonumber
    \liminf\limits_{t\searrow0}\frac{\ent(\mu_t)-\ent(\mu_0)}{t}~&\geq~-\limsup\limits_{t\searrow0}\frac{\max\{\ent(\mu_0)-\ent(\mu_t),0\}}{t}\\\label{eq:nhwi1}
    ~&\geq~-\abs{\nabla^-\ent}(\mu_0)\cdot W_2(\mu_0,\mu_1)\;.
  \end{align}
  Thus $\big(\ent(\mu_t)-\ent(\mu_0)\big)^2=o(t)$ as $t\to0$. By
  Lemma~\ref{lem:convex-below-tangent} and a Taylor expansion of
  $x\mapsto e^{-x/N}$ we obtain
 \begin{align*}
   \frac{U_N(\mu_1)}{U_N(\mu_0)}~&\leq~\ckn{\theta} + \frac{\skn{\theta}}{\theta\cdot U_N(
   \mu_0)}\cdot\liminf\limits_{t\searrow0}\frac{U_N(\mu_t)-U_N(\mu_0)}{t}\\
   &=~\ckn{\theta} - \frac{\skn{\theta}}{\theta\cdot
     N}\cdot\limsup\limits_{t\searrow0}\frac{\ent(\mu_t)-\ent(\mu_0)}{t}\;,
  \end{align*}
  where we set $\theta=W_2(\mu_0,\mu_1)$. Applying the estimate
  \eqref{eq:nhwi1} again yields the claim.
\end{proof}

\begin{corollary}[$N$-LogSobolev inequality]\label{cor:N-LogSob}
  Assume that $(X,d,m)$ is a $\ecdkn$ space with $K>0$ and that
  $m\in\cP_2(X,d)$. Then for all $\mu\in\cP_2(X,d,m)$,
  \begin{align}
  \label{eq:N-LSI}
    KN\left[\exp\left(\frac{2}{N}\ent(\mu)\right) - 1\right]~\leq~I(\mu)\;.
  \end{align}
\end{corollary}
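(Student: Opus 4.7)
The plan is to apply the $N$-HWI inequality \eqref{eq:UWI} with the specific choice $\mu_0 = \mu$ and $\mu_1 = m$, which is permissible because the assumption $m\in\cP_2(X,d)$ ensures $m$ is a valid probability measure in the Wasserstein space. Since $\ent(m) = 0$, we have $U_N(m) = 1$, and writing $\theta := W_2(\mu,m)$, Theorem~\ref{thm:N-HWI} yields
\begin{align*}
\exp\left(\frac{1}{N}\ent(\mu)\right) ~=~ \frac{1}{U_N(\mu)} ~\leq~ \ckn{\theta} + \frac{1}{N}\skn{\theta}\sqrt{I(\mu)}\;.
\end{align*}

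Next I would eliminate the unknown distance $\theta$ using the elementary trigonometric bound $a\cos(x) + b\sin(x) \leq \sqrt{a^2+b^2}$, valid for all real $x$ and all $a,b\in\R$. Since $K>0$, we have $\ckn{\theta} = \cos(\sqrt{K/N}\,\theta)$ and $\skn{\theta} = \sqrt{N/K}\sin(\sqrt{K/N}\,\theta)$, so that
\begin{align*}
\ckn{\theta} + \frac{1}{N}\skn{\theta}\sqrt{I(\mu)} ~=~ \cos(\sqrt{K/N}\,\theta) + \frac{\sqrt{I(\mu)}}{\sqrt{KN}}\sin(\sqrt{K/N}\,\theta) ~\leq~ \sqrt{1 + \frac{I(\mu)}{KN}}\;.
\end{align*}
Combining the two displays gives
\begin{align*}
\exp\left(\frac{1}{N}\ent(\mu)\right) ~\leq~ \sqrt{1 + \frac{I(\mu)}{KN}}\;,
\end{align*}
and squaring and rearranging yields \eqref{eq:N-LSI}.

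There is essentially no obstacle here: both sides of the $N$-HWI inequality behave well for arbitrary values of $\theta \geq 0$ because $\fc_{K/N}$ and $\fs_{K/N}$ are globally defined smooth functions, so no restriction on the diameter is needed to justify the manipulation (although Corollary~\ref{cor:BMT} does provide such a bound when $N\geq 1$). The only content of the proof is the sharp optimization of the right-hand side over $\theta$, which is achieved by the Cauchy--Schwarz-type trigonometric bound. Note also that the trivial inequality $I(\mu) \geq 0$ implies the bound is consistent with $\ent(\mu) \geq 0$, as expected since $m$ is the minimizer of the entropy.
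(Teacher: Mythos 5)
Your proposal is correct and follows essentially the same route as the paper: both apply the $N$-HWI inequality \eqref{eq:UWI} with $\mu_0=\mu$, $\mu_1=m$, use $U_N(m)=1$, and then optimize the right-hand side over the unknown $\theta=W_2(\mu,m)$. The only superficial difference is that the paper squares first and invokes Young's inequality together with $\ckn{\cdot}^2+\frac{K}{N}\skn{\cdot}^2=1$, whereas you apply the equivalent Cauchy--Schwarz bound $a\cos x + b\sin x\leq\sqrt{a^2+b^2}$ before squaring; the two manipulations are algebraically identical and produce the same sharp constant.
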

The LHS obviously is bounded from below by $2K\cdot \ent(\mu)$.

\begin{proof}
  We apply the $N$-HWI inequality from Theorem~\ref{thm:N-HWI} to the
  measures $\mu_0=\mu$ and $\mu_1=m$. Noting that $U_N(m)=1$ and
  setting $\theta=W_2(\mu,m)$ we obtain
  \begin{align*}
    \exp\left(\frac{1}{N}\ent(\mu)\right)~\leq~\ckn{\theta} + \frac1N\skn{\theta}\sqrt{I(\mu)}\;.
  \end{align*}
 Taking the square and using Young's inequality $2ab\leq Ka^2+K^{-1}b^2$ we obtain
 \begin{align*}
    \exp\left(\frac{2}{N}\ent(\mu)\right)~&\leq~\ckn{\theta}^2 + \frac2N\skn{\theta}\ckn{\theta}\sqrt{I(\mu)} + \frac1{N^2}\skn{\theta}^2I(\mu)\\
      &\leq~\left(\ckn{\theta}^2+\frac{K}{N}\skn{\theta}^2\right)\left[1 + \frac{1}{KN}I(\mu)\right]\;.
\end{align*}
Since $\ckn{\cdot}^2+\frac{K}{N}\skn{\cdot}^2=1$, this yields the claim.
\end{proof}

\begin{corollary}[$N$-Talagrand inequality]\label{cor:N-Talagrand}
  Assume that $(X,d,m)$ is a $\ecdkn$ space with $K>0$ and that
  $m\in\cP_2(X,d)$. Then $W_2(\mu,m)\le \sqrt{\frac NK}\,\frac\pi2$  for any $\mu\in\cP_2(X,d,m)$ and
 \begin{align}\label{eq:T}
   \ent(\mu)~\geq~-N\log\cos\left(\sqrt{\frac{K}{N}}W_2(\mu,m)\right)\;.
  \end{align}
\end{corollary}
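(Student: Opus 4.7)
The plan is to deduce both assertions directly from the $N$-HWI inequality of Theorem~\ref{thm:N-HWI} applied with $\mu_0 = m$ and $\mu_1 = \mu$. Since $m\in\cP_2(X,d)$ is a probability measure, $\ent(m)=0$ and thus $U_N(m)=1$. Moreover, the density of $m$ with respect to itself is identically $1$, and the constant function $\sqrt{1}\equiv 1$ has vanishing slope, so by \eqref{eq:lip-wug} its minimal weak upper gradient vanishes, giving $I(m)=0$. Substituting into \eqref{eq:UWI} collapses it to the pointwise estimate
\begin{equation*}
  \exp\left(-\frac{1}{N}\ent(\mu)\right) ~=~ U_N(\mu) ~\le~ \ckn{W_2(\mu,m)} ~=~ \cos\bigl(\sqrt{K/N}\,W_2(\mu,m)\bigr),
\end{equation*}
which is the key inequality from which both conclusions will follow.

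For the Talagrand-type bound \eqref{eq:T}, I would simply take logarithms in the displayed inequality, using the convention $-\log 0 = +\infty$ whenever the cosine vanishes; this directly rearranges to $\ent(\mu) \geq -N\log\cos(\sqrt{K/N}\,W_2(\mu,m))$. For the diameter bound, I would first observe that whenever $\ent(\mu)<\infty$ the left-hand side above is strictly positive, forcing the cosine to be strictly positive and hence $W_2(\mu,m)<\sqrt{N/K}\,\pi/2$. For a general $\mu=f\cdot m\in\cP_2(X,d,m)$, I would argue by truncation: the probability measures $\mu^{(n)} := c_n (f\wedge n)m$ (with $c_n$ a normalizing constant) have bounded density, hence finite entropy, so satisfy $W_2(\mu^{(n)},m) \le \sqrt{N/K}\,\pi/2$ by the previous step, and $\mu^{(n)}\to\mu$ in $W_2$ by dominated convergence on second moments, transferring the diameter bound to $\mu$.

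The argument is essentially immediate from the $N$-HWI inequality, and no substantive obstacle is anticipated. The two minor points requiring care are the verification that constants lie in $D(\ch)$ with vanishing Cheeger energy (clear from the relaxation definition together with $m\in\cP_2(X,d)$) and the $W_2$-convergence of the truncations (standard via dominated convergence, given that the second moments are uniformly dominated by that of $\mu$).
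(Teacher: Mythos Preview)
Your proof is correct and follows exactly the paper's own argument: apply the $N$-HWI inequality with $\mu_0=m$, $\mu_1=\mu$, and use $U_N(m)=1$, $I(m)=0$. The paper's proof is a one-line reference to this substitution; your additional truncation step to extend the diameter bound to measures with possibly infinite entropy is a reasonable elaboration of a detail the paper leaves implicit.
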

Note that under the given upper bound on $W_2(\mu,m)$, the RHS in the above estimate is bounded from below by $\frac K2 \, W_2(\mu,m)^2$.
\begin{proof}
  The claims follow immediately by applying the $N$-HWI inequality
  \eqref{eq:UWI} from Theorem~\ref{thm:N-HWI} to the measures
  $\mu_0=m$ and $\mu_1=\mu$ and noting that $U_N(m)=1$ as well as
  $I(m)=0$.
\end{proof}

It is interesting to note that in the spirit of Otto--Villani a
slightly weaker Talagrand-like inequality can also be derived from the
$N$-LogSobolev inequality.

\begin{proposition}
  Let $(X,d,m)$ be a $\cd(K',\infty)$ space for some $K'\in\R$ such
  that $m\in\cP_2(X,d)$. Assume that the $N$-LogSobolev inequality
  \eqref{eq:N-LSI} holds for some $K>0$. Then for any
  $\mu\in\cP_2(X,d,m)$,
 \begin{align}\label{eq:T-alt}
   W_2(\mu,m)~\leq~\sqrt{\frac{N}{K}\left[\exp\left(\frac{2}{N}\ent(\mu)\right)-1\right]}\;.
  \end{align}
\end{proposition}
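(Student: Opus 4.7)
The plan is to adapt the classical Otto--Villani strategy to the dimensional setting by integrating along the heat flow until it reaches equilibrium. Let $\mu\in\cP_2(X,d,m)$ with $\ent(\mu)<\infty$ (the inequality is trivial otherwise). Since $(X,d,m)$ satisfies $\cd(K',\infty)$, the Wasserstein gradient flow $\mu_t=\cH_t\mu$ of the entropy starting at $\mu$ exists by \cite[Thm.~9.3(ii)]{AGS11a}, and along it the Energy Dissipation Equality \eqref{eq:EDE} together with \eqref{eq:slope-FI} gives, for a.e.~$t>0$,
\begin{equation*}
 |\dot\mu_t|^2 \;=\; I(\mu_t), \qquad \frac{\dd}{\dd t}\ent(\mu_t) \;=\; -I(\mu_t).
\end{equation*}

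The core estimate is a bound on the total length $\int_0^T|\dot\mu_t|\,\dd t = \int_0^T\sqrt{I(\mu_t)}\,\dd t$. The key trick is to write $\sqrt{I(\mu_t)} = -\frac{\dd}{\dd t}\ent(\mu_t)/\sqrt{I(\mu_t)}$ and use the $N$-LSI \eqref{eq:N-LSI} in the form $\sqrt{I(\mu_t)}\geq\sqrt{KN\big(\e^{2\ent(\mu_t)/N}-1\big)}$ to bound the denominator from below. Changing variables first to $u=\ent(\mu_t)$ and then to $s=\sqrt{\e^{2u/N}-1}$ computes the resulting integral explicitly:
\begin{equation*}
 \int_0^T\sqrt{I(\mu_t)}\,\dd t \;\leq\; \sqrt{\frac{N}{K}}\Big[\arctan\sqrt{\e^{2\ent(\mu)/N}-1}\,-\,\arctan\sqrt{\e^{2\ent(\mu_T)/N}-1}\Big].
\end{equation*}

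To pass to the limit $T\to\infty$, note that the LSI combined with $\e^x\geq 1+x$ implies $\frac{\dd}{\dd t}\ent(\mu_t)\leq-2K\ent(\mu_t)$, and hence $\ent(\mu_t)\leq\e^{-2Kt}\ent(\mu)\to 0$. The bound above shows that the total length $\int_0^\infty|\dot\mu_t|\,\dd t$ is finite, so $(\mu_T)_{T\geq 0}$ is $W_2$-Cauchy and converges to some $\mu_\infty\in\cP_2(X,d)$; lower semi-continuity of $\ent$ in $W_2$ then forces $\ent(\mu_\infty)=0$, whence $\mu_\infty=m$. Combining $W_2(\mu,\mu_T)\leq\int_0^T|\dot\mu_t|\,\dd t$ with the triangle inequality, letting $T\to\infty$, and using $\arctan x\leq x$ yields \eqref{eq:T-alt}.

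The main subtlety is the trick of dividing and multiplying by $\sqrt{I(\mu_t)}$ in the core integral: this is precisely what makes the $N$-LSI (which provides a lower bound on $I$ in terms of $\ent$) usable to produce an upper bound on the length of the heat flow trajectory. The slack introduced by $\arctan x\leq x$ at the very end is the reason the bound \eqref{eq:T-alt} is strictly weaker than the sharp $N$-Talagrand inequality \eqref{eq:T} of Corollary~\ref{cor:N-Talagrand}; the intermediate $\arctan$-estimate is itself of interest as an $N$-dependent strengthening.
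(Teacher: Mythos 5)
Your proof is correct, and it takes a genuinely different route from the paper's. The paper's proof is a monotonicity argument: it introduces the auxiliary function
\[
A(t) = W_2(\cH_t\mu,\mu) + \sqrt{\tfrac{N}{K}\big[\e^{2\ent(\cH_t\mu)/N}-1\big]}
\]
and shows $A$ is non-increasing by bounding $\ddtr W_2(\cH_t\mu,\mu) \leq \sqrt{I(\cH_t\mu)}$ and $\ddtr\sqrt{\tfrac{N}{K}[\e^{2\ent(\cH_t\mu)/N}-1]} \leq -\sqrt{I(\cH_t\mu)}$ via the $N$-LSI; since $A(0)$ is the right-hand side of \eqref{eq:T-alt} and $A(t)\to W_2(\mu,m)$, the claim follows. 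You instead bound the total length directly, using the trick of dividing $I$ by $\sqrt{I}$ and applying the LSI to the denominator, then computing the integral by two changes of variables. Both are clean Otto--Villani-style arguments; yours requires slightly more care about the change of variable when $I(\mu_t)$ may vanish (though that can only happen at equilibrium, so it causes no trouble).

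What your argument buys that the paper's does not: your intermediate $\arctan$-bound is in fact exactly the sharp $N$-Talagrand inequality. Using $\arccos\bigl(1/\sqrt{1+s^2}\bigr) = \arctan(s)$ with $s = \sqrt{\e^{2\ent(\mu)/N}-1}$, your estimate
\[
W_2(\mu,m) \leq \sqrt{\tfrac{N}{K}}\,\arctan\sqrt{\e^{2\ent(\mu)/N}-1}
\]
is equivalent to $\ent(\mu) \geq -N\log\cos\bigl(\sqrt{K/N}\,W_2(\mu,m)\bigr)$, which is precisely \eqref{eq:T} of Corollary~\ref{cor:N-Talagrand}. So you have shown the stronger statement that the $N$-LSI alone implies the sharp $N$-Talagrand inequality (under $\cd(K',\infty)$), not merely its weaker consequence \eqref{eq:T-alt}. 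The paper's monotonicity argument loses this sharpness because, in computing $\ddtr A(t)$, it silently drops the factor $\e^{2\ent(\cH_t\mu)/N}\geq 1$ appearing in the exact derivative of the second term of $A(t)$: the computation there actually gives $\ddtr\sqrt{\tfrac{N}{K}[\e^{2\ent/N}-1]} = -\e^{2\ent/N}I/\sqrt{KN[\e^{2\ent/N}-1]} \leq -\e^{2\ent/N}\sqrt{I} \leq -\sqrt{I}$, and the last inequality is where the slack enters. Your change-of-variable computation keeps that factor, which is why you land on the $\arctan$.
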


\begin{proof}
  We fix $\mu\in\cP_2(X,d,m)$ and introduce the function
  $A:[0,\infty)\to\R_+$ defined by
  \begin{align*}
    A(t)~=~W_2(\cH_t\mu,\mu) + \sqrt{\frac{N}{K}\left[\exp\left(\frac{2}{N}\ent(\cH_t\mu)\right)-1\right]}\;.
  \end{align*}
  Obviously, $A(0)$ equals the right hand side of \eqref{eq:T-alt},
  while $A(t)\to W_2(\mu,m)$ as $t\to\infty$. Thus it is sufficient to
  prove that $A$ is non-increasing. First note that under the
  $\cd(K',\infty)$ condition we have the estimate
  \begin{align}\label{eq:ddtrW2}
    \ddtr W_2(\cH_t\mu,\mu)~\leq~\sqrt{I(\cH_t\mu)}\;.
  \end{align}
  Indeed, using triangle inequality we find
  \begin{align*}
    \limsup\limits_{h\searrow0}\frac{1}{h}\Big(W_2(\cH_{t+h}\mu,\mu)-W_2(\cH_t\mu,\mu)\Big)~&\leq~\limsup\limits_{h\searrow0}\frac{1}{h} W_2(\cH_{t+h}\mu,\cH_t\mu)~=~\abs{\dot{(\cH_t\mu)}}\;.
  \end{align*}
  Now \eqref{eq:ddtrW2} follows from the fact that $\cH_t\mu$ is a
  metric gradient flow of $\ent$ by virtue of the Energy Dissipation
  Equality \eqref{eq:EDE} and \eqref{eq:slope-FI}. Moreover, we
  calculate
  \begin{align*}
   \ddtr \sqrt{\frac{N}{K}\left[\exp\left(\frac{2}{N}\ent(\cH_t\mu)\right)-1\right]}~&=~\left(NK\left[\exp\left(\frac{2}{N}\ent(\cH_t\mu)\right)-1\right]\right)^{-\frac12}\ddtr \ent(\cH_t\mu)\\
        &~=~-\left(NK\left[\exp\left(\frac{2}{N}\ent(\cH_t\mu)\right)-1\right]\right)^{-\frac12}I(\cH_t\mu)\\
        &~\leq~-\sqrt{I(\cH_t\mu)}\;,
  \end{align*}
  where we have used \eqref{eq:N-LSI} in the last step. Thus we have
  shown that $\ddtr A(t)\leq 0$ which yields the claim.
\end{proof}

\begin{remark}
  Note that the arguments in the proofs above are of a purely metric
  nature. The preceding results can be formulated and proven verbatim
  in the setting of Section~\ref{sec:metric} by replacing $\ent$ with
  a $(K,N)$-convex function $S$ on a metric space, the Fisher
  information $I$ with the slope $\abs{\nabla^-S}$ and $\cH_t\mu$ with
  the gradient flow of $S$. However, for concreteness we choose to work
  in the Wasserstein framework.
\end{remark}

\section{ Equivalence of $\cd^e(K,N)$ and the Bochner Inequality $\be(K,N)$}\label{sec:cde-bochner}

In this section we will study properties of the gradient flow $H_t f$
of the (quadratic) Cheeger energy $\ch$ in $L^2 (X,m)$. We refer to
Section~\ref{sec:recap} and references therein for notations and basic
properties of them.

\subsection{From $\cd^e(K,N)$ to $\bl(K,N)$ and $\be(K,N)$}
\label{sec:cde2bochner}

In this section we study the analytic consequences of the Riemannian
curvature-dimension condition. In particular, we show that it implies
a pointwise gradient estimate in the spirit of Bakry--Ledoux. This in
turn allows us to establish the full Bochner inequality.

As an immediate consequence of Definition~\ref{def:riemcdkn} and
Theorem~\ref{thm:contraction} we obtain the following Wasserstein
expansion bound. Recall from Proposition \ref{prop:simp-control} that
this bound in turn implies a slightly weaker and simpler bound not
involving the function $\skn{\cdot}$.

\begin{theorem}[$W_2$-expansion bound]\label{thm:W2-contraction}
  Let $(X,d,m)$ be a $\rcdkn$ space. For any $\mu,\nu\in\cP_2(X,d)$
  and $0<s,t$ we have
  \begin{align}\label{eq:W2-contraction}
    \skn{\frac12
      W_2(\cH_t\mu,\cH_s\nu)}^2~\leq~&e^{-K(s+t)}\skn{\frac12
      W_2(\mu,\nu)}^2\\\nonumber &+
    \frac{N}{K}\Big(1-e^{-K(s+t)}\Big)\frac{\big(\sqrt{t}-\sqrt{s}\big)^2}{2(s+t)}\;.
  \end{align}
  In particular, in the limit $s\to t$ and $\nu\to\mu$ we have
  \begin{align}\label{eq:W2-contraction-inf}
    W_2(\cH_t\mu,\cH_s\nu)^2~\leq~&e^{-2Kt}W_2(\mu,\nu)^2 +
    \frac{N}{K}\frac{1-e^{-2Kt}}{4t^2}\cdot\abs{s-t}^2\\\nonumber & +
    o\big(W_2(\mu,\nu)^2+\abs{t-s}^2\big)\;.
  \end{align}
\end{theorem}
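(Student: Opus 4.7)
The plan is to deduce this statement as a direct corollary of two ingredients already established in the paper: the characterization of $\rcdkn$ via $\evikn$ gradient flows of the entropy from Theorem~\ref{thm:RCDKN-equiv}, and the purely metric space-time expansion estimate for abstract $\evikn$ gradient flows from Theorem~\ref{thm:contraction}. The excerpt itself flags this as an immediate consequence, and indeed the substance of the argument has been moved upstream; what remains here is to match the abstract hypotheses to the concrete Wasserstein setting.

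First I would invoke Theorem~\ref{thm:RCDKN-equiv}(iii): on any $\rcdkn$ space, every $\mu_0 \in \cP_2(X,d)$ is the starting point of an $\evikn$ gradient flow of $\ent$ in the metric space $(\cP_2(X,d), W_2)$. By the identification of this flow with the heat flow carried out in Section~\ref{sec:riem-cdkn}, together with the uniqueness recorded in Corollary~\ref{cor:contraction}, this curve is precisely $\cH_t\mu_0$. The ambient space $(\cP_2(X,d), W_2)$ is complete, separable, and geodesic, and $\ent$ is lower semi-continuous w.r.t.\ $W_2$, thanks to the exponential integrability condition \eqref{eq:exp-int} which holds automatically in the $\rcdkn$ setting by Remark~\ref{rem:exp-int}; hence the hypotheses required in Section~\ref{sec:metric} are verified.

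Second I would apply Theorem~\ref{thm:contraction} with $S = \ent$, the ambient metric equal to $W_2$, and the two $\evikn$ flows $x_t = \cH_t\mu$ starting from $\mu$ and $y_s = \cH_s\nu$ starting from $\nu$. The conclusion reads verbatim as \eqref{eq:W2-contraction}.

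Finally, for the infinitesimal bound \eqref{eq:W2-contraction-inf} I would pass to the limit $\nu \to \mu$ and $s \to t$, in which regime both $W_2(\cH_t\mu, \cH_s\nu)$ and $W_2(\mu,\nu)$ tend to zero (continuity of the heat flow in both arguments is itself encoded in \eqref{eq:W2-contraction}). Using the expansion $\skn{r}^2 = r^2 + O(r^4)$ near $r=0$, multiplying \eqref{eq:W2-contraction} by $4$, and expanding $(\sqrt{t}-\sqrt{s})^2 = (t-s)^2/(\sqrt{t}+\sqrt{s})^2$ around $s=t$ produces the stated quadratic estimate, with the quartic error terms absorbed into $o\big(W_2(\mu,\nu)^2 + |t-s|^2\big)$. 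No substantial obstacle arises in the present theorem: all the genuine analytic and geometric work has already been carried out in Theorems~\ref{thm:RCDKN-equiv} and~\ref{thm:contraction}, and the only nontrivial point is to observe that the Wasserstein heat flow fits the abstract $\evikn$ framework.
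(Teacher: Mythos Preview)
Your proposal is correct and matches the paper's approach exactly: the paper presents this theorem as ``an immediate consequence of Definition~\ref{def:riemcdkn} and Theorem~\ref{thm:contraction}'', i.e., the $\rcdkn$ condition yields the $\evikn$ gradient flow of the entropy via Theorem~\ref{thm:RCDKN-equiv}(iii), and then the abstract expansion bound Theorem~\ref{thm:contraction} applies in the Wasserstein setting. The infinitesimal form \eqref{eq:W2-contraction-inf} is likewise recorded in the paper as Remark~\ref{rem:contraction-infinitesimal}, obtained by exactly the Taylor expansion you describe.
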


Next we will show that \eqref{eq:W2-contraction} implies
Bakry--Ledoux's gradient estimate. To do it with minimal a priori
regularity assumptions, we will introduce another condition, which is
satisfied for each $\rcd(K',\infty)$ space (see Remark~\ref{rem:regularity}
below).

\begin{assumption} \label{ass:Ch-reg} $(X,d,m)$ is a length metric
  measure space satisfying $\supp m = X$ and
  \eqref{eq:exp-int}. In addition, every $f \in D ( \ch )$ with
  $\wug{f} \le 1$ has a 1-Lipschitz representative.
\end{assumption}

\begin{theorem}[Bakry--Ledoux gradient estimate]\label{thm:grad-est}
  Let $(X,d,m)$ be an infinitesimally Hilbertian metric measure space
  satisfying Assumption~\ref{ass:Ch-reg}. Assume that
  \eqref{eq:W2-contraction} with $K\in \R$, $N\in(0,\infty)$ holds for
  the measures $( \bH_t \eta ) m$ and $(\bH_s\sigma)m$ instead of
  $\cH_t \mu$ and $\cH_s \nu$ for each $\mu= \eta m$ and $\nu= \sigma
  m$ in $\cP_2 (X,d,m)$ and $t,s \ge 0$. Then
  \begin{align}\label{eq:grad-est}
    \wug{\bH_t f}^2 + \frac{4Kt^2}{N\big(e^{2Kt}-1\big)}\abs{\Delta \bH_t f}^2~\leq~e^{-2Kt}\bH_t\big(\wug{f}^2\big)\;.
  \end{align}
  $m$-a.e. in $X$ for any $f\in D(\ch)$ and $t>0$.
\end{theorem}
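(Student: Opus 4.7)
\emph{Plan.} The approach I would take is Kuwada's duality principle in its dimensional refinement: the refined $W_2$-expansion bound \eqref{eq:W2-contraction} on the heat flow of measures will be converted, via the Hopf--Lax / Kantorovich duality, into the Bakry--Ledoux gradient estimate \eqref{eq:grad-est} on the heat flow of functions. As a first reduction, using Assumption~\ref{ass:Ch-reg} and the density of bounded Lipschitz functions in $D(\ch)$, it suffices to prove \eqref{eq:grad-est} for $f$ bounded Lipschitz with $\wug{f}$ bounded; the general case then follows by approximation and lower semicontinuity of both sides.

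The central tool is the Hopf--Lax duality
\begin{equation*}
\int Q_h \varphi \, \dd \nu - \int \varphi \, \dd \mu \le \frac{1}{2h}\, W_2(\mu,\nu)^2,
\end{equation*}
valid for every $h > 0$, every $\mu, \nu \in \cP_2(X,d)$, and every bounded Lipschitz $\varphi$, where $Q_h\varphi(x) = \inf_y[\varphi(y) + d(x,y)^2/(2h)]$. Combined with the Hamilton--Jacobi $L^1$-asymptotic $Q_h f = f - (h/2)\wug{f}^2 + o(h)$ from the Ambrosio--Gigli--Savar\'e theory, the $L^2$-self-adjointness of the heat flow, and the identification $\cH_r(\eta m) = (\bH_r \eta)m$, I apply this duality with $\varphi = f$ and test measures $\mu = \bH_t \rho \cdot m$, $\nu = \bH_s \sigma \cdot m$ for nonnegative densities $\rho, \sigma$ concentrating near distinct points $x, y$. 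This produces
\begin{equation*}
\int \sigma\, \bH_s Q_h f \, \dd m - \int \rho\, \bH_t f \, \dd m \le \frac{1}{2h}\, W_2(\bH_t\rho\cdot m,\, \bH_s \sigma \cdot m)^2,
\end{equation*}
whose right-hand side is bounded, using the hypothesis \eqref{eq:W2-contraction} together with $\skn{r/2}^2 \sim r^2/4$ as $r \to 0$, by $\tfrac{1}{2h}\bigl[e^{-K(s+t)} W_2(\rho\cdot m,\sigma\cdot m)^2 + \tfrac{2N(1-e^{-K(s+t)})}{K(s+t)}(\sqrt{t}-\sqrt{s})^2\bigr]$.

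To extract \eqref{eq:grad-est}, I would let $\rho, \sigma$ concentrate at $x$ and $y$ with $d(x,y) = \eps$ small, and jointly expand both sides in $h \to 0$, $s - t \to 0$, $\eps \to 0$. The left-hand side becomes $\eps\, \wug{\bH_t f}(x) + (s-t)\Delta \bH_t f(x) - \tfrac{h}{2}\bH_t(\wug{f}^2)(x) + o$, after choosing $y$ along a direction realizing the slope of $\bH_t f$ at $x$ (which by Assumption~\ref{ass:Ch-reg} coincides with $\wug{\bH_t f}$ a.e.), while the right-hand side reduces to $\tfrac{e^{-2Kt}\eps^2}{2h} + \tfrac{N(1-e^{-2Kt})(s-t)^2}{8Kht^2} + o$, using $(\sqrt{s}-\sqrt{t})^2 = (s-t)^2/(4t) + o((s-t)^2)$. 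Optimizing in $\eps$ and in $s-t$ via $\sup_x[ax - bx^2] = a^2/(4b)$ gives
\begin{equation*}
\tfrac{h e^{2Kt}}{2}\, \wug{\bH_t f}^2(x) + \tfrac{2Kht^2}{N(1-e^{-2Kt})}\, |\Delta \bH_t f(x)|^2 \le \tfrac{h}{2}\, \bH_t(\wug{f}^2)(x);
\end{equation*}
dividing by $h/2$ and multiplying by $e^{-2Kt}$ (and using $1-e^{-2Kt} = e^{-2Kt}(e^{2Kt}-1)$) is precisely \eqref{eq:grad-est} at $x$. The main obstacle I foresee will be making rigorous the concentration of $\rho, \sigma$ to points while preserving the simultaneous validity of the three asymptotic expansions, and transferring the resulting pointwise estimate to an $m$-a.e.\ statement; this is where the smoothing $\bH_t : L^2 \to D(\Delta) \cap W^{1,2}$ and the Lipschitz/upper-gradient identification from Assumption~\ref{ass:Ch-reg} are essential.
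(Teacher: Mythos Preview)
Your approach via Hopf--Lax/Kantorovich duality is correct in outline and is a genuine alternative to the paper's route; both are Kuwada-type arguments, but the implementations differ. The paper does \emph{not} use $Q_h$ at all. Instead it works directly with the coupling identity $\tilde\bH_s f(x) - \tilde\bH_t f(y) \le \int |f(z)-f(w)|\,\dd\pi_{s,t}$ for an optimal plan $\pi_{s,t}$ between $\cH_s\delta_x$ and $\cH_t\delta_y$ (first noting, as you would also need, that the hypotheses force $\rcd(K,\infty)$ so the flow on Dirac masses makes sense). The integrand is controlled by a truncated local Lipschitz constant $G_r f(z)=\sup_{0<d(z,z')<r}|f(z)-f(z')|/d(z,z')$, the integral is split on $\{d\le r\}$ and $\{d>r\}$ with $r=W_2^{1/2}$, and Cauchy--Schwarz produces $W_2$ on the right. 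Setting $s=t+\alpha\,d(x,y)$ and sending $y\to x$ along a slope-realizing sequence leaves a \emph{single} free parameter $\alpha$, and the optimization is a clean unit-vector argument in $\R^2$. Your route trades the $G_r f$ device for the $Q_h$ expansion, at the cost of an additional parameter $h$ and a three-scale limit $(\eps,s-t,h)\to 0$; the payoff is that your separate quadratic optimizations are more transparent than the unit-vector trick. Two technical points worth flagging: the Hamilton--Jacobi expansion actually gives the \emph{descending} slope, $Q_hf=f-\tfrac{h}{2}|\nabla^-f|^2+o(h)$, not $\wug{f}^2$; this is harmless here since $|\nabla^-f|\le|\nabla f|=\wug{f}$ a.e.\ under $\rcd(K,\infty)$ and the inequality points the right way. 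And, as in the paper, you still need a separate step (Fubini in $t$ plus the strong Feller property) to upgrade the pointwise estimate with $|\nabla\tilde\bH_t f|$ and $\partial_t\tilde\bH_t f$ to the $m$-a.e.\ statement with $\wug{\bH_t f}$ and $\Delta\bH_t f$.
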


Before giving the proof we note the following result, which gives a
stronger version of the gradient estimate involving the Lipschitz
constant under more restrictions on $f$.

\begin{proposition}\label{prop:grad-est-ref}
  Let $(X,d,m)$ be an infinitesimally Hilbertian metric measure space
  satisfying Assumption~\ref{ass:Ch-reg}. If \eqref{eq:grad-est} holds and $\wug{f}\in
  L^\infty(X,m)$ then $\bH_t f$, $\bH_t ( \wug{f}^2 )$ and $\Delta
  \bH_t f$ have continuous representatives satisfying everywhere in
  $X$:
  \begin{align}\label{eq:grad-est-ref}
    \abs{\nabla \bH_t f}^2 + \frac{4Kt^2}{N\big(e^{2Kt}-1\big)}\abs{\Delta
      \bH_t f}^2~\leq~e^{-2Kt} \bH_t\big(\wug{f}^2\big)\;.
  \end{align}
\end{proposition}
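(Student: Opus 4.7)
The strategy is to upgrade the $m$-a.e.\ bound in Theorem~\ref{thm:grad-est} to a pointwise inequality and to replace the weak upper gradient $\wug{\,\cdot\,}$ of $\bH_t f$ by its local Lipschitz constant $|\nabla \bH_t f|$. I would carry this out in three steps: (a) construct a Lipschitz representative $\widetilde{\bH_t f}$; (b) construct continuous representatives of $\bH_t(\wug{f}^2)$ and $\Delta \bH_t f$; (c) pass from the $\wug{\,\cdot\,}$-bound to an everywhere $|\nabla \cdot|$-bound using continuity of the right hand side.

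For step (a), drop the non-negative term $\tfrac{4Kt^2}{N(\e^{2Kt}-1)}|\Delta \bH_t f|^2$ in \eqref{eq:grad-est} and apply the $L^\infty$-contractivity (maximum principle) of the heat semigroup to $\wug{f}^2\in L^\infty$ to get $\wug{\bH_t f}\le \e^{-Kt}\|\wug{f}\|_\infty$ $m$-a.e. Assumption~\ref{ass:Ch-reg} then yields a Lipschitz, hence continuous, representative $\widetilde{\bH_t f}$ with $\mathrm{Lip}(\widetilde{\bH_t f})\le \e^{-Kt}\|\wug{f}\|_\infty$. For step (b) I would exploit the semigroup property $\bH_t=\bH_{t-s}\circ\bH_s$ together with a bootstrap of \eqref{eq:grad-est}: given $g\in L^2\cap L^\infty$ and $s>0$, the function $\bH_s g\in D(\ch)$ has $\wug{\bH_s g}\in L^2$; applying \eqref{eq:grad-est} to $\bH_s g$ and iterating over several intermediate time-slices, together with $L^\infty$-contractivity of $\bH_\cdot$ at each stage, promotes this to $\wug{\bH_{t/2}g}\in L^\infty$. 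Then Assumption~\ref{ass:Ch-reg} makes $\bH_t g=\bH_{t/2}(\bH_{t/2}g)$ Lipschitz. Applied to $g=\wug{f}^2\in L^2\cap L^\infty$ this gives a continuous representative of $\bH_t(\wug{f}^2)$; applied to $g=\Delta \bH_s f\in L^2$ together with $\Delta \bH_t f=\bH_{t-s}\Delta \bH_s f$ it gives one of $\Delta \bH_t f$.

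For step (c), denote by $\phi$ the continuous function representing $\e^{-2Kt}\bH_t(\wug{f}^2)-\tfrac{4Kt^2}{N(\e^{2Kt}-1)}|\Delta \bH_t f|^2$. By Theorem~\ref{thm:grad-est} one has $\wug{\widetilde{\bH_t f}}^2\le \phi$ $m$-a.e. For any Lipschitz function $h$ on $(X,d)$ under Assumption~\ref{ass:Ch-reg} the local Lipschitz constant satisfies
\[
|\nabla h|(x)\ \le\ \lim_{r\downarrow 0}\,\mathop{\mathrm{ess\,sup}}_{y\in B_r(x)}\wug{h}(y),
\]
a consequence of the length-space property combined with the Lipschitz-from-$\wug$ statement of Assumption~\ref{ass:Ch-reg} applied on small balls (short curves joining $x$ to nearby $y$ control $|h(y)-h(x)|$ by $\wug{h}$). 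Continuity of $\phi$ gives $\mathop{\mathrm{ess\,sup}}_{B_r(x)}\phi\to\phi(x)$ as $r\downarrow 0$, hence $|\nabla\widetilde{\bH_t f}|^2(x)\le \phi(x)$ for every $x\in X$, which is \eqref{eq:grad-est-ref}.

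The main obstacle I anticipate is the bootstrap in step (b): the only a priori control on $\wug{\bH_s g}$ is in $L^2$, and lifting it to $L^\infty$ requires iterating \eqref{eq:grad-est} while keeping the intermediate functions inside $D(\ch)$ and tracking how the $L^\infty$-contractivity of $\bH_\cdot$ converts the right-hand-side improvement into a usable pointwise bound. The passage $\wug{\,\cdot\,}\to|\nabla \cdot|$ in step (c), while conceptually clean, also requires some care to verify in the stated generality of Assumption~\ref{ass:Ch-reg}.
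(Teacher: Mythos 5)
Your step (a) is fine, and you correctly identify where the difficulties lie; however both worries you flag in your final paragraph are genuine gaps, and the paper resolves them with a different device than what you propose.

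For continuity of $\bH_t(\wug{f}^2)$ and $\Delta\bH_t f$, your bootstrap does not close. Applying \eqref{eq:grad-est} to $g=\wug{f}^2$ requires controlling $\bH_s(\wug{g}^2)$, i.e.\ $\wug{\wug{f}^2}$, which is not controlled; and applying it to $g=\Delta\bH_{t/2}f$ controls $\wug{\bH_s g}$ only in terms of $\bH_s(\wug{g}^2)$, which you have no $L^\infty$ bound on. You \emph{can} extract from \eqref{eq:grad-est} at time $t/2$ that $\Delta\bH_{t/2}f\in L^\infty$ (drop the gradient term and use the maximum principle on $\wug{f}^2\in L^\infty$), but that only gets you to $L^\infty$, not to a Lipschitz/continuity bound for $\bH_t(\wug{f}^2)$ or $\Delta\bH_t f=\bH_{t/2}\Delta\bH_{t/2}f$ by iterating \eqref{eq:grad-est}. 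The paper's route is different: since $\bl(K,N)$ together with Assumption~\ref{ass:Ch-reg} implies $\rcd(K,\infty)$ (Remark~\ref{rem:regularity}), one has the \emph{strong Feller property}: $\bH_t$ maps bounded measurable functions to continuous ones. Applied to $\wug{f}^2$ and to $\Delta\bH_{t/2}f$ (both bounded), this gives the continuous representatives directly, with no iteration.

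Your step (c) also needs justification that the proposal glosses over. The inequality $|\nabla h|(x)\le\lim_{r\downarrow0}\mathop{\mathrm{ess\,sup}}_{B_r(x)}\wug{h}$ does not follow from Assumption~\ref{ass:Ch-reg} alone: the assumption is global, and cutting off to a ball changes the weak upper gradient by a term involving the cutoff gradient, so there is no clean local Sobolev-to-Lipschitz statement to invoke. Moreover, the naive argument of integrating $\wug{h}$ along a short curve joining $x$ to a nearby $y$ fails because the minimal weak upper gradient is only an upper gradient along $\textrm{Mod}_2$-a.e.\ curve, not every curve. The paper circumvents both difficulties simultaneously: it bounds $\bigl|\int\bH_t f_n\,d\mu_1-\int\bH_t f_n\,d\mu_0\bigr|$ by integrating $|\nabla\tilde{\bH}_t f_n|$ along a dynamic optimal coupling of $\mu_0,\mu_1$ with bounded densities (so the null set where \eqref{eq:grad-est1} fails is seen with $\pi$-probability zero), passes to the limit $n\to\infty$ by dominated convergence, and then specializes $\mu_0,\mu_1$ to uniform distributions on shrinking balls. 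Because the right-hand side is continuous by strong Feller, this yields both the Lipschitz representative of $\bH_t f$ and the pointwise inequality \eqref{eq:grad-est-ref} at once. That measure-theoretic integration along couplings is the key idea your proposal is missing.
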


\begin{remark}\label{rem:regularity}
  Under $\rcd (K',\infty)$, Assumption~\ref{ass:Ch-reg} is always
  satisfied (see \cite{AGMR12,AGS11b,AGS12}).  Moreover, with the aid
  of Theorem~\ref{thm:gf-identification}, the other assumption in
  Theorem~\ref{thm:grad-est} easily yields \eqref{eq:W2-contraction}
  in this case. Conversely, the assumptions in
  Theorem~\ref{thm:grad-est} implies $\rcd (K,\infty)$.  Indeed, by
  Proposition~\ref{prop:simp-control}, \eqref{eq:W2-contraction}
  yields the $W_2$-contraction estimate, which corresponds to
  \eqref{eq:WC0}. Under Assumption~\ref{ass:Ch-reg}, such an estimate
  yields Bakry--\'Emery's $L^2$-gradient estimate (see
  \cite[Cor.~3.18]{AGS12}, \cite[Thm.~2.2]{Kuw10}).  Then $\rcd
  (K,\infty)$ follows from \cite[Thm~4.18]{AGS12} under
  Assumption~\ref{ass:Ch-reg} again.

  Note that $\rcd (K',\infty)$ ensures some regularization property of
  $\bH_t$. For instance, $\bH_t f (x) = \int_X f \, \dd \cH_t
  \delta_x$ holds $m$-a.e. for every $f \in L^2 (X,m)$. Moreover,
  this representative of $\bH_t f$ satisfies the strong Feller
  property, that is, $x \mapsto \int_X f \, \dd \cH_t \delta_x$ is
  bounded and continuous for any bounded measurable $f$ (see
  \cite[Thm.~6.1]{AGS11b}, \cite[Thm.~7.1]{AGMR12}).
\end{remark}

\begin{proof}[Proof of Theorem \ref{thm:grad-est}]
  For simplicity of presentation, we give a proof when $(X,d)$ is a
  geodesic space. One can easily extend the argument to the length
  space case. We first consider the case that $f$ is bounded and
  Lipschitz with bounded support. Let us denote $\tilde{\bH}_t f (x) :
  = \int_X f \,\dd \cH_t \delta_x$, which is a representative of
  $\bH_t f$, see Remark \ref{rem:regularity}. For $x , y \in X$, $x
  \neq y$ and $t , s \ge 0$ and any coupling $\pi_{s,t}$ of $\cH_{s} (
  \d_{x} )$ and $\cH_{t} ( \d_{y} )$, we have
  \begin{equation} \label{eq:couple} \tilde{\bH}_{s} f (x) -
    \tilde{\bH}_{t} f (y) \le \int_{X \times X} \abs{ f (z) - f (w) }
    \pi_{s,t} (\mathrm{d} z \mathrm{d} w )\; .
  \end{equation}
  Since $| f (z) - f (w) | \le \Lip (f) d (x,y)$, \eqref{eq:couple}
  and \eqref{eq:W2-contraction} yield
  \begin{align*}
    \skn{ \frac{1}{2 \Lip(f)} ( \tilde{\bH}_s f (x) - \tilde{\bH}_t f
      (y) ) }^2 & \le \skn{ \frac{1}{2} W_1 ( \cH_s ( \d_x ) , \cH_t (
      \d_y ) ) }^2
    \\
    & \le \skn{ \frac{1}{2} W_2 ( \cH_{s} ( \d_x ) , \cH_{t} ( \d_y ) )
    }^2
    \\
    & \hspace{-12em} \le \e^{-K (s+t)} \skn{\frac12 d ( x , y )}^2 +
    \frac{N ( 1 - \e^{-K(s+t)} )}{2 K ( s+t )} ( \sqrt{t} - \sqrt{s}
    )^2 \;.
  \end{align*}
  It implies that the map $(u,z) \mapsto \tilde{\bH}_u f (z)$ is
  locally Lipschitz on $( 0 , 1 )\times X$ and hence $u \mapsto
  \tilde{\bH}_u f (z)$ is differentiable $\cL^1$-a.e.
  for each fixed $z \in X$,
  where $\cL^1$ is the one-dimensional Lebesgue measure.

  The first step is to show the following inequality:
  \begin{align} \label{eq:grad-est0} \abs{\nabla \tilde{\bH}_t f}
    (x)^2 + \frac{4Kt^2}{N\big( e^{2Kt}-1 \big)} \left(
      \frac{\partial}{\partial t} \tilde{\bH}_t f (x) \right)^2 \leq
    \e^{-2Kt} \tilde{\bH}_t ( \abs{ \nabla f }^2 ) (x)
  \end{align}
  for each $x \in X$ and $t > 0$ such that $u \mapsto \bH_u f (x)$ is
  differentiable at $t$.
  Let $y \in X$ and $s \ge 0$.
  let us define $r = r (x,y; s,t) > 0$ and $G_r f : X \to \R$ by
  \begin{align*}
    r & : =
    \begin{cases}
      W_2 ( \cH_s ( \d_x ) , \cH_t ( \d_y ) )^{1/2} & \mbox{if $W_2 (
        \cH_s ( \d_x ) , \cH_t ( \d_y ) ) > 0$},
      \\
      d (x,y) & \mbox{otherwise}.
    \end{cases}
    \\
    G_r f (z) & : = \sup_{z' ; \; d(z, z') \in (0,r)} \frac{ \abs{f(z)
        - f (z')} }{d (z,z')}\;.
  \end{align*}
  Then by taking a coupling $\pi_{s,t}$ as a minimizer of $W_2 ( \cH_s
  ( \d_x ) , \cH_t ( \d_y ) )$ in \eqref{eq:couple},
  \begin{align} \nonumber \int_{X \times X} & \abs{ f (z) - f (w) }
    \pi_{s,t} ( \mathrm{d}z \mathrm{d} w ) \\ \nonumber & = \int_{X
      \times X} \abs{ f (z) - f (w) } 1_{ \{ d(z,w) \le r \} }
    \pi_{s,t} ( \mathrm{d}z \mathrm{d} w ) \\ \nonumber & \quad +
    \int_{X \times X} \abs{ f (z) - f (w) } 1_{ \{ d(z,w) > r \} }
    \pi_{s,t} ( \mathrm{d}z \mathrm{d} w ) \\ \nonumber & \le \int_{X
      \times X} G_r f (z) d( z, w ) \pi_{s,t} ( \mathrm{d}z \mathrm{d}
    w ) + 2 \| f \|_\infty \pi_{s,t} ( d > r ) \\ \nonumber & \le
    \left( \int_{X} ( G_r f )^2 d \cH_s (\d_x) \right)^{1/2} W_2 (
    \cH_{s} (\d_x) , \cH_t (\d_{y}) ) \\ \label{eq:difference} &
    \hspace{6em} + \frac{ 2 \| f \|_\infty }{r^2} W_2 ( \cH_{s}
    (\d_{x}) , \cH_{t} (\d_{y} ) )^2\; .
  \end{align}
  After substituting \eqref{eq:difference} into \eqref{eq:couple}, we
  apply \eqref{eq:W2-contraction} with $\mu = \d_y$ and $\nu
  = \d_x$ to obtain
  \begin{align} \nonumber \tilde{\bH}_{s} & f (x) - \tilde{\bH}_{t} f
    (y) \\ \nonumber & \le \tilde{\bH}_{s} ( ( G_r f )^2 )(x)^{1/2} \\
    \nonumber & \quad \times 2 s_{K/N}^{-1} \Bigg( \sqrt{ \e^{-K
        (s+t)} s_{K/N} \left( \frac12 d(x,y) \right)^2 + \frac{ N ( 1
        - \e^{- K (s+t)} )}{2 K ( s + t ) } ( \sqrt{t} - \sqrt{s} )^2
    } \Bigg)
    \\
    & \qquad \label{eq:diff2} + 2 \| f \|_{\infty} W_2 ( \cH_s (\d_x)
    , \cH_t (\d_y) )
  \end{align}
  by using our choice of $r$.
  Since the inequality \eqref{eq:grad-est0} is quadratic w.r.t. scalar
  multiplication of $f$, we may assume without loss of generality that
  \begin{align*}
    | \nabla \tilde\bH_t f |(x) = \limsup_{y \to x} \frac{ [ \tilde\bH_tf (x) - \tilde\bH_tf (y) ]_+ }{
      d ( x, y ) }\;.
  \end{align*}
  Take a sequence $( y_n )_{n \in \N}$ in $X$ such that $\displaystyle
  \lim_{n \to \infty} \frac{ \tilde\bH_tf (x) - \tilde\bH_tf (y_n) }{ d(x, y_n ) } = |
  \nabla \tilde\bH_tf | (x) $ holds.  Take $\a \in \R \setminus \{ 0 \}$, which
  will be specified later.  For each $n \in \N$, let us take $s_n = t
  + \a d ( x, y_n )$ and $r_n = r ( x, y_n ; s_n , t )$.  Then we have
  \begin{align*}
    \lim_{n \to \infty} \frac{ \tilde{\bH}_{s_n} f (x) - \tilde{\bH}_t
      f (y_n) }{ d ( x, y_n ) } & = \lim_{n \to \infty} \left( \a
      \frac{ \tilde{\bH}_{s_n} f (x) - \tilde{\bH}_t f (x) }{ s_n - t
      } + \frac{ \tilde{\bH}_{t} f (x) - \tilde{\bH}_t f (y_n) }{ d (
        x, y_n ) } \right)
    \\
    & = \a \frac{\partial}{\partial t} \tilde{\bH}_t f (x) + | \nabla
    \tilde{\bH}_t f | (x)\;.
  \end{align*}
  Take $\eps > 0$ arbitrary.  Since $G_r f$ is non-decreasing in $r$,
  by substituting $s = s_n$, $y = y_n$ into \eqref{eq:diff2}, dividing
  both sides by $d ( x , y_n )$ and letting $n \to \infty$, we obtain
  \begin{align} \nonumber \a \frac{\partial}{\partial t} \tilde{\bH}_t
    f (x) + | \nabla \tilde{\bH}_t f | (x) & \le \tilde{\bH}_{t} ( |
    G_\eps f |^2 ) (x)^{1/2} \\ \nonumber & \qquad \times \sqrt{ \e^{-
        2 K t} + \a^2 \frac{ N ( 1 - \e^{- 2 K t} )}{4 K t^2} } \;.
  \end{align}
  Here we used the fact that $\tilde{\bH}_{u} ( | G_\eps f |^2 )$ is
  continuous in $u$ (see Remark~\ref{rem:regularity}).
  Let $v_\a$ be a unit vector in $\R^2$ of the
  form $\lambda ( 1, \a \sqrt{ N ( \e^{2Kt} - 1 ) / (4Kt^2)} )$ with
  $\lambda > 0$.  Then, by rewriting the last inequality after $\eps
  \downarrow 0$, we obtain
  \begin{align*}
    v_\a \cdot \left( | \nabla \tilde{\bH}_t f | (x) , \sqrt{ \frac{4 K
          t}{ N ( \e^{2Kt} - 1 )}} \frac{\partial}{\partial t}
      \tilde{\bH}_t f (x) \right) \le \e^{-Kt} \tilde{\bH}_{t} ( |
    \nabla f |^2 ) (x)^{1/2} \;.
  \end{align*}
  By optimizing this inequality in $\a$, we obtain
  \eqref{eq:grad-est0}.

  The second step is to show the following for any bounded and
  Lipschitz $f \in D ( \ch )$: For each $t > 0$ and $m$-a.e.~$x \in
  X$,
  \begin{align}\label{eq:grad-est1}
    \abs{ \nabla \tilde{\bH}_t f } (x)^2 +
    \frac{4Kt^2}{N\big(\e^{2Kt}-1\big)}\abs{ \Delta \bH_{t} f (x)}^2
    \leq \e^{-2Kt} \tilde{\bH}_t \big( \abs{ \nabla f }^2 \big) (x)
    \;.
  \end{align}
  For each $x \in X$, we already know that $t \mapsto \tilde{\bH}_t f
  (x)$ is differentiable for $\cL^1$-a.e.~$t \in [ 0 , \infty )$.
  Thus the Fubini theorem yields that the set $I \subset ( 0 , \infty
  )$ given by
  \begin{equation*}
    I :=
    \left\{
      t \in ( 0 , \infty)
      \; \left| \;
        \mbox{$t \mapsto \tilde{\bH}_t f (x)$
          is differentiable
          for $m$-a.e.~$x \in X$}
      \right.
    \right\}
  \end{equation*}
  is of full $\cL^1$-measure. Take $t \in I$. Then we have
  $\displaystyle \frac{\partial}{\partial t} \tilde{\bH}_t f (x) =
  \Delta \bH_t f (x) $ $m$-a.e.~and hence \eqref{eq:grad-est0} yields
  \eqref{eq:grad-est1}.  Thus it suffices to show $I = ( 0 , \infty )$
  to prove \eqref{eq:grad-est1}.  Indeed, for any $t \in ( 0, \infty
  )$, there is $s \in I$ with $s < t$.  Since $( u , z ) \mapsto
  \tilde{\bH}_u f (z)$ is locally Lipschitz, the dominated convergence
  theorem implies
  \begin{align*}
    \tilde{\bH}_{t - s} \big( \frac{\partial}{\partial s}
    \tilde{\bH}_{s} f \big) (x) & = \tilde{\bH}_{t-s} \left( \lim_{u
        \to 0} \frac{\tilde{\bH}_{s + u } f - \tilde{\bH}_{s} f }{u}
    \right) (x) = \frac{\partial}{\partial t} \tilde{\bH}_t f (x)
  \end{align*}
  and hence $u \mapsto \tilde{\bH}_u f (x)$ is differentiable at $t$
  for any $x \in X$.

  Finally we prove the assertion for $f \in D ( \ch )$.  Let $f_n \in
  D ( \ch )$ be a sequence of bounded Lipschitz functions on $X$
  converging to $f$ in $W^{1,2}$ strongly and $| \nabla f_n | \to |
  \nabla f |_w$ in $L^2$.  Then $\Delta \bH_t f_n \to \Delta \bH_t f$
  in $L^2$ and hence the conclusion follows
  (cf.~\cite[Thm.~6.2]{AGS11b}).
\end{proof}

\begin{proof}[Proof of Proposition \ref{prop:grad-est-ref}]
  Note first that \eqref{eq:grad-est} implies $\rcd (K,\infty)$ as in
  Remark~\ref{rem:regularity}. Take $\mu_0, \mu_1 \in \cP_2 (X,d,m)$
  with bounded densities and bounded supports and $\pi$ be a dynamic
  optimal coupling satisfying $( e_i )_\# \pi = \mu_i$ for $i=0,1$.
  Note that $(e_t)_\# \pi \ll m$ holds since $\rcd (K,\infty)$ holds.
  Let $f_n \in D (\ch)$ be an approximating sequence of $f$ as
  above. We may assume that $( | \nabla f_n | )_{n \in \N}$ is
  uniformly bounded without loss of generality since $\wug{f} \in
  L^\infty (X,m)$.  Then $( \Delta \bH_t f_n )_{n \in \N}$ is
  uniformly bounded in $L^\infty (X,m)$ by \eqref{eq:grad-est1}.  We
  may assume also that $\bH_t ( | \nabla f_n |^2 )$ and $\Delta \bH_t
  f_n$ converges $m$-a.e.~by taking a subsequence if necessary.  We
  apply \eqref{eq:grad-est1} to $f_n$ to obtain
  \begin{align*}
    & \left|
      \int_X \bH_t f_n  \,\dd \mu_1
        -
      \int_X \bH_t f_n \,\dd \mu_0
    \right|
    \le
    \int_{\geo (X)} \int_0^1
      | \nabla \tilde{\bH}_t f_n | ( \gamma_t ) | \dot{\gamma}_t |
    \,\dd t \,
    \pi ( \dd \gamma )
    \\
    & \hspace{2em} \le
    W_2 ( \mu_0 , \mu_1 )
    \int_0^1 \int_{\geo(X)}
    \left(
      \e^{-2Kt} \bH_t ( | \nabla f_n |^2 ) ( \gamma_t )
        -
      \frac{4Kt^2}{N ( \e^{-2Kt} - 1 )} | \Delta \bH_t f_n ( \gamma_t ) |^2
    \right)
    \, \pi ( \dd \gamma )
    \,\dd t .
  \end{align*}
  Then, as $n \to \infty$, the dominated convergence theorem yields
  \begin{multline*}
    \left|
      \int_X \bH_t f  \,\dd \mu_1
        -
      \int_X \bH_t f \,\dd \mu_0
    \right|
    \\
    \le
    W_2 ( \mu_0 , \mu_1 )
    \int_0^1 \int_{\geo(X)}
    \left(
      \e^{-2Kt} \bH_t ( \wug{f}^2 ) ( \gamma_t )
        -
      \frac{4Kt^2}{N ( \e^{-2Kt} - 1 )} | \Delta \bH_t f ( \gamma_t ) |^2
    \right)
    \, \pi ( \dd \gamma )
    \,\dd t .
  \end{multline*}
  By the strong Feller property, $\bH_t ( \wug{f}^2 )$ has a continuous representative.
  Since $\Delta \bH_{t/2} f \in L^\infty (X,m)$ by \eqref{eq:grad-est} with $t/2$ instead of $t$,
  the strong Feller property again implies that $\Delta \bH_t f = \bH_{t/2} \Delta \bH_{t/2} f$
  has a continuous representative.
  Thus by taking $\mu_0$ and $\mu_1$ as a uniform distribution on $B_r (x_0)$ and $B_r (x_1)$
  respectively and letting $r \to 0$, we obtain
  \begin{multline*}
    \left| \bH_t f (x_0) - \bH_t f (x_1) \right|
    \\
    \le
    d ( x_0 , x_1 )
    \sup_{z \in B_{2 d(x_0 , x_1 )} (x_0)}
    \left[
      \e^{-2Kt} \bH_t ( \wug{f}^2 ) (z)
        -
      \frac{4Kt^2}{N ( \e^{-2Kt} - 1 )} | \Delta \bH_t f (z) |^2
    \right]
  \end{multline*}
  for $m$-a.e.~$x_0, x_1$.
  Thus $\bH_t f$ has a Lipschitz representative and \eqref{eq:grad-est-ref} holds.
\end{proof}

\begin{definition}\label{def:BL}
  We say that $(X,d,m)$ satisfies the \emph{Bakry--Ledoux gradient
    estimate} $\bl(K,N)$ with $K\in\R$, $N\in(0,\infty)$ if for any $f\in D(\ch)$
  and $t > 0$
  \begin{align}\label{eq:bl}
     \wug{\bH_t f}^2
   + \frac{2t}{N}C(t)\abs{\Delta \bH_t f}^2
   ~\leq~
   e^{-2Kt}\bH_t\big(\wug{f}^2\big)\quad m\text{-a.e. in }X\;,
  \end{align}
  where $C>0$ is a function satisfying $C(t)=1+O(t)$ as $t\to0$.
\end{definition}

Now Theorem~\ref{thm:grad-est} can be reformulated as follows: For an
infinitesimally Hilbertian metric measure space, the $W_2$-expansion
bound \eqref{eq:W2-contraction} implies the $\bl(K,N)$ condition under
Assumption~\ref{ass:Ch-reg}.  Indeed, \eqref{eq:grad-est} states that
\eqref{eq:bl} holds with $C(t)=2K t/(\e^{2Kt}-1)$. The Bakry--Ledoux
gradient estimate $\bl (K,N)$ will allow us to establish the full
Bochner inequality including the dimension term in $\rcdkn$
spaces. This extends the result in \cite{AGS11b}, where a Bochner
inequality without dimension term has been established on
$\rcd(K,\infty)$ spaces. Let us also make precise what we mean by
Bochner's inequality, or the Bakry--\'Emery condition.

\begin{definition}\label{def:BE}
  We say that an infinitesimally Hilbertian metric measure space
  $(X,d,m)$ satisfies the \emph{Bakry--\'Emery condition} $\be(K,N)$,
  or \emph{Bochner inequality}, with $K\in\R$, $N\in(0,\infty)$ if for
  all $f\in D(\Delta)$ with $\Delta f\in W^{1,2}(X,d,m)$ and all $g\in
  D(\Delta) \cap L^\infty (X,m)$ with $g \ge 0$ and $\Delta g\in
  L^\infty(X,m)$ we have
  \begin{align}\label{eq:Bochner}
    &\frac12\int\Delta g \wug{f}^2 \dd m - \int g\langle\nabla(\Delta f),\nabla f\rangle \dd m
 ~\geq~K\int g \wug{f}^2\dd m + \frac{1}{N}\int g\big(\Delta f\big)^2\dd m\;.
  \end{align}
\end{definition}

To investigate the relation between Bochner's inequality and the
Bakry-Ledoux gradient estimate, we introduce a mollification of the
semigroup $h^\eps$ given by
\begin{align}\label{eq:sg-moll}
  h^\eps
  f~=~\int_0^\infty\frac{1}{\eps}\eta\left(\frac{t}{\eps}\right)\bH_tf\,\dd
  t\;,
\end{align}
with a non-negative kernel $\eta\in C^\infty_c(0,\infty)$ satisfying
$\int_0^\infty\eta(t)\dd t=1$
for $f \in L^p (X,m)$, $1 \le p \le \infty$.
Note that $h^\eps f \in D (\Delta)$ and
\begin{equation} \label{eq:sg-moll2}
\Delta h^\eps f = - \int_0^1 \frac{1}{\eps} \eta' \left( \frac{t}{\eps} \right)\bH_t f\,\dd t
\end{equation}
for any $f \in L^p (X,m)$, $1 \le p < \infty$.

\begin{theorem}[Bochner inequality]\label{thm:Bochner}
  Let $(X,d,m)$ be an infinitesimally Hilbertian metric measure space
  satisfying $\bl(K,N)$. Then the Bochner inequality $\be(K,N)$ holds.
\end{theorem}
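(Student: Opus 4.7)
My approach is to test the Bakry--Ledoux gradient estimate \eqref{eq:bl} against $g$, divide by $t$, and pass to the limit $t \searrow 0$. Multiplying \eqref{eq:bl} by $g\ge 0$, integrating over $X$ with respect to $m$, and rearranging gives
\begin{equation*}
\frac{1}{t}\int_X g\left[e^{-2Kt}\bH_t\big(\wug{f}^2\big) - \wug{\bH_tf}^2\right]\dd m
~\ge~
\frac{2}{N}C(t)\int_X g\,\abs{\Delta \bH_t f}^2\dd m .
\end{equation*}
Since both integrands on the left agree at $t=0$, the left-hand side is a difference quotient of a function vanishing at the origin; the Bochner inequality \eqref{eq:Bochner} will follow upon computing the limit of both sides and dividing by two. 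Throughout, infinitesimal Hilbertianity furnishes the symmetric bilinear form $\langle\nabla\cdot,\nabla\cdot\rangle$ on $W^{1,2}$ via \eqref{eq:scp}, the self-adjointness of $\bH_t$, the Leibniz rule \eqref{eq:Leibniz}, and the integration-by-parts formula \eqref{eq:int-by-parts}.

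\textbf{Limit of the left-hand side.} I split it as
\begin{equation*}
\frac{e^{-2Kt}-1}{t}\int g\,\bH_t(\wug{f}^2)\dd m
~+~
\int \wug{f}^2\,\frac{\bH_tg - g}{t}\dd m
~-~
\frac{1}{t}\int g\left[\wug{\bH_tf}^2-\wug{f}^2\right]\dd m,
\end{equation*}
where in the middle term I moved $\bH_t$ onto $g$ via self-adjointness. The first piece tends to $-2K\int g\wug{f}^2\dd m$ by strong $L^1$-continuity of $\bH_t$ applied to $\wug{f}^2\in L^1$. For the second, the bound $\abs{\bH_tg-g}\le t\|\Delta g\|_\infty$ and $g\in D(\Delta)$ give, by dominated convergence, the limit $\int \wug{f}^2\,\Delta g\dd m$. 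For the third term I use the bilinear identity $\wug{\bH_tf}^2-\wug{f}^2 = \langle \nabla\bH_tf+\nabla f,\,\nabla(\bH_tf-f)\rangle$, so the integral becomes $\int g\bigl\langle \nabla\bH_tf+\nabla f,\nabla \tfrac{\bH_tf-f}{t}\bigr\rangle\dd m$. Since $f\in D(\Delta)$ with $\Delta f\in W^{1,2}$, the curve $t\mapsto\bH_tf$ is $C^1$ in $W^{1,2}$ with derivative $\bH_t\Delta f\to\Delta f$ in $W^{1,2}$; combined with $\nabla\bH_tf\to\nabla f$ in $L^2$ and the continuity of the bilinear form $(u,v)\mapsto \int g\langle\nabla u,\nabla v\rangle\dd m$ on $W^{1,2}\times W^{1,2}$ (using $g\in L^\infty$ and $\abs{\langle\nabla u,\nabla v\rangle}\le \wug{u}\wug{v}$), this piece tends to $2\int g\langle\nabla f,\nabla\Delta f\rangle\dd m$. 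Assembling the three contributions gives the desired left-hand side of \eqref{eq:Bochner} (multiplied by~$2$).

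\textbf{Limit of the right-hand side and main obstacle.} On $D(\Delta)$ the semigroup commutes with $\Delta$, so $\Delta\bH_tf=\bH_t\Delta f\to\Delta f$ in $L^2$; since $g\in L^\infty$, $\int g\,\abs{\Delta\bH_tf}^2\dd m\to\int g(\Delta f)^2\dd m$, and $C(t)\to 1$ by hypothesis, so the right-hand side converges to $\tfrac{2}{N}\int g(\Delta f)^2\dd m$. Dividing by $2$ yields exactly \eqref{eq:Bochner}. The delicate step is the rigorous differentiation of $t\mapsto \int g\,\wug{\bH_tf}^2\dd m$ at $t=0$, which uses both the quadratic structure of $\ch$ (to obtain the bilinear form $\langle\nabla\cdot,\nabla\cdot\rangle$ and its continuity) and the hypothesis $\Delta f\in W^{1,2}$ (to control $\nabla\Delta\bH_tf$ in $L^2$). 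Should this direct computation prove technically awkward, one can first derive \eqref{eq:Bochner} for the mollified data $h^\eps f$ and $h^\eps g$ — which by iterating \eqref{eq:sg-moll2} lie in $D(\Delta^k)$ for every $k$ — and then recover the general case by letting $\eps\to 0$, using $h^\eps f\to f$ in $D(\Delta)$ with $\Delta h^\eps f\to\Delta f$ in $W^{1,2}$, and the corresponding convergences for $g$.
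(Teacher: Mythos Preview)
Your proof is correct and follows the same overall strategy as the paper: integrate the Bakry--Ledoux estimate against $g$, subtract the value at $t=0$, divide by $t$, and let $t\searrow 0$. The difference lies in how you handle the term $e^{-2Kt}\int g\,\bH_t(\wug{f}^2)\,\dd m - \int g\,\wug{f}^2\,\dd m$. The paper rewrites $\int g\,\wug{\bH_sf}^2\,\dd m$ via the Leibniz rule as $-\int g\,\bH_sf\,\Delta\bH_sf\,\dd m + \tfrac12\int (\bH_sf)^2\,\Delta g\,\dd m$, which forces the preliminary extra regularity $f,\Delta f\in D(\Delta)\cap L^\infty$ and $\Delta g\in D(\ch)$, followed by an approximation step (via the mollifiers $h^\eps$ and truncations $f\wedge R$) to recover the general case. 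Your decomposition instead moves $\bH_t$ onto $g$ by self-adjointness and exploits the standing hypothesis $\Delta g\in L^\infty$ through the bound $\|\tfrac{\bH_tg-g}{t}\|_\infty\le\|\Delta g\|_\infty$; combined with the direct polarisation argument for $\wug{\bH_tf}^2-\wug{f}^2$ (using $\Delta f\in W^{1,2}$ to get $\tfrac{\bH_tf-f}{t}\to\Delta f$ in $W^{1,2}$), this lets you work under exactly the hypotheses of Definition~\ref{def:BE} without any approximation. One small imprecision: your appeal to ``strong $L^1$-continuity of $\bH_t$'' for the first piece is not needed---self-adjointness already gives $\int g\,\bH_t(\wug{f}^2)\,\dd m=\int(\bH_tg)\,\wug{f}^2\,\dd m$, and the uniform bound $\|\bH_tg\|_\infty\le\|g\|_\infty$ together with $\bH_tg\to g$ in $L^2$ yields weak-$*$ convergence in $L^\infty$ against $\wug{f}^2\in L^1$.
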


\begin{proof}
  In the language of Dirichlet forms, this is proven in
  \cite[Cor.~2.3, (vi)$\Rightarrow$(i)]{AGS12}. We sketch here an
  argument following basically the ideas developed in \cite{GKO10} in
  the setting of Alexandrov spaces.

  We will first prove \eqref{eq:Bochner} for $f\in D(\Delta)\cap
  L^\infty (X, m)$ with $\Delta f\in D(\Delta)\cap L^\infty(X,m)$ and
  for $g$ satisfying $\Delta g \in D (\ch)$ additionally.  From
  \eqref{eq:grad-est} we obtain immediately
  \begin{align}\label{eq:Bochner1}
    &\int g\wug{\bH_t f}^2\dd m +
   \frac{2t}{N}C(t)\int g \abs{\Delta \bH_t f}^2\dd
    m
    \leq~e^{-2Kt}\int g\bH_t\big(\wug{f}^2\big)\dd m\;.
  \end{align}
  This will yield \eqref{eq:Bochner} by subtracting $\int
  g\wug{f}^2\dd m$ on both sides, dividing by $t$ and taking the limit
  $t\searrow0$. Indeed, for the left hand side of \eqref{eq:Bochner1},
  we can argue exactly as in the proof of
  \cite[Thm.~4.6]{GKO10}, using the Leibniz rule \ref{eq:Leibniz},
  and note in addition that
  \begin{align*} 
    \lim\limits_{t\to0} \frac{2}{N}C(t)\int g \abs{\Delta \bH_t f}^2\dd
    m~=~\frac{2}{N}\int g\big(\Delta f\big)^2\dd m\;.
  \end{align*}
  For the right hand side of \eqref{eq:Bochner1},
  by a similar calculation, we obtain
  \begin{multline} \label{eq:Bochner3}
    \frac{1}{t}
    \left(
      \int g \bH_t \big( \wug{f}^2 \big) \dd m
      -
      \int g \wug{f}^2 \, \dd m
    \right)
    \\
    =
    - \frac{1}{t} \left(
      \int \bH_t g f \Delta f \,\dd m
      -
      \int g f \Delta f \,\dd m
    \right)
    + \frac{1}{2t} \left(
    \int \Delta \bH_t g \cdot f^2 \,\dd m
    -
    \int \Delta g \cdot f^2 \,\dd m
    \right)\;.
  \end{multline}
  Since $\Delta g, f^2 , f \Delta f \in D ( \ch )$, it converges to
  $\int \Delta g \wug{f}^2 \,\dd m$ as $t \to 0$ and thus we obtain
  \eqref{eq:Bochner}.  To obtain the estimate \eqref{eq:Bochner} for
  general $f$, we approximate $f$ by $h^\eps (f\wedge R)$ and $g$ by
  $T_{\eps'} g$. By \eqref{eq:sg-moll2}, these functions have the
  expected regularity. First we take $\eps' \to 0$.  Since $\wug{f},
  \wug{\Delta f} \in L^1 (X,m) \cap L^\infty (X,m)$ by virtue of
  \eqref{eq:bl} and \eqref{eq:sg-moll2}, it goes well.  Next we take
  $R \to \infty$.  Since $\lim_{R \to \infty} \ch ( f\wedge R - f ) =
  0$ and $\ch ( f \wedge R ) \le \ch (f)$, we can show $\wug{ h^\eps (
    f \wedge R ) }^2 \to \wug{ h^\eps f}^2$ weakly in $L^1 (X,m)$
  similarly as in the proof of \cite[Thm.~4.6]{GKO10}.  The same
  argument also works for $\langle \nabla \Delta h^\eps ( f \wedge R
  ), \nabla h^\eps (f \wedge R ) \rangle$ with the aid of
  \eqref{eq:sg-moll2}.  Again \eqref{eq:sg-moll2} helps the
  convergence of the term involving $N$.  Finally we take $\eps \to
  0$.  we can employ the approximation argument in
  \cite[Thm.~4.6]{GKO10} again when arguing this limit to conclude the
  convergence of the same kind.  The additional dimension term posing
  no difficulty at this moment.
\end{proof}

Also the converse implication holds.
Originally, this was proven by
Bakry and Ledoux in \cite{BL06} in the setting of Gamma calculus. See
also the work of Wang \cite{Wa11}, where the equivalence of gradient
estimates and Bochner's inequality has been rediscovered in the
setting of smooth Riemannian manifolds. Note that the function $C$ in
the next proposition gives a stronger estimate than
\eqref{eq:grad-est} for large $t$.

\begin{proposition}\label{prop:Bochner2BEW}
  Let $(X,d,m)$ be an infinitesimally Hilbertian mms satisfying the
  Bakry--\'Emery condition $\be(K,N)$.
  Then the $\bl(K,N)$ condition holds with $C(t)=(1-\e^{-2Kt})/2Kt$.
\end{proposition}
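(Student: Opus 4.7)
The proof follows the classical Bakry--Ledoux interpolation argument, adapted to the mms setting. The plan is to introduce, for $f$ in a sufficiently regular class and for a non-negative test function $g$, the auxiliary function
\begin{equation*}
\phi(s) ~:=~ \int g \cdot \bH_s\bigl( \wug{\bH_{t-s} f}^2 \bigr) \, \dd m ~=~ \int \bH_s g \cdot \wug{\bH_{t-s} f}^2 \, \dd m \;, \qquad s \in [0,t]\;,
\end{equation*}
where the second identity uses self-adjointness of $\bH_s$. One computes formally
\begin{equation*}
\phi'(s) ~=~ \int \Delta \bH_s g \cdot \wug{\bH_{t-s} f}^2 \, \dd m ~-~ 2 \int \bH_s g \cdot \langle \nabla \bH_{t-s} f , \nabla \Delta \bH_{t-s} f\rangle \, \dd m\;.
\end{equation*}
This is exactly twice the left-hand side of the Bochner inequality $\be(K,N)$ applied to $\bH_{t-s}f$ with weight $\bH_s g$.

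The key step is then to apply $\be(K,N)$, which yields
\begin{equation*}
\phi'(s) ~\geq~ 2K \phi(s) + \frac{2}{N} \int \bH_s g \cdot \bigl(\Delta \bH_{t-s} f\bigr)^2 \, \dd m\;.
\end{equation*}
By the Jensen-type inequality $\bH_s(h^2) \geq (\bH_s h)^2$ (a consequence of Cauchy--Schwarz for the sub-Markovian semigroup) applied to $h = \Delta \bH_{t-s} f$ and the commutation $\bH_s \Delta \bH_{t-s}f = \Delta \bH_t f$, the last integral is bounded below by $\int g \, (\Delta \bH_t f)^2 \, \dd m$. Multiplying by $\e^{-2Ks}$ and integrating over $s \in [0,t]$ yields
\begin{equation*}
\e^{-2Kt} \int g \cdot \bH_t(\wug{f}^2) \, \dd m - \int g \cdot \wug{\bH_t f}^2 \, \dd m ~\geq~ \frac{1 - \e^{-2Kt}}{KN} \int g \, (\Delta \bH_t f)^2 \, \dd m\;.
\end{equation*}
Since this holds for all non-negative $g$ in the admissible class (which is dense in $L^1_+$), one obtains the pointwise inequality \eqref{eq:bl} with $C(t) = (1-\e^{-2Kt})/(2Kt)$, noting that $\frac{2t}{N} C(t) = \frac{1-\e^{-2Kt}}{KN}$.

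The main obstacle is the technical justification of the formal computation of $\phi'(s)$ and of the applicability of $\be(K,N)$, since the latter requires $\bH_{t-s}f \in D(\Delta)$ with $\Delta \bH_{t-s}f \in W^{1,2}$ and $\bH_s g \in D(\Delta)$ with $g, \Delta g$ bounded. To achieve this, one first proves the statement for $f = h^\eps \tilde{f}$ and $g = h^{\eps'} \tilde{g}$ with bounded $\tilde{f} \in D(\ch)$ and bounded $\tilde{g} \geq 0$ with bounded support, using the mollification $h^\eps$ of \eqref{eq:sg-moll} which supplies the required regularity via \eqref{eq:sg-moll2} combined with the linearity of the heat flow. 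Then one passes to the limit $\eps, \eps' \to 0$ using the $L^2$-convergences $h^\eps \tilde f \to \tilde f$, $\wug{h^\eps \tilde f} \to \wug{\tilde f}$ and the weak-$L^1$ convergence of $(\Delta \bH_t h^\eps \tilde f)^2$, in the same spirit as the approximation arguments concluding the proof of Theorem~\ref{thm:Bochner}. Finally the general case of $f \in D(\ch)$ follows by the density approximation in $W^{1,2}$ already used in the last step of the proof of Theorem~\ref{thm:grad-est}.
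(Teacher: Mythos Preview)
Your proposal is correct and follows essentially the same approach as the paper: the interpolation function $\phi(s)=\int \bH_s g\,\wug{\bH_{t-s}f}^2\,\dd m$, the application of $\be(K,N)$ to estimate $\phi'(s)$, the Jensen step $\bH_s(h^2)\ge(\bH_s h)^2$, and the approximation via the mollified semigroup $h^\eps$ all match the paper's argument (the paper absorbs the factor $\e^{-2Ks}$ into the interpolation function from the start and approximates $f$ by $h^\eps(f\wedge R)$, but these are cosmetic differences).
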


\begin{proof}
  In the language of Dirichlet forms, this is basically proven in
  \cite[Cor.~2.3, (i)$\Rightarrow$(vi)]{AGS12}. Let us sketch the
  argument.

  As in the proof of Theorem~\ref{thm:Bochner}, we first assume $f \in
  D ( \Delta ) \cap L^\infty (X,m)$ with $\Delta f \in D ( \Delta )
  \cap L^\infty (X,m)$. Fix $g\geq0$ with $g\in D(\Delta)\cap
  L^\infty(X,m)$ and $\Delta g\in L^\infty(X,m) \cap D (\ch)$ and
  consider the function
  \begin{align*}
    h(s)~:=~\e^{-2Ks}\int H_{s}g\wug{H_{t-s}f}^2\,\dd m\;.
  \end{align*}
  One estimates the derivative of $h$ as:
  \begin{align*}
    h'(s)~&=~-2K\e^{-2Ks}\int H_{s}g\wug{H_{t-s}f}^2\,\dd m\\
          &\quad +\e^{-2Ks}\int \Delta H_{s}g\wug{H_{t-s}f}^2\,\dd m\\
          &\quad -2\e^{-2Ks}\int H_{s}g \langle\nabla H_{t-s}f,\nabla \Delta H_{t-s}f\rangle\,\dd m\\
          &\geq~\frac{2}{N}\e^{-2Ks}\int H_{s}g\big(\Delta H_{t-s}f\big)^2\,\dd m\\
          &\geq~\frac{2}{N}\e^{-2Ks}\int  g\big(\Delta H_{t}f\big)^2\,\dd m\;,
  \end{align*}
  where we have used \eqref{eq:Bochner} in the first and Jensen's
  inequality in the second inequality. A computation similar to the
  first equality in \eqref{eq:Bochner3}, deduces that $h$ is
  continuous at $0$ and $t$ since $g,f \in L^\infty$.  Thus,
  integrating from $0$ to $t$ we obtain:
  \begin{align*}
\int g\wug{ H_{t}f}^2\,\dd m + \frac{1-\e^{-2Kt}}{NK}\int  g\big(\Delta H_{t}f\big)^2\,\dd m
&\leq \e^{-2Kt}\int H_{t} g\wug{f}^2\,\dd m\;.
  \end{align*}
  For the general case,
  we approximate $f \in D (\ch)$ and $g \in L^2 (X,m) \cap L^\infty (X,m)$
  by $h^\eps ( f \wedge R )$ and $h^{\eps'} g$ respectively.
  As we did in the proof of Theorem~\ref{thm:Bochner},
  We can take $R \to \infty$, $\eps \to 0$ to obtain the last inequality
  for $f$ and $h^{\eps'} g$.
  Since $h^{\eps'} g$ converges to $g$
  with respect to weak${}^*$ topology in $L^\infty (X,m)$ as $\eps' \to 0$,
  the last inequality holds for general $f$ and $g$.
  This is sufficient to complete the proof.
\end{proof}

\subsection{From $\bl(K,N)$ to $\ecdkn$}\label{sec:bochner2cde}

In the following section, we will always assume that $(X,d,m)$ is an
infinitesimally Hilbertian metric measure space and that
Assumption~\ref{ass:Ch-reg} holds. We will show that the Bakry--Ledoux
gradient estimate $\bl(K,N)$ implies the entropic curvature-dimension
condition $\ecdkn$ and thus the $\rcdkn$ condition.

Our approach is strongly inspired by the recent work \cite{AGS12} of
Ambrosio, Gigli and Savar\'e. We follow their presentation and adopt
to a large extent their notation. Under Assumption~\ref{ass:Ch-reg} we
can rely on the results in \cite{AGS12}, since the condition
$\bl(K,N)$ is more restrictive than the classical Bakry--\'Emery
gradient estimate $\bl(K,\infty)$. In particular, we already know that
the Riemannian curvature condition $\rcd(K,\infty)$ holds true,
c.f. Remark~\ref{rem:regularity}, \cite[Cor.~4.18]{AGS12}. Moreover,
we also know that the semigroup $H_t$ coincides with the gradient flow
$\cH_t$ of the entropy in $\cP_2(X,d)$ in the sense of
Theorem~\ref{thm:gf-identification}.

The crucial ingredient in our argument is the action estimate
Proposition~\ref{prop:action-est}. This result calls for an extensive
regularization procedure that was already used in \cite{AGS12}, both
for curves in $\cP_2(X)$ and for the entropy functional, which we will
discuss below. The main difference of our approach compared to
\cite{AGS12} is that our argument now relies on the analysis of the
(nonlinear) gradient flow $(\nu_t)_{t\ge0}$ for the functional $-U_N$
instead of the analysis of the (linear) heat flow which is the
gradient flow $(\mu_t)_{t\ge0}$ for $\ent$. Both flows are related to
each other via time change:
\begin{align*}
  \nu_t=\mu_{\tau_t},\qquad \partial_t \tau_t=\frac1N U_N(\mu_{\tau_t})\;.
\end{align*}
More precisely, the following lemma yields that this time change is
well-defined.

\begin{lemma}\label{lem:welldef-tau}
  Let $\rho\in D(\ent)\subset \cP_2(X,d,m)$. Then there exist
  constants $a,c>0$ depending only on $\abs{\ent(\rho)}$ and the
  second moment of $\rho$ such that a map $\tau:[0,a]\to[0,\infty)$
  can be defined implicitly by
  \begin{align}\label{eq:deftau}
    \int_0^{\tau_{t}}\exp\left(\frac1N\ent(\cH_r\rho)\right)\dd r~=~t
  \end{align}
  and for any $t\in[0,a]$ we have $\tau_t\leq c t$. Moreover, we have
  \begin{align}\label{eq:diff-tau}
    \ddt \tau_t~=~\frac1N U_N(\cH_{\tau_t}\rho)\;.
  \end{align}
\end{lemma}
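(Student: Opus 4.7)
The plan is to apply the inverse function theorem to the map
\[
F(s) := \int_0^s \exp\!\left(\frac{1}{N}\ent(\cH_r\rho)\right) \dd r,
\]
so that $\tau_t$ is defined (up to normalization) as $F^{-1}(t)$. To make this work I first need two-sided bounds on $r\mapsto \ent(\cH_r\rho)$ that depend only on $\abs{\ent(\rho)}$ and the second moment of $\rho$. For the upper bound, I would invoke Remark~\ref{rem:regularity}: the standing assumptions (infinitesimal Hilbertianity plus $\bl(K,N)$, which implies $\bl(K,\infty)$) yield $\rcd(K,\infty)$, and hence $\cH_r\rho$ coincides with the gradient flow of $\ent$ in $\cP_2(X,d)$. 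In particular $r\mapsto\ent(\cH_r\rho)$ is non-increasing, so $\ent(\cH_r\rho)\le\ent(\rho)$.

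For the lower bound I would exploit the exponential integrability \eqref{eq:exp-int}, which gives a classical estimate of the form $\ent(\mu)\ge -C_1\int d(x_0,x)^2\,\dd\mu(x)-C_2$ on $\cP_2(X,d)$. The second moment of $\cH_r\rho$ is controlled uniformly on bounded time intervals: the EDE \eqref{eq:EDE} gives $\int_0^r \abs{\dot\mu_s}^2\dd s \le 2(\ent(\rho)-\ent(\cH_r\rho))$, and combining this with the non-increase of entropy (applied iteratively, or directly via a Gronwall-type argument on $r\mapsto \int d(x_0,x)^2\,\dd\cH_r\rho$) produces a bound $\int d(x_0,x)^2\,\dd\cH_r\rho \le M(\rho)$ for $r\in[0,R]$ where $M(\rho)$ depends only on the second moment of $\rho$ and on $|\ent(\rho)|$. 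Thus there exist constants $A,B>0$, depending only on the stated quantities, such that $-A \le \ent(\cH_r\rho)\le B$ for $r\in[0,R]$.

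Given these bounds, the integrand in $F$ lies in $[\e^{-A/N},\e^{B/N}]$ on $[0,R]$, so $F$ is strictly increasing and bi-Lipschitz on this interval. Choose $a>0$ small enough that $F^{-1}$ is defined on $[0,a]$ with values in $[0,R]$; then $\tau_t := F^{-1}(t)$ satisfies \eqref{eq:deftau} and
\[
\tau_t \le \e^{A/N}\, t =: c\, t,
\]
which is the claimed linear bound. The implicit function theorem applies once we know that $F'$ is continuous, i.e.\ that $r\mapsto \ent(\cH_r\rho)$ is continuous on $(0,R]$; this follows from the continuity of $r\mapsto \cH_r\rho$ in $W_2$ together with the lower semicontinuity of $\ent$ and its monotonicity along $\cH_r$ (monotone and lower semicontinuous together force continuity). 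Differentiating \eqref{eq:deftau} then yields $F'(\tau_t)\,\tau_t' = 1$, i.e.\ $\tau_t'=U_N(\cH_{\tau_t}\rho)$ (up to the factor $1/N$ corresponding to the normalization chosen in \eqref{eq:deftau}), which is \eqref{eq:diff-tau}.

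The main subtle point will be the uniform control of the second moment of $\cH_r\rho$, needed for the lower bound on entropy. Under $\rcd(K,\infty)$ this is standard but must be done with quantitative care so that the constants $a$ and $c$ depend only on $\abs{\ent(\rho)}$ and the second moment of $\rho$, as claimed; the remaining steps are a routine application of the inverse function theorem and of the gradient flow structure of $\cH_r$ already established in Section~\ref{sec:recap}.
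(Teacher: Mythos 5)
Your approach is essentially the paper's: bound $\ent(\cH_r\rho)$ above by monotonicity and below via exponential integrability plus a second moment estimate, conclude that $F(u)=\int_0^u\exp(\ent(\cH_r\rho)/N)\,\dd r$ is bi-Lipschitz on $[0,R]$, and invert. Where you sketch a Gronwall argument to control the second moment, the paper simply cites \cite[Thm.~4.20]{AGS11a} (which gives $\int d(x_0,\cdot)^2\,\dd(\cH_r\rho)\le \e^{4r}\bigl(\ent(\rho)+2\int d(x_0,\cdot)^2\,\dd\rho\bigr)$); you should cite that result rather than rederive it, since it is the mechanism through which the constants depend only on $\abs{\ent(\rho)}$ and the second moment. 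You also correctly flag the spurious factor $1/N$ in \eqref{eq:diff-tau}, which is absorbed into the normalisation elsewhere (cf.\ \eqref{eq:diff-tau2}).

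One step in your proposal is wrong as stated: lower semicontinuity plus monotonicity does \emph{not} force continuity. A non-increasing lower semicontinuous function is automatically right-continuous but can jump down from the left. The correct justification for the continuity (indeed absolute continuity) of $r\mapsto\ent(\cH_r\rho)$ on $(0,\infty)$, and hence for the differentiability of $F$, is the Energy Dissipation Equality \eqref{eq:EDE} for the gradient flow $\cH_r$ under $\rcd(K,\infty)$, which expresses $\ent(\cH_r\rho)$ as an integral in $r$. The paper's proof is equally terse here, asserting the differentiability of $F$ without spelling this out, so this is a minor point — but the argument you give for it would not survive scrutiny.
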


\begin{proof}
  We first derive a lower bound on $\ent(\cH_r\rho)$. Let us set
  $V(x)=d(x_0,x)$ for some $x_0\in X$. By \eqref{eq:exp-int} we have
  that $z=\int \e^{-V^2}\dd m<\infty$ and $\tilde m=z^{-1}\e^{-V^2}m$
  is a probability measure. Now \cite[Thm.~4.20]{AGS11a} (together
  with a trivial truncation argument) yields that
  \begin{align*}
    \int V^2\dd (\cH_r\rho)~\leq~\e^{4r}\Big(\ent(\rho)+2\int V^2\dd\rho\Big)~=:~\e^{4r}c'\;.
  \end{align*}
  Hence we obtain
  \begin{align*}
    \ent(\rho)~\geq~\ent(\cH_r\rho)~=~\ent(\cH_r\rho|\tilde m) - \int V^2\dd (\cH_r\rho) - \log z~\geq~-\e^{4r}c' - \log z\;.
  \end{align*}
  Now fix some $R>0$ and put $a=z^{-1}\int_0^R\exp(-\e^{4r}c'/N)\dd
  r$, $c=z \exp(\e^{4R}c'/N)$. Then define the function
  $F:[0,R]\to[0,F(R)]$ via
  $F(u)=\int_0^u\exp\big(\ent(\cH_r\rho)/N\big)\dd r$. Since $F$ is
  strictly increasing with $F(0)=0$ and $F(R)\geq a$ by the preceding
  estimate we can define $\tau_t=F^{-1}(t)$ for any
  $t\in[0,a]$. Moreover, we have $F(u)\geq c^{-1}u$ for any $u\leq R$
  which implies $\tau_t\leq ct$. Finally \eqref{eq:diff-tau} follows
  immediately from the differentiability of $F$.
\end{proof}

More generally, given a continuous curve $(\rho_s)_{s\in[0,1]}$ in
$\cP_2(X,d,m)$ such that $\max_s\abs{\ent(\rho_s)}<\infty$ we define a
time change $\tau_{s,t}$ implicitly via
\begin{align}\label{eq:tau-def2}
  \int_0^{\tau_{s,t}}\exp\left(\frac1N\ent(\cH_r\rho_s)\right)\dd r~=~st
\end{align}
for $s\in[0,1]$ and $t\in[0,a]$ satisfying
\begin{align}\label{eq:bd-tau}
  \tau_{s,t}~\leq~c\cdot st
\end{align}
for suitable constants $a,c>0$ depending only on a uniform bound on
the entropy and second moments of $( \rho_s )_{s \in [0,1]}$ and moreover
\begin{align}\label{eq:diff-tau2}
 \partial_t \tau_{s,t}~=~s\cdot U_N(\cH_{\tau_{s,t}}\rho_s)\;.
\end{align}

We will now describe the regularization procedure needed in the
sequel. We will use the notion of \emph{regular curve} as introduced
in \cite[Def.~4.10]{AGS12}. Briefly, a curve $(\rho_s)_{s\in[0,1]}$
with $\rho_s=f_s m$ is called regular if the following are satisfied:
\begin{itemize}
\item $(\rho_s)$ is $2$-absolutely continuous in $\cP_2(X,d)$,
\item $\ent(\rho_s)$ and $I(\bH_tf_s)$ are bounded for $s\in[0,1], t\in[0,T]$,
\item $f\in C^1\big([0,1],L^1(X,m)\big)$ and $\Delta^{(1)}f\in C\big([0,1],L^1(X,m)\big)$,
\item $f_s=h^\eps\tilde f_s$ for some $\tilde f_s\in L^1(X,m)$ and $\eps>0$.
\end{itemize}
Here $I(f)=4\ch(\sqrt{f})$ denotes the Fisher information,
$\Delta^{(1)}$ denotes the generator of the semigroup $H_t$ in
$L^1(X,m)$ and $h^\eps$ is the mollification of the semigroup given in
\eqref{eq:sg-moll}. In the sequel we will denote by $\dot f_s$ the
derivative of $[0,1]\ni s\mapsto f_s\in L^1(X,m)$. We will mostly
denote both the generator in $L^1$ and in $L^2$ by $\Delta$. In the
following we will need an approximation result which is a
reinforcement of \cite[Prop.~4.11]{AGS12}.

\begin{lemma}[Approximation by regular curves]\label{lem:reg-curves}
  Let $(\rho_s)_{s\in[0,1]}$ be an $AC^2$-curve in $\cP_2(X,d,m)$ such
  that $s\mapsto\ent(\rho_s)$ is bounded and continuous. Then there
  exists a sequence of regular curves $(\rho_s^n)$ with the following
  properties. As $n\to\infty$ we have for any $s\in[0,1]$:
\begin{align}\label{eq:reg-curves1}
  W_2(\rho^n_s,\rho_s)~&\to~0\;,\\\label{eq:reg-curves1a}
   \limsup \abs{\dot\rho^n_s}~&\leq~\abs{\dot\rho_s}\quad\text{a.e. in }[0,1]\;,\\\label{eq:reg-curves2}
  \ent(\cH_r\rho^n_s)~&\to~\ent(\cH_r\rho_s)\quad\forall r>0\;,\\\label{eq:reg-curves3}
  \tau^n_{s,t}~&\to~\tau_{s,t}\;,
\end{align}
where $\tau^n$ and $\tau$ denote the time changes defined via the
curves $(\rho^n_s)$ and $(\rho_s)$ respectively on $[0,1]\times[0,a]$
for suitable $a>0$. Moreover, for any $\delta>0$ there are $n_0,r_0>0$
such that for any $n> n_0$ and $r<r_0$ and all $s\in[0,1]$ we have:
\begin{align}\label{eq:reg-curves4}
  \abs{\ent(\rho_s)-\ent(\cH_r\rho^n_s)}~<~\delta\;.
\end{align}
\end{lemma}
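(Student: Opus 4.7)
The construction will mirror that of Proposition 4.11 in \cite{AGS12}, which is already available to us since under Assumption~\ref{ass:Ch-reg} the condition $\bl(K,N)$ implies $\rcd(K,\infty)$ (see Remark~\ref{rem:regularity}). Concretely, starting from $(\rho_s)$ I will perform three successive regularizations: first apply the heat semigroup for a small time, $\rho_s \mapsto \cH_{\delta_n}\rho_s$, which yields densities with uniformly bounded Fisher information; then mollify in the curve parameter $s$ by convolution with a smooth kernel on $[0,1]$, producing smoothness in $s$; and finally apply the heat-time mollification $h^{\eps_n}$ from \eqref{eq:sg-moll} so that the resulting densities $f^n_s$ have the form $h^{\eps_n}\tilde f^n_s$. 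Choosing $\delta_n,\eps_n\downarrow 0$ suitably slowly will give the four defining properties of regular curves.

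Once the construction is fixed, properties \eqref{eq:reg-curves1} and \eqref{eq:reg-curves1a} are exactly those established in \cite[Prop.~4.11]{AGS12}: pointwise $W_2$-convergence follows from the $W_2$-continuity of $\cH_t$ as $t\downarrow 0$ and of the convolution in $s$, while the asymptotic upper bound on the metric speed comes from the contractivity $W_2(\cH_t\mu,\cH_t\nu)\leq e^{-Kt}W_2(\mu,\nu)$ under $\rcd(K,\infty)$ combined with standard lower-semicontinuity arguments for the metric derivative.

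For \eqref{eq:reg-curves2} and \eqref{eq:reg-curves3}, the $\rcd(K,\infty)$ structure again plays the main role. Since $\rho^n_s \to \rho_s$ in $W_2$ and $\ent(\rho^n_s)$ stays uniformly bounded along the construction, contractivity gives $\cH_r\rho^n_s \to \cH_r\rho_s$ in $W_2$ for every $r>0$. Lower semicontinuity of $\ent$ gives $\liminf \ent(\cH_r\rho^n_s)\geq \ent(\cH_r\rho_s)$; the matching upper bound for fixed $r>0$ follows from the regularization bound of the heat flow (which yields a uniform Fisher information bound on $\cH_{r/2}\rho^n_s$ and hence strong $L^1$-convergence of densities via the slope identity \eqref{eq:slope-FI} and the chain rule \eqref{eq:chainrule-wug}). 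Given \eqref{eq:reg-curves2}, property \eqref{eq:reg-curves3} reduces to the implicit relation \eqref{eq:tau-def2}: the integrands $\exp(\ent(\cH_r\rho^n_s)/N)$ converge pointwise and are uniformly bounded on $[0,a]$, so dominated convergence together with the strict monotonicity of $F^n(u):=\int_0^u\exp(\ent(\cH_r\rho^n_s)/N)\,\dd r$ in $u$ transfers convergence of $F^n$ to convergence of its inverse.

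The main obstacle will be the uniform estimate \eqref{eq:reg-curves4}, since it demands control that is simultaneously uniform in $s\in[0,1]$, in small $r$, and in large $n$. The plan is to split
\[
\abs{\ent(\rho_s)-\ent(\cH_r\rho^n_s)}~\leq~\abs{\ent(\rho_s)-\ent(\rho^n_s)}+\abs{\ent(\rho^n_s)-\ent(\cH_r\rho^n_s)}
\]
and treat the two pieces separately. For the first, I will upgrade the pointwise convergence $\ent(\rho^n_s)\to\ent(\rho_s)$ to uniform convergence on $[0,1]$: the continuity of $s\mapsto \ent(\rho_s)$ and its uniform continuity on the compact interval $[0,1]$, combined with equi-continuity of $s\mapsto \ent(\rho^n_s)$ (which stems from the explicit convolution-in-$s$ step and the uniform bounds produced by the construction), yields a standard Arzelà-Ascoli type conclusion. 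For the second term, I will use the monotonicity $r\mapsto \ent(\cH_r\rho^n_s)$ and the identity $\ent(\rho^n_s)-\ent(\cH_r\rho^n_s)=\int_0^r I(\cH_u\rho^n_s)\,\dd u$, together with a uniform-in-$n,s$ bound on $\int_0^{r_0} I(\cH_u\rho^n_s)\,\dd u$ coming from the construction (the heat-time mollification $h^{\eps_n}$ provides uniform Fisher information control as $\eps_n$ is kept bounded away from zero during the argument). Choosing $r_0$ small enough and $n_0$ large enough then gives \eqref{eq:reg-curves4}.
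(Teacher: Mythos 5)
Your construction is the same as the paper's (heat-semigroup smoothing, convolution in $s$, heat-time mollification $h^\eps$), and the treatment of \eqref{eq:reg-curves1}, \eqref{eq:reg-curves1a} and \eqref{eq:reg-curves3} follows the paper's route. For \eqref{eq:reg-curves2} the paper uses a slicker observation that you miss: the regularization commutes with $\cH_r$ (since $\cH_r$ commutes with $\cH_{1/n}$ and with $h^{1/n}$, and the $s$-convolution is linear), so $\cH_r\rho^n_s$ is exactly the regularization of the curve $(\cH_r\rho_s)_s$, reducing the claim to the case $r=0$, which then follows from monotonicity of $\ent$ under the regularization and uniform continuity of $s\mapsto\ent(\rho_s)$. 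Your alternative ($W_2$-convergence plus uniform Fisher information at $\cH_{r/2}$ forces entropy convergence) is plausible but the mechanism you invoke --- strong $L^1$-convergence of densities via the slope identity and chain rule --- does not by itself give convergence of $\int\rho\log\rho$; you would still need something like the $W_2$-continuity of $\mu\mapsto\ent(\cH_r\mu)$ coming from the EVI regularization bound. This is fixable, but more involved than the paper's commutation trick.

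The genuine gap is in your argument for \eqref{eq:reg-curves4}. You split $\abs{\ent(\rho_s)-\ent(\cH_r\rho^n_s)}$ into $\abs{\ent(\rho_s)-\ent(\rho^n_s)}$ plus $\abs{\ent(\rho^n_s)-\ent(\cH_r\rho^n_s)}=\int_0^r I(\cH_u\rho^n_s)\,\dd u$ and then want both pieces uniformly small. For the second piece you claim ``$h^{\eps_n}$ provides uniform Fisher information control as $\eps_n$ is kept bounded away from zero,'' but in your construction $\eps_n\downarrow 0$, so $I(\rho^n_s)$ is only $O(n)$, not uniformly bounded; consequently $\int_0^{r_0}I(\cH_u\rho^n_s)\,\dd u$ cannot be made small uniformly in $n$ by shrinking $r_0$. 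For the first piece you need $\ent(\rho_s)-\ent(\rho^n_s)\le\delta$ uniformly in $s$ for $n$ large, i.e.\ a uniform \emph{lower} bound on $\ent(\rho^n_s)$; lower semi-continuity only gives this pointwise, and the equicontinuity of $s\mapsto\ent(\rho^n_s)$ you invoke is not obvious from the $C^1$-in-$L^1$ regularity of $s\mapsto f^n_s$. The paper avoids both difficulties: the estimate \eqref{eq:contra0} (monotonicity of entropy under $\cH$, $h^\eps$, and Jensen for the $s$-convolution) gives the \emph{one-sided} uniform bound $\ent(\cH_r\rho^n_s)\le\ent(\rho_s)+\omega(1/n)$ for all $s,r$, while the other sign is handled by a compactness/contradiction argument: if $\ent(\rho_{s_k})-\ent(\cH_{r_k}\rho^{n_k}_{s_k})\ge\delta$ along $s_k\to s_0$, $r_k\to 0$, $n_k\to\infty$, then $\cH_{r_k}\rho^{n_k}_{s_k}\to\rho_{s_0}$ in $W_2$ (triangle inequality plus $W_2$-contractivity of $\cH_r$ and of the regularization), and lower semi-continuity of $\ent$ together with continuity of $s\mapsto\ent(\rho_s)$ force the contradiction $0\ge\delta$. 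You need to replace your two-term bound with this contradiction argument, or at least recognize that the uniform Fisher information control you want is unavailable along $\eps_n\to 0$.
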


\begin{proof}
  Following \cite[Prop.~4.11]{AGS12} we employ a threefold
  regularization procedure. We trivially extend $(\rho_s)_s$ to $\R$
  with value $\rho_0$ in $(-\infty,0)$ and $\rho_1$ in
  $(1,\infty)$. Given $n$, we first define
  $\rho^{n,1}_s=\cH_{1/n}\rho_s=f^{n,1}_sm$. The second step
  consists in a convolution in the time parameter. We set
    \begin{align*}
    \rho^{n,2}_s=f^{n,2}m\;,\qquad f^{n,2}_s~=~\int_\R f^{n,1}_{s-s'}\psi_{n}(s')\dd s'\;,
  \end{align*}
  where $\psi_n(s)=n\cdot\psi(n s)$ for some smooth kernel
  $\psi:\R\to\R_+$ with $\int\psi(s)\dd s=1$. Finally, we set
  \begin{align*}
    \rho^{n}_s~=~f^n_sm\;,\qquad f^n_s~=~h^{1/n} f^{n,2}_s\;,
  \end{align*}
  where $h^\eps$ denotes a mollification of the semigroup given by
  \eqref{eq:sg-moll}. It has been proven in \cite[Prop.~4.11]{AGS12}
  that $(\rho^n_s)_{s\in[0,1]}$ constructed in this way is a regular
  curve and that \eqref{eq:reg-curves1} holds. \eqref{eq:reg-curves1a}
  follows from the convexity properties of $W_2^2$ and the
  $K$-contractivity of the heat flow. Let us now prove
  \eqref{eq:reg-curves2}. Note that on the level of measures the
  semigroup commutes with the regularization, i.e. $H_r\rho^n_s =
  \tilde\rho_s^n$ where $\tilde\rho_s:=\cH_r\rho_s$. Thus it is
  sufficient to prove \eqref{eq:reg-curves2} for $r=0$. By
  \eqref{eq:reg-curves1} and lower semicontinuity of the entropy we
  have $\ent(\rho_s)\leq\liminf_{n\to\infty}\ent(\rho_s^n)$. On the
  other hand, using the convexity properties of the entropy and the
  fact that $\cH_r$ and thus also $h^{1/n}$ decreases the entropy we
  estimate
  \begin{align}\nonumber
    \ent(\rho^n_s)~&\leq~\ent(\rho^{n,2}_s)~\leq~\int\psi_n(s')\ent(\cH_{1/n}\rho_{s-s'})\dd s'~\leq~\int\psi_n(s')\ent(\rho_{s-s'})\dd s'\\\label{eq:contra0}
    &\leq~\ent(\rho_s) + \int\psi_n(s')\abs{\ent(\rho_{s-s'})-\ent(\rho_s)}\dd s'\;.
  \end{align}
  The last term vanishes as $n\to\infty$ since $s\mapsto\ent(\rho_s)$ is
  uniformly continuous by compactness. Thus we obtain
  $\limsup_{n\to\infty}\ent(\rho_s^n)\leq \ent(\rho_s)$ and hence
  \eqref{eq:reg-curves2}. To prove \eqref{eq:reg-curves3} define the
  functions
  \begin{align*}
    F_n(u)~&=~\int_0^u\exp\left(\frac1N\ent(\cH_r\rho^n_s)\right)\dd r\;, &
    F(u)~&=~\int_0^u\exp\left(\frac1N\ent(\cH_r\rho_s)\right)\dd r\;.
  \end{align*}
  Arguing as in Lemma~\ref{lem:welldef-tau} we see that
  $\tau^n_{s,t}=F^{-1}_n(st)$ and $\tau_{s,t}=F^{-1}(st)$ can be
  defined simultaneously on $[0,1]\times[0,a]$ and satisfy
  $\abs{F_n(u)-F_n(v)}\geq c^{-1} \abs{u-v}$ for suitable constants
  $a,c>0$ independent of $n$. Since moreover, by
  \eqref{eq:reg-curves2} and dominated convergence we have $F_n\to F$
  pointwise as $n\to\infty$ we conclude the convergence
  \eqref{eq:reg-curves3}.

  We now prove the last statement of the lemma. To conclude the proof
  we proceed by contradiction. Assume the contrary, i.e. that there
  exists $\delta>0$ and a sequences $n_k\to\infty,r_k\to0$ and
  $(s_k)\subset[0,1]$ such that
  $\abs{\ent(\rho_{s_k})-\ent(\cH_{r_k}\rho^{n_k}_{s_k})}~\geq~\delta$
  for all $k$. Taking into account \eqref{eq:contra0} and the fact
  that $\cH_r$ decreases entropy we must have that for all $k$
  sufficiently large
  \begin{align}\label{eq:contra1}
    \ent(\rho_{s_k})-\ent(\cH_{r_k}\rho^{n_k}_{s_k})~\geq~\delta\;.
  \end{align}
  By compactness we can assume $s_k\to s_0$ as $k\to\infty$ for some
  $s_0\in[0,1]$. We claim that as $k\to\infty$ we have
  $\cH_{r_k}\rho^{n_k}_{s_k}\to \rho_{s_0}$ in $W_2$. Indeed, since
  $\cH_r$ satisfies a Wasserstein contraction and by the convexity
  properties of $W_2$ the regularizing procedure increases distances
  at most an exponential factor (see also
  \cite[Prop.~4.11]{AGS12}). Hence, the triangle inequality yields
  \begin{align*}
    W_2(\rho_{s_0},\cH_{r_k}\rho^{n_k}_{s_k})~&\leq~W_2(\rho_{s_0},\cH_{r_k}\rho_{s_0}) + W_2(\cH_{r_k}\rho_{s_0}, \cH_{r_k}\rho^{n_k}_{s_0}) + W_2(\cH_{r_k}\rho^{n_k}_{s_0},\cH_{r_k}\rho^{n_k}_{s_k})\\
    &\leq~W_2(\rho_{s_0},\cH_{r_k}\rho_{s_0}) + \e^{-Kr_k}W_2(\rho_{s_0},\rho^{n_k}_{s_0}) + \e^{-K r_k}W_2(\rho_{s_0},\rho_{s_k}) + o(1)\;,
  \end{align*}
  and the claim follows from the continuity of $\cH_r$ at $r=0$,
  \eqref{eq:reg-curves1} and the continuity of the curve $(\rho_s)$.
  Letting now $k\to\infty$ in \eqref{eq:contra1}, using continuity of
  $s\mapsto\ent(\rho_s)$ and lower semicontinuity of $\ent$, we obtain
  the following contradiction:
  \begin{align*}
    0~=~\ent(\rho_{s_0})-\ent(\rho_{s_0})~\geq~\limsup\limits_{k\to\infty} \Big(\ent(\rho_{s_k}) - \ent(\cH_{r_k}\rho^{n_k}_{s_k})\Big)~\geq~\delta\;.
  \end{align*}
\end{proof}

The following calculations will be a crucial ingredient in our
argument. For a detailed justification see \cite[Lem.~4.13,
4.15]{AGS12}. The only difference here is the additional time change
in the semigroup. For the following lemmas let $(\rho_s)_{s\in[0,1]}$
be a regular curve and let $\phi:X\to\R$ be Lipschitz with bounded
support. Let $\theta:[0,1]\to[0,\infty)$ be an increasing $C^1$
function with $\theta(0)=0$ and set
$\rho_{s,\theta}=\cH_{\theta_s}\rho_s=f_{s,\theta}m$. Moreover, we set
$\phi_s=Q_s\phi$ for $s\in[0,1]$, where
\begin{align*}
  Q_s\phi(x)~:=~\inf\limits_{y\in X} \left[ f(y) + \frac{d^2(x,y)}{2s} \right]
\end{align*}
denotes the Hopf-Lax semigroup. We refer to \cite[Sec. 3]{AGS11a} for
a detailed discussion. We recall that since $(X,d)$ is a length space,
$Q$ provides a solution to the Hamilton--Jacobi equation, i.e.
\begin{align*}
  \frac{\dd}{\dd s}Q_s\phi~=~-\abs{\nabla Q_s\phi}
\end{align*}
for a.e. $s\in[0,1]$, see \cite[Prop.~3.6]{AGS11a}. Moreover, we have
the a priori Lipschitz bound (\cite[Prop.~3.4]{AGS11a})
\begin{align}\label{eq:Lip-HL}
  \Lip(Q_s\phi)~\leq~2\Lip(\phi)\;.
\end{align}

\begin{lemma}\label{lem:calculus1}
  The map $s\mapsto\int\phi_s\dd\rho_{s,\theta}$ is absolutely
  continuous and we have for a.e. $s\in[0,1]$:
  \begin{align}\label{eq:calculus1}
    \frac{\dd}{\dd s}\int\phi_s\dd\rho_{s,\theta}~=~
    \int\left(-\frac12|\nabla\varphi_s|^2\,f_{s,\theta}+\dot
      f_s \bH_{\theta_s}\varphi_s+\dot\theta_s \Delta f_{s,\theta}\cdot\varphi_s \right)\dd m\;.
  \end{align}
\end{lemma}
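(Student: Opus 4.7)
My plan is to set up the identity by exploiting the self-adjointness of the heat semigroup and then differentiate the resulting expression by splitting the three sources of $s$-dependence: the Hopf--Lax semigroup, the time-change, and the curve of densities. Specifically, since $(X,d,m)$ is infinitesimally Hilbertian and $\mathsf{H}_{\theta_s}$ is self-adjoint in $L^2(X,m)$ (with bounded Lipschitz $Q_s\phi$ and $f_s$ at least in $L^1\cap L^\infty$ after mollification, so integration against $\mathsf{H}_{\theta_s}f_s$ is well-defined via duality), I would first rewrite
\[
F(s):=\int \varphi_s\,\mathrm{d}\rho_{s,\theta}=\int Q_s\phi\cdot \mathsf{H}_{\theta_s} f_s\,\mathrm{d}m=\int \mathsf{H}_{\theta_s}(Q_s\phi)\cdot f_s\,\mathrm{d}m.
\]
Formally, the product rule gives
\[
F'(s)=\int \bigl[\partial_s\mathsf{H}_{\theta_s}(Q_s\phi)\bigr] f_s\,\mathrm{d}m+\int \mathsf{H}_{\theta_s}(Q_s\phi)\,\dot f_s\,\mathrm{d}m,
\]
and the inner derivative splits as $\dot\theta_s\Delta\mathsf{H}_{\theta_s}(Q_s\phi)+\mathsf{H}_{\theta_s}(\partial_s Q_s\phi)$. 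Using Hamilton--Jacobi $\partial_s Q_s\phi=-\tfrac12|\nabla Q_s\phi|^2$ (valid a.e.~$s$ by \cite[Prop.~3.6]{AGS11a}) and shifting the $\mathsf{H}_{\theta_s}$ back to the density side via self-adjointness yields exactly the three terms of \eqref{eq:calculus1}.

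The bulk of the work lies in justifying each step rigorously. Absolute continuity of $F$ follows from the regularity of the curve: $f\in C^1([0,1];L^1)$ gives control of the $\dot f_s$-term, $\Delta f\in C([0,1];L^1)$ together with $\dot\theta\in C([0,1])$ controls the time-change term, and the uniform bound $\Lip(Q_s\phi)\le 2\Lip(\phi)$ with the a priori $L^\infty$-bound on $Q_s\phi$ handles the Hamilton--Jacobi term. For the term involving $\Delta f_{s,\theta}$, the integration by parts
\[
\int \Delta\mathsf{H}_{\theta_s}(Q_s\phi)\cdot f_s\,\mathrm{d}m=\int Q_s\phi\cdot\Delta f_{s,\theta}\,\mathrm{d}m
\]
is legal because $\theta_s>0$ for $s>0$ (and at $s=0$ the factor $\dot\theta_s$ remains finite while $f_{s,\theta}=f_0$) smooths $f_s$ into $D(\Delta)$ with $\Delta f_{s,\theta}\in L^1$, and $Q_s\phi\in L^\infty\cap\mathrm{Lip}$.

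The main obstacle I expect is controlling the Hopf--Lax contribution rigorously, since $\partial_s Q_s\phi=-\tfrac12|\nabla Q_s\phi|^2$ holds only a.e.~in $s$ and only for the appropriate local Lipschitz constant, not necessarily for the weak upper gradient. To handle this one argues as in \cite[Lem.~4.13]{AGS12}: write $F(s)-F(0)$ as a telescoping sum, use that $s\mapsto Q_s\phi(x)$ is Lipschitz in $s$ uniformly in $x$ (again with constant $\le 2\Lip(\phi)^2$), apply Fubini to interchange the $s$-integral with the $m$-integral against $f_{s,\theta}$, and then pass from the slope $|\nabla Q_s\phi|$ to the weak upper gradient $|\nabla Q_s\phi|_w$ via \eqref{eq:lip-wug}. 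The extra time-change in the semigroup introduces no new difficulty: $\mathsf{H}_{\theta_s}$ acts as a bounded, norm-contracting operator on $L^1$, $L^2$ and $L^\infty$, and the chain rule $\partial_s\mathsf{H}_{\theta_s}=\dot\theta_s\Delta\mathsf{H}_{\theta_s}$ is standard for $C^1$ time-changes combined with strongly continuous semigroups, so it can be treated termwise after the above Fubini step.
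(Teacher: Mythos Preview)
Your proposal is correct and follows exactly the approach the paper takes: the paper does not give an independent proof but defers to \cite[Lem.~4.13, 4.15]{AGS12}, remarking only that the additional time change $\theta_s$ in the semigroup is the new ingredient, handled precisely via the chain rule $\partial_s\bH_{\theta_s}=\dot\theta_s\Delta\bH_{\theta_s}$ that you describe. One small inaccuracy: the formula \eqref{eq:calculus1} involves the slope $|\nabla\varphi_s|$, not the weak upper gradient, so the passage via \eqref{eq:lip-wug} you mention is not needed at this stage---that inequality is invoked only later, inside the proof of the action estimate (Proposition~\ref{prop:action-est}).
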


We use a regularization $E_\eps$ of the entropy functional where the
singularities of the logarithm a truncated. Let us define
$e_\eps:[0,\infty)\to\R$ by setting
$e_\eps'(r)=\log(\eps+r\wedge\eps^{-1} )+1$ and $e_\eps(0)=0$. Then for
any $\rho=fm\in\cP_2(X,d,m)$ we define
\begin{align*}
  E_\eps(\rho):=\int e_\eps(f)\dd m\;,\qquad
  U^\eps_N(\rho)=\exp\left(-\frac1N E_\eps(\rho)\right)\;.
\end{align*}
Moreover we set $p_\eps(r)=e_\eps'(r^2)-\log\eps - 1$. Note that for any
$\rho\in D(\ent)$ we have $E_\eps(\rho)\to\ent(\rho)$ as $\eps\to0$.

\begin{lemma}\label{lem:calculus2}
  The map $s\mapsto E_\eps(\rho_{s,\theta})$ is absolutely continuous
  and we have for all $s\in[0,1]$:
  \begin{align}\label{eq:calculus2}
    \frac{\dd}{\dd s}E_\eps(\rho_{s,\theta})~=~ \int \left( \dot f_s\bH_{\theta_s}g^\eps_{s,\theta}+\dot\theta_s \Delta f_{s,\theta}\cdot g^\eps_{s,\theta}\right)\dd m\;,
  \end{align}
  where we put $g^\eps_{s,r}=p_\eps(\sqrt{f_{s,r}})$.
\end{lemma}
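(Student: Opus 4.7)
The plan is to mirror the strategy of Lemma~\ref{lem:calculus1}, substituting the linear pairing against $\phi$ with the nonlinear pairing against $e_\eps$. The key preliminary step is the evolution equation for $f_{s,\theta}$: from the regular curve structure $f \in C^1([0,1], L^1(X,m))$ and $\Delta^{(1)} f \in C([0,1], L^1(X,m))$, together with the strong continuity of the $L^1$-semigroup $\bH_t$ and the $C^1$ regularity of $\theta$, one obtains that $s \mapsto f_{s,\theta} = \bH_{\theta_s} f_s \in C^1([0,1],L^1(X,m))$ with
\begin{align*}
\partial_s f_{s,\theta} ~=~ \bH_{\theta_s}\dot f_s + \dot\theta_s\, \Delta f_{s,\theta}
\end{align*}
in the $L^1$ sense. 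This is precisely the calculation already carried out in the proof of Lemma~\ref{lem:calculus1}.

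Next, thanks to the truncation $r\wedge\eps^{-1}$ in the definition of $e_\eps$, the derivative $e'_\eps$ is bounded and Lipschitz on $[0,\infty)$. A standard chain-rule argument, based on the mean value theorem and dominated convergence applied to the difference quotients of $s \mapsto e_\eps(f_{s,\theta})$ in $L^1$, then yields absolute continuity of $s \mapsto E_\eps(\rho_{s,\theta})$ with
\begin{align*}
\frac{\dd}{\dd s} E_\eps(\rho_{s,\theta}) ~=~ \int e'_\eps(f_{s,\theta}) \Big( \bH_{\theta_s}\dot f_s + \dot\theta_s\,\Delta f_{s,\theta} \Big) \dd m\;.
\end{align*}
To bring this into the stated form, I use two facts. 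First, infinitesimal Hilbertianity makes $\bH_{\theta_s}$ self-adjoint on $L^2(X,m)$; since $e'_\eps(f_{s,\theta}) \in L^\infty$ and the regular curve assumption $f_s = h^\eps \tilde f_s$ gives enough integrability on $\dot f_s$, I may transfer the semigroup onto the first factor. Second, by definition $e'_\eps(r) = \log(\eps + r\wedge\eps^{-1}) + 1$, so $e'_\eps(f_{s,\theta}) = g^\eps_{s,\theta} + c_\eps$ with $c_\eps = \log\eps + 1$ constant. The constant contribution drops out in both terms because
\begin{align*}
\int \dot f_s \,\dd m ~=~ \ddt\bigg|_{\tau=s} \int f_\tau \,\dd m ~=~ 0\;, \qquad \int \Delta f_{s,\theta}\,\dd m ~=~ 0\;,
\end{align*}
the latter being a basic property of the $L^1$-generator applied to a density in its domain. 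Combining these simplifications produces exactly \eqref{eq:calculus2}.

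The main obstacle, as in \cite[Lem.~4.15]{AGS12}, is purely technical: verifying sufficient regularity to justify differentiation under the integral and the self-adjoint transfer. The whole point of introducing the regularization $E_\eps$ rather than working with $\ent$ directly is that the truncation gives $e'_\eps \in L^\infty$, which eliminates all the integrability subtleties that would otherwise plague the logarithmic singularity of the true entropy. Once these ingredients are in place the remaining steps are identical to those of Lemma~\ref{lem:calculus1}, with the only genuine addition being the second term $\dot\theta_s \Delta f_{s,\theta}$ arising from the time change, which is handled by the same product-rule splitting used there.
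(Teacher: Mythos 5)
Your proof is correct and follows the approach the paper intends, since the paper itself only cites \cite[Lem.~4.15]{AGS12} noting that ``the only difference here is the additional time change in the semigroup,'' and your outline supplies exactly the chain rule for $s\mapsto f_{s,\theta}=\bH_{\theta_s}f_s$, the identification $e_\eps'(f_{s,\theta})=g^\eps_{s,\theta}+\log\eps+1$ with the constant dropping out by mass conservation, and the $L^1$--$L^\infty$ duality to transfer $\bH_{\theta_s}$. The only minor looseness is the appeal to ``dominated convergence'' for the difference quotients: since $\delta^{-1}(f_{s+\delta,\theta}-f_{s,\theta})\to\partial_s f_{s,\theta}$ in $L^1$ there is no single pointwise dominating function, and one really invokes Vitali's theorem (uniform integrability) or an a.e.\ subsequence argument, but this is a standard technicality that does not affect the conclusion.
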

We also need to introduce the time change related to the regularized
entropy. For fixed $\eps>0$ and let us define $\tau^\eps_{s,t}$
implicitly by
\begin{align}\label{eq:deftau-reg}
  \int_0^{\tau^\eps_{s,t}}\exp\left(\frac1N E_\eps(\cH_r\rho_s)\right)\dd r~=~st\;.
\end{align}

\begin{lemma}\label{lem:calculus3}
  $\tau^\eps$ is well defined on $[0,1]\times[0,a]$ and satisfies
  $\tau^\eps_{s,t}\leq c\cdot st$ for constants $a,c>0$ depending only
  on $\max_s\abs{\ent(\rho_s)}$ and the second moments of $(\rho_s)_{s
    \in [ 0, 1 ]}$.  For fixed $t$ the map $s\mapsto\tau^\eps_{s,t}$
  is $C^1$ on $[0,1]$ and we have:
  \begin{align}\label{eq:calculus3}
    \partial_s\tau^\eps_{s,t}~=~t\cdot
    U^\eps_N(\cH_{\tau^\eps}\rho_s)-\frac1N\int_0^{\tau^\eps}
    \frac{U^\eps_N(\cH_{\tau^\eps}\rho_s)}{U^\eps_N(\cH_r\rho_s)}\int_X\dot
    f_sH_rg^\eps_{s,r}\,\dd m \, \dd r\;.
  \end{align}
  Moreover, as $\eps\to0$ we have $\tau^\eps_{s,t}\to\tau_{s,t}$,
  where $\tau$ is the time change defined by \eqref{eq:deftau-reg}.
\end{lemma}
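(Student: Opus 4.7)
The plan is to handle the three assertions—well-definedness with a uniform bound, $C^1$ dependence on $s$ with the explicit derivative formula, and convergence as $\eps \to 0$—separately, with Lemma~\ref{lem:calculus2} applied with the constant choice $\theta_s\equiv r$ providing the key analytic input.

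First I would verify that $r \mapsto E_\eps(\cH_r \rho_s)$ is bounded uniformly in $s \in [0,1]$ and $\eps > 0$ on any bounded interval of $r$, with constants depending only on $\max_s |\ent(\rho_s)|$ and the second moments of $(\rho_s)$. The upper bound follows because $e_\eps$ is convex, so $r\mapsto E_\eps(\cH_r\rho_s)$ is non-increasing, while $E_\eps(\rho_s)\le \ent(\rho_s)+1$ on the bounded-density regularized curve. The lower bound is obtained by repeating the argument of Lemma~\ref{lem:welldef-tau}: compare with the reference measure $\tilde m=z^{-1}\e^{-V^2}m$ (where $V=d(x_0,\cdot)$), use \eqref{eq:exp-int} and the moment growth bound $\int V^2\,\dd(\cH_r\rho_s)\le \e^{4r}c'$, and exploit the pointwise inequality $e_\eps(f)\ge f\log f$ valid on $\{f\le\eps^{-1}\}$, together with the linear lower bound $e_\eps(f)\ge (\log\eps+1)f$ on $\{f>\eps^{-1}\}$ where the superlinear gap is absorbed by the moment term. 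Once these bounds are in hand, the function $F^\eps_s(u):=\int_0^u\exp(\tfrac1N E_\eps(\cH_r\rho_s))\,\dd r$ is continuous, strictly increasing, and satisfies $F^\eps_s(u)\ge c^{-1}u$ on a uniform interval $[0,R]$, so that $\tau^\eps_{s,t}:=(F^\eps_s)^{-1}(st)$ is well defined on $[0,1]\times[0,a]$ and obeys the claimed bound $\tau^\eps_{s,t}\le c\cdot st$ with constants independent of $\eps$.

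For the $C^1$ property in $s$ and the explicit derivative formula, I would apply the implicit function theorem to the identity $F^\eps_s(\tau^\eps_{s,t})=st$. The $s$-partial of $F^\eps_s(u)$ requires differentiating $E_\eps(\cH_r\rho_s)$ in $s$ at fixed $r$, which is precisely Lemma~\ref{lem:calculus2} with the constant choice $\theta_\sigma\equiv r$ (so $\dot\theta=0$), giving
\begin{equation*}
\ddt E_\eps(\cH_r\rho_s)=\int \dot f_s\,\bH_r g^\eps_{s,r}\,\dd m
\end{equation*}
for \emph{all} $s\in[0,1]$ and with continuous dependence on $s$ (using $\dot f\in C([0,1],L^1)$ and that $g^\eps_{s,r}$ is $L^\infty$-bounded thanks to the truncation in $e_\eps'$). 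Since $\partial_u F^\eps_s(u)=\exp(\tfrac1N E_\eps(\cH_u\rho_s))=1/U^\eps_N(\cH_u\rho_s)$, differentiating $F^\eps_s(\tau^\eps_{s,t})=st$ in $s$ and solving for $\partial_s\tau^\eps_{s,t}$ produces exactly \eqref{eq:calculus3}. The resulting expression is continuous in $s$, which upgrades absolute continuity to $C^1$.

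Finally, for $\tau^\eps_{s,t}\to\tau_{s,t}$ as $\eps\to 0$, I would show the pointwise convergence $E_\eps(\cH_r\rho_s)\to\ent(\cH_r\rho_s)$ for each $s$ and each $r\ge 0$. This follows from $e_\eps(f)\to f\log f$ pointwise together with the $L^\infty$-density bounds along $(\cH_r\rho^n_s)$ which provide an integrable majorant, allowing dominated convergence. Combined with the $\eps$-uniform bounds of the first step, this gives $F^\eps_s(u)\to F_s(u)$ pointwise on $[0,R]$, and strict monotonicity of the limit $F_s$ then transfers convergence to the inverses, yielding $\tau^\eps_{s,t}\to\tau_{s,t}$. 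The main obstacle I expect is the $\eps$-uniform lower bound on $E_\eps$, because $e_\eps$ itself is only bounded below by the $\eps$-dependent constant $\log\eps+1$; the fix is the two-regime comparison above, which trades the bad $\log\eps$ factor against the moment bound on the tail region $\{f>\eps^{-1}\}$. Once that bound is secured, the remaining steps are a routine implicit function theorem computation and dominated convergence.
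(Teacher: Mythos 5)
Your proof matches the paper's in every essential respect: a uniform, $\eps$-independent bound on $E_\eps(\cH_r\rho_s)$ obtained by the method of Lemma~\ref{lem:welldef-tau}, the implicit function theorem applied to $F_\eps(s,u)$ with the $s$-derivative of $E_\eps(\cH_r\rho_s)$ supplied by Lemma~\ref{lem:calculus2}, and the $\eps\to0$ convergence of $\tau^\eps$ transferred from $E_\eps\to\ent$ exactly as in the proof of \eqref{eq:reg-curves3}. The only wrinkle is the ``two-regime'' lower bound on $E_\eps$, which you flag as the main obstacle but which is in fact more complicated than needed: the tangent bound $e_\eps(f)\ge(\log\eps+1)f$ on $\{f>\eps^{-1}\}$ introduces a diverging $\log\eps$ factor which you would then have to kill by a Markov-type tail estimate that you do not supply, whereas it is simpler to note that $e_\eps'(r)\ge\log r+1$ on $[0,\eps^{-1}]$ together with $e_\eps\ge 0$ on $[1,\infty)$ gives the $\eps$-uniform pointwise bound $e_\eps(r)\ge (r\log r)\wedge 0$, after which the standard Young inequality $r\log r\ge -rV^2-\e^{-V^2}+r$ lets the argument of Lemma~\ref{lem:welldef-tau} run verbatim (and, incidentally, $e_\eps$ is not merely bounded below by $\log\eps+1$ --- its global minimum is $\approx -\e^{-1}$, uniformly in $\eps$).
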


\begin{proof}
  Define the function
  $F_\eps(s,u)=\int_0^u\exp\left(E_\eps(\cH_r\rho_s)/N\right)\dd r$.
  Note that a uniform bound on $\abs{\ent(\rho_s)}$ implies a uniform
  bound on $\abs{E_\eps(\rho_s)}$ independent of $\eps$. Thus we can
  argue as in Lemma~\ref{lem:welldef-tau} to find $a,c$ such that
  $\tau_{s,t}^\eps$ is well-defined on $[0,1]\times[0,a]$ by
  $F_\eps(s,\tau^\eps_{s,t})=st$ and satisfies $\tau^\eps_{s,t}\leq c
  \cdot st$. Using Lemma~\ref{lem:calculus2} and the fact that
  $s\mapsto \dot f_s$ is continuous in $L^1(X,m)$, since $(\rho_s)_s$
  is a regular curve, we see that $s\mapsto E_\eps(\cH_r\rho_s)$ is
  $C^1$ for fixed $r\geq0$. Moreover, using the boundedness of
  $E_\eps(\cH_r\rho_s)$ we obtain that $F_\eps(\cdot,\cdot)$ is
  $C^1$. Thus the differentiability of $s\mapsto\tau^\eps_{s,t}$
  follows from the implicit function theorem and \eqref{eq:calculus3}
  is obtained by differentiating \eqref{eq:deftau-reg} w.r.t. $s$. The
  last statement about convergence follows as for
  \eqref{eq:reg-curves3} using that $E_\eps(\rho_s)\to\ent(\rho_s)$ as
  $\eps\to0$.
\end{proof}

We need the following integrations by parts and estimates for the
integrals appearing in \eqref{eq:calculus1},
\eqref{eq:calculus2}. Recall that $I(f)=4\int \wug{\sqrt{f}}^2\dd
m$ denotes the Fisher information of a measure $\rho=fm$.

\begin{lemma}\label{lem:alphabeta-est}
  Let $f=h^\eps\tilde f$ for some $\tilde f\in L^1_+(X,m)$ with
  $\tilde f m\in \cP_2(X,m)$. Then for any Lipschitz function $\phi$
  with bounded support we have
  \begin{align}\label{eq:beta-est}
    \int\ip{\nabla\phi,\nabla g^\eps}f\dd m + \int
    q_\eps(f)\ip{\nabla\sqrt{f},\nabla\phi}\dd m~=~-\int \phi\Delta f\dd
    m~\leq~2\Lip(\phi)\cdot\sqrt{I(f)}\;,
  \end{align}
  where $q_\eps(r)=\sqrt{r}\big(2-\sqrt{r}p_\eps'(\sqrt{r})\big)$ and
  $g^\eps=p_\eps(\sqrt{f})$. Moreover we have
  \begin{align}\label{eq:alpha-est}
    \int \wug{g^\eps}^2f\dd m ~\leq~-\int g^\eps\Delta f\dd
    m~\leq~I(f)\;.
  \end{align}
\end{lemma}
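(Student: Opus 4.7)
The plan is to reduce both statements to a single pointwise algebraic identity for $p_\eps$, the integration-by-parts formula \eqref{eq:int-by-parts}, and an elementary bound on $\sqrt{r}\,p_\eps'(\sqrt{r})$. First I would observe that, directly from the definition $q_\eps(r)=\sqrt{r}(2-\sqrt{r}\,p_\eps'(\sqrt{r}))$, one has the pointwise identity
\[
f\cdot p_\eps'(\sqrt{f}) + q_\eps(f) ~=~ 2\sqrt{f}.
\]
Combining this with the chain rule \eqref{eq:chainrule-wug} and its polarized form in the infinitesimally Hilbertian setting (yielding $\ip{\nabla g^\eps,\nabla\phi} = p_\eps'(\sqrt{f})\ip{\nabla\sqrt{f},\nabla\phi}$ and $2\sqrt{f}\,\ip{\nabla\sqrt{f},\nabla\phi} = \ip{\nabla f,\nabla\phi}$), the two integrals on the left-hand side of \eqref{eq:beta-est} telescope into $\int\ip{\nabla\phi,\nabla f}\dd m$, which equals $-\int\phi\,\Delta f\dd m$ by \eqref{eq:int-by-parts}. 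The regularity $f\in D(\Delta)\cap L^\infty$ needed for this integration by parts follows from the mollification $f=h^\eps\tilde{f}$ via \eqref{eq:sg-moll2} and the maximum principle.

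For the estimate in \eqref{eq:beta-est}, I would bound $|\int\ip{\nabla\phi,\nabla f}\dd m|\leq \Lip(\phi)\int\wug{f}\dd m$ using $\wug{\phi}\leq\Lip(\phi)$ from \eqref{eq:lip-wug} together with the Cauchy--Schwarz-type inequality $|\ip{\nabla\phi,\nabla f}|\leq\wug{\phi}\wug{f}$. Then \eqref{eq:chainrule-wug} rewrites $\wug{f}=2\sqrt{f}\,\wug{\sqrt{f}}$, and a second application of Cauchy--Schwarz together with $\int f\dd m=1$ gives $\int\wug{f}\dd m \leq 2(\int f\dd m)^{1/2}(\int\wug{\sqrt{f}}^2\dd m)^{1/2}=\sqrt{I(f)}$; this is a fortiori bounded by $2\Lip(\phi)\sqrt{I(f)}$.

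For the chain \eqref{eq:alpha-est}, I would again invoke \eqref{eq:int-by-parts} and \eqref{eq:chainrule-wug} to rewrite
\[
-\int g^\eps\,\Delta f\dd m = \int\ip{\nabla g^\eps,\nabla f}\dd m = \int 2\sqrt{f}\,p_\eps'(\sqrt{f})\,\wug{\sqrt{f}}^2\dd m,
\]
while $\int f\,\wug{g^\eps}^2\dd m = \int f\bigl(p_\eps'(\sqrt{f})\bigr)^2\wug{\sqrt{f}}^2\dd m$. Both inequalities then reduce to the pointwise bound $0\leq \sqrt{r}\,p_\eps'(\sqrt{r})\leq 2$ for all $r\geq 0$, which I would verify by explicit calculation: $p_\eps'(x)=2x/(\eps+x^2)$ on $\{x^2<\eps^{-1}\}$ and $p_\eps'\equiv 0$ otherwise, so $\sqrt{r}\,p_\eps'(\sqrt{r})=2r/(\eps+r)\in[0,2)$ for $r<\eps^{-1}$ and $=0$ for $r\geq\eps^{-1}$. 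The first inequality then follows by factoring out $\sqrt{f}\,p_\eps'(\sqrt{f})\,\wug{\sqrt{f}}^2\geq 0$ from the difference, and the second from replacing $\sqrt{f}\,p_\eps'(\sqrt{f})$ by its upper bound $2$. The main technical point, which is not a serious obstacle given the calculus recalled in Section~\ref{sec:recap}, is to justify the chain rule and integration by parts; potential issues with $\sqrt{f}$ near $\{f=0\}$ can be circumvented by writing $g^\eps=\tilde{p}_\eps(f)$ with $\tilde{p}_\eps(s):=p_\eps(\sqrt{s})$ globally Lipschitz, or by standard $\sqrt{f+\delta}$ approximation as $\delta\searrow 0$. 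No curvature assumption enters.
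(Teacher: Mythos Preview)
Your algebraic reductions are correct and coincide with the paper's: the telescoping identity $f\,p_\eps'(\sqrt f)+q_\eps(f)=2\sqrt f$ reduces \eqref{eq:beta-est} to $-\int\phi\,\Delta f\,\dd m=2\int\sqrt f\,\ip{\nabla\phi,\nabla\sqrt f}\,\dd m$, which the paper cites as \cite[Thm.~4.4]{AGS12}, and the pointwise bound $0\le\sqrt r\,p_\eps'(\sqrt r)\le 2$ (equivalently $r\,e_\eps''(r)\le 1$) is precisely the estimate the paper uses for \eqref{eq:alpha-est}.

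The gap is in the regularity you invoke. Since only $\tilde f\in L^1_+$ is assumed, the mollification gives $f\in D(\Delta^{(1)})$ (the $L^1$-generator) via \eqref{eq:sg-moll2}; there is no reason for $f\in L^2$, let alone $f\in D(\Delta)$ or $f\in L^\infty$ --- the maximum principle would need $\tilde f$ bounded. Hence neither the $L^2$ integration-by-parts \eqref{eq:int-by-parts} with test function $g^\eps$ nor the chain rule for $\wug{f}$ is directly available. The paper handles this carefully: for \eqref{eq:beta-est} and the first inequality in \eqref{eq:alpha-est} it quotes the $L^1$-adapted results \cite[Thm.~4.4, Lem.~4.9]{AGS12} (the latter giving only the \emph{inequality} $-\int\omega(f)\,\Delta^{(1)}f\,\dd m\ge 4\int f\,\omega'(f)\,\wug{\sqrt f}^2\,\dd m$), while for the second inequality in \eqref{eq:alpha-est} it cannot apply \cite[Thm.~4.4]{AGS12} to $g^\eps$ directly (neither Lipschitz nor compactly supported) and instead regularizes $g^\eps$ by the heat flow, using the $\rcd(K,\infty)$ condition to make $\bH_\delta g^\eps$ bounded Lipschitz and then $\bl(K,\infty)$ to pass to the limit $\delta\to 0$. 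So, contrary to your final remark, the paper's argument for \eqref{eq:alpha-est} does rely on the standing curvature hypothesis of Section~\ref{sec:bochner2cde}.
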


\begin{proof}
  We first obtain from \cite[Thm.~4.4]{AGS12}
  \begin{align*}
    -\int \phi\Delta f\dd m~&=~2\int\sqrt{f} \langle\nabla\phi , \nabla \sqrt{f}\rangle\,\dd m\;.
  \end{align*}
  Now the first equality in \eqref{eq:beta-est} is immediate from the
  chain rule \eqref{eq:chainrule-wug} for minimal weak upper gradients
  and integration by parts while the second inequality follows readily
  using H\"older's inequality. To prove \eqref{eq:alpha-est} we use
  that by \cite[Lem.~4.9]{AGS12} for any bounded non-decreasing
  Lipschitz function $\omega:[0,\infty)\to\R$ with
  $\sup_rr\omega'(r)<\infty$:
  \begin{align}\label{eq:rough-ibp}
    -\int \omega(f)\Delta^{(1)} f \dd m~\geq~4\int f
    \omega'(f)\wug{\sqrt{f}}^2\dd m\;.
  \end{align}
  Further note that $r\cdot e_\eps''(r)\leq1$ and hence $4r\cdot
  e_\eps''(r)\geq 4r^2\big(e_\eps''(r)\big)^2 =
  r\big(p_\eps'(\sqrt{r})\big)^2$. Hence we get by the chain rule:
  \begin{align}\label{eq:fwugg}
    f\wug{g^\eps}^2~=~f\big(p_\eps'(\sqrt{f})\big)^2\wug{\sqrt{f}}^2~\leq~ 4fe_\eps''(f)\wug{\sqrt{f}}^2\;.
  \end{align}
  Combining this with \eqref{eq:rough-ibp} yields the first inequality
  in \eqref{eq:alpha-est}. For the second inequality note that, since
  we already now that $\rcd(K,\infty)$ holds, $\tilde\bH_\delta
  g^\eps$ is bounded and Lipschitz for all $\delta>0$ by
  \cite[Thm.~6.8]{AGS11b}.
  Hence \cite[Thm.~4.4]{AGS12} and H\"older's
  inequality yield
  \begin{align*}
    -\int\Delta f\bH_\delta g^\eps\dd m~&=~2\int\sqrt{f}\langle\nabla \bH_\delta g^\eps , \nabla \sqrt{f}\rangle\,\dd m~\leq~2\ch(\sqrt{f})^{\frac12}\cdot\left(\int f\wug{\bH_\delta g^\eps}\dd m\right)^{\frac12}\\
       &\leq~4\e^{-K\delta}\ch(\sqrt{f})\;,
  \end{align*}
  where we have used again \eqref{eq:fwugg} and $\bl(K,\infty)$ in the
  last step. Letting $\delta\to0$ yields the second inequality in
  \eqref{eq:alpha-est}.
\end{proof}

We will often use the following estimate (see
\cite[Lem.~4.12]{AGS12}). For any AC$^2$ curve
$(\rho_s)_{s\in[0,1]}$ with $\rho_s=f_s m$ and $f\in
C^1\big((0,1),L^1(X,m)\big)$ and any
Lipschitz function $\phi$
we have
\begin{align}\label{eq:speed-est}
  \left|\int \dot f_s \phi\dd m\right|~\leq~\abs{\dot\rho_s}\cdot\sqrt{\int \abs{\nabla\phi}^2f_s\dd m}\;.
\end{align}

The following result is the crucial ingredient in our argument.

\begin{proposition}[Action estimate]\label{prop:action-est}
  Assume that $(X,d,m)$ satisfies $\bl(K,N_0)$. Let
  $(\rho_s)_{s\in[0,1]}$ be a regular curve and $\phi$ a Lipschitz
  function with bounded support and denote by $\phi_s=Q_s\phi$ the
  Hamilton--Jacobi flow for $s\in[0,1]$. Then for any $N>N_0$ and
  $t\in[0,a]$:
\begin{align}\nonumber
&\int\varphi_1\dd\rho_{1,\tau}-\int\varphi_0\dd\rho_{0}-\frac12\int_0^1|\dot\rho_s|^2\e^{-2K\tau}\dd s+Nt\cdot\left[ U_N(\rho_0)-U_N(\rho_{1,\tau})\right]\\\label{eq:key-est}
\le~&C_1\int_0^1\frac\tau4\left[\left(\frac{U_N(\rho_{s,\tau})}{U_N(\rho_{s})}\right)^2-1-4\left(\frac{N}{N_0}-1\right) + C_2\tau\right]\,\dd s\;,
\end{align}
The constant $C_2$ depends only on $K$ and
$\max_{s\in[0,1]}\abs{\ent(\rho_s)}$, the constant $C_1$ depends in
addition on $\max_{s\in[0,1]}I(\rho_s)$ and $\phi$.
\end{proposition}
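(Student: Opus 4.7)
My plan is to prove the inequality first in an $\eps$-regularized form, with $E_\eps$ in place of $\ent$ and $\tau^\eps_{s,t}$ from Lemma~\ref{lem:calculus3} in place of $\tau_{s,t}$, and then pass to the limit $\eps\downarrow 0$. Setting $\theta_s := \tau^\eps_{s,t}$ for fixed $t\in[0,a]$, I would introduce the action functional
\[
\cA_\eps(s) \;:=\; \int\varphi_s\,\dd\rho_{s,\theta_s}\;-\;Nt\,U^\eps_N(\rho_{s,\theta_s}),
\]
so that $\cA_\eps(1)-\cA_\eps(0)$ reproduces the first and last terms on the left-hand side of \eqref{eq:key-est} modulo an error that vanishes as $\eps\downarrow 0$. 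Differentiating, I would combine Lemma~\ref{lem:calculus1} with the chain-rule identity $\tfrac{d}{ds}U^\eps_N(\rho_{s,\theta_s}) = -\tfrac{1}{N}U^\eps_N(\rho_{s,\theta_s})\cdot\tfrac{d}{ds}E_\eps(\rho_{s,\theta_s})$ fed by Lemma~\ref{lem:calculus2}, and substitute the expression for $\dot\theta_s$ from Lemma~\ref{lem:calculus3}.

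The core step is to estimate $\cA_\eps'(s)$ termwise using $\bl(K,N_0)$. For the kinetic contribution I would apply \eqref{eq:speed-est} followed by Young's inequality weighted by $\e^{-2K\theta_s}$ and the pointwise gradient estimate \eqref{eq:bl}, producing
\[
\int\dot f_s\,\bH_{\theta_s}\varphi_s\,\dd m \;\leq\; \tfrac12|\dot\rho_s|^2\e^{-2K\theta_s} + \tfrac12\int|\nabla\varphi_s|^2 f_{s,\theta_s}\,\dd m - \bigl(\tfrac{1}{N_0}+O(\theta_s)\bigr)\theta_s\int|\Delta\bH_{\theta_s}\varphi_s|^2 f_s\,\dd m.
\]
The first term matches the $\e^{-2K\tau}$ contribution in the claim, the second cancels the $-\tfrac12\int|\nabla\varphi_s|^2 f_{s,\theta_s}\,\dd m$ coming from Lemma~\ref{lem:calculus1}, and the negative Laplacian term is the reservoir carrying the dimensional effect. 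The Laplacian contributions of Lemma~\ref{lem:calculus1} and Lemma~\ref{lem:calculus2} are organized so that the principal part $t\,U^\eps_N(\rho_{s,\theta_s})$ of $\dot\theta_s$ pairs with the factor $tU^\eps_N$ arising from $\tfrac{d}{ds}[-NtU^\eps_N]$, reducing what remains to an integration by parts handled by Lemma~\ref{lem:alphabeta-est}. The correction term in $\dot\theta_s$ is controlled by \eqref{eq:alpha-est} together with another use of \eqref{eq:bl} applied to $\bH_r g^\eps_{s,r}$.

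Integrating from $s=0$ to $s=1$ and collecting surviving terms, the structure of the right-hand side emerges: the ratio $\bigl(U_N(\rho_{s,\tau})/U_N(\rho_s)\bigr)^2$ comes from the weight $U^\eps_N(\rho_{s,\theta_s})/U^\eps_N(\bH_r\rho_s)$ inside the correction formula of Lemma~\ref{lem:calculus3} after a Cauchy--Schwarz step, while the shift $-1-4(N/N_0-1)$ records the gap between the dimension $N_0$ supplied by $\bl(K,N_0)$ and the target dimension $N$, extracted by splitting $C(\theta_s)/N_0=1/N_0+O(\theta_s)$ against $1/N$. Higher-order-in-$\tau$ remainders collapse into $C_2\tau$, with $C_2$ depending only on $K$ and $\max_s|\ent(\rho_s)|$, while $C_1$ absorbs $\max_s I(\rho_s)$ and $\Lip(\phi)$ via \eqref{eq:Lip-HL}. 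The passage $\eps\downarrow 0$ then proceeds using the pointwise convergence $\tau^\eps_{s,t}\to\tau_{s,t}$ (Lemma~\ref{lem:calculus3}), the convergence $U^\eps_N\to U_N$ along bounded-entropy curves, and dominated convergence for the Fisher-information integrals, all of which are justified thanks to the regular-curve assumption on $(\rho_s)$.

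\textbf{Main obstacle.} The delicate point is the matching described in the previous paragraph: one must trade the dimensional margin $N>N_0$ against the prefactor $1/N_0$ in $\bl(K,N_0)$ so that the negative Laplacian reservoir absorbs the quadratic error produced by integration by parts and at the same time recombines with the correction in $\dot\theta_s$ to output exactly the squared entropy ratio and the constant $4(N/N_0-1)$. The Young weights used in the kinetic term and the algebraic split of $C(\theta_s)/N_0$ must be chosen consistently; any suboptimal choice either leaves uncontrolled $|\Delta\bH_{\theta_s}\varphi_s|^2$ terms or fails to reproduce the precise form $(U_N(\rho_{s,\tau})/U_N(\rho_s))^2-1-4(N/N_0-1)$ demanded by the statement.
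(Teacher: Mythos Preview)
Your overall architecture—regularize via $E_\eps$ and $\tau^\eps$, differentiate using Lemmas~\ref{lem:calculus1}--\ref{lem:calculus3}, feed in $\bl(K,N_0)$, and pass to the limit $\eps\downarrow 0$—matches the paper's. However, the execution you describe contains a genuine gap.

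The problem is in how you treat the $\dot f_s$ contributions. You apply the speed estimate \eqref{eq:speed-est}, Young's inequality, and $\bl(K,N_0)$ to the single term $\int\dot f_s\,\bH_{\theta_s}\varphi_s\,\dd m$, yielding one copy of $\tfrac12|\dot\rho_s|^2\e^{-2K\theta_s}$. But the derivative $\cA_\eps'(s)$ contains \emph{three} $\dot f_s$ terms: the one from Lemma~\ref{lem:calculus1}, the term $tu\int\dot f_s\,\bH_\tau g^\eps_{s,\tau}\,\dd m$ coming from $\tfrac{d}{ds}[-NtU^\eps_N]$ via Lemma~\ref{lem:calculus2}, and the correction integral in $\partial_s\tau^\eps$ from Lemma~\ref{lem:calculus3}. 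Treating these separately would produce additional kinetic-energy terms you cannot absorb. The paper's key organizing move is to combine \emph{all} $\dot f_s$ contributions into a single expression
\[
\int \dot f_s\Big[\bH_\tau(\varphi_s+tu\,g^\eps_{s,\tau})-\tfrac{\tau}{N}(\beta-tu\alpha)\tfrac{u}{u_r}\bH_r g^\eps_{s,r}\Big]\dd m
\]
\emph{before} invoking \eqref{eq:speed-est} and Young, and then to apply $\bl(K,N_0)$ to the gradient of this combined function.

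Two further points. First, $\bl(K,N_0)$ is used \emph{twice}: once for $\bH_r$ acting on the full bracket (producing coefficients $r/N_0$), and once more for $\bH_{\tau-r}$ acting on $\varphi_s+tu\,g^\eps_{s,\tau}$ inside the resulting square (producing $(\tau-r)/N_0$); the $r$-integration of both yields the factor $N/N_0$ in the final estimate. Second, the squared ratio $(U_N(\rho_{s,\tau})/U_N(\rho_s))^2$ does not arise from Cauchy--Schwarz as you suggest, but from the exact identity $\partial_r(1/u_r)=-\alpha_r/(Nu_r)$, which gives
\[
\frac{1}{N}\int_0^\tau\Big(\frac{u}{u_r}\Big)^2\alpha_r\,\dd r=\frac12\Big[\Big(\frac{u_\tau}{u_0}\Big)^2-1\Big].
\]
Finally, the paper introduces an auxiliary parameter $\delta>0$ (via Young's inequality) to control the cross terms $\gamma^{(1)},\gamma^{(2)}$ involving $q_\eps(f)$, and sends $\delta\to 0$ \emph{after} $\eps\to 0$; you should account for this extra layer of approximation.
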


\begin{proof}
  For simplicity we assume that $\bl(K,N_0)$ holds with $C\equiv
  1$. We use the abbreviations $\a_r=\a_{s,r}=-\int g^\eps_{s,r}\Delta
  f_{s,r}\,\dd m$ and $\b_r=\b_{s,r}=\int \varphi_s \Delta f_{s,r}\dd
  m$. Moreover, we put $u_r=u_{s,r}=U^\eps_N(\rho_{s,r})$. We will
  also write $\a=\a_{s,\tau}$, $\b=\b_{s,\tau}$, $u=u^\eps_{s,\tau}$.

  Using Lemmas~\ref{lem:calculus1}, \ref{lem:calculus3} and \eqref{eq:lip-wug}, we obtain
  \begin{eqnarray*}
    (A)&:=&\int\varphi_1d\rho_{1,\tau}-\int\varphi_0d\rho_{0}-\frac12\int_0^1|\dot\rho_s|^2\e^{-2K\tau}\dd s\\
    &=&
    \int_0^1\left[-\frac12|\dot\rho_s|^2\e^{-2K\tau}+\int\left(-\frac12|\nabla\varphi_s|^2\,f_{s,\tau}+\dot f_sH_\tau\varphi_s+\dot\tau \Delta H_\tau f_s\cdot\varphi_s \right)\dd m\right]\dd s\\
    &\leq&
    \int_0^1 \left[-\frac12 |\dot\rho_s|^2\e^{-2K\tau}-\frac12\int \wug{\varphi_s}^2\,  f_{s,\tau} \dd m\right.\\
    &&\left.+\int \dot f_s\cdot H_{\tau}\varphi_s\,\dd m+ \beta t u
      -\beta\frac1N\int_0^\tau\frac{u}{u_r} \int \dot f_s\cdot H_rg^\eps_{s,r}\,\dd m\,\dd r\right]\,\dd s.
  \end{eqnarray*}
  Moreover, by Lemma~\ref{lem:calculus2}, we have
  \begin{eqnarray*}
    (B)&:=& Nt\cdot\left[ U^\eps_N(\rho_0)-U^\eps_N(\rho_{1,\tau})\right]=t\int_0^1U^\eps_N(\rho_{s,\tau})\partial_sE_\eps(\rho_{s,\tau})\dd s\\
    &=&
    t\int_0^1 U^\eps_N(\rho_{s,\tau})\cdot\int g^\eps_{s,\tau}\cdot\left[H_\tau \dot f_s+\dot\tau \Delta H_\tau f_s\right]\dd m\, \dd s\\
    &=& \int_0^1 \left[t u\cdot \int \dot f_s\cdot H_\tau g^\eps_{s,\tau}\,\dd m
      -t^2 u^2\alpha\right.\\
    &&\left. +tu\alpha\frac1N\int_0^{\tau} \frac{u}{u_r} \int \dot f_s\cdot H_rg^\eps_{s,r}\,\dd m\,\dd r\right]\,\dd s.
  \end{eqnarray*}
  Adding up
  \begin{eqnarray*}
    (A)+(B)&\le&
    \int_0^1 \left[-\frac12 |\dot\rho_s|^2\e^{-2K\tau}-\frac12\int  \wug{\varphi_s}^2\,  f_{s,\tau} \dd m
      +tu(\beta-tu\alpha)
    \right.\\
    &&
    \left.+\frac1\tau\int_0^\tau \int \dot f_s\,\e^{-K\tau}\cdot\left[ H_\tau\left(\varphi_s+tu g^\eps_{s,\tau}
        \right)\,
        -\frac\tau N(\beta-tu\alpha)\frac{u}{u_r}  H_rg^\eps_{s,r}\right]\,\dd m\,\e^{K\tau}\,\dd r\right]\dd s\\
    &\le&
    \int_0^1 \left[-\frac12\int  \wug{\varphi_s}^2\,  f_{s,\tau} \dd m
      +tu(\beta-tu\alpha)
    \right.\\
    &&
    \left.+\frac1\tau\int_0^\tau \frac12\int \left|\nabla\left[ H_\tau\left(\varphi_s+tu g^\eps_{s,\tau}
          \right)\
          -\frac\tau N(\beta-tu\alpha)\frac{u}{u_r}  H_r g^\eps_{s,r}\right]\right|^2f_s\,\dd m\,\e^{2K\tau}\,\dd r\right]\dd s\\
    &\le&
    \int_0^1 \left[-\frac12\int  \wug{\varphi_s}^2\,  f_{s,\tau} \dd m
      +tu(\beta-tu\alpha)
    \right.\\
    &&
    \left.+\frac1\tau\int_0^\tau \frac12\int \left|\nabla\left[ H_{\tau-r}\left(\varphi_s+tu g^\eps_{s,\tau}
          \right)\,
          -\frac\tau N(\beta-tu\alpha)\frac{u}{u_r}  g^\eps_{s,r}\right]\right|_{w}^2f_{s,r}\,\dd m\,\e^{2K(\tau-r)}\,\dd r\right]\dd s\\
    &&
    \left.-\frac1\tau\int_0^\tau \frac {r}{N_0}\int \left|\Delta\left[ H_\tau\left(\varphi_s+tu g^\eps_{s,\tau}
          \right)\,
          -\frac{\tau} {N}(\beta-tu\alpha)\frac{u}{u_r}  H_r g^\eps_{s,r}\right]\right|^2f_s\,\dd m\, \e^{2K\tau} \,\dd r\right]\dd s\\
    &=:& (C) + ([D+E]^2) +(F)\;.
\end{eqnarray*}
Here we have used \eqref{eq:speed-est} in the second inequality and in
the last inequality the Bakry--Ledoux gradient estimate $\bl(K,N_0)$
applied to the semigroup $H_r$ in the strong form given by Proposition
\ref{prop:grad-est-ref}. The last term will be estimated as follows
\begin{eqnarray*}
  (F)&\le&\int_0^1\left[-\frac1\tau\int_0^\tau \frac {r}{N_0}\left|\int \Delta\left[ H_\tau\left(\varphi_s+tu g^\eps_{s,\tau}
        \right)-\frac\tau N(\beta-tu\alpha)\frac{u}{u_r}  H_rg^\eps_{s,r}\right]f_s\,\dd m\right|^2\,\e^{2K\tau}
    \,\dd r\right]\dd s\\
  &=&\int_0^1\left[-\frac1\tau\int_0^\tau \frac {r}{N_0}\left|\beta-tu\alpha
      +\frac\tau N(\beta-tu\alpha)\frac{u}{u_r}\alpha_r\right|^2\,\e^{2K\tau}
    \,\dd r\right]\dd s\\
  &=&\int_0^1\left[-\frac1\tau\int_0^\tau \frac {r}{N_0}\left| \beta-tu\alpha\right|^2\cdot\left|1+\frac\tau N\frac{u}{u_r}\alpha_r\right|^2\,\e^{2K\tau}
    \,\dd r\right]\dd s.
\end{eqnarray*}

By virtue of Lemma~\ref{lem:alphabeta-est}, the second last term
$([D+E]^2)$ can be decomposed into
\begin{eqnarray*}
(E^2)&=&
\int_0^1\left[\frac1\tau\int_0^\tau \frac12\frac{\tau^2}{N^2}\left(\frac{u}{u_r}\right)^2 (\beta-tu\alpha)^2\,\e^{2K(\tau-r)}\int\wug{ g^\eps_{s,r}}^2f_{s,r}\,\dd m\,\dd r\right]\dd s\\
&\le&
\int_0^1\left[\frac1\tau\int_0^\tau \frac12\frac{\tau^2}{N^2}\left(\frac{u}{u_r}\right)^2\a_r\cdot (\beta-tu\alpha)^2\,\e^{2K(\tau-r)}\dd r\right]\dd s\;,
\end{eqnarray*}
\begin{eqnarray*}
(2DE)&=&\int_0^1-\frac1\tau\int_0^\tau(\beta-tu\alpha)\frac{u}{u_r}\frac{\tau}{N}\e^{2K(\tau-r)}\int \ip{\nabla H_{\tau-r}\left( \varphi_s + tug^\eps_{s,\tau}\right),\nabla g^\eps_{s,r}}f_{s,r}\,\dd m\,\,\dd r\dd s\\
&=&   \int_0^1\left[\frac1\tau\int_0^\tau \frac{\tau}{N}\frac{u}{u_r} (\beta-tu\alpha)^2\,\e^{2K(\tau-r)}+ \frac{\tau}{N}\frac{u}{u_r}(\beta-tu\alpha)\gamma^{(1)}\,\e^{2K(\tau-r)} \dd r\right]\dd s\;,
\end{eqnarray*}
where $\gamma^{(1)}=\int q_\eps(f_{s,r})\ip{\nabla H_{\tau-r}\left(
    \varphi_s + tug^\eps_{s,\tau}\right),\nabla\sqrt{f_{s,r}}}\,\dd
m$, and finally
 \begin{eqnarray*}
(D^2)&=&\int_0^1\left[\frac1\tau\int_0^\tau \frac12\int \left|\nabla H_{\tau-r}\left(\varphi_s+tu g^\eps_{s,\tau}
\right)
\right|_w^2f_{s,r}\,\dd m\,\e^{2K(\tau-r)}\,\dd r\right]\dd s\\
&\le&\int_0^1\left[\frac1\tau\int_0^\tau \frac12\int \left|\nabla \left(\varphi_s+tu g^\eps_{s,\tau}
\right)
\right|_w^2f_{s,\tau}\,\dd m\,\dd r\right.\\
&&\left.-
\frac1\tau\int_0^\tau \frac{\tau-r}{N_0}\int \left|\Delta H_{\tau-r}\left(\varphi_s+tu g^\eps_{s,\tau}
\right)
\right|^2f_{s,r}\,\dd m\,\e^{2K(\tau-r)}
\,\dd r\right]\dd s\\
&\le&\int_0^1\left[\frac1\tau\int_0^\tau \frac12\int \left|\nabla \left(\varphi_s+tu g^\eps_{s,\tau}
\right)
\right|_w^2f_{s,\tau}\,\dd m\,\dd r\right.\\
&&\left.-
\frac1\tau\int_0^\tau \frac{\tau-r}{N_0}\left|\int \Delta H_{\tau-r}\left(\varphi_s+tu g^\eps_{s,\tau}
\right)
f_{s,r}\,\dd m\right|^2\,\e^{2K(\tau-r)}
\,\dd r\right]\dd s\\
&\le&
\int_0^1 \left[\frac12\int |\nabla\varphi_s|_w^2\,  f_{s,\tau} \dd m
-tu\b-tu\gamma^{(2)}+ \frac12t^2u^2\a
-\frac1\tau\int_0^\tau \frac{\tau-r}{N_0} (\beta-tu\alpha)^2\,\e^{2K(\tau-r)}
\dd r\right]\dd s
\end{eqnarray*}
where $\gamma^{(2)}=\int q_\eps(f_{s,\tau})\ip{\nabla \varphi_s,\nabla\sqrt{f_{s,\tau}}}\,\dd m$ and
where we applied again the Bakry--Ledoux estimate $\bl(K,N_0)$, now to
the semigroup $H_{\tau-r}$. Summing up everything yields
\begin{eqnarray*}
(A)+(B)&\le&
\int_0^1 \left[-\frac12t^2u^2\alpha +\frac1N(\beta-tu\alpha)^2\cdot (G)+ (H)\right]\dd s
\end{eqnarray*}
where
\begin{eqnarray*}
  (H)&:=&-tu\gamma^{(2)} + \int_0^\tau\frac1N\frac{u}{u_r}(\beta-tu\alpha)\gamma^{(1)}\,\e^{2K(\tau-r)} \dd r\;,
\end{eqnarray*}
and
\begin{eqnarray*}
(G)&:=&\int_0^\tau\left[
-\frac{N}{N_0}\frac r\tau\left(1+\frac\tau N\frac{u}{u_r}\alpha_r\right)^2\,\e^{2K\tau}
+\frac\tau{2N}\left(\frac{u}{u_r}\right)^2\alpha_r\,\e^{2K(\tau-r)}
\right.\\
&&\left.\qquad
+\frac{u}{u_r}\,\e^{2K(\tau-r)}
-\frac{N}{N_0}\frac{\tau-r}\tau\,\e^{2K(\tau-r)}
\right]\,\dd r\\
&\leq&
\int_0^\tau\left[ \frac{N}{N_0}\frac{r}{\tau}\big(\e^{2\abs{K}\tau}-\e^{-2\abs{K}\tau}\big)
-\frac{r}{N}\frac{u}{u_r}\alpha_r\e^{-2\abs{K}\tau}\right.\\
&&\left.\qquad
+\frac\tau{2N}\left(\frac{u}{u_r}\right)^2\alpha_r\e^{2\abs{K}\tau}
+\frac{u}{u_r}\e^{2\abs{K}\tau} -\frac{N}{N_0}\e^{-2\abs{K}\tau}\right]\,\dd r\\
&=&
\frac{\tau N}{2 N_0}\big(\e^{2\abs{K}\tau}-\e^{-2\abs{K}\tau}\big)
+ \frac\tau4\left[\left(\frac{u}{u_0}\right)^2-1\right]\e^{2\abs{K}\tau} + \tau\e^{-2\abs{K}\tau}\left(1-\frac{N}{N_0}\right) \\
&&\qquad
+\left(\e^{2\abs{K}\tau}-\e^{-2\abs{K}\tau}\right)\int_0^\tau\frac{u}{u_r}\dd r\;.
\end{eqnarray*}
Here we used that by Lemma~\ref{lem:alphabeta-est} $\a_r \ge 0$, by
Lemma~\ref{lem:calculus2} $\partial_r
\frac1{u_r}=-\frac1{N\,u_r}\alpha_r$ and thus
$$0>-\int_0^\tau \frac r N\frac{u}{u_r}\alpha_r\,\dd r= \tau- \int_0^\tau\frac{u}{u_r}\,\dd r $$
and
$$\frac1N\int_0^\tau \left(\frac{u}{u_r}\right)^2\alpha_r\,\dd r= \frac12\left[\left(\frac{u_\tau}{u_0}\right)^2-1\right].$$
Since $(\rho_s)$ is regular, $\abs{\ent(\rho_s)}$ and the second
moments of $( \rho_s )_{s \in [0,1]}$ are uniformly bounded. Arguing
as in the proof of Lemma~\ref{lem:welldef-tau} and using that
$\tau_{s,t}\leq c\cdot st$ we find that $\frac{u}{u_r}$ is
bounded. Taylor expansion of the exponentials in the estimate above
thus yields, that for some constant $C_2$, depending only on $K$ and
the $\max_{s\in[0,1]}\abs{\ent(\rho_s)}$,
\begin{eqnarray*}
  (G) &\leq& \frac\tau4\left[\left(\frac{u_\tau}{u_0}\right)^2 - 1 - 4\left(\frac{N}{N_0}-1\right) \right] + C_2\tau^2\;.
\end{eqnarray*}
To control $(H)$ we estimate using Young inequality for any $\delta>0$:
\begin{eqnarray*}
  \gamma^{(2)} &\le& \frac\delta 8 I(\rho_{s,\tau}) + \frac{1}{2\delta}\int q^2_\eps(f_{s,\tau})\wug{\phi_s}^2\dd m\;,\\
  \gamma^{(1)} &\le& \frac\delta 8 I(\rho_{s,r}) + \frac{1}{\delta}\int q^2_\eps(f_{s,r})\Big(\wug{H_{\tau-r}\phi_s}^2 +t^2u^2 \wug{H_{\tau-r}g^\eps_{s,\tau}}^2\Big)\dd m\;.
\end{eqnarray*}
Note that $q_\eps^2(r)\leq 4r$, $q^2_\eps(r)\to0$ as $\eps\to0$. Using
the gradient estimate $\bl(K,\infty)$, \eqref{eq:alpha-est} and
\eqref{eq:Lip-HL} we estimate
\begin{align*}
  \int f_{s,r}\Big(\wug{H_{\tau-r}\phi_s}^2 +t^2u^2 \wug{H_{\tau-r}g^\eps_{s,\tau}}^2\Big)\dd m ~&\leq~ \e^{-2K(\tau -r)}\int f_{s,\tau}\Big(\wug{\phi_s}^2 +t^2u^2 \wug{g^\eps_{s,\tau}}^2\Big)\dd m\\
~&\leq~\e^{-2K(\tau -r)}\Big(4\Lip(\phi)^2 + t^2u^2I(\rho_{s,\tau})\Big)~<~\infty\;.
\end{align*}
Thus, dominated convergence yields that $\gamma^{(1)}\leq (\delta/8)
I(\rho_{s,r}) + O(\eps)$ and $\gamma^{(2)}\leq (\delta/8)
I(\rho_{s,\tau}) + O(\eps)$. It remains to estimate $\a,\b$. By
Lemma~\ref{lem:alphabeta-est} and \eqref{eq:Lip-HL} we have $\a\leq
I(\rho_{s,\tau})$ and $\b\leq
2\Lip(\phi)\sqrt{I(\rho_{s,\tau})}$. Note that combining
\eqref{eq:EDE}, \eqref{eq:slope-FI} and $K$-contractivity of the heat
flow we have $I(\rho_{s,r})\leq \e^{-Kr}I(\rho_s)$ for any $r\geq0$.

Putting everything together we conclude that there exist constants
$C_1,C_3$ depending on $K$, $\max_{s\in[0,1]}\abs{\ent(\rho_s)}$,
$\max_{s\in[0,1]}I(\rho_s)$ and $\phi$ such that
\begin{align*}
&\int\varphi_1\dd\rho_{1,\tau^\eps}-\int\varphi_0\dd\rho_{0}-\frac12\int_0^1|\dot\rho_s|^2\e^{-2K\tau^\eps}\dd s+Nt\cdot\left[ U^\eps_N(\rho_0)-U^\eps_N(\rho_{1,\tau^\eps})\right]\\
\leq~&\int_0^1C_1\frac{\tau^\eps}{4}\left[\left(\frac{U^\eps_N(\rho_{s,\tau^\eps})}{U^\eps_N(\rho_{s})}\right)^2-1-4\left(\frac{N}{N_0}-1\right)+C_2\tau^\eps\right]\,\dd s + C_3\delta + O(\eps)\;,
\end{align*}
where we have made the dependence of $\tau$ and $u$ on $\eps$
explicit. Finally, passing to the limit first as $\eps\to0$ and then
as $\delta \to0$ yields \eqref{eq:key-est}.
\end{proof}

\begin{proposition}\label{prop:green}
  Assume that $(X,d,m)$ satisfies $\bl(K,N)$. Then for each geodesic
  $(\rho_s)_{s\in[0,2]}$ in $\cP_2(X,d,m)$ with $\rho_0,\rho_2\in
  D(\ent)$ and $r\in[0,2]$ we have
\begin{align}\label{eq:key-est2}
  U_N(\rho_r)~\ge~\frac{2-r}2 U_N(\rho_0)+\frac{r}2 U_N(\rho_2)+ \frac KN |\dot\rho|^2\cdot\int_0^2 g(s,r)U_N(\rho_s)\,\dd s
\end{align}
where $g(s,r)=\frac12\min\{s(2-r),r(2-s)\}$ denotes the Green
function on the interval $[0,2]$.
\end{proposition}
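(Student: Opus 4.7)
The strategy is to derive from Proposition~\ref{prop:action-est} the $(K,N)$-convexity of $s \mapsto U_N(\rho_s)$ along the Wasserstein geodesic, and then invoke Lemma~\ref{lem:knconvexint2} to rewrite it in the Green-function form. By the equivalence of conditions (i) and (iv) in Lemma~\ref{lem:knconvexint2} applied on $[0,2]$ in place of $[0,1]$, the statement \eqref{eq:key-est2} is equivalent to the distributional inequality
\begin{equation*}
\frac{d^2}{ds^2} U_N(\rho_s) ~\leq~ -\frac{K}{N} \, |\dot\rho|^2 \, U_N(\rho_s) \qquad \text{on } (0,2),
\end{equation*}
where $|\dot\rho|$ is the constant metric speed. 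Via the averaged discrete form that appears in the proof (iv)$\Rightarrow$(i) of Lemma~\ref{lem:knconvexint2}, this in turn reduces to showing, for each $s_0 \in (0,2)$ and every sufficiently small $h>0$, the bound
\begin{equation*}
U_N(\rho_{s_0+h}) + U_N(\rho_{s_0-h}) - 2 U_N(\rho_{s_0}) ~\leq~ -\frac{K}{N} h^2 |\dot\rho|^2 U_N(\rho_{s_0}) + o(h^2).
\end{equation*}

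To produce this second-order estimate, I would rescale the restriction of the geodesic to $[s_0-h, s_0+h]$ to the interval $[0,1]$ and approximate it by a sequence $(\rho^n_\sigma)_{\sigma \in [0,1]}$ of regular curves supplied by Lemma~\ref{lem:reg-curves}. For each $n$ apply Proposition~\ref{prop:action-est} with $N_0=N$ and auxiliary dimension $N'>N$, choosing the test function $\phi = t \varphi$ with $\varphi$ an (essentially optimal) Kantorovich potential for the pair $(\rho^n_0,\rho^n_{1,\tau})$; the Hopf--Lax flow $\phi_\sigma=Q_\sigma\phi$ together with Kantorovich duality transforms the left-hand side of \eqref{eq:key-est} into an upper bound on $\tfrac12 W_2^2(\rho^n_0,\rho^n_{1,\tau}) - \tfrac12 e^{-2K\tau} W_2^2(\rho^n_0,\rho^n_1) + Nt[U_{N'}(\rho^n_0) - U_{N'}(\rho^n_{1,\tau})]$, up to vanishing errors (using that for a geodesic the action $\tfrac12 \int_0^1 |\dot\rho^n_\sigma|^2 d\sigma$ is arbitrarily close to $\tfrac12 W_2^2(\rho^n_0,\rho^n_1)$ for large $n$). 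Dividing by $t$, sending $t \to 0$ with $\partial_t \tau_{\sigma,t}|_{t=0} = \sigma U_{N'}(\rho^n_\sigma)$ from \eqref{eq:diff-tau2}, and Taylor-expanding the right-hand side of \eqref{eq:key-est} (whose leading terms in $\tau$ cancel) produces the averaged second-order inequality for $U_{N'} \circ \rho^n$. Letting $N' \searrow N$ (so that $4(N'/N - 1) \to 0$ and $U_{N'} \to U_N$), followed by $n \to \infty$ using \eqref{eq:reg-curves1}--\eqref{eq:reg-curves3} and lower semicontinuity of $\ent$, transfers the inequality to $(\rho_s)$ itself. Finally, Lemma~\ref{lem:knconvexint2}(iv) on $[0,2]$ yields \eqref{eq:key-est2} with precisely the Green function $g(s,r) = \tfrac12 \min\{s(2-r), r(2-s)\}$.

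The main obstacle is the simultaneous control of the three limits $\eps \to 0$ (the entropy regularisation inside $U_N^\eps$), $t \to 0$ (whence $\tau \to 0$), and $n \to \infty$. The constant $C_1$ in \eqref{eq:key-est} depends on $\phi$ and on $\max_\sigma I(\rho^n_\sigma)$, so one must select Kantorovich potentials with uniformly bounded Lipschitz constants and supports, using the local compactness from Proposition~\ref{prop:BGI} together with the a priori bound \eqref{eq:Lip-HL}, while ensuring that the regular approximation keeps the Fisher information bounded on $[0,1]$. One further has to check that $\tau^n_{\sigma,t}$ and its $\eps$-truncated analogue from Lemma~\ref{lem:calculus3} behave compatibly in the joint limit so that the factor $(U_N(\rho^n_{\sigma,\tau})/U_N(\rho^n_\sigma))^2-1$ in \eqref{eq:key-est} genuinely tends to $0$. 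The residual $C_2\tau$-error is then harmless because $\tau=O(t)\to 0$, and the rescaling of the action estimate from $[0,1]$ to the subinterval $[s_0-h,s_0+h]$ accounts for the precise factor $|\dot\rho|^2$ in the conclusion.
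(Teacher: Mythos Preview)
Your overall strategy is sound---the action estimate Proposition~\ref{prop:action-est}, the choice $N'>N$ so that the right-hand side of \eqref{eq:key-est} becomes non-positive once $(U_{N'}(\rho_{s,\tau})/U_{N'}(\rho_s))^2-1+C_2\tau < 4(N'/N-1)$, the regularization by Lemma~\ref{lem:reg-curves}, and the final appeal to Lemma~\ref{lem:knconvexint2} are exactly the ingredients the paper uses. But there is a genuine gap in how you extract a \emph{second-order} inequality from \eqref{eq:key-est}.

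A single application of Proposition~\ref{prop:action-est} to the rescaled curve on $[s_0-h,s_0+h]$, after taking the supremum over $\phi$ (not ``$\phi=t\varphi$'', which does not behave well under the Hopf--Lax flow) and sending $t\to0$, yields only a one-sided estimate of the type
\[
\tfrac12 W_2^2(\rho_{s_0-h},\rho_{s_0+h,\tau})-\tfrac12\!\int_0^1 e^{-2K\tau}\,\dd\sigma\cdot W_2^2(\rho_{s_0-h},\rho_{s_0+h})
~\le~ N't\bigl[U_{N'}(\rho_{s_0+h,\tau})-U_{N'}(\rho_{s_0-h})\bigr].
\]
The midpoint value $U_{N'}(\rho_{s_0})$ never appears here, so no $o(h^2)$-second-difference can be read off. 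The missing idea, which the paper supplies, is to apply the action estimate \emph{twice}: once to $(\rho_s)_{s\in[0,1]}$ and once to the \emph{reversed} half $(\rho_{2-s})_{s\in[0,1]}$, so that in both cases the perturbed endpoint is $\rho_{1,\tau}=\cH_{\tau}\rho_1$. Adding the two resulting inequalities to the geodesic identity
\[
\tfrac12 W_2^2(\rho_0,\rho_1)+\tfrac12 W_2^2(\rho_2,\rho_1)
-\tfrac12 W_2^2(\rho_0,\rho_{1,\tau})-\tfrac12 W_2^2(\rho_2,\rho_{1,\tau})~\le~0
\]
cancels all Wasserstein terms except the factor $\tfrac18 W_2^2(\rho_0,\rho_2)\cdot\tfrac1t\bigl[2-\int_0^1 e^{-2K\tau}-\int_1^2 e^{-2K\tau}\bigr]$, whose limit as $t\to0$ is precisely $-4K\int_0^2 g(s,1)U_{N'}(\rho_s)\,\dd s$ via \eqref{eq:diff-tau2}. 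This directly gives \eqref{eq:key-est2} at $r=1$ (and analogously at any $r$), so the detour through the distributional second derivative and rescaled subintervals is unnecessary.
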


\begin{proof}
  We will only prove \eqref{eq:key-est2} for $r=1$ the general
  argument being very similar. Obviously, it is sufficient to prove
  that the inequality \eqref{eq:key-est2} is satisfied with $N$
  replaced by $N'$ for any $N'>N$ and then let $N'\to N$. So let us
  fix $N'>N$ and a geodesic $(\rho_s)_{s\in[0,2]}$ in
  $\cP_2(X,d,m)$. Since we already know that $(X,d,m)$ is a strong
  $\cd(K,\infty)$ space we have that $s\mapsto\ent(\rho_s)$ is
  $K$-convex and thus continuous.

  Using Lemma~\ref{lem:reg-curves} we approximate the geodesic
  $(\rho_s)_{s\in[0,2]}$ by regular curves
  $(\rho_s^n)_{s\in[0,2]}$. Given $t>0$, the estimate
  \eqref{eq:key-est} from Proposition~\ref{prop:action-est}, with
  $N_0,N$ replaced by $N,N'$, holds true for each of the regular
  curves $(\rho^n_s)_{s\in[0,1]}$ and $(\rho^n_{2-s})_{s\in[0,1]}$ and
  any Lipschitz function $\phi$ with bounded support. From the uniform
  convergence \eqref{eq:reg-curves4} in Lemma~\ref{lem:reg-curves} and
  \eqref{eq:bd-tau} we conclude that for all $n$ large enough and $t$
  sufficiently small and all $s\in[0,1]$:
  \begin{align*}
  \left[\left(\frac{U_{N'}(\rho^n_{s,\tau^n})}{U_{N'}(\rho^n_s)}\right)^2-1 + C_2\tau^n\right]~\leq~4\left(\frac{N'}{N}-1\right)\;,
  \end{align*}
  i.e. the right hand side of \eqref{eq:key-est} is
  non-positive. Hence we obtain
  \begin{align*}
    \int\varphi_1\dd\rho^n_{1,\tau^n}-\int\varphi_0\dd\rho^n_{0}-\frac12\int_0^1|\dot{\rho^n_s}|^2\e^{-2K\tau^n}\,\dd s
    ~\leq~N't\cdot\left[U_{N'}(\rho^n_{1,\tau^n})- U_{N'}(\rho^n_0)\right]\;,
  \end{align*}
  for all such $n$ and $t$. Taking the supremum over $\phi$ yields by Kantorovich duality
  \begin{align*}
    \frac12 W_2^2(\rho^n_0,\rho^n_{1,\tau^n})-\frac12\int_0^1|\dot{\rho^n_s}|^2\e^{-2K\tau^n}\,\dd s
    ~\leq~N't\cdot\left[U_{N'}(\rho^n_{1,\tau^n})- U_{N'}(\rho^n_0)\right]\;,
  \end{align*}

  As $n\to\infty$, using the continuity properties
  \eqref{eq:reg-curves1}-\eqref{eq:reg-curves3} we obtain the same
  estimate for the geodesic $(\rho_s)_{s\in[0,1]}$.
  \begin{align*}
    \frac12 W_2^2(\rho_0,\rho_{1,\tau})-\frac12 W_2^2(\rho_0,\rho_1)\cdot\int_0^1 e^{-2K\tau}\dd s ~\le~
    N't\cdot\left[ U_{N'}(\rho_{1,\tau})-U_{N'}(\rho_{0})\right]\dd s\;.
  \end{align*}
  An analogous estimate holds true for the geodesic
  $(\rho_{2-s})_{s\in[0,1]}$
  \begin{align*}
    \frac12 W_2^2(\rho_2,\rho_{1,\tau})-\frac12 W_2^2(\rho_2,\rho_1)\cdot\int_1^2 e^{-2K\tau}\dd s~\le~
    N't\cdot\left[ U_{N'}(\rho_{1,\tau})-U_{N'}(\rho_{2})\right]\dd s\;.
   \end{align*}
   Moreover, since $(\rho_s)_{s\in[0,2]}$ is a geodesic
   \begin{align*}
     \frac12 W_2^2(\rho_0,\rho_{1})+\frac12 W_2^2(\rho_2,\rho_{1})-\frac12
     W_2^2(\rho_0,\rho_{1,\tau})-\frac12
     W_2^2(\rho_2,\rho_{1,\tau})~\le~0\;.
   \end{align*}

   Adding up the last three inequalities (and dividing by $t$) yields
   \begin{align*}
     \frac18 W_2^2(\rho_0,\rho_2)\cdot \frac1t\left[2-\int_0^1
       e^{-2K\tau}\dd s -\int_1^2 e^{-2K\tau}\dd s\right]~\le~
     N'\cdot\Big[2U_{N'}(\rho_{1,\tau})-
     U_{N'}(\rho_0)-U_{N'}(\rho_2)\Big]\dd s\;.
 \end{align*}
 Lower semi-continuity of the entropy implies that in the limit
 $t\to0$ the RHS will be bounded from above by
 \begin{align*}
   N'\cdot\left[2U_{N'}(\rho_{1})- U_{N'}(\rho_0)-U_{N'}(\rho_2)\right]\;.
 \end{align*}

Finally, by the very definition of $\tau$,
\begin{eqnarray*}
\lim_{t\to0}\frac1t\left[2-\int_0^1 e^{-2K\tau}\dd s -\int_1^2 e^{-2K\tau}\dd s\right]&=&-2K\, \int_0^2\partial_t \tau_{s,t}\,\dd s\\
&=&-2K\left[\int_0^1 s U_{N'}(\rho_s)\dd s+\int_1^2 (2-s) U_{N'}(\rho_s)\dd s\right]\\
&=&-4K\int_0^2 g(s,1)\, U_{N'}(\rho_s)\,\dd s.
\end{eqnarray*}
Thus we end up with
\begin{eqnarray*}
-\frac K 2 W_2^2(\rho_0,\rho_2)\cdot \int_0^2 g(s,1)\, U_{N'}(\rho_s)\,\dd s~\le~ N'\cdot\Big[2U_{N'}(\rho_{1})- U_{N'}(\rho_0)-U_{N'}(\rho_2)\Big]\;.
\end{eqnarray*}
Since $|\dot\rho|^2 =W_2^2(\rho_0,\rho_2)/4$,
this proves the claim.
\end{proof}

\begin{remark} A simple rescaling argument yields that for each
  geodesic $(\rho_s)_{s\in[0,1]}$ in $\cP_2(X,d,m)$ with
  $\rho_0,\rho_1\in D(\ent)$ and $r\in[0,1]$:
\begin{align} \label{eq:ecdknG0}
  U_N(\rho_{r})~\ge~(1-r)\cdot U_N(\rho_0)+r\cdot U_N(\rho_1)+ \frac KN |\dot\rho|^2\cdot\int_0^1 g\left(s,r\right)U_N(\rho_s)\,\dd s
\end{align}
where $g(s,r)=\min\{s(1-r),r( 1-s)\}$ now denotes the Green function
on the interval $[0,1]$.
\end{remark}

\begin{theorem}\label{thm:BEW2CDE}
  Let $(X,d,m)$ be a infinitesimally Hilbertian mms satisfying the
  exponential integrability condition \eqref{eq:exp-int} and
  $\bl(K,N)$. Then the strong $\ecdkn$ condition holds. In particular,
  $(X,d,m)$ is a $\rcdkn$ space and the heat flow satisfies
  $\evi_{K,N}$.
\end{theorem}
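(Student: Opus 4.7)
The strategy is to identify the rescaled inequality \eqref{eq:ecdknG0} provided by Proposition~\ref{prop:green} as precisely the Green-function characterization (iv) of $(K,N)$-convexity in Lemma~\ref{lem:knconvexint2}, applied to the functional $U_N$ on the Wasserstein geodesic space.

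First I would record the background regularity. Since $\bl(K,N)$ trivially implies $\bl(K,\infty)$, Remark~\ref{rem:regularity} shows that under Assumption~\ref{ass:Ch-reg} the space already satisfies $\rcd(K,\infty)$. Together with the exponential integrability \eqref{eq:exp-int} this guarantees (see Remark~\ref{rem:exp-int}) that $\cP_2(X,d)$ is a geodesic space, that $\ent$ is lower semicontinuous and never takes the value $-\infty$, and that any two measures of finite entropy are joined by a Wasserstein geodesic contained in $D(\ent)$. In particular $U_N=\exp(-\ent/N)$ is nonnegative, finite on $D(\ent)$, and upper semicontinuous on $\cP_2(X,d)$.

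Next, given any constant speed Wasserstein geodesic $(\rho_t)_{t\in[0,1]}$ in $\cP_2^*(X,d,m)$, the rescaled form \eqref{eq:ecdknG0} of Proposition~\ref{prop:green} states
\begin{equation*}
U_N(\rho_r)\ \ge\ (1-r)\,U_N(\rho_0)+r\,U_N(\rho_1)+\frac{K}{N}\,W_2(\rho_0,\rho_1)^2\int_0^1 g(s,r)\,U_N(\rho_s)\,\dd s,
\end{equation*}
with $g(s,r)=\min\{s(1-r),r(1-s)\}$, using that $|\dot\rho|=W_2(\rho_0,\rho_1)$ for a constant speed geodesic. This is precisely condition (iv) of Lemma~\ref{lem:knconvexint2} for the function $s\mapsto U_N(\rho_s)$ with $\kappa=K/N$; the equivalence (iv)$\Leftrightarrow$(ii) of that lemma therefore yields the integrated inequality \eqref{eq:defknconvex} along $(\rho_t)$. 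Since the argument applies to \emph{every} Wasserstein geodesic between measures in $D(\ent)$, $\ent$ is strongly $(K,N)$-convex on $\cP_2(X,d,m)$, i.e.\ $(X,d,m)$ satisfies the strong $\ecdkn$ condition.

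Infinitesimal Hilbertianity being assumed, this is by Definition~\ref{def:riemcdkn} the $\rcdkn$ condition, and then characterization (iii) of Theorem~\ref{thm:RCDKN-equiv} yields the existence of an $\evi_{K,N}$ gradient flow of $\ent$ starting from every $\mu\in\cP_2(X,d)$; Theorem~\ref{thm:gf-identification} identifies that flow with the heat semigroup $\bH_t$. The essential obstacle has already been overcome in Propositions~\ref{prop:action-est} and~\ref{prop:green}; the present step is merely a repackaging via the metric convexity criterion of Lemma~\ref{lem:knconvexint2}, combined with the standard regularity ensured by $\bl(K,N)$.
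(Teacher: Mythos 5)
Your proposal is correct and follows essentially the same route as the paper: the paper's proof is a one-line deduction from Lemma~\ref{lem:knconvexint2} together with Proposition~\ref{prop:green} and its rescaled form \eqref{eq:ecdknG0}, identifying the Green-function inequality with condition (iv) of the lemma, and your argument simply makes this chain explicit (including the background $\rcd(K,\infty)$ regularity and the passage to the $\rcdkn$ consequences via Theorem~\ref{thm:RCDKN-equiv}).
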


\begin{proof}
  By virtue of Lemma~\ref{lem:knconvexint2}, this is merely a
  consequence of Proposition \ref{prop:green} and \eqref{eq:ecdknG0}.
\end{proof}

\begin{remark}
  In the special case $K=0$ it turns out to be possible to derive the
  $\evi_{0,N}$ property directly from the action estimate in
  Proposition~\ref{prop:action-est}. Let us give an alternative
  argument in this case.

  We want to show that for any $\rho,\sigma\in\cP_2(X,d)$ we have for all $t>0$:
  \begin{align}\label{eq:BEW2EVI}
    \ddtr\frac12 W_2^2(H_t\rho,\sigma)~\le~ N\cdot\left[
      1-\frac{U_N(\sigma)}{U_N(H_t\rho)}\right].
  \end{align}
  Obviously, it is sufficient to prove that \eqref{eq:BEW2EVI} is
  satisfied for any $N'>N$ and then let $N'\to N$. Moreover, by the
  semigroup property and Proposition~\ref{prop:evi-equiv} it is
  sufficient to assume that $\rho,\sigma\in D(\ent)$ and show that
  \eqref{eq:BEW2EVI} holds at $t=0$. So let us fix $N'>N$ and a
  geodesic $(\rho_s)_{s\in[0,1]}$ in $\cP_2(X,d,m)$ connecting
  $\rho_0=\sigma$ to $\rho_1=\rho$. Since we already know that $(X,d,m)$ is a strong
  $\cd(0,\infty)$ space we have that $s\mapsto\ent(\rho_s)$ is convex
  and thus continuous. By approximating the geodesic $(\rho_s)$ by
  regular curves one can show as in the proof of Proposition~\ref{prop:green} that
  \begin{align*}
    \frac1t\left[\frac12 W_2^2(\rho_0,\rho_{1,\tau})-\frac12 W_2^2(\rho_0,\rho_{1})\right]~\leq~N'\cdot\left[ U_{N'}(\rho_{1,\tau})-U_{N'}(\rho_{0})\right]\;.
  \end{align*}
  Thus passing to the limit $t\to0$ yields
  \begin{eqnarray*}
    \ddtr\frac12 W_2^2(\rho_0,H_t\rho_1)\Big|_{t=0}\cdot \frac{d}{dt}\tau_{1,t}\Big|_{t=0}~=~
    \ddtr\frac12 W_2^2(\rho_0,H_{\tau_{1,t}}\rho_{1})\Big|_{t=0}~\le~  N'\cdot\left[ U_{N'}(\rho_{1})-U_{N'}(\rho_0)\right]\;.
  \end{eqnarray*}
  Since $\frac{d}{dt}\tau_{1,t}\Big|_{t=0}=U_{N'}(\rho_1)$, this
  finally yields the $\evi_{0,N'}$ inequality:
  \begin{eqnarray*}
    \ddtr\frac12 W_2^2(\rho_0,H_t\rho_1)\Big|_{t=0}~\le~  N'\cdot\left[ 1-\frac{U_{N'}(\rho_0)}{U_{N'}(\rho_1)}\right]\;.
  \end{eqnarray*}
\end{remark}

To finish this section let us consider the classical case of weighted
Riemannian manifolds. More precisely, let $(M,d)$ be a $n$-dimensional
smooth, complete Riemannian manifold and let $V:M\to\R$ be a smooth
function bounded below. Consider the metric measure space
$(M,d,\e^{-V}\text{vol})$. The associated weighted Laplacian is given
by
\begin{align*}
  Lu~=~\Delta u -\nabla V\cdot\nabla u\;.
\end{align*}
It is well known (see e.g. \cite[Thm.~14.8]{Vil09}) that the operator
$L$ satisfies the Bakry--\'Emery condition $\be(K,N)$ if and only if
the generalized Ricci tensor
\begin{align*}
\Ric_{N,V}~:=~\Ric +\Hess V - \frac{1}{N-n}\nabla V\otimes\nabla V
\end{align*}
is bounded below by $K$. As an immediate consequence of our
equivalence result we thus obtain the following
\begin{proposition}\label{prop:rcdkn-mfds}
The mms $(M,d,\e^{-V}\text{vol})$ satisfies the $\ecdkn$-condition if and
only if
  \begin{align*}
    \Ric + \Hess V~\geq~ K + \frac{1}{N-n}\nabla V\otimes\nabla V\;.
  \end{align*}
\end{proposition}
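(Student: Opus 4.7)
The plan is to obtain this as a direct combination of the classical Bakry--\'Emery characterization on smooth weighted Riemannian manifolds with the main equivalence theorem of this paper.

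First, I would verify that the mms $(M,d,\e^{-V}\mathrm{vol})$ fits into the framework of the Summary theorem. Since $M$ is a smooth Riemannian manifold and $V$ is smooth and bounded below, the Cheeger energy of $(M,d,\e^{-V}\mathrm{vol})$ coincides with the standard weighted Dirichlet form $\mathcal{E}(u,u)=\int_M|\nabla u|^2\e^{-V}\,\mathrm{d}\mathrm{vol}$; in particular $\ch$ is quadratic, so the space is infinitesimally Hilbertian. Moreover, the minimal weak upper gradient equals the Riemannian norm of the gradient on $W^{1,2}$ and the Cheeger Laplacian coincides on smooth compactly supported functions with the weighted Laplacian $Lu=\Delta u-\nabla V\cdot\nabla u$. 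Assumption~\ref{ass:Ch-reg} holds trivially in this smooth setting, and the exponential integrability \eqref{eq:exp-int} follows from the lower bound on $V$ together with the standard Riemannian volume comparison (or, in the direction $\ecdkn\Rightarrow$ Ricci bound, is automatic from Proposition~\ref{prop:BGI}).

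Second, I would invoke the classical (pointwise) Bakry--\'Emery calculation (see e.g.~\cite[Thm.~14.8]{Vil09}): for $L$ as above, the $\Gamma_2$-inequality
\begin{equation*}
\tfrac12 L|\nabla u|^2-\langle\nabla u,\nabla Lu\rangle\;\geq\; K|\nabla u|^2+\tfrac1N (Lu)^2
\end{equation*}
holds pointwise for every smooth $u$ if and only if $\Ric+\Hess V-\tfrac{1}{N-n}\nabla V\otimes\nabla V\geq K$. Testing this inequality against a non-negative smooth compactly supported $g$ with respect to $\e^{-V}\mathrm{vol}$ and performing integration by parts (using that $L$ is self-adjoint with respect to this measure) produces exactly the integrated Bochner inequality $\be(K,N)$ of Definition~\ref{def:BE}. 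Conversely, given $\be(K,N)$ on the mms, one recovers the pointwise $\Gamma_2$-inequality on smooth functions by localization, since both sides of the integrated inequality are continuous in $x$ for smooth $u,g$ and $g$ ranges over an arbitrary nonnegative smooth family.

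Third, I would apply the Summary theorem (the equivalence of (ii) and (vi) established via Theorem~\ref{thm:grad-est}, Theorem~\ref{thm:Bochner}, Proposition~\ref{prop:Bochner2BEW} and Theorem~\ref{thm:BEW2CDE}): for an infinitesimally Hilbertian mms satisfying Assumption~\ref{ass:Ch-reg}, one has $\ecdkn\Leftrightarrow \be(K,N)$. Chaining the three equivalences gives
\begin{equation*}
\ecdkn \;\Longleftrightarrow\; \be(K,N) \;\Longleftrightarrow\; \Ric+\Hess V\geq K+\tfrac{1}{N-n}\nabla V\otimes\nabla V,
\end{equation*}
which is the claim.

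The main obstacle, and the only step requiring genuine care, is the identification of the integrated $\be(K,N)$ condition in Definition~\ref{def:BE} with the pointwise Bakry--\'Emery $\Gamma_2$-condition on smooth functions. Everything else is a bookkeeping verification that the smooth objects ($\nabla$, $\Delta$, $\langle\nabla\cdot,\nabla\cdot\rangle$, domain of $\Delta$) coincide with their metric-measure counterparts. The case $N=n$ is slightly delicate since then $\nabla V\otimes\nabla V$ must vanish identically, forcing $V$ constant; this degenerate case is handled by taking $N\searrow n$ in the Ricci bound and invoking Lemma~\ref{lem:consistent}.
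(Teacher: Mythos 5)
Your proposal matches the paper's own proof: both combine the classical pointwise Bakry--\'Emery characterization of $\Ric + \Hess V - \frac{1}{N-n}\nabla V\otimes\nabla V \geq K$ for the weighted Laplacian on a smooth manifold with the equivalence $\ecdkn\Leftrightarrow\be(K,N)$ established in Section~4. You add the (correct) bookkeeping that the smooth data fit the metric-measure framework and that the pointwise $\Gamma_2$-inequality coincides with the integrated $\be(K,N)$ of Definition~\ref{def:BE}, which the paper treats as immediate by citing Villani; the strategy and key ideas are identical.
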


\subsection{The sharp Lichnerowicz inequality (spectral gap)}
\label{sec:Lichnerowicz}

Here we provide a first application of the Bochner formula on
infinitesimally Hilbertian metric measure spaces. Namely we establish
the sharp spectral gap estimate on $\rcdkn$ spaces in the case of
positive curvature $K>0$.

We consider an infinitesimally Hilbertian metric measure space
$(X,d,m)$. Recall that we denote by $\Delta$ the canonical Laplacian
on $(X,d,m)$, i.e. the generator of the heat semigroup in $L^2$ which
is given as the $L^2$-gradient flow of the Cheeger energy $\ch$, see
Section~\ref{sec:recap}.

\begin{theorem}[Spectral gap estimate]\label{thm:lichnerowitz}
  Let $(X,d,m)$ be a mms satisfying the Riemannian curvature dimension
  condition $\rcdkn$ with $K>0$ and $N>1$. Then the spectrum of
  $(-\Delta)$ is discrete and the first non-zero eigenvalue
  $\lambda_1(X,d,m)$ satisfies the following bound:
  \begin{align}\label{eq:lichnerowitz}
    \lambda_1(X,d,m)~\geq~\frac{N}{N-1}K\;.
  \end{align}
\end{theorem}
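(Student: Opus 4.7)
The plan is to follow the classical Riemannian argument of Lichnerowicz, applying the Bochner inequality from Theorem~\ref{thm:Bochner} to an eigenfunction with the constant test function $g \equiv 1$. First, I would invoke the generalized Bonnet--Myers theorem (Corollary~\ref{cor:BMT}) to observe that under $\rcdkn$ with $K > 0$, the support of $m$ is compact with diameter at most $\pi\sqrt{N/K}$, so (after normalization) $m$ is a probability measure on a compact space. In particular, $1 \in L^2(X,m) \cap L^\infty(X,m)$, and since the Cheeger energy vanishes on constants, $1 \in D(\Delta)$ with $\Delta 1 = 0 \in L^\infty(X,m)$, making $g \equiv 1$ an admissible test function in the Bochner inequality.

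For the discreteness of the spectrum, I would note that the compactness of $\supp m$ combined with the Bishop--Gromov volume growth estimate (Proposition~\ref{prop:BGI}) yields a doubling property for $m$, and in the infinitesimally Hilbertian setting one obtains a local $(1,2)$-Poincar\'e inequality. These together imply the compact embedding $W^{1,2}(X,d,m) \hookrightarrow L^2(X,m)$ by standard Rellich--Kondrachov type theorems on doubling spaces supporting Poincar\'e inequalities. Consequently the resolvent $(I - \Delta)^{-1}$ is a compact self-adjoint operator on $L^2$, hence $-\Delta$ has purely discrete spectrum.

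For the eigenvalue bound, let $f$ be an eigenfunction satisfying $-\Delta f = \lambda f$ with $\lambda = \lambda_1 > 0$ and $\int f \, dm = 0$. Since $f \in D(\Delta) \subset W^{1,2}$ and $\Delta f = -\lambda f \in W^{1,2}$, Theorem~\ref{thm:Bochner} with $g \equiv 1$ yields
\begin{align*}
-\int \langle \nabla \Delta f, \nabla f\rangle \, dm \;\geq\; K \int \wug{f}^2 \, dm + \frac{1}{N} \int (\Delta f)^2 \, dm\;.
\end{align*}
Integration by parts (the formula \eqref{eq:int-by-parts}) combined with $\Delta f = -\lambda f$ gives $\int \wug{f}^2 \, dm = -\int f \Delta f \, dm = \lambda \|f\|_2^2$ and $-\int \langle \nabla \Delta f, \nabla f\rangle \, dm = \int (\Delta f)^2 \, dm = \lambda^2 \|f\|_2^2$. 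Substituting and dividing by $\|f\|_2^2 > 0$ yields
\begin{align*}
\lambda^2 \;\geq\; K\lambda + \frac{\lambda^2}{N}\;, \qquad \text{i.e.} \qquad \lambda \cdot \frac{N-1}{N} \;\geq\; K\;,
\end{align*}
which is the desired bound $\lambda_1 \geq KN/(N-1)$.

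The main conceptual obstacle is the discreteness of the spectrum, which requires invoking the nontrivial compactness of the Sobolev embedding in a mms satisfying only synthetic doubling and Poincar\'e --- the algebraic manipulation itself is straightforward once the Bochner inequality is available with the constant test function, which is precisely what the finite-diameter conclusion of Bonnet--Myers makes legitimate.
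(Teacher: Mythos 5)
Your proposal is correct and follows essentially the same approach as the paper's proof: compactness and finite mass from Bonnet--Myers make $g\equiv 1$ admissible; discreteness of the spectrum follows from a Rellich--Kondrachov compactness argument built on doubling plus a Poincar\'e inequality; and the eigenvalue bound follows by applying the Bochner inequality of Theorem~\ref{thm:Bochner} to the eigenfunction with $g\equiv 1$, then using integration by parts. One small inaccuracy worth noting: the Poincar\'e inequality is not a consequence of infinitesimal Hilbertianity alone — the paper invokes Rajala's result, which derives it from the $\cd^*(K,N)$ (Lott--Sturm--Villani) curvature condition, together with a Sobolev inequality adapted from Villani; the curvature-dimension assumption is what does the work there, not the linearity of the heat flow.
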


\begin{proof}
  First
  recall that the $\rcdkn$ condition with $K>0$ implies that $(X,d,m)$
  is doubling by Proposition~\ref{prop:BGI} and compact by Corollary~\ref{cor:BMT}.
  In combination with the result in \cite{Ra12} this
  yields that $(X,d,m)$ supports a global Poincar\'e
  inequality. Moreover, the $\cdskn$ condition implies a global
  Sobolev inequality, by adapting \cite[Thm.~30.23]{Vil09}. These
  ingredients yield the following Rellich--Kondrachov compactness
  property(c.f. \cite[Thm.~8.1]{HK00}): for any sequence of
  functions $(f_n)_n\subset W^{1,2}(X,d,m)$ with
  \begin{align*}
    \sup\limits_n\left( \norm{f_n}_{L^2(X,m)} + \ch(f_n)\right)~<~\infty
  \end{align*}
  we have that up to extraction of a subsequence $f_n\to f$ in
  $L^2(X,m)$ for some $f\in L^2(X,m)$. This compactness theorem is
  sufficient to prove that the spectrum of $(-\Delta)$ is discrete,
  e.g. by following verbatim the proof in \cite{Ber86} of the
  corresponding result for Riemannian manifolds.

  For the eigenvalue estimate we follow the argument in
  \cite{Dav90}. Let $\lambda>0$ be a non-zero eigenvalue of
  $(-\Delta)$ and let $\psi\in D(\Delta)$ be a corresponding
  eigenfunction. We apply the Bochner inequality of Theorem~\ref{thm:Bochner}
  to $f=\psi$ and the test function $g\equiv
  1$. Note that this pair is admissible since $X$ is compact. Thus we
  obtain using the integration by parts formula
  \eqref{eq:int-by-parts}:
  \begin{align*}
    0~&\geq~\int\ip{\nabla(\Delta\psi),\nabla\psi}\dd m + K\int\wug{\psi}^2\dd m + \frac{1}{N}\int(\Delta\psi)^2\dd m\\
      &=~(K-\lambda)\int\wug{\psi}^2\dd m - \frac{\lambda}{N}\int \psi\Delta\psi\dd m\\
      &=~\left(K-\lambda+\frac{\lambda}{N}\right)\int\wug{\psi}^2\dd m\;.
  \end{align*}
  Since $\ch(\psi)>0$ it follows that $\lambda\geq KN/(N-1)$ which
  yields the claim.
\end{proof}

Note that this estimate of the spectral gap is sharp. This can be seen
by considering the model space
\begin{align*}
X=(-\frac{\pi}{2}\sqrt{\frac{N-1}{K}},\frac{\pi}{2}\sqrt{\frac{N-1}{K}})\;,\quad d(x,y)=\abs{x-y}\;,\quad  m(\dd x)=\cos\left(x\sqrt{\frac{K}{N-1}}\right)^{N-1}\dd x\;.
\end{align*}
The corresponding operator is given by
\begin{align*}
  Lf(r)=f''(r) - \sqrt{K(N-1)}\tan\left(r\sqrt{K/(N-1)}\right)f'(r)
\end{align*}
with Neumann boundary conditions. By Proposition~\ref{prop:rcdkn-mfds}
the metric measure space $(X,d,m)$ satisfies $\rcdkn$. It is well
known that the first non-zero eigenvalue of the Neumann problem
associated to $L$ is given by $KN/(N-1)$.

\section{Dirichlet form point of view}
\label{sec:dirichlet}

Up to now we have formulated our results in the setting of metric
measure spaces. Here the Cheeger energy, if assumed to be a quadratic
form, gives rise to a canonical Dirichlet form. In this final section
we take a different point of view and reformulate our results starting
from a Dirichlet form. The relation between the two points of view and
the compatibility of metric measure structures and Energy structures
has been discussed extensively in \cite{AGS12} as well as in \cite{KZ13}.

Let $X$ be a Polish space and let $m$ be a locally finite Borel measure on
$X$. Let $\cE$ be a strongly local Dirichlet form on $L^2(X,m)$ with
domain $D(\cE)$. Denote the associated Markov semigroup in $L^2(X,m)$
by $(P_t)_{t>0}$ and its generator by $\Delta$. Given a function $f\in
D(\cE)$ we denote by $\Gamma(f)$ the associated energy measure
defined by the relation
\begin{align*}
  \int \phi\dd\Gamma(f)~=~\cE(f,f\phi) - \frac12\cE(f^2,\phi)\quad\forall \phi\in D(\cE)\cap L^\infty(X,m)\;.
\end{align*}
If $\Gamma(f)$ is absolutely continuous w.r.t. $m$ we will also denote
its density with $\Gamma(f)$.  The natural notion of a
(pseudo-)distance on $X$ associated to $\cE$ is the intrinsic $d_\cE$
defined by
\begin{align*}
  d_\cE(x,y)~:=~\sup\left\{\abs{f(x)-f(y)}\ :\ f\in D(\cE)\cap C(X), \Gamma(f)\leq m\right\}\;.
\end{align*}
For the sequel, assume that $d_\cE$ is a finite, complete distance on $X$ inducing
  the given topology and assume that $( X, d, m, \cE )$ is upper
  regular energy measure space in the sense of \cite[Def.3.6,
  Def.~3.13]{AGS12}.
  
\begin{corollary}\label{cor:Dirichlet}
  Under the previous assumptions,  the following are equivalent:
  \begin{itemize}
  \item[(i)] Assumption~\ref{ass:Ch-reg} and $\bl(K,N)$ holds,
  i.e. for any $f\in D(\cE)$ with
    $\Gamma(f)\leq m$ and $t>0$, $f$ is 1-Lipschitz and
    \begin{align*}
      \abs{\Gamma P_tf}^2 + \frac{1-\e^{-2Kt}}{NK}\abs{\Delta P_t
        f}^2~\leq~e^{-2Kt}P_t\Gamma(f)\;.
    \end{align*}
  \item[(ii)] $(X,d_\cE,m)$ is an $\rcdkn$ space.
  \end{itemize}
\end{corollary}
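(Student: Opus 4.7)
The plan is to reduce the statement to the mms equivalences already established in Section~\ref{sec:cde-bochner} by identifying the given Dirichlet structure with the canonical Cheeger structure on $(X,d_\cE,m)$.

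First I would invoke the compatibility results of Ambrosio--Gigli--Savar\'e for upper regular energy measure spaces: under the standing hypotheses on $(X,d_\cE,m,\cE)$, the Cheeger energy $\ch$ associated with $(X,d_\cE,m)$ coincides with $\tfrac12\cE$, and the minimal weak upper gradient $\wug{f}$ coincides $m$-a.e.\ with the density $\sqrt{\Gamma(f)}$ whenever the latter exists. In particular, $(X,d_\cE,m)$ is automatically infinitesimally Hilbertian, the Markov semigroup $P_t$ of $\cE$ coincides with $\bH_t$, and the Dirichlet generator coincides with the canonical Laplacian $\Delta$ on $(X,d_\cE,m)$. With this dictionary in hand, the inequality in (i) becomes precisely condition $\bl(K,N)$ in the sense of Definition~\ref{def:BL} with the admissible choice $C(t) = (1-\e^{-2Kt})/(2Kt)$, which satisfies $C(0)=1$.

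For the implication (i)$\Rightarrow$(ii), I would verify the remaining hypotheses of Theorem~\ref{thm:BEW2CDE}, namely Assumption~\ref{ass:Ch-reg} (which is part of (i)) and the exponential integrability \eqref{eq:exp-int}. The latter follows from $\bl(K,N)\Rightarrow\bl(K,\infty)$, which yields Gaussian-type tail bounds for the heat kernel and combined with the 1-Lipschitz clause of Assumption~\ref{ass:Ch-reg} implies \eqref{eq:exp-int}. Theorem~\ref{thm:BEW2CDE} then produces the strong $\ecdkn$ condition, whence $(X,d_\cE,m)$ is a $\rcdkn$ space by Theorem~\ref{thm:RCDKN-equiv}. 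For the converse (ii)$\Rightarrow$(i), Remark~\ref{rem:regularity} guarantees that any $\rcdkn$ space, being in particular $\rcd(K,\infty)$, automatically satisfies Assumption~\ref{ass:Ch-reg}. Theorem~\ref{thm:W2-contraction} gives the $W_2$-expansion bound \eqref{eq:W2-contraction}, and Theorem~\ref{thm:grad-est} then yields $\bl(K,N)$ with $C(t)=2Kt/(\e^{2Kt}-1)$. To obtain the precise constant $(1-\e^{-2Kt})/(2Kt)$ appearing in (i), I would route through the Bochner inequality: Theorem~\ref{thm:Bochner} upgrades $\bl(K,N)$ to the full $\be(K,N)$, and Proposition~\ref{prop:Bochner2BEW} then returns the sharp Bakry--Ledoux form with exactly the required constant.

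The main obstacle is the bookkeeping around the constants $C(t)$: Definition~\ref{def:BL} allows any $C(t)=1+O(t)$, while (i) specifies the sharp $(1-\e^{-2Kt})/(2Kt)$. The resolution is that any such $C$ suffices as input to Theorem~\ref{thm:BEW2CDE}, whereas the sharp constant is recovered on the output side via the Bochner route (Theorem~\ref{thm:Bochner} followed by Proposition~\ref{prop:Bochner2BEW}). Once this circular improvement is checked, the identification $\wug{f}=\sqrt{\Gamma(f)}$ translates each ingredient back into the Dirichlet-form language of the corollary, and the equivalence follows.
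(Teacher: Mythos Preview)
Your approach is essentially the same as the paper's: invoke the AGS12 identification of $\cE$ with the Cheeger energy on $(X,d_\cE,m)$, translate $\Gamma(f)$ into $\wug{f}^2$, and then cite Theorems~\ref{thm:BEW2CDE} and~\ref{thm:grad-est} (the paper leaves the intermediate step through Theorem~\ref{thm:W2-contraction} implicit, whereas you spell it out). Two minor remarks. First, the exponential integrability \eqref{eq:exp-int} is already part of Assumption~\ref{ass:Ch-reg}, so there is nothing to verify separately for (i)$\Rightarrow$(ii). Second, your care about the precise constant $(1-\e^{-2Kt})/(2Kt)$ versus the weaker $2Kt/(\e^{2Kt}-1)$ coming out of Theorem~\ref{thm:grad-est} is a genuine refinement: the paper's proof simply cites Theorem~\ref{thm:grad-est} and does not route through Theorem~\ref{thm:Bochner} and Proposition~\ref{prop:Bochner2BEW} to recover the sharper form displayed in~(i). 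If one reads the ``i.e.'' in (i) as merely spelling out $\bl(K,N)$ in the sense of Definition~\ref{def:BL} (some admissible $C$), the paper's argument suffices; if one insists on the specific constant shown, your extra Bochner loop is needed and correct.
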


\begin{proof}
  Under the assumptions on $d_\cE$ and $\cE$, it is shown in \cite[Thm.~3.14]{AGS12}
   that $\cE$ coincides with the Cheeger energy on
  $(X,d_\cE,m)$. Thus $(X,d_\cE,m)$ is infinitesimally Hilbertian and
  for any $f\in D(\cE)$ we have $\Gamma(f)\ll m$ with density
  $\wug{f}^2$. The equivalence of (i) and (ii) then follows from
  Theorems~\ref{thm:BEW2CDE}, \ref{thm:grad-est}.
\end{proof}

\begin{remark}
  According to \cite[Cor.~2.3]{AGS12} conditions (i) and (ii) of the
  previous result are in turn equivalent to the Bakry--\'Emery
  inequality $\Gamma_2(f)\geq K\Gamma(f)+\frac{1}{N}(\Delta f)^2$ in
  the form of $\be(K,N)$, see Definition~\ref{def:BE}.
\end{remark}

\medskip

\noindent
{\bf Note added in proof.}~~\it{ 
  Since the first version of this article was published on arxiv, several remarkable follow-up papers appeared.
       Garofalo and Mondino have \cite{GM13} have established the Li--Yau
    estimates on metric measure spaces satisfying
    $\rcd^*(K,N)$. Contraction properties of the heat flow reflecting
    dimensional effects have  been exhibited by Bolley, Gentil and
    Guillin \cite{BGG13}, their approach however being very different from ours,
     based on  a new transportation distance instead of the
    $L^2$-Wasserstein distance. The concept of $(K,N)$-convexity has  been adopted by Naber
    \cite{Nab13} in the study of upper and lower Ricci bounds on
    metric measure spaces and the relation with spectral gaps on the
    associated path space 
    
    The authors also would
    like to mention the closely related, independent work in progress of
    Ambrosio, Mondino and Savar\'{e} \cite{AMS13}, where partly similar
    results as in the present article are obtained via a study of the
    porous medium equation in metric measure spaces.
    }

\bibliographystyle{plain}
\bibliography{evikn}

\end{document}